\numberwithin{equation}{section}
\newcounter{hours}\newcounter{minutes}
\theoremstyle{definition}
\newtheorem{thm}{Theorem}[section]
\newtheorem{lem}[thm]{Lemma}
\newtheorem{cor}[thm]{Corollary}
\theoremstyle{remark}                  
\newtheorem{rem}[thm]{Remark}
\theoremstyle{definition}
\newtheorem{DEF}[thm]{Definition}
\def\S{{\mathbb S}}
\def\real{{\mathbb R}}
\def\diam{\textnormal{diam}}
\def\det{\textnormal{det}}
\def\union{\bigcup}
\newcommand{\norm}[1]{\lvert#1\rvert}
\newcommand{\inner}[2]{\langle #1,#2\rangle}
\def\xbar{\bar{x}}
\def\Dbar{\bar{D}}
\def\Omegabar{\bar{\Omega}}
\def\pbar{\bar{p}}
\def\xhat{\widehat{x}}
\def\xbarhat{\widehat{\bar{x}}}
\def\Mbar{\bar{M}}
\DeclareMathOperator{\cl}{cl}
\def\calL{\mathcal{L}}
\newcommand{\Leb}[1]{\left\vert{#1}\right\vert_{\calL}}
\def\qcheck{\widehat{q}}
\def\pcheck{\widehat{p}}
\newcommand\g[1]{g_{#1}}
\def\qbar{\bar{q}}
\newcommand\gbar[1]{\bar{g}_{#1}}
\DeclareMathOperator{\dist}{dist}
\DeclareMathOperator{\MTW}{MTW}
\def\pbardot{\dot{\bar{p}}}
\newcommand{\MTWcoord}[4]{-(c_{#1#2, \bar{r}\bar{s}}-c_{#1#2, \bar{t}}c^{\bar{t}, s}c_{s, \bar{r}\bar{s}})c^{\bar{r}, #3}c^{\bar{s}, #4}}
\DeclareMathOperator{\ch}{conv}
\DeclareMathOperator{\spt}{spt}
\DeclareMathOperator{\aff}{aff}
\def\gammabar{\bar{\gamma}}
\DeclareMathOperator{\interior}{int}
\def\halfspace{H}
\def\arbitrary{A}
\def\sublevelset{S}
\def\ellipsoid{E}
\def\normal{N}
\DeclareMathOperator{\dVol}{dVol}
\def\mubar{\bar{\mu}}
\def\innerdom{\spt{\rho}}
\def\outerdom{\Omega}
\def\innertarget{\spt{\rhobar}}
\def\outertarget{\Omegabar}
\def\arbitrarybar{\bar{\arbitrary}}
\def\rhobar{\bar{\rho}}
\def\xbardot{\dot{\xbar}}
\def\vectn{q}
\newcommand{\cone}[2]{I_{\pbar_0}(#1, #2)}
\newcommand{\proj}[2]{\pi_{#1}{\left(#2\right)}}
\def\pmax{p_{\max}}
\newcommand{\coord}[2]{\left[#1\right]_{#2}}
\newcommand{\outerdomcoord}[1]{\coord{\outerdom}{#1}}
\newcommand{\innerdomcoord}[1]{\coord{\innerdom}{#1}}
\newcommand{\outertargetcoord}[1]{\coord{\outertarget}{#1}}
\newcommand{\cExp}[2]{exp^c_{#1}({#2})}
\def\subzero{\sublevelset_0}
\def\subzerocoord{\coord{\subzero}{\xbar_0}}
\def\contactcoord{\coord{\contact}{\xbar_0}}
\def\mountain{m}
\def\mountainhat{{m}}
\def\mountaintilde{\tilde{m}}
\def\contact{\sublevelset_0}
\newcommand\transform[2]{\mathcal{M}_{#1, #2}}
\newcommand\transforminv[2]{\mathcal{M}_{#1, #2}^{-1}}
\newcommand\transformpm[2]{\mathcal{M}^{\pm 1}_{#1, #2}}
\newcommand\transformadj[2]{\mathcal{M}^*_{#1, #2}}
\newcommand{\gnorm}[2][]{\lvert #2\rvert_{\g{#1}}}
\newcommand{\gbarnorm}[2][]{\lvert #2\rvert_{\gbar{#1}}}
\newcommand{\innerg}[3][x_0, \xbar_0]{\g{#1}\left(#2, #3\right)}
\newcommand{\innergbar}[3][\xbar_0]{\gbar{#1}\left(#2, #3\right)}
\newcommand{\linear}[3][]{l^{#2}_{#1}(#3)}
\def\xmax{x_{\max}}
\newcommand{\e}[2][]{v^{#1}_{#2}}
\newcommand{\ehat}[2][]{\widehat{v}{}^{#1}_{#2}}
\newcommand{\vectsp}[1][]{V_{#1}}
\newcommand{\DDbar}[2]{-D \Dbar c(#1, #2)}
\newcommand{\DDbarinv}[2]{\left[ -D\Dbar c(#1, #2)\right]^{-1}}
\newcommand{\DbarDinv}[2]{\left[ -\Dbar Dc(#1, #2)\right]^{-1}}
\newcommand{\tansp}[2]{T_{#1}{#2}}
\newcommand{\cotansp}[2]{T^*_{#1}{#2}}
\newcommand{\cotanspM}[1]{\cotansp{#1}{M}}
\newcommand{\tanspM}[1]{\tansp{#1}{M}}
\newcommand{\cotanspMbar}[1]{\cotansp{#1}{\Mbar}}
\newcommand{\tanspMbar}[1]{\tansp{#1}{\Mbar}}
\def\mountainhat{\widehat{\mountain}}
\newcommand{\extremalhalfspace}[2][x_e, \xbar_0,\e{0}]{\halfspace^{#2}_{#1}}
\newcommand{\w}[2][]{w^{#1}_{#2}}
\newcommand{\wbar}[2][]{\bar{w}{}^{#1}_{#2}}
\newcommand{\raisebar}[1]{{#1}^{\sharp}}
\newcommand{\plane}[2]{\Pi^{#1}_{#2}}
\newcommand{\subcoord}[1][\xbar]{\coord{\sublevelset}{#1}}
\newcommand{\supsegment}[2]{ l(#1,#2) } 
\def\bigsublevelset{\sublevelset^{\textrm{big}}}
\def\ttil{\tilde{t}}
\begin{document}

\title{On the local geometry of Maps with c-convex potentials}

\author{Nestor Guillen}
\address[Nestor Guillen]{Department of Mathematics, University of California at Los Angeles Department of Mathematics}
\thanks{N. Guillen is partially supported by a National Science Foundation grant DMS-1201413.}
\email{nestor@math.ucla.edu} 

\author{Jun Kitagawa}
\address[Jun Kitagawa]{University of British Columbia and Pacific Institute for Mathematical Sciences, Vancouver, Canada}
\thanks{J. Kitagawa is partially supported by a Pacific Institute for the Mathematical Sciences Postdoctoral Fellowship.}
\email{kitagawa@math.ubc.ca}

\begin{abstract}
    We identify a condition for regularity of optimal transport maps that requires only three derivatives of the cost function, for measures given by densities that are only bounded above and below. This new condition is equivalent to the weak Ma-Trudinger-Wang condition when the cost is $C^4$. Moreover, we only require (non-strict) $c$-convexity of the support of the target measure, removing the hypothesis of strong $c$-convexity in a previous result of Figalli, Kim, and McCann, but at the added cost of assuming compact containment of the supports of both the source and target measures. 
\end{abstract}

\date{\today}

\date{}
\maketitle
\markboth{N. Guillen and J. Kitagawa}{The local geometry of maps with c-convex potentials}

\section{Introduction}

    \subsection{Statement of main result}\label{subsection: main result}
    This paper is concerned with the regularity of solutions to the optimal transport problem under certain conditions. Namely, suppose that $\outerdom$ and $\outertarget$ are subsets of Riemannian manifolds $M$ and $\Mbar$, $\mu=\rho\dVol_M$, $\mubar=\rhobar\dVol_{\Mbar}$ are probability measures, and $c:\outerdom^{\cl}\times \outertarget^{\cl}\to \real$ is a \emph{cost function} satisfying conditions~\eqref{Twist},~\eqref{Nondeg},~\eqref{Convdm}, and~\eqref{QConv} (described in Subsection~\ref{subsection: setup}). Then, we wish to discuss the regularity and injectivity properties of solutions to the optimal transport problem, i.e. measurable maps $T:\outerdom\to\outertarget$ such that $T_\#\mu=\mubar$, and which satisfy
    \begin{equation}\label{OT problem}
         \int_{\outerdom} c(x,T(x)) d\mu(x)=\inf_{S_\# \mu =\mubar}\int_{\outerdom} c(x,S(x)) d\mu(x).
    \end{equation}
    Since $\mu$ is absolutely continuous with respect to $\dVol_M$ and $c$ satisfies~\eqref{Twist} and~\eqref{Nondeg}, it can be shown that~\eqref{OT problem} admits a solution, and moreover, this solution can be characterized as 
    \begin{equation}\label{eqn: solution characterization}
         T(x)=\cExp{x}{Du(x)}
    \end{equation}
    defined $\dVol_M$ almost everywhere for some $c$-convex real valued function $u$, known as a $c$-convex potential of $T$ (for details see~\cite[Chapter 9]{Vil09}, see Subsection~\ref{subsection: setup} for relevant definitions). This was first discovered by Brenier for $c(x,y)=-x\cdot y$ in $\mathbb{R}^n$ (see~\cite{Bre87}), and then later extended by Gangbo and McCann to more general cost functions (see~\cite{GM96}). A similar existence result and characterization on Riemannian manifolds was discovered by McCann (\cite{Mc01}). The issue of regularity of such solutions is markedly more difficult.

\bigskip

    The main result of this paper is the following theorem. We note that we only require $C^3$ regularity of the cost function instead of the usual $C^4$, and even then the slightly weaker notion that mixed derivatives of the cost should be continuously differentiable. This along with the relevant hypotheses~\eqref{Twist},~\eqref{Nondeg},~\eqref{Convdm}, and~\eqref{QConv} are described in Subsection~\ref{subsection: setup}.

    \begin{thm}\label{thm: OT main theorem}
         Suppose $\mu=\rho\dVol_{M}$ and $\mubar=\rhobar\dVol_{\bar{M}}$ for non-negative measurable functions $\rho$ and $\rhobar$, and let $c:\outerdom^{\cl}\times\outertarget^{\cl}\to\real$ be $C^3$ in the sense of Subsection~\ref{subsection: setup}. Suppose $c$, $\innerdom$, $\innertarget$, $\outerdom$, and $\outertarget$ satisfy the properties~\eqref{Twist},~\eqref{Nondeg},~\eqref{Convdm}, and~\eqref{QConv}. Additionally, suppose  there are constants $0<\alpha_1\leq \alpha_2<+\infty$ such that
         \begin{equation*}
              \alpha_1\leq \frac{\rho(x)}{\rhobar(\xbar)}\leq \alpha_2,\qquad a.e.\; (x,\xbar) \in \innerdom\times \innertarget.
         \end{equation*} 
         Then, the optimal transport map $T$  in~\eqref{OT problem} is injective and continuous in the interior of $\innerdom$.
\end{thm}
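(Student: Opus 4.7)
I would reduce both injectivity and interior continuity to a single statement: the $c$-convex potential $u$ in~\eqref{eqn: solution characterization} is \emph{strictly} $c$-convex on $\interior(\spt\rho)$. Once strict $c$-convexity is known, injectivity follows because the $c$-subdifferential $\partial^c u$ is then single-valued and its images on distinct points are disjoint, while continuity of $T=\cExp{\cdot}{Du}$ follows from the fact that $\partial^c u$ has closed graph together with local boundedness. So the heart of the paper is the strict $c$-convexity claim, and I would argue it by contradiction.

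\textbf{Setting up the contradiction.} Suppose strict $c$-convexity fails at some $x_0\in\interior(\innerdom)$, so there is a $c$-support mountain $\mountain(x)=-c(x,\xbar_0)+c(x_0,\xbar_0)+u(x_0)$ (in the notation suggested by the macros) whose contact set $\contact=\{u=\mountain\}$ is more than a single point. Near $\contact$ I would study the family of sublevel sets $\sublevelset_t=\{u\le \mountain+t\}$ for small $t>0$. The plan is to show two competing estimates: (i) a geometric/shape statement that these $\sublevelset_t$ are forced, by the weak MTW-type condition~\eqref{QConv}, to be elongated or contained in a thin neighborhood of $\contact$; and (ii) a measure-theoretic Alexandrov-type balance
\[
\alpha_1 \Leb{\sublevelset_t}\le \mu(\sublevelset_t)=\mubar(\partial^c u(\sublevelset_t))\le \alpha_2\Leb{\partial^c u(\sublevelset_t)},
\]
where the right-hand side is estimated using the $c$-Monge--Amp\`ere structure. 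A geometric upper bound on $\Leb{\partial^c u(\sublevelset_t)}$ which is incompatible with (i) when $\contact$ has positive dimension produces the contradiction. This is the same high-level scheme as Figalli--Kim--McCann, but replacing their use of the classical MTW tensor along $C^4$ $c$-segments by a discrete three-derivative form of~\eqref{QConv}, and replacing strong $c$-convexity of $\outertarget$ by the compact containment hypothesis to keep all relevant $c$-exponential images inside $\outertarget^{\cl}$.

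\textbf{Main technical step: the shape of $\sublevelset_t$.} The crucial ingredient is to show that the dual sublevel sets (the images $[\sublevelset_t]_{\xbar_0}$ in the cotangent picture given by the macro $\coord{\cdot}{\xbar_0}$) are, up to controlled errors, convex with respect to the relevant affine structure, with John-type ellipsoid control that degenerates as $t\to 0$ only along directions tangent to $\contact$. Without $C^4$ and without strong $c$-convexity of $\outertarget$, this is where the new hypothesis~\eqref{QConv} has to do all the work: it must supply, for each $\xbar_0$ and each supporting halfspace at $\contact$, a quantitative one-sided comparison between $-c(\cdot,\xbar)$ and its affine tangent that uses only three continuous derivatives. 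I expect this quasi-convexity of sublevel sets, proved directly from the $C^3$ version of MTW, to be the main obstacle, since standard proofs use the sign of the MTW tensor along $c$-segments, an object requiring $C^4$. Everything else (the mass-balance identity, the John ellipsoid argument yielding the volume contradiction, and the passage from strict $c$-convexity to continuity and injectivity of $T$) is then a relatively routine adaptation, with compact containment of $\innerdom,\innertarget$ inside $\outerdom,\outertarget$ ensuring that the sublevel sets and their $c$-subdifferential images stay away from the boundary where regularity could fail.
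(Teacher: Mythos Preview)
Your reduction to strict $c$-convexity of the potential is correct and matches the paper. The gap is in how you propose to obtain it. The vertical lifts $\sublevelset_t=\{u\le \mountain+t\}$ are not the family the argument runs on: the paper instead \emph{tilts} the focus. Near a $c$-exposed point $x_e$ of the contact set $\contact$, one builds $c$-functions $\mountain_t$ whose foci move along a short $c$-segment from $\xbar_0$ in a direction dictated by the outward normal to $[\contact]_{\xbar_0}$ at $p_{(x_e,\xbar_0)}$ (Lemmas~\ref{lem: localization claim}--\ref{lem: chopping convergence}). The resulting sections are trapped in a small ball around $x_e$, and along one specific direction the ratio of the distance from $p_{(x_e,\xbar_t)}$ to the supporting plane over the width of the section tends to zero. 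It is this directional degeneracy, fed into the Aleksandrov estimate (Theorem~\ref{thm: aleksandrov}, which carries an explicit factor $d(p_0,\Pi^{v}\cup\Pi^{-v})/d(\Pi^v,\Pi^{-v})$) and balanced against the sharp-growth lower bound (Lemma~\ref{lem: sharp growth estimate}), that yields the contradiction. Your soft statement that $\sublevelset_t$ becomes thin near $\contact$ does not close the loop without this localization. Relatedly, you have the main obstacle backwards: convexity of $[\sublevelset]_{\xbar}$ follows immediately from~\eqref{QConv} via~\eqref{eqn: gLp} (Corollary~\ref{cor: c-convexity of sublevel sets}); the real work is in the two quantitative estimates just named, where~\eqref{QConv} is used not to get convexity but to compare $c$-polar duals to Euclidean ones and to control the height of $c$-cones.

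More seriously, your last sentence is wrong: compact containment of $\innerdom$ in $\outerdom$ does \emph{not} keep the sublevel sets, or even the contact set $\contact$ itself, away from $\partial\outerdom$. The localization argument above only rules out $c$-extremal points of $\contact$ lying in $\outerdom^{\interior}$ (already requiring three separate cases: interior of $\innerdom$, $\outerdom^{\interior}\setminus\innerdom$, and $\partial\innerdom$, the last needing a two-scale trick borrowed from Figalli--Kim--McCann). It remains entirely possible that every $c$-extremal point of $\contact$ sits on $\partial\outerdom$, and the paper spends all of Section~\ref{section: strict convexity} ruling this out by a different mechanism: one constructs a family of thin cones in $[\outertarget]_{x_0}$ with vertex $\pbar_0$, shows their preimages under $\partial_cu$ accumulate on a single point of $\contact\cap\partial\outerdom$ and hence lie outside $\innerdom$ for small $r$, and then contradicts the lower mass bound in~\eqref{eqn: main}. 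This is exactly where the compact containment $\innerdom\Subset\outerdom$ is spent, replacing the strong $c$-convexity of $\innertarget$ used in~\cite{FKM11}; your sketch does not account for this step at all.
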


    It is well known that strict $c$-convexity implies differentiability for Aleksandrov solutions (by a standard compactness argument), this goes back to \cite{C92} in the case of the Monge-Ampere equation and convexity. See also \cite{FKM11} for the details in the case of a general cost and $c$-convexity. Thus, most of the paper will be devoted to the following result which implies Theorem~\ref{thm: OT main theorem} after combining with Theorem~\ref{thm: Brenier implies Aleksandrov} and the characterization~\eqref{eqn: solution characterization}.    

    \begin{thm}\label{thm: main}
         Consider $\mu$ and $\mubar$ and a cost $c: \outerdom^{\cl}\times\outertarget^{\cl} \to \mathbb{R}$, $\innerdom$, $\innertarget$, $\outerdom$, and $\outertarget$ satisfying the assumptions of Theorem~\ref{thm: OT main theorem}. Then any $c$-convex $u$ that is an Aleksandrov solution of~\eqref{OT problem} (see Definition~\ref{def: Aleksandrov solution}) is strictly $c$-convex in the interior of $\innerdom$.
    \end{thm}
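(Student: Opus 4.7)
The plan is to argue by contradiction, following the broad outline of the Figalli--Kim--McCann argument but replacing the use of strong $c$-convexity of the target with a more careful localization that exploits the compact containment assumptions. Suppose some interior point $x_0 \in \interior \innerdom$ admits a $c$-support $\mountain(x) = -c(x, \xbar_0) + \lam$ of $u$ whose contact set $\contact := \{x \in \outerdom : u(x) = \mountain(x)\}$ contains a point other than $x_0$; we seek a contradiction. For small $h>0$ consider the sublevel sets $\sublevelset_h := \{x \in \outerdom : u(x) \leq \mountain(x) + h\}$. Since $u$ is an Aleksandrov solution, the density bounds translate into
\begin{equation*}
\alpha_1 |\sublevelset_h|_{\calL} \leq |\partial_c u(\sublevelset_h)|_{\calL} \leq \alpha_2 |\sublevelset_h|_{\calL},
\end{equation*}
up to uniformly controlled Jacobian factors coming from the $c$-exponential map.

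Working next in the dual coordinates $x \mapsto -D_{\xbar}c(x, \xbar_0)$ on the source side and $\xbar \mapsto -D_x c(x_0, \xbar)$ on the target side (both $C^2$ diffeomorphisms by \eqref{Twist} and \eqref{Nondeg}), conditions \eqref{Convdm} and \eqref{QConv} ensure that $\contactcoord$ is convex and that the dual image of $\partial_c u(\sublevelset_h)$ sits inside a set whose thickness transverse to the affine hull of $\contactcoord$ shrinks with $h$. After normalizing $\contactcoord$ by John's lemma, I would construct test perturbations $\mountain_{\xbar}(x) := -c(x, \xbar) + \lam'$ of $\mountain$, for $\xbar$ chosen near the boundary of the associated dual $c$-section, whose contact sets with $u$ are strictly smaller than $\contact$. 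Summing the estimates obtained from a controlled family of such perturbations should yield a sharper upper bound of the form $|\partial_c u(\sublevelset_h)|_{\calL} = o(|\sublevelset_h|_{\calL})$ as $h \to 0$, contradicting the linear lower bound and forcing $\contact = \{x_0\}$.

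The hard part is verifying that these test perturbations remain valid $c$-supports when $\innertarget$ is only $c$-convex (not strongly so). Figalli--Kim--McCann use strong $c$-convexity precisely to ensure that nearby perturbed mountains $\mountain_{\xbar}$ continue to contact $u$ at points whose $c$-exponential image lies inside $\innertarget$; a merely $c$-convex boundary allows this image to slip out. Here the compact containment $\innertarget \Subset \outertarget$ provides a uniform buffer: the image $\partial_c u(\sublevelset_h)$ is confined near $\innertarget$ at positive distance from $\partial \outertarget$, and on that buffer the perturbations $\mountain_{\xbar}$ still define legitimate $c$-supports of $u$. A parallel buffer from $\innerdom \Subset \outerdom$ keeps $\sublevelset_h$ away from $\partial \outerdom$ for $h$ small. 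The main technical challenge will be to make this buffer argument quantitative, and to check that the mild $C^3$ version of \eqref{QConv} retains enough of the MTW-type convexity of $c$-sections for the volume estimates underlying Figalli--Kim--McCann to go through; this is where I expect the bulk of the work to lie.
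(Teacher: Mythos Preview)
Your proposal has a fundamental misconception about the geometry of the contact set. You assert that the buffer $\innerdom\Subset\outerdom$ keeps $\sublevelset_h$ away from $\partial\outerdom$ for $h$ small, but this is precisely what fails: the heart of the argument is that if $\contact$ is not a singleton, then all of its $c$-extremal points lie on $\partial\outerdom$. Hence $\contact$ itself (and every $\sublevelset_h\supset\contact$) reaches the boundary, and your scheme of analyzing sections shrinking to $\contact$ cannot produce the contradiction you describe. The paper establishes this localization first (Section~\ref{section: localization}) by \emph{tilting}---not merely raising---the supporting $c$-function near a putative interior $c$-exposed point $x_e$, so that the resulting sections $\sublevelset_t$ shrink to the single point $x_e$ rather than to all of $\contact$. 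The Aleksandrov-type upper bound (Theorem~\ref{thm: aleksandrov}) and the sharp growth lower bound (Lemma~\ref{lem: sharp growth estimate}) applied to these tilted sections then give a ratio that is bounded below by a fixed constant yet is forced to zero, ruling out interior extremal points.

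Once the extremal points of $\contact$ are pushed to $\partial\outerdom$, the final contradiction is not a volume estimate of the form $\Leb{\partial_c u(\sublevelset_h)}=o(\Leb{\sublevelset_h})$. Instead one singles out the boundary extremal point $\xmax$, builds a narrowing family of solid cones in $\outertargetcoord{x_0}$ with apex $\pbar_0$ and axis pointing into $\outertargetcoord{x_0}$, and proves that the $\partial_c u$-preimage of each such cone (minus $\contact$) is contained in an arbitrarily small neighborhood of $\xmax$. Since $\xmax\in\partial\outerdom$ and $\innerdom\Subset\outerdom$, this preimage eventually misses $\innerdom$ entirely, so a target set of positive measure has $\partial_c u$-preimage of zero measure in $\innerdom$, contradicting~\eqref{eqn: main}. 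Thus the compact containment $\innerdom\Subset\outerdom$ enters exactly at this last step, not as a device to keep sections interior; and $\innertarget\Subset\outertarget$ is used to guarantee that the tilted $c$-segments and cones remain in the domain of definition of $c$, not as a buffer for perturbed supports. Your proposal misidentifies where both containment hypotheses enter and skips the extremal-point analysis on which the whole argument rests.
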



    \subsection{History of regularity theory for the optimal transport problem}\label{subsection: history}
    We give a brief overview of the history of regularity theory of this problem here. Details about all relevant conditions can be found in Section~\ref{section: basic definitions} below. Due to the characterization~\eqref{eqn: solution characterization}, it can be seen that a potential function solving the optimal transport problem satisfies the Monge-Amp\'ere type equation, 
    \begin{equation}\label{eqn: main classical}
         \det\left (D^2u(x)+D^2_{x,x}c(x, T(x))\right ) =\lvert \det\;{D^2_{x, \xbar}c(x, T(x))}\rvert \rho(x)/\rhobar(T(x))
    \end{equation}
    for $\dVol_M$ almost-every $x$. In particular, regularity techniques of elliptic partial differential equations can be applied towards deducing regularity of the optimal transport problem.

    There are two branches in  the regularity theory for the optimal transport problem, each of which originated with the regularity theory for the classical Monge-Amp\'ere equation. This is not surprising, for when $\outerdom$ and $\outertarget$ are convex subsets of $\mathbb{R}^n$ and $c(x,y)=-x\cdot y$ is the inner product,~\eqref{eqn: main classical} reduces to the classical Monge-Amp\'ere equation for a convex function $u$
    \begin{equation*}
         \det(D^2u) = \rho(x)/\rhobar(D u(x)).
    \end{equation*}

    The first branch utilizes the continuity method and the existence of \emph{a priori} $C^2$ estimates for solutions, in order to show $C^{2, \alpha}$ regularity. Such estimates originate with the work of Pogorelov (see~\cite{Pog1971}) and were used by Urbas (~\cite{Urb97}) in showing regularity for the so-called second boundary value problem for Monge-Amp\'ere. For a long time regularity was open for all other costs, the first result for a general class of costs was by Ma, Trudinger, and Wang in~\cite{MTW05},  who derived new a priori estimates and showed the existence of solutions which were $C^{2,\alpha}$ in the interior,  followed then by work of Trudinger and Wang in~\cite{TW09} to show global $C^{2, \alpha}$ regularity of solutions to the optimal transport problem for costs and domains satisfying certain conditions (see Section~\ref{subsection: MTW}). See also the following by Liu, Trudinger, and Wang:~\cite{Liu09},~\cite{LT10},~\cite{LTW10}.

    On the other hand, the second method begins with weak solutions and shows they possess $C^{1, \alpha}$ regularity, and takes a more geometric approach. This method was pioneered by Caffarelli (see~\cite{C90,C92}) in the case of the classical Monge-Amp\'ere equation. The method is adapted later to the optimal transport problem  by Figalli, Kim, and McCann first to costs satisfying~\eqref{NNCC} (\cite{FKM09}) then~\eqref{A3w}(\cite{FKM11}), with costs and domains satisfying the same assumptions as Ma, Trudinger, and Wang. This is also the method that we utilize in this paper.

    It should be noted here that the work of Loeper (\cite{L09}) serves to connect the two branches of regularity theory. Loeper showed that the condition~\eqref{A3w} introduced by Ma, Trudinger, and Wang is necessary for regularity of solutions to the optimal transport problem. Additionally, Loeper proves Theorem~\ref{thm: Loeper's lemma} which shows that condition~\eqref{A3w} implies a certain kind of convexity of certain sublevel sets of the solution to the optimal transport problem, which plays a key part in the geometric approach used by Figalli, Kim, and McCann.

    We also mention here that there are a number of examples known to satisfy~\eqref{A3w} and weakened versions,~\eqref{A3s} and~\eqref{NNCC} (see Subsection~\ref{subsection: MTW} for these conditions). The condition \eqref{A3s} is satisfied by many well known costs, including $c(x, \xbar)=\norm{x-\xbar}^p$ on $\real^n\times\real^n$, for $-2<p<1$ (see~\cite{TW09}); geodesic distance squared and the cost related to the far-field antenna problem on the sphere (\cite{L11}); and geodesic distance squared on quotients and perturbations of the sphere, which is a widely studied case (\cite{DG10,DG11,FRV12}). On the other hand, \eqref{NNCC} is satisfied on products of spheres (\cite{FKM10}). For the geodesic distance squared in more general manifolds, Loeper and Villani \cite{ LV10} showed regularity with the extra assumption that the Riemannian manifold is non-focal (thus this is a global result), further discussion on regularity of optimal transport maps and its relation with the cut locus can be found in \cite{FRV11}, see also \cite{FRV10}.

    It should also be noted that a cost function that satisfies~\eqref{A3w} but neither~\eqref{A3s} nor~\eqref{NNCC} is $c(x, \xbar)=\norm{x-\xbar}^{-2}$ on $\real^n\times\real^n$ (see the appendix in~\cite{KK12}). 

    \subsection{Outline of Caffarelli's method for the classical Monge-Amp\'ere equation}\label{subsection: Caffarelli outline}
    Here we provide a brief outline of Caffarelli's method applied to the classical Monge-Amp\'ere equation, for a thorough discussion of this part of the theory, see \cite{C92lectures}. Heuristically, this equation is degenerate elliptic for general convex $u$ but becomes uniformly elliptic when solutions are shown to be uniformly convex, thus one expects for elliptic theory to provide the regularity of a weak solution  $u$ whenever we can show it is strictly convex, understood in the sense that each supporting hyperplane to the graph touches it at a unique point. Caffarelli first observed in~\cite{C90} that if $u$ is convex and satisfies in some weak sense the inequality
    \begin{equation*}
         \alpha_1\leq \det(D^2u)\leq \alpha_2,
    \end{equation*}
    then for any affine function $l(x)$, for any $x$ we have the pointwise bound
    \begin{align}
	     l(x)-u(x)  &\leq c_1(\alpha_1, \alpha_2,n)\Leb{ \{u\leq l\}}^{2/n}\frac{d(x,\Pi_+ \cup \Pi_{-})}{d(\Pi_+,\Pi_{-})}\label{eqn: euclidean aleksandrov}
	     \end{align}
	     where $\Pi_+$ and $\Pi_{-}$ are the two supporting hyperplanes to the sublevel set $ \{u\leq l\}$ normal to some fixed direction, along with the sharp growth estimate
\begin{align}
	     \sup \limits_{\{u\leq l\} }\{ l-u \} &\geq c_2(\alpha_1, \alpha_2,n)\Leb{\{u\leq l\}}^{2/n}.\label{eqn: euclidean sharp growth}
    \end{align}

    To give a brief idea of Caffarelli's proof, the above bounds show that if $l(x)$ is a supporting function to $u$ at $x_0$ and the convex sets $\{ u \leq l+r^2\}$ are comparable to a ball of radius $r$ for small $r$, then $u$ grows like a parabola away from the linear function $l(x)$, i.e. $u$ is strictly convex and $C^{1,1}$ at $x=x_0$. If one prevent the sets $\{u\leq l+r^2\}$ from having very high eccentricity, then one can show $u$ is strictly convex and differentiable, this is achieved in \cite{C90} by using the same bounds~\eqref{eqn: euclidean aleksandrov} and~\eqref{eqn: euclidean sharp growth} to show first that any interior singularity (i.e. lack of strict convexity) must propagate to the boundaries.

    The bounds~\eqref{eqn: euclidean aleksandrov} and~\eqref{eqn: euclidean sharp growth} were not originally presented as above  (see  \cite{C90}, Lemmas 1 and 2), rather, they were explicitly stated and used only under the normalization condition
    \begin{equation*}
         B_1(0) \subset \{ u\leq l \} \subset B_n(0)
    \end{equation*}
    where $n$ is the dimension. Then, by virtue of the affine invariance of the Monge-Amp\'ere equation one can obtain the strict convexity and differentiability of $u$ by constantly re-normalizing the sublevel sets to the above situation and applying the pointwise bounds~\eqref{eqn: euclidean aleksandrov} and~\eqref{eqn: euclidean sharp growth} at all scales. Of course, equation~\eqref{eqn: main classical} does not enjoy affine invariance in general, which makes this approach hard to follow in general (however, see \cite{FKM09} where this procedure is effectively applied to NNCC costs). The point of view we take here is to obtain the analogues to estimates~\eqref{eqn: euclidean aleksandrov} and~\eqref{eqn: euclidean sharp growth}  for general costs $c$ without any constraint on the eccentricity of $\{u \leq l\}$, this is where we use~\eqref{QConv} extensively.

    \subsection{The contributions of this paper}\label{subsection: differences}
    In this section, we would like to highlight the three main contributions of this paper. Perhaps the most interesting one is the realization that \eqref{QConv}, a condition representing ``quantitative'' quasiconvexity of a certain collection of functions, is a sufficient condition for the regularity of the optimal transport map. When the cost function $C^4$, the condition~\eqref{A3w} of Ma-Trudinger-Wang is known to be  necessary  for regularity, a fact discovered by Loeper in~\cite{L09}. On the other hand, our condition~\eqref{QConv} only requires the concept of $c$-segment, and the overall proof presented here requires only three derivatives of the cost. Additionally, we show that when the cost function is $C^4$,~\eqref{QConv} is equivalent to~\eqref{A3w} (via Loeper's characterization,~\eqref{eqn: gLp}, see Subsection~\ref{subsection: MTW} for more details). In particular, our approach shows that the property of a cost function being regular is preserved under $C^3$ limits. In this regard, it is worth recalling that Villani has already showed that for the geodesic distance squared, condition \eqref{A3w} is stable under Gromov-Hausdorff limits \cite{Vil08}. 

    Another one of our main contributions is a different geometric condition on the domains $\outerdom$ and $\innertarget$. In~\cite{FKM11}, Figalli, Kim, and, McCann require that the outer domain of definition, $\outerdom$, and the support of the target measure, $\innertarget$, be \emph{strongly} $c$-convex with respect to each other. We are able to show regularity under the condition that $\outerdom$ and $\innertarget$ be only $c$-convex with respect to each other, but at the additional cost of requiring that the supports of the two measures, $\innerdom$ and $\innertarget$, be compactly contained in outer domains $\outerdom$ and $\outertarget$ where the cost function is defined. Of course, we require that $\outerdom$ and $\outertarget$ must be $c$-convex with respect to each other. This restriction is reminiscent of the situation for the Euclidean cost, the boundaries of $\outerdom$ and $\outertarget$ can be thought of as ``infinity'' in the case of Euclidean cost, hence the requirement for compact containment in this case is akin to that of boundedness of the domains in the Euclidean case.
    
    Finally, we point out that our estimates Lemma~\ref{lem: sharp growth estimate} and Theorem~\ref{thm: aleksandrov} differ slightly from those in~\cite{FKM11}. Our proof of Lemma~\ref{lem: sharp growth estimate} is of a different nature. On the other hand, our version of Theorem~\ref{thm: aleksandrov} does not require the sublevel set in question to be trapped inside a small ball (see~\cite[Theorems 6.2 and 6.11]{FKM11} for comparison). A key ingredient in the second proof is the classical result in convex geometry, the Bishop-Phelps Theorem (Theorem~\ref{thm: Bishop-Phelps}).


    \subsection{Organization of paper}\label{subsection: organization}
    The remainder of the paper is organized as follows: in Section~\ref{section: basic definitions} we review the basic definitions pertaining to $c$-convex geometry in detail and define most of the notation that will be used later on. In Subsection~\ref{subsection: setup} we describe the four hypotheses on the cost $c$, and in Subsection~\ref{subsection: MTW} we show that the familiar condition~\eqref{A3w} implies our new condition~\eqref{QConv}. In Section~\ref{section: polar duals} we use~\eqref{QConv} to develop some more tools of $c$-convex geometry, at the end of the section we use the tools just developed to prove an analogue of the estimate~\eqref{eqn: euclidean sharp growth}. In Section \ref{section: aleksandrov estimate} we also extend Aleksandrov's theorem (the analogue of the estimate~\eqref{eqn: euclidean aleksandrov}), our proof is somewhat different from the one of a similar estimate in in~\cite{FKM11}, which allows us to prove the bound without assuming that the underlying domain has small diameter. In Section~\ref{section: localization} we use the generalized bounds to reproduce Caffarelli's localization theory for our costs, ruling out extremal points of any contact sets in the interior of $\outerdom$. Finally, in Section~\ref{section: strict convexity} we show that a solution must indeed be strictly $c$-convex, thus proving Theorem~\ref{thm: main}.


\bigskip

\bigskip

\section{Elements of $c$-convex geometry}\label{section: basic definitions}

\subsection{Set up and notation}\label{subsection: setup} We describe all the objects under consideration, borrowing from the notation in~\cite{KM10}, we consider two sets $\outerdom$ and $\outertarget$ which are bounded subdomains of $n$-dimensional Riemannian manifolds $(M, \g{})$ and $(\Mbar, \gbar{})$, respectively. Elements of $\outerdom$ will be denoted by $x$ and those of $\outertarget$ by $\xbar$.

Together with these sets we have a \emph{cost function} $c(x, \xbar)$, a function $c: \outerdom \times \outertarget \to \mathbb{R}$. We will assume the cost function is $C^3$ in the sense that, any mixed derivatives which are of order $2$ in one variable and order $1$ in the other are continuous in $\outerdom^{\cl} \times \outertarget^{\cl}$. By an abuse of language, we will say the cost is $C^3$ to denote this. Note that by the canonical splitting
\begin{align*}
\cotansp{(x, \xbar)}{(M \times \Mbar)}&=\cotanspM{x} \oplus \cotanspMbar{\xbar},
\end{align*}
we can write the canonical splitting of the differential of $c$ as
\begin{equation*}
dc=Dc\oplus\Dbar c.
\end{equation*} 
Furthermore, the analogous splitting of $\tansp{(x, \xbar)}{(M \times \Mbar)}$ guarantees that the linear operator $-\Dbar D c(x, \xbar): \tanspMbar{\xbar} \to \tansp{-Dc(x, \xbar)}{\left(\cotanspM{x}\right)}\cong \cotanspM{x}$ and its adjoint $-D \Dbar c(x, \xbar)$ are unambiguously defined for each $(x, \xbar)\in\outerdom^{\cl}\times \outertarget^{\cl}$. First we define a bit of notation that will be used heavily.\\

    \begin{DEF}\label{def: p notation}
         For any pair of points $(x, \xbar)\in\outerdom\times\outertarget$ we define
         \begin{align*}
              p_{(x, \xbar)}:&=-\Dbar c(x, \xbar)\\
              \pbar_{(x, \xbar)}:&=-Dc(x, \xbar).
         \end{align*}
         Moreover, if $A \subset \outerdom$ (resp $\bar A \subset \outertarget$) and $\xbar \in \outertarget$ (resp. $x \in \outerdom$), we will write
         \begin{align*}
         	  \coord{A}{\xbar} &:= -\Dbar c(A,\xbar)\\
              (resp. \coord{\bar A}{x} & := -Dc(x,\bar A) \;).
         \end{align*}
    \end{DEF}
We will often use $p$ for elements of $\outerdomcoord{\xbar}$ and $\pbar$ for elements of $\outertargetcoord{x}$.

We now list our hypotheses on $c$, $\outerdom$, $\outertarget$, and $\innertarget$. \eqref{Twist},~\eqref{Nondeg}, and~\eqref{Convdm} are standard in the literature, but~\eqref{QConv} is introduced here for the first time. Later we explain how~\eqref{QConv} follows from condition~\eqref{A3w} of Ma, Trudinger, and Wang (see~\cite{MTW05} and~\cite{TW09}) when $c$ is $C^4$.\\

\noindent \textbf{\underline{Twist.}} The mappings
\begin{align}
\xbar & \mapsto -Dc(x_0, \xbar),\ \xbar\in\outertarget \notag\\
x & \mapsto -\Dbar c(x, \xbar_0),\ x\in\outerdom \tag{Twist}\label{Twist}
\end{align}
are injective for each $x_0 \in\outerdom$ and $\xbar_0 \in \outertarget$.

\begin{DEF}\label{def: exp^c}
For $x_0 \in \outerdom$ and $\pbar\in\outertargetcoord{x_0}$, (resp. $\xbar_0 \in \outertarget$ and $p\in\outerdomcoord{\xbar_0}$) we write $\cExp{x_0}{\pbar}\in\outertarget$ (resp. $\cExp{\xbar_0}{p}\in\outerdom$) for the unique element such that 
\begin{equation*}
-Dc(x_0, \cExp{x_0}{\pbar})=\pbar \;\;(\text{resp.} -\Dbar c(\cExp{\xbar_0}{p}, \xbar_0)=p)\;.
\end{equation*}
\end{DEF}

\noindent \textbf{\underline{Nondegeneracy.}} For any pair $(x, \xbar)\in\outerdom\times\outertarget$, the following linear mappings are invertible
\begin{align}
-\Dbar D c(x, \xbar)&: \tanspMbar{\xbar} \to \tansp{-Dc(x, \xbar)}{\left(\cotanspM{x}\right)}\cong \cotanspM{x}\notag\\
\DDbar{x}{\xbar}&: \tanspM{x} \to \tansp{-\Dbar c(x, \xbar)}{\left(\cotanspMbar{\xbar}\right)}\cong \cotanspMbar{\xbar}\tag{Nondeg}\label{Nondeg}.
\end{align}
\begin{DEF}\label{def: DDbar}
For each $(x, \xbar)\in \outerdom^{\cl}\times\outertarget^{\cl}$, for brevity of notation we will denote the linear transformation
    \begin{align*}
         \transform{x}{\xbar}:= \DbarDinv{x}{\xbar}: \cotanspM{x}\to \tanspMbar{\xbar}
    \end{align*}
    and its adjoint
        \begin{align*}
         \transformadj{x}{\xbar}:= \DDbarinv{x}{\xbar}: \cotanspMbar{\xbar}\to \tanspM{x}.
    \end{align*}
\end{DEF}
\begin{DEF}\label{def: inner product}
For any fixed $x\in \outerdom^{\cl}$, $\xbar\in \outertarget^{\cl}$ we define the inner product 
\begin{equation*}
\innerg[x, \xbar]{v}{w}:=\innergbar[\xbar]{\transform{x}{\xbar}v}{\transform{x}{\xbar}w}
\end{equation*}
 for $v$,  $w\in \cotanspM{x}$, here $\gbar{}$ is the Riemannian metric on $\Mbar$ (note that this defines an inner product by~\eqref{Nondeg}). 
\end{DEF}
\begin{rem}\label{rem: differential of c-exp}
By~\eqref{Nondeg}, the linear map $\transform{x}{\xbar}$ is an isomorphism of $\cotanspM{x}$ with $\tanspMbar{\xbar}$ for each $(x, \xbar)\in\outerdom^{\cl}\times \outertarget^{\cl}$. Additionally, for a fixed $\xbar\in \outertarget$, from the relation $-\Dbar c(\cExp{\xbar}{p}, \xbar)=p$, we deduce that at any $p\in\outerdomcoord{\xbar}$, the differential of the $c$-exponential map is given by
\begin{equation*}
D_p \cExp{\xbar}{\cdot}=\transformadj{\cExp{\xbar}{p}}{\xbar}.
\end{equation*}
In particular, since this map is bijective by~\eqref{Nondeg}, if 
\begin{equation*}
q\in \partial (u\circ \cExp{\xbar}{\cdot})(p)\subset \cotansp{p}{(\cotanspM{\xbar})}
\end{equation*}
we can see that 
\begin{equation*}
\left([D_p exp^c_{\xbar}]^{-1}\right)^*(q)=\transforminv{\cExp{\xbar}{p}}{\xbar} q\in\partial u(\cExp{\xbar}{p}),
\end{equation*}
where $L^*: W ^*\to V^*$ denotes the transpose of a linear transformation $L: V\to W$ between two vector spaces $V$ and $W$.
\end{rem}
\begin{rem}\label{rem: universal constants}
Again by~\eqref{Nondeg}, the quantities $\lVert{\transformpm{x}{\xbar}}\rVert$ and $\lvert\det{\left(\transform{x}{\xbar}\right)}\rvert$ are uniformly bounded away from $0$ and infinity on $\outerdom^{\cl}\times \outertarget^{\cl}$. Throughout the paper, whenever we refer to a constant as \emph{universal}, this will denote that the constant depends only on the domains $\outerdom$, $\outertarget$, $\innerdom$, and $\innertarget$, the dimension $n$, the constants $\alpha_1$ and $\alpha_2$ in~\eqref{eqn: main}, and the following finite, nonzero quantities: $\sup_{\outerdom^{\cl}\times \outertarget^{\cl}}{\lvert\det{\left(\transform{x}{\xbar}\right)}\rvert^{\pm 1}}$, $\sup_{\outerdom^{\cl}\times \outertarget^{\cl}}{\lVert{\transformpm{x}{\xbar}}\rVert}$, $\lVert c\rVert_{C^3(\outerdom^{\cl}\times \outertarget^{\cl})}$.
\end{rem}

The next hypothesis is a geometric condition on the domains $\outerdom$, $\innertarget$.
\begin{DEF}\label{def: c-segment}
Given points $x_0$, $x_1\in\outerdom$ and $\xbar\in\outertarget$, we define the \emph{(canonical parametrization of the) $c$-segment with respect to $\xbar$ from $x_0$ and $x_1$} by the curve
\begin{equation*}
\cExp{\xbar}{(1-t)p_{(x_0, \xbar)}+tp_{(x_1, \xbar)}},\;\;\;t\in[0, 1].
\end{equation*}
In an analogous manner we define the \emph{(canonical parametrization of the) $c$-segment with respect to $x$ from $\xbar_0$ to $\xbar_1$}, given points $\xbar_0$, $\xbar_1\in\outertarget$ and $x\in\outerdom$. We will write $[x_0, x_1]_{\xbar}$ ($[\xbar_0, \xbar_1]_{x}$) to refer to the actual images of these curves.
\end{DEF}

\begin{DEF}\label{def: c-convex sets}
Given a point $x\in\outerdom$, we will say that $\arbitrarybar\subset\outertarget$ is \emph{$c$-convex with respect to $x$} if for any two points $\xbar_0$ and $\xbar_1\in\arbitrarybar$, the $c$-segment $[\xbar_0, \xbar_1]_{x}$ is entirely contained in $\arbitrarybar$. We will say that $\arbitrarybar$ is \emph{$c$-convex with respect to $\arbitrary\subset\outerdom$} if it is $c$-convex with respect to every $x\in\arbitrary$.

We also define $\arbitrary$ being $c$-\emph{convex with respect to $\xbar\in\outertarget$} or \emph{with respect to $\arbitrarybar\subset\outertarget$} in a similar way. Finally, we define when $\arbitrary\subset\outerdom$ and $\arbitrarybar\subset\outertarget$ are \emph{$c$-convex with respect to each other} in the obvious manner.
\end{DEF}

\noindent \textbf{\underline{$c$-convexity of domains.}} The domains $\outerdom$ and $\outertarget$ are $c$-convex with respect to each other, i.e. 
\begin{equation}\label{Convdm}\tag{DomConv}
x_0,x_1 \in \outerdom \Rightarrow [x_0,x_1]_{\xbar} \subset \outerdom, \qquad\forall\; x_0,x_1 \in \outerdom,\; \xbar\in\innertarget
\end{equation}
(and vice versa), $\innertarget$ is $c$-convex with respect to $\outerdom$, and $\innertarget$ is compactly contained in $\outertarget$. By this we mean $\innertarget\subset \outertarget^{\interior}$.

Conditions~\eqref{Twist},~\eqref{Nondeg}, and~\eqref{Convdm} are well known in the literature of optimal transport (see, for example,~\cite{MTW05}). The following is an unfamiliar condition, but is actually equivalent to the key condition~\eqref{A3w} first introduced by Ma, Trudinger, and Wang when the cost function is $C^4$ (see Subsection~\ref{subsection: MTW} below). We note that our condition~\eqref{QConv} only requires the notion of $c$-exponential map (hence $C^1$ of $c$ and~\eqref{Twist}) to formulate, however the remainder of our results require the aforementioned $C^3$ regularity.
\\

\noindent \textbf{\underline{Quantitative quasiconvexity.}} There is a universal constant $M\geq 1$ such that for any points $x$, $x_0$, $x_1 \in \outerdom$ and $\xbar$, $\xbar_0$, $\xbar_1\in\outertarget$, 
\begin{align}
&-c(x,\xbar(t))+c(x,\xbar_0)-(-c(x_0,\xbar(t))+c(x_0,\xbar_0)) \notag\\
&\qquad \leq Mt\;(-c(x,\xbar_1)+c(x,\xbar_0)-(-c(x_0,\xbar_1)+c(x_0,\xbar_0)))_{_+},\qquad\forall\;t\in[0,1]	\notag\\
&-c(x(s),\xbar)+c(x_0,\xbar)-(-c(x(s),\xbar_0)+c(x_0,\xbar_0)) \notag\\
&\qquad\leq Ms\;(-c(x_1,\xbar)+c(x_0,\xbar)-(-c(x_1,\xbar_0)+c(x_0,\xbar_0)))_{_+},\qquad\forall\; s\in [0,1]\label{QConv}\tag{QQConv}
\end{align}
where $\xbar(t)$ is the $c$-segment with respect to $x_0$ from $\xbar_0$ to $\xbar_1$, and $x(s)$ is the $c$-segment with respect to $\xbar_0$ from $x_0$ to $x_1$. 
\begin{rem}\label{rem: H4 remark}
Note that if $\mountain_t$ is any family of $c$-function whose foci are given by $\xbar(t)$, the first inequality above reads
\begin{align*}
\mountain_t(x)-\mountain_0(x)\leq Mt(\mountain_1(x)-\mountain_0(x))_{+}.
\end{align*}
Similarly, if $\mountain$ and $\mountain_0$ are arbitrary $c$-functions with $\xbar$ and $\xbar_0$ as respective foci, the second inequality can be written
\begin{align*}
 \mountain(x(s))-\mountain(x_0)-(\mountain_0(x(s))-\mountain_0(x_0))\leq Mt(\mountain(x_1)-\mountain(x_0)-(\mountain_0(x_1)-\mountain_0(x_0)))_{+}.
\end{align*}
\end{rem}
Finally, we make some other notational conventions used throughout the paper: the symbol $\inner{\cdot}{\cdot}$ will denote the evaluation map between an element of a vector space and an element of its dual space. Also, $\lvert\cdot\rvert_{\calL}$ will denote either the Riemannian volume on $(M, \g{})$ or $(\Mbar, \gbar{})$, the associated Riemannian volumes on a tangent or cotangent space, or the volume induced by the inner product $\g{x, \xbar}$ on a tangent or cotangent space of $M$ (which is comparable to the associated Riemannian volume by a factor depending only on $c$). Finally, $\gnorm[x, \xbar]{\cdot}$, $\gnorm[x]{\cdot}$, and $\gbarnorm[\xbar]{\cdot}$ will denote the length of tangent or cotangent vectors, with respect to the inner products $\g{x, \xbar}$, $\g{x}$, and $\gbar{\xbar}$. 
\subsection{$c$-convex functions} We now review the concept of $c$-convexity for functions.
\begin{DEF}\label{def: c-functions}
A \emph{$c$-function} with \emph{focus $\xbar_0 \in \outertarget$} is a scalar function on $\outerdom$ of the form
\begin{equation*}
\mountain(x):=-c(x, \xbar_0)+\lambda_0
\end{equation*}
for some $\lambda_0\in\real$. A function $u$ is \emph{$c$-convex} if for any $x_0$ there is a $c$-function $\mountain$ such that
\begin{align*}
u(x_0)&=\mountain(x_0),\\
u(x)&\geq \mountain(x),\qquad\forall\; x\in\outerdom,
\end{align*}
and we say such an $\mountain$ is \emph{supporting to $u$ at $x_0$}.

If the second inequality above is strict for all $x\neq x_0$, we say that $u$ is \emph{strictly $c$-convex at $x_0$}.
\end{DEF}
\begin{DEF}\label{def: c-subdifferentials}
If $u$ is a $c$-convex function and $x\in\outerdom$, we define the \emph{$c$-subdifferential of $u$ at $x$} as the set-valued mapping given by
\begin{equation*}
\partial_cu(x):=\{\xbar\in\outertarget\mid \text{there exists a }c\text{-function with focus }\xbar\text{ that is supporting to }u\text{ at }x\}.
\end{equation*}
Also, given a Borel measurable set $\arbitrary\subset \outerdom$, we define
\begin{equation*}
\partial_cu\left(\arbitrary\right):=\union_{x\in \arbitrary}\partial_cu(x).
\end{equation*}
Finally, we define the \emph{subdifferential of $u$ at $x$} as the set-valued mapping given by
\begin{equation*}
\partial u(x):=\{p\in \cotanspM{x} \mid u(exp_x(v))\geq u(x)+\langle p, v\rangle +o(\lvert v\rvert),\ v\in\tanspM{x}\}
\end{equation*}
where here $exp_x$ is the Riemannian exponential map.
\end{DEF}
With these definitions in hand, we may define some weak notions of solutions to the equation~\eqref{eqn: main classical}.

\begin{DEF}\label{def: Brenier solution}
A $c$-convex function $u$ is a \emph{Brenier solution to~\eqref{OT problem}}, if 
\begin{equation*}
(\partial_cu)_\#\mu=\mubar.
\end{equation*}
\end{DEF}
\begin{rem}\label{rem: brenier solutions a.e.}
Given a $c$-convex function $u$ where $c\in C^1(\outerdom^{\cl}\times\outertarget^{cl})$, it is known that $u$ must be Lipschitz, and hence differentiable $\dVol_M$ almost everywhere (see~\cite{Vil09}). Thus, since $\mu$ is absolutely continuous with respect to $\dVol_M$, it is easy to see that $\partial_cu(x)$ is single valued for $dVol_M$ almost every $x$. Hence, we may reformulate a Brenier solution as a $c$-convex function $u$ such that for any continuous function $\eta \in C(\outertarget)$,
\begin{equation*}
\int_{\outertarget}\eta(\xbar)d \mubar(\xbar) = \int_{\outerdom}\eta(\partial_cu(x))d\mu(x).
\end{equation*}
In particular, under the assumptions on $\mu$ and $\mubar$ in this paper, a Brenier solution will satisfy for any continuous function $\eta \in C(\outertarget)$,
\begin{equation*}
\alpha_1 \int_{\innerdom}\eta(\partial_cu(x))\dVol_M(x) \leq \int_{\innertarget}\eta(\xbar)\dVol_{\Mbar}( \xbar) \leq \alpha_2 \int_{\innerdom}\eta(\partial_cu(x))\dVol_M(x).
\end{equation*}
\end{rem}
As mentioned in Subsection~\ref{subsection: history}, the analysis of the optimal transport map is often through the study of a certain scalar PDE, which becomes the Monge-Amp\`ere equation when the cost is the Euclidean inner product. In this special case, Aleksandrov introduced a notion of solution using the subdifferential of a convex potential function, which in the case of a general cost function $c$ is defined as follows.

\begin{DEF}\label{def: Aleksandrov solution}
A $c$-convex function $u$ is \emph{an Aleksandrov solution of~\eqref{OT problem}} if
\begin{equation}\label{eqn: main}
\alpha_1\Leb{\arbitrary\cap \innerdom}  \leq \left | \partial_c u(\arbitrary)\right |\leq \alpha_2 \Leb{\arbitrary\cap \innerdom}	
\end{equation}
for any Borel measurable $\arbitrary\subset \outerdom$, and $\partial_cu(\outerdom)\subset\innertarget$.
\end{DEF}
By results contained in~\cite{MTW05}, if the support of the target measure $\mubar$ is $c$-convex with respect to the support of the initial measure $\mu$, the two notions of solution coincide.
\begin{thm}[{\cite[Lemma 5.1 and Section 3]{MTW05}}]\label{thm: Brenier implies Aleksandrov}
If $\innertarget$ is $c$-convex with respect to $\outerdom$, then a Brenier solution to~\eqref{OT problem} is also an Aleksandrov solution to~\eqref{OT problem}.
\end{thm}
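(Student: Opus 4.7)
The plan is to verify the two defining properties of an Aleksandrov solution: the containment $\partial_cu(\outerdom) \subset \innertarget$, and the two-sided Lebesgue estimate $\alpha_1 \Leb{A \cap \innerdom} \leq \Leb{\partial_cu(A)} \leq \alpha_2 \Leb{A \cap \innerdom}$ for every Borel $A \subset \outerdom$.

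First I would address containment. Since $c \in C^1$ the $c$-convex function $u$ is locally Lipschitz on $\outerdom$ and hence differentiable $\dVol_M$-almost everywhere. At any differentiability point $x \in \innerdom$, the Twist condition forces $\partial_cu(x) = \{T(x)\}$ with $T(x) = \cExp{x}{Du(x)}$, and the Brenier push-forward $(\partial_cu)_\#\mu = \mubar$ together with $\spt\mubar \subset \innertarget$ gives $T(x) \in \innertarget$ for $\mu$-a.e.\ $x \in \innerdom$. To upgrade this to every $x \in \outerdom$ I would observe that $\coord{\partial_cu(x)}{x} \subset \cotanspM{x}$ is closed and convex (a standard consequence of the definition of $\partial_cu$, as pointwise suprema of $c$-functions remain $c$-supported), while $\coord{\innertarget}{x}$ is convex by the hypothesized $c$-convexity of $\innertarget$ with respect to $\outerdom$. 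The general fact that every extreme point of $\coord{\partial_cu(x)}{x}$ arises as a limit of $\coord{T(x_n)}{x_n}$ for differentiability points $x_n \to x$ places such extreme points inside the closed set $\coord{\innertarget}{x}$, and convexity then propagates the inclusion to all of $\coord{\partial_cu(x)}{x}$.

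Next I would handle the measure estimate. Twist combined with the existence of a Kantorovich dual potential $v$ on $\innertarget$ yields essential injectivity of $T$: the reverse optimal map $\bar{T}(\xbar) = \cExp{\xbar}{Dv(\xbar)}$ satisfies $T \circ \bar{T} = \mathrm{id}$ $\mubar$-a.e., so $(\partial_cu)^{-1}(\partial_cu(A)) = A$ modulo $\mu$-null sets for any Borel $A \subset \outerdom$. Applying Remark~\ref{rem: brenier solutions a.e.} with $\eta = \chi_{\partial_cu(A)}$ gives
\begin{equation*}
\alpha_1 \Leb{A \cap \innerdom} \leq \Leb{\partial_cu(A) \cap \innertarget} \leq \alpha_2 \Leb{A \cap \innerdom},
\end{equation*}
and since step one provides $\partial_cu(A) \subset \innertarget$, the intersection with $\innertarget$ is redundant, delivering the Aleksandrov bound.

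The main obstacle is completing step one at points $x \in \outerdom \setminus \overline{\innerdom}$, where no sequence of differentiability points from $\innerdom$ need converge to $x$, so the extreme-point approximation does not immediately apply. Here I would fall back on the canonical Kantorovich dual representation $u(x) = \sup_{\xbar \in \innertarget}\bigl(-c(x,\xbar) - v(\xbar)\bigr)$, to which any Brenier solution reduces modulo an additive constant; by construction every supporting $c$-function has focus in $\innertarget$, forcing $\partial_cu(x) \subset \innertarget$ for \emph{all} $x \in \outerdom$. The $c$-convexity of $\innertarget$ with respect to $\outerdom$ then ensures this canonical representative is compatible with the convex-geometric framework used in the extension argument above.
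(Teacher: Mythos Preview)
The paper does not prove this theorem; it is quoted from \cite[Lemma~5.1 and Section~3]{MTW05} with no argument reproduced here, so there is nothing in the present paper to compare your proposal against.

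On the substance of your sketch: the two-step structure (containment of $\partial_cu$ in $\innertarget$, then the measure bound via essential injectivity of $T$) is the standard route and is essentially what \cite{MTW05} does. The obstacle you isolate for $x\in\outerdom\setminus\overline{\innerdom}$ is genuine. However, your proposed remedy has a gap. You assert that ``any Brenier solution reduces modulo an additive constant'' to the canonical Kantorovich potential $\sup_{\xbar\in\innertarget}(-c(\cdot,\xbar)-v(\xbar))$, but Definition~\ref{def: Brenier solution} only constrains $u$ through the pushforward condition $(\partial_cu)_\#\mu=\mubar$, which sees $u$ only $\mu$-a.e., i.e.\ only on $\innerdom$. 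Uniqueness of the optimal map gives $Du=D\phi$ (hence $u=\phi+\mathrm{const}$) on $\innerdom$, where $\phi$ is the Kantorovich potential, but two distinct $c$-convex functions on $\outerdom$ can certainly agree on $\innerdom$ while differing outside it. So the Kantorovich representation does not automatically transfer to $u$ on all of $\outerdom$, and the containment $\partial_cu(x)\subset\innertarget$ for $x\notin\overline{\innerdom}$ is not yet justified. One way to close this is to replace $u$ by the $c$-convex envelope $\tilde u:=(u^{c})^{\bar c}$ with the transforms taken over $\innertarget$ and $\outerdom$ respectively; then $\tilde u=u$ on $\innerdom$, $\tilde u$ is still a Brenier solution, and $\partial_c\tilde u(\outerdom)\subset\innertarget$ by construction. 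The downstream applications in the paper only ever use $u$ on $\innerdom$, so this replacement is harmless.
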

\subsection{The Ma-Trudinger-Wang Tensor, Loeper's Theorem, and~\eqref{QConv}}\label{subsection: MTW}

Suppose that $c$ is $C^4$ in the sense that mixed derivatives of order $2$ in both variables simultaneously are continuous, and fix local coordinate systems on $M$, $\Mbar$ near $(x, \xbar)$.  Then for $V$, $W\in \tanspM{x}$ and $\eta$, $\zeta\in \cotanspM{x}$, define 
\begin{equation*}
\MTW_{(x, \xbar)}(V, W, \eta, \zeta):=\MTWcoord{i}{j}{k}{l}(x, \xbar)V^iW^j\eta_k\zeta_l.
\end{equation*}
Here regular indices denote derivatives of $c$ with respect to the first variable, while indices with a bar above denote derivatives with respect to the second derivative, and a pair of raised indices denotes the matrix inverse.

\begin{DEF}\label{def: A3w}
We will say that a cost $c$ satisfies condition~\eqref{A3w} if for all $x\in\outerdom$, $\xbar\in\outertarget$, and any $V\in \tanspM{x}$ and $\eta\in \cotanspM{x}$ such that $\inner{\eta}{V}=0$, 
\begin{equation}\label{A3w}\tag{A3w}
\MTW_{(x, \xbar)}(V, V, \eta, \eta)\geq 0.
\end{equation}
\end{DEF}
\begin{rem}\label{rem: other conditions}
There are two conditions related to~\eqref{A3w}, each slightly stronger. 
First, we say that a cost $c$ satisfies condition~\eqref{A3s} if there exists a constant $\delta_0>0$ such that, for all $x\in\outerdom$, $\xbar\in\outertarget$, and any $V\in \tanspM{x}$ and $\eta\in \cotanspM{x}$ such that $\inner{\eta}{V}=0$, 
\begin{equation}\label{A3s}\tag{A3s}
\MTW_{(x, \xbar)}(V, V, \eta, \eta)\geq \delta_0\gnorm[x]{V}^2\gnorm[x]{\eta}^2.
\end{equation}

Next, we say that a cost $c$ satisfies condition~\eqref{NNCC} if there exists a constant $\delta_0>0$ such that, for all $x\in\outerdom$, $\xbar\in\outertarget$, and any $V\in \tanspM{x}$ and $\eta\in \cotanspM{x}$, 
\begin{equation}\label{NNCC}\tag{NNCC}
\MTW_{(x, \xbar)}(V, V, \eta, \eta)\geq \delta_0\gnorm[x]{V}^2\gnorm[x]{\eta}^2.
\end{equation}
The difference with~\eqref{A3w} is the removal of the conditions that $\inner{\eta}{V}=0$. Also,~\eqref{NNCC} stands for ``non-negative cross curvature'' (see~\cite{KM10}).
\end{rem}
%
This condition implies an important geometric condition first discovered by Loeper (see~\cite[Theorem 3.2]{L09} and~\cite[Theorem 4.10]{KM10}). Within this paper, we refer to this property as~\eqref{eqn: gLp}, the ``geometric Loeper property.'' We note that this property is also known in the literature as ``DASM'' (double above sliding mountain) or the ``Loeper's Maximum Principle.''
\begin{thm}[Loeper's Theorem]\label{thm: Loeper's lemma}
Let $c$, $\outerdom$, and $\outertarget$ satisfy~\eqref{Twist},~\eqref{Nondeg},~\eqref{Convdm}, and~\eqref{A3w}. Fix any points $\xbar_0$, $\xbar_1\in\outertarget$ and $x_0\in\outerdom$. Then, if $\xbar(t)$ is the $c$-segment with respect to $x_0$ from $\xbar_0$ to $\xbar_1$, then for all $t\in[0, 1]$ and $x\in\outerdom$ we have
\begin{equation}\tag{gLp}\label{eqn: gLp}
-c(x, \xbar(t))+c(x_0, \xbar(t))\leq \max\left\{-c(x, \xbar_0)+c(x_0, \xbar_0),\;-c(x, \xbar_1)+c(x_0, \xbar_1)\right\}.
\end{equation}
An analogous statement holds upon reversing the roles of the domains $\outerdom$ and $\outertarget$. Whenever a cost function $c$ satisfies the conclusion of Theorem~\ref{thm: Loeper's lemma} above, we will say by an abuse of language that \emph{$c$ satisfies~\eqref{eqn: gLp}}.
\end{thm}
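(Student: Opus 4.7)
The plan is to fix $x,x_0\in\outerdom$ and consider the scalar function
\begin{equation*}
f(t):=-c(x,\xbar(t))+c(x_0,\xbar(t)),\qquad t\in[0,1],
\end{equation*}
along the $c$-segment $\xbar(t)$; the desired conclusion~\eqref{eqn: gLp} amounts to the quasiconvexity $f(t)\leq\max\{f(0),f(1)\}$ for all $t\in[0,1]$. Arguing by contradiction, I suppose $f$ attains its maximum at some $t_*\in(0,1)$ with $f(t_*)>\max\{f(0),f(1)\}$, so that $f'(t_*)=0$ and $f''(t_*)\leq 0$; the degenerate cases $\xbar_0=\xbar_1$ and $x=x_0$ may be excluded at the outset.

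The heart of the proof is a direct computation of $f''$ in terms of the Ma--Trudinger--Wang tensor. Since $\xbar(t)=\cExp{x_0}{p(t)}$ with $p(t)=(1-t)p_0+tp_1\in\cotanspM{x_0}$, Remark~\ref{rem: differential of c-exp} gives $\dot{\xbar}(t)=\transformadj{x_0}{\xbar(t)}(p_1-p_0)$, and affineness of $p(\cdot)$ forces $\ddot{p}\equiv 0$. Expanding
\begin{equation*}
f'(t)=\langle\bar Dc(x_0,\xbar(t))-\bar Dc(x,\xbar(t)),\dot{\xbar}(t)\rangle,
\end{equation*}
differentiating once more, and using $\ddot{p}\equiv 0$ to solve for $\ddot{\xbar}$, coordinate bookkeeping yields an identity of the form
\begin{equation*}
f''(t)=\MTW_{(x,\xbar(t))}\bigl(V(t),V(t),\eta(t),\eta(t)\bigr)+\Phi(t)\,f'(t),
\end{equation*}
where $V(t)\in\tanspM{x}$ and $\eta(t)\in\cotanspM{x}$ are the images through $\transform{x}{\xbar(t)}$ of $\dot{\xbar}(t)$ and of the gradient discrepancy $\bar Dc(x_0,\xbar(t))-\bar Dc(x,\xbar(t))$ respectively, and $\Phi(t)$ is a scalar continuous in the data. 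Crucially, $\langle\eta(t),V(t)\rangle$ is itself a scalar multiple of $f'(t)$, so the critical point condition $f'(t_*)=0$ automatically yields the orthogonality required to invoke~\eqref{A3w}, from which $f''(t_*)\geq 0$ follows.

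To upgrade this to a genuine contradiction---since~\eqref{A3w} only gives a weak inequality, leaving room for $f''(t_*)=0$---I would use a standard approximation argument: perturb $c$ in $C^4$ to a nearby cost $c_\epsilon$ satisfying the strict condition~\eqref{A3s} of Remark~\ref{rem: other conditions} (which can be arranged by adding a small cross-convex perturbation), apply the argument above to $c_\epsilon$ to obtain the strict inequality $f_\epsilon''(t_*^\epsilon)>0$ and hence a contradiction with interior maximality, and then pass $\epsilon\to 0$ using that~\eqref{eqn: gLp} is a pointwise inequality stable under $C^2$ convergence of the cost. The statement obtained by interchanging the roles of $\outerdom$ and $\outertarget$ follows by applying the same argument to the transposed cost $(\xbar,x)\mapsto c(x,\xbar)$, under which~\eqref{A3w} is invariant. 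I expect the principal obstacle to be the coordinate computation cleanly identifying the MTW tensor inside the expression for $f''(t)$ and verifying that $\langle\eta(t),V(t)\rangle$ is proportional to $f'(t)$; this is essentially Loeper's original observation in~\cite{L09} and is precisely what forces the $C^4$ hypothesis on $c$ in the classical formulation.
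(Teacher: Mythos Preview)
The paper does not give its own proof of Theorem~\ref{thm: Loeper's lemma}; it is quoted as an external result from \cite[Theorem 3.2]{L09} and \cite[Theorem 4.10]{KM10}. Your outline does follow the spirit of Loeper's original argument, and the key observation---that at a critical point of $f$ the orthogonality $\langle\eta,V\rangle=0$ is automatic, so \eqref{A3w} applies---is correct and is precisely the point.

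There is, however, a genuine gap in your final step. Having established only $f''(t_*)\geq 0$ at an interior maximum, you cannot yet contradict $f''(t_*)\leq 0$; you propose to resolve this by perturbing $c$ in $C^4$ to a cost $c_\epsilon$ satisfying \eqref{A3s}, remarking that this ``can be arranged by adding a small cross-convex perturbation.'' For an arbitrary $C^4$ cost on arbitrary domains this is not justified: you would need a perturbation that simultaneously preserves \eqref{Twist}, \eqref{Nondeg}, \eqref{Convdm}, and pushes the MTW tensor to be uniformly positive on the full null cone $\langle\eta,V\rangle=0$, and no such construction is supplied. Without it the argument does not close.

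A cleaner route, which avoids approximation entirely, is to extract from the same tensor computation the \emph{global} differential inequality
\begin{equation*}
f''(t)\geq -C\lvert f'(t)\rvert\qquad\text{for all }t\in[0,1],
\end{equation*}
valid under \eqref{A3w} (this is \cite[Theorem 12.36]{Vil09}, and is exactly the inequality the present paper invokes later in the proof of Lemma~\ref{lem: gLp redux}). From this, quasiconvexity follows directly: if $f(t_*)>\max\{f(0),f(1)\}$ at some interior $t_*$, pick $t_0<t_*$ with $f'(t_0)>0$; on the interval where $f'>0$ the inequality reads $(\log f')'\geq -C$, so Gr\"onwall gives $f'(t)\geq f'(t_0)e^{-C(t-t_0)}>0$ for all $t\geq t_0$, forcing $f(1)>f(t_*)$, a contradiction. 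This is the argument you should substitute for the approximation step.
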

We point out here that if $c$ satisfies condition~\eqref{QConv}, it also satisfies~\eqref{eqn: gLp}.
\begin{lem}\label{lem: H4 implies gLp}
Let $c$, $\outerdom$, and $\outertarget$ satisfy~\eqref{Twist},~\eqref{Nondeg},~\eqref{Convdm}, and~\eqref{QConv}. Then $c$ satisfies~\eqref{eqn: gLp}.
\end{lem}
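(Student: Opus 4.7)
The plan is to reduce \eqref{eqn: gLp} to a one-variable inequality by restricting attention to the cost difference along the given $c$-segment. Fix $x, x_0 \in \outerdom$ and $\xbar_0, \xbar_1 \in \outertarget$ and set
\[
\phi(t) := -c(x, \xbar(t)) + c(x_0, \xbar(t)),\qquad t \in [0,1],
\]
where $\xbar(t)$ is the $c$-segment with respect to $x_0$ from $\xbar_0$ to $\xbar_1$. Then \eqref{eqn: gLp} is exactly the statement $\phi(t) \leq \max\{\phi(0), \phi(1)\}$, and the first inequality in \eqref{QConv} translates directly into the compact form
\begin{equation}\label{eqn: plan main}
\phi(t) - \phi(0) \leq Mt\,(\phi(1) - \phi(0))_+,\qquad t \in [0,1].
\end{equation}

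I then split into two cases. If $\phi(1) \leq \phi(0)$, the right-hand side of \eqref{eqn: plan main} vanishes and we immediately conclude $\phi(t) \leq \phi(0) = \max\{\phi(0), \phi(1)\}$. If instead $\phi(0) \leq \phi(1)$, the idea is to run the same argument on the reversed segment. Here the crucial structural observation is that the curve $s \mapsto \xbar(1-s)$ is itself the canonical $c$-segment with respect to $x_0$ from $\xbar_1$ to $\xbar_0$, which is clear from Definition~\ref{def: c-segment} upon passing to the $p$-coordinates $-Dc(x_0,\cdot)$, where $c$-segments are just affinely parametrized straight lines. Applying \eqref{QConv} to this reversed segment with $\tilde\phi(s) := \phi(1-s)$ gives
\[
\phi(1-s) - \phi(1) \leq Ms\,(\phi(0) - \phi(1))_+ = 0,
\]
so that $\phi(t) \leq \phi(1) = \max\{\phi(0), \phi(1)\}$ for every $t \in [0,1]$. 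Combining the two cases yields \eqref{eqn: gLp}; the symmetric statement with the roles of $\outerdom$ and $\outertarget$ interchanged follows identically from the second inequality of \eqref{QConv}.

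There is essentially no hard step in this argument. The only ingredients used are the $(\cdot)_+$ on the right-hand side of \eqref{QConv} (which makes the inequality vacuous whenever the relevant cost difference is non-positive) and the trivial fact that a $c$-segment traversed backwards is still a $c$-segment between the swapped endpoints. In particular, the precise value of the universal constant $M \geq 1$ in \eqref{QConv} plays no role in deriving \eqref{eqn: gLp} — only the sign of the right-hand side matters. This is reassuring, since \eqref{eqn: gLp} is a purely qualitative maximum-principle statement, whereas \eqref{QConv} is genuinely quantitative; the latter carries strictly more information, as will be needed in the sequel.
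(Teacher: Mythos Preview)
Your proof is correct and follows essentially the same approach as the paper's: both split into two cases according to which endpoint value is larger, and in each case apply \eqref{QConv} with the $c$-segment oriented so that the $(\cdot)_+$ term vanishes. Your use of the auxiliary function $\phi$ and the explicit observation that the reversed parametrization is again a canonical $c$-segment make the argument slightly cleaner notationally, but the logic is identical.
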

\begin{proof}
Fix $\xbar_0$, $\xbar_1\in\innertarget$, and $\xhat\in\outerdom$. Now fix any $x\in \outerdom$. If 
\begin{equation*}
-c(x, \xbar_0)+c(\xhat, \xbar_0)\geq-c(x, \xbar_1)+c(\xhat, \xbar_1)
\end{equation*}
we choose $\xbar(t)$ to be the $c$-segment with respect to $\xhat$ from $\xbar_0$ to $\xbar_1$. Then by~\eqref{QConv}, we see that
\begin{align*}
-c(x, \xbar(t))+c(\xhat, \xbar(t))&\leq -c(x, \xbar_0)+c(\xhat, \xbar_0)+Ms[-c(x, \xbar_1)+c(\xhat, \xbar_1)-(-c(x, \xbar_0)+c(\xhat, \xbar_0))]_+\\
&=-c(x, \xbar_0)+c(\xhat, \xbar_0)\\
&=\max\left\{-c(x, \xbar_0)+c(\xhat, \xbar_0),\;-c(x, \xbar_1)+c(\xhat, \xbar_1)\right\}
\end{align*}
for any $t\in [0, 1]$. On the other hand, if 
\begin{equation*}
-c(x, \xbar_0)+c(\xhat, \xbar_0)\leq-c(x, \xbar_1)+c(\xhat, \xbar_1)
\end{equation*}
we instead take $\xbar(t)$ to be the $c$-segment with respect to $\xhat$ from $\xbar_1$ to $\xbar_0$, and apply the same reasoning to obtain the desired inequality.

The analogous statement holds with the roles of the domains $\outerdom$ and $\innertarget$ reversed.
\end{proof}

We now recall three key geometric properties of $c$-convex functions that hold when $\outerdom$ and $\outertarget$ satisfy~\eqref{Convdm}, and the cost $c$ satisfies~\eqref{eqn: gLp} (hence in particular, if it satisfies~\eqref{QConv}).
\begin{cor}[{\cite[Theorem 3.1 and Proposition 4.4]{L09}}]\label{cor: local to global}
If $c$, $\outerdom$, and $\outertarget$ satisfy~\eqref{Twist}, \eqref{Nondeg}, \eqref{Convdm}, and~\eqref{eqn: gLp}, then for any $c$-convex function $u$ in $\outerdom$ and $x\in \outerdom$
\begin{equation*}
\partial u(x)=\coord{\partial_cu(x)}{x}.
\end{equation*}
Also suppose $u$ is a $c$-convex function. Then, for any $c$-function $\mountain$, a local minimum of the difference $u-\mountain$ is a global minimum. 
\end{cor}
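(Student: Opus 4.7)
My plan is to derive both assertions directly from the geometric Loeper property~\eqref{eqn: gLp} together with the $c$-convexity of $u$ and of the domains~\eqref{Convdm}. The unifying idea is to interpolate between a globally supporting $c$-function and a candidate function by forming the $c$-segment between their foci, and to propagate the supporting property along that segment via~\eqref{eqn: gLp}.

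For the identity $\partial u(x)=\coord{\partial_c u(x)}{x}$, the inclusion ``$\supseteq$'' is a direct first-order Taylor expansion of $-c(\cdot,\xbar)$ at $x$: if $\mountain(y)=-c(y,\xbar)+\lambda$ supports $u$ at $x$ with $\xbar\in\partial_c u(x)$, then $\coord{\xbar}{x}\in\partial u(x)$. For the reverse inclusion, given $p\in\partial u(x)$, I would first use~\eqref{Convdm} and the convexity of $\coord{\outertarget}{x}$ (which holds because $\xbar\mapsto -Dc(x,\xbar)$ sends $c$-segments with respect to $x$ to Euclidean segments) to produce $\xbar\in\outertarget$ with $\coord{\xbar}{x}=p$, and then show that $\mountain(y):=-c(y,\xbar)+u(x)+c(x,\xbar)$ supports $u$ globally. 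To do this, pick any globally supporting $\mountain_0$ at $x$ with focus $\xbar_0$ (available by $c$-convexity of $u$), form the $c$-segment $\xbar(t)$ from $\xbar_0$ to $\xbar$ with respect to $x$, and define $\mountain_t$ with focus $\xbar(t)$ normalized so that $\mountain_t(x)=u(x)$. By~\eqref{eqn: gLp}, $\mountain_t(y)\leq\max\{\mountain_0(y),\mountain(y)\}$ for every $y$, and a continuity argument in $t$ on $\{t:\mountain_t\leq u\}$ should transfer global support from $\mountain_0$ to $\mountain_1=\mountain$.

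For the local-to-global minimum statement, after subtracting a constant I may assume $u(x_0)=\mountain(x_0)=0$ and $u\geq\mountain$ on a neighborhood $U$ of $x_0$. Suppose for contradiction that $u(y_0)<\mountain(y_0)$ at some $y_0\in\outerdom$. Choose a globally supporting $c$-function $\mountain_0$ at $x_0$ with focus $\xbar_0$, normalized so that $\mountain_0(x_0)=0$, let $\xbar(t)$ be the $c$-segment from $\xbar_0$ to the focus $\xbar$ of $\mountain$ with respect to $x_0$, and let $\mountain_t$ be the corresponding $c$-function with $\mountain_t(x_0)=0$. Then~\eqref{eqn: gLp} gives $\mountain_t\leq\max\{\mountain_0,\mountain\}$ pointwise; combined with $\mountain_0\leq u$ everywhere and $\mountain\leq u$ on $U$, I would run a connectedness argument on $T:=\{t\in[0,1]:\mountain_t\leq u \text{ globally on }\outerdom\}$. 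The set $T$ contains $0$ and is closed by continuity, and its openness should follow by iterating~\eqref{eqn: gLp} starting from each intermediate $\mountain_{t^*}$ in place of $\mountain_0$. Thus $1\in T$, so $\mountain\leq u$ globally, contradicting $u(y_0)<\mountain(y_0)$.

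The main obstacle in both parts is closing the continuity step: the inequality $\mountain_t\leq\max\{\mountain_0,\mountain\}$ from~\eqref{eqn: gLp} does not by itself yield $\mountain_t\leq u$ globally, since $\mountain\leq u$ is only known in a neighborhood. Verifying openness of the good set of parameters $t$ is where care is needed; one will likely need to invoke the analogous~\eqref{eqn: gLp} statement with the roles of $\outerdom$ and $\outertarget$ swapped, and to bootstrap from each supporting $\mountain_{t^*}$ to extend the range of validity.
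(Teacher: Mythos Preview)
The paper does not prove this corollary; it simply cites \cite[Theorem~3.1 and Proposition~4.4]{L09}. So the comparison is against Loeper's argument, and your sliding/connectedness scheme is genuinely different from it---and has a real gap that cannot be closed along the lines you suggest. For the first assertion, the step ``$\partial u(x)\subseteq\coord{\partial_c u(x)}{x}$'' fails to start: from $p\in\partial u(x)$ you only get $u(y)\geq u(x)+\langle p,\exp_x^{-1}(y)\rangle+o(|y-x|)$, while the candidate $c$-function $\mountain$ with $D\mountain(x)=p$, $\mountain(x)=u(x)$ satisfies $\mountain(y)=u(x)+\langle p,\exp_x^{-1}(y)\rangle+O(|y-x|^2)$, so $\mountain\leq u$ need not hold even locally. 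Your connectedness set $T=\{t:\mountain_t\leq u\}$ is closed and contains $0$, but \eqref{eqn: gLp} only yields $\mountain_t\leq\max\{\mountain_{t^*},\mountain_1\}$; with no control on $\mountain_1$ anywhere, this gives no openness. Bootstrapping from $\mountain_{t^*}$ or swapping the roles of $\outerdom$ and $\outertarget$ does not help, since the obstruction is the absence of any inequality for $\mountain_1$. The same problem persists in your approach to the second assertion outside the neighborhood $U$.

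Loeper's route avoids the sliding argument entirely. Since $u$ is a supremum of uniformly $C^2$ functions it is locally semi-convex, hence $\partial u(x)$ equals the convex hull of the reachable gradients $D^*u(x):=\{\lim Du(x_k):x_k\to x,\ u\text{ differentiable at }x_k\}$. At each such $x_k$ one has $Du(x_k)=-Dc(x_k,\xbar_k)$ with $\xbar_k\in\partial_c u(x_k)$; passing to a subsequence gives $D^*u(x)\subseteq\coord{\partial_c u(x)}{x}$. Then \eqref{eqn: gLp} is used only once, to show that $\coord{\partial_c u(x)}{x}$ is convex (if $\xbar_0,\xbar_1\in\partial_c u(x)$ then $\mountain_t\leq\max\{\mountain_0,\mountain_1\}\leq u$ with $\mountain_t(x)=u(x)$, so $\xbar(t)\in\partial_c u(x)$), which yields $\partial u(x)=\ch D^*u(x)\subseteq\coord{\partial_c u(x)}{x}$. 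The local-to-global statement is then an immediate consequence: a local minimum of $u-\mountain$ at $x_0$ gives $D\mountain(x_0)\in\partial u(x_0)=\coord{\partial_c u(x_0)}{x_0}$, so by \eqref{Twist} the focus of $\mountain$ lies in $\partial_c u(x_0)$, hence $u(y)-\mountain(y)\geq u(x_0)-\mountain(x_0)$ for all $y$.
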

\begin{cor}\label{cor: c-convexity of sublevel sets}
If $c$, $\outerdom$, and $\outertarget$ satisfy~\eqref{Twist},~\eqref{Nondeg},~\eqref{Convdm}, and~\eqref{eqn: gLp}, and $u$ is a $c$-convex function and $\mountain_0$ is a $c$-function with focus $\xbar_0$, then
\begin{equation*}
\{x\in\outerdom \mid u(x)\leq \mountain_0(x)\}
\end{equation*}
is $c$-convex with respect to $\xbar_0$.
\end{cor}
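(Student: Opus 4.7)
The plan is to prove $c$-convexity by picking two points $x_0,x_1\in\{u\le \mountain_0\}$, considering the $c$-segment $x(s)$ with respect to $\xbar_0$ from $x_0$ to $x_1$ (which stays in $\outerdom$ by~\eqref{Convdm}), and showing $u(x(s))\le\mountain_0(x(s))$ for every $s\in[0,1]$ by reducing to the reversed-roles version of~\eqref{eqn: gLp}.

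First I would invoke $c$-convexity of $u$: for each $s\in[0,1]$, let $\mountain_s$ be a $c$-function supporting $u$ at $x(s)$, with focus $\xbar_s$, written as $\mountain_s(x)=-c(x,\xbar_s)+\lambda_s$ for $\lambda_s=u(x(s))+c(x(s),\xbar_s)$. Since $u\ge\mountain_s$ globally and $u(x_i)\le\mountain_0(x_i)$ for $i=0,1$, subtracting gives
\begin{equation*}
\mountain_s(x_i)-\mountain_0(x_i) = -c(x_i,\xbar_s)+c(x_i,\xbar_0)+(\lambda_s-\lambda_0)\le 0,\qquad i=0,1.
\end{equation*}
Equivalently, $-c(x_i,\xbar_s)+c(x_i,\xbar_0)\le \lambda_0-\lambda_s$ for $i=0,1$.

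Next I would apply the symmetric form of Theorem~\ref{thm: Loeper's lemma} (the ``analogous statement'' obtained by swapping the roles of $\outerdom$ and $\outertarget$) with the point $\xbar_s$ playing the role of the outer variable and with $x(s)$ the $c$-segment with respect to $\xbar_0$ from $x_0$ to $x_1$. This yields
\begin{equation*}
-c(x(s),\xbar_s)+c(x(s),\xbar_0)\le\max_{i=0,1}\bigl\{-c(x_i,\xbar_s)+c(x_i,\xbar_0)\bigr\}\le \lambda_0-\lambda_s,
\end{equation*}
which rearranges to $\mountain_s(x(s))\le\mountain_0(x(s))$. Since $\mountain_s$ supports $u$ at $x(s)$, we have $u(x(s))=\mountain_s(x(s))\le\mountain_0(x(s))$, proving $x(s)\in\{u\le\mountain_0\}$.

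There is essentially no obstacle here beyond correctly invoking the reversed-variable version of~\eqref{eqn: gLp}; everything else is bookkeeping on supporting $c$-functions. The only subtle point is that the $c$-segment we form is with respect to the focus $\xbar_0$ of the comparison function $\mountain_0$, which is exactly the configuration that makes the reversed Loeper inequality applicable with $\xbar=\xbar_s$.
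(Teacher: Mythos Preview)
Your proof is correct and uses essentially the same idea as the paper: apply the reversed-roles form of~\eqref{eqn: gLp} along the $c$-segment with respect to $\xbar_0$. The only difference is organizational: the paper first proves the case $u=\mountain$ for a single $c$-function and then writes $\{u\le\mountain_0\}=\bigcap_{\mountain\le u}\{\mountain\le\mountain_0\}$ as an intersection of such sets, whereas you work pointwise, choosing a supporting $c$-function $\mountain_s$ at each $x(s)$ and applying the same computation there.
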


    \begin{proof}
         First suppose $u=\mountain$, a $c$-function with focus $\xbar$, and $x_0, x_1 \in \{ x \in \outerdom \mid\mountain(x)\leq\mountain_0(x)\}$. Since $\outerdom$ and $\outertarget$ are $c$-convex with respect to each other, the $c$-segment $[x_0, x_1]_{\xbar_0}$ is entirely contained in $\outerdom$. Also, since $c$ satisfies~\eqref{eqn: gLp} we see that for any $s\in [0, 1]$
         \begin{equation*}
              \mountain(x(s))-\mountain_0(x(s))\leq \max\{\mountain(x_0)-\mountain_0(x_0),\ \mountain(x_1)-\mountain_0(x_1) \}\leq 0,
         \end{equation*}
         where $x(s)=[x_0,x_1]_{\xbar_0}$. Therefore $[x_0, x_1]_{\xbar_0}\subset \{ x\in \outerdom\mid \mountain(x)\leq \mountain_0(x)\}$, and this set is $c$-convex with respect to $\xbar_0$.
         Now, for a general $c$-convex function $u$ we have
         \begin{equation*}
              \left \{ u\leq \mountain_0 \right \} = \bigcap \limits_{\left \{ \mountain\mid\mountain \leq u \right \}} \left \{ \mountain \leq \mountain_0\right \},
         \end{equation*}
         the intersection being over $c$-functions $\mountain$ that lie below $u$ in $\outerdom$. All sets on the right side are $c$-convex with respect to $\xbar_0$, so the same must be true of their intersection and we are done.
    \end{proof}
\begin{rem}\label{rem: quasiconvexity}
Suppose $\xbar(t)$ is a $c$-segment in $\innertarget$ with respect to some $x_0\in\outerdom$ from $\xbar_0$ to $\xbar_1$. For any $x\in\outerdom$, by~\eqref{eqn: gLp}, if $-c(x,\xbar(t))+c(x,\xbar_0)>-c(x_0,\xbar(t))+c(x_0,\xbar_0)$ for \emph{any} $t\in[0, 1]$ then we must have $-c(x,\xbar_1)+c(x,\xbar_0)>-c(x_0,\xbar(t))+c(x_0,\xbar_0)$.

A similar remark holds for the function $-c(x(s),\xbar)+c(x(s),\xbar_0)$ where $x(s)$ is a $c$-segment in $\outerdom$ with respect to $\xbar_0\in\innertarget$ between two points.
\end{rem}


    We end this section showing that if $c$ is $C^4$, the familiar condition~\eqref{A3w} implies the new condition~\eqref{QConv}.
    
    At this point, we would like to mention that this proof is motivated by the key inequality~\eqref{eqn: second derivative lower bound}, which was in turn inspired by calculations in the proof of~\cite[Proposition 4.6]{KM10} by Kim and McCann. Later, we were informed by Alessio Figalli that this result is known in the literature, and would like to thank him for pointing out the reference in~\cite{Vil09}.

    \begin{lem}\label{lem: gLp redux}
         Suppose $c$ is $C^4$ in the sense mentioned above. If $c$, $\outerdom$, and $\outertarget$ satisfy \eqref{Twist}, \eqref{Nondeg} and~\eqref{Convdm}, then the cost satisfies~\eqref{A3w} if and only if it satisfies~\eqref{QConv}.
    \end{lem}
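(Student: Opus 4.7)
The plan is to prove both directions of the equivalence.

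\textbf{\eqref{QConv} implies \eqref{A3w}.} This is a Taylor expansion argument. Fix $(x_0,\xbar_0) \in \outerdom \times \outertarget$, $V \in \tanspM{x_0}$ and $\eta \in \cotanspM{x_0}$ with $\langle \eta, V\rangle = 0$. For small $\epsilon > 0$, set $x_\epsilon := \exp_{x_0}(\epsilon V)$ (Riemannian exponential) and $\xbar_1 := \cExp{x_0}{-Dc(x_0,\xbar_0) + \eta}$. Apply the first inequality of \eqref{QConv} with $x = x_\epsilon$ and $t = \epsilon$. The orthogonality $\langle \eta, V\rangle = 0$ kills the leading bilinear term coming from $-D\Dbar c(x_0,\xbar_0)$ in the cross-difference on the right, so the right-hand side is $O(\epsilon^3)$ after multiplication by $t = \epsilon$. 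Taylor-expanding the left-hand side to the same order in $\epsilon$ and matching coefficients as $\epsilon\to 0^+$ produces the inequality $\MTW_{(x_0,\xbar_0)}(V, V, \eta, \eta) \geq 0$ (up to a positive universal multiple), which is precisely \eqref{A3w}. The second inequality of \eqref{QConv} gives the same conclusion by the symmetry of hypotheses.

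\textbf{\eqref{A3w} implies \eqref{QConv}.} Fix $x, x_0 \in \outerdom$, $\xbar_0, \xbar_1 \in \outertarget$ and let $\xbar(t)$ be the $c$-segment with respect to $x_0$. Define
\begin{equation*}
\phi(t) := -c(x,\xbar(t)) + c(x_0,\xbar(t)) - \bigl(-c(x,\xbar_0) + c(x_0,\xbar_0)\bigr),
\end{equation*}
so $\phi(0) = 0$ and the target is $\phi(t) \leq Mt\phi(1)_+$. If $\phi(1) \leq 0$, Theorem~\ref{thm: Loeper's lemma} (which holds under \eqref{A3w} by~\cite{L09}) gives $\phi(t) \leq 0$ and the conclusion is immediate. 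From now on assume $\phi(1) > 0$.

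The technical core is a direct calculation, along the lines of~\cite[Proposition 4.6]{KM10} and~\cite{Vil09}: since the $c$-segment corresponds to a straight line $\pbar(t) = (1-t)\pbar_0 + t\pbar_1$ in $\cotanspM{x_0}$, a chain-rule expansion yields
\begin{equation*}
\phi''(t) = -\MTW_{(x_0,\xbar(t))}\bigl(V(t), V(t), \pbar_1 - \pbar_0, \pbar_1 - \pbar_0\bigr) + R(t),
\end{equation*}
where $V(t) := \transformadj{x_0}{\xbar(t)}\bigl(p_{(x,\xbar(t))} - p_{(x_0,\xbar(t))}\bigr) \in \tanspM{x_0}$ encodes the displacement between $x$ and $x_0$ as seen from $\xbar(t)$, and $R(t)$ is controlled in absolute value by a universal multiple (depending on $\lVert c\rVert_{C^3}$ and the \eqref{Nondeg} constants) of $\gnorm[x_0]{V(t)}^2$. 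Split $V(t) = V_\parallel(t) + V_\perp(t)$ so that $V_\perp(t)$ annihilates the covector $\pbar_1 - \pbar_0$, i.e.\ $\langle \pbar_1-\pbar_0, V_\perp(t)\rangle = 0$. Condition \eqref{A3w} yields $\MTW\bigl(V_\perp, V_\perp, \pbar_1-\pbar_0, \pbar_1-\pbar_0\bigr) \geq 0$, while the cross terms and the contribution of $V_\parallel$ are bounded by $|\phi'(t)|$ up to universal factors, since by the chain rule $\langle \pbar_1-\pbar_0, V(t)\rangle$ is a universal multiple of $\phi'(t)$. This produces a differential inequality
\begin{equation*}
\phi''(t) \leq C_1 |\phi'(t)| + C_2, \qquad t\in [0,1],
\end{equation*}
with universal $C_1, C_2 \geq 0$. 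Combining this with $\phi(0) = 0$ and the pointwise bound $\phi(t) \leq \phi(1)$ supplied by Theorem~\ref{thm: Loeper's lemma}, an elementary ODE argument yields the required linear bound $\phi(t) \leq Mt\phi(1)$ for a universal $M \geq 1$. The second inequality of \eqref{QConv} follows after interchanging the roles of $\outerdom$ and $\outertarget$.

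\textbf{Main obstacle.} The principal difficulty is the explicit second-derivative formula for $\phi''(t)$ and the careful splitting of $V(t)$ so that \eqref{A3w} may be applied to the principal part while the remainder $R(t)$ and the parallel piece are bounded by universal constants depending only on the $C^3$ norm of $c$ and the \eqref{Nondeg} bounds. Converting the resulting differential inequality to the clean linear-in-$t$ bound is elementary, but relies on Loeper's theorem to supply the global control on $\phi$ needed to bound $\phi'(0)$ in terms of $\phi(1)$.
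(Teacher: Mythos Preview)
Your proof has a genuine gap in the direction \eqref{A3w} $\Rightarrow$ \eqref{QConv}, which is the substantive half of the lemma.

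The differential inequality you derive, $\phi''(t)\leq C_1|\phi'(t)|+C_2$, is not the right one, and it does \emph{not} imply the bound $\phi(t)\leq Mt\,\phi(1)$. First, the sign in your formula for $\phi''$ is off: along a $c$-segment with respect to $x_0$ the second derivative of $\phi$ is, after the splitting $V=V_\parallel+V_\perp$, bounded \emph{below} by the MTW term on $V_\perp$ (which is nonnegative under \eqref{A3w}) minus terms controlled by $|\phi'|$; this is the content of \cite[Proof of Theorem~12.36]{Vil09} and yields
\[
\phi''(t)\ \geq\ -C\,|\phi'(t)|,
\]
with \emph{no} additive constant. Your remainder $R(t)$, bounded only by a universal multiple of $|V(t)|^2$, introduces an absolute constant $C_2$ that does not vanish as $\phi(1)\to 0$; with such a $C_2$, the ODE argument cannot produce an inequality of the form $\phi(t)\le Mt\,\phi(1)$ with $M$ universal. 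Indeed, an \emph{upper} bound on $\phi''$ places no restriction on how large $\phi'(0)$ can be relative to $\phi(1)$, so even the constraint $\phi\le\phi(1)$ from Loeper's theorem does not close the argument.

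The paper proceeds from the correct inequality $\phi''\geq -C|\phi'|$ in two further steps you do not address: (i) a monotonicity claim showing that once $\phi'(t_0)>0$ one has $\phi'(t)>0$ for all $t\ge t_0$ (proved via the convexity of sublevel sets of $\pbar\mapsto -c(x,\cExp{x_0}{\pbar})+c(x_0,\cExp{x_0}{\pbar})$, i.e.\ Corollary~\ref{cor: c-convexity of sublevel sets} with roles reversed), which allows one to drop the absolute value and integrate to $f'(t_2)/f'(t_1)\geq e^{-C(t_2-t_1)}$; and (ii) an application of Cauchy's mean value theorem to $\tilde f(\tilde t)=f(t\tilde t)$ against $f$ to obtain $f(t)-f(0)\le e^{C}t\,(f(1)-f(0))$. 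Neither of these follows from your upper-bound inequality.

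For \eqref{QConv} $\Rightarrow$ \eqref{A3w}, your Taylor argument with a single scale $\epsilon$ does not isolate the MTW tensor: with $x_\epsilon=\exp_{x_0}(\epsilon V)$ and $t=\epsilon$, the left side has nonvanishing $O(\epsilon^3)$ contributions (from mixed third derivatives) that are not the MTW term, which appears only at $O(\epsilon^4)$; matching against the $O(\epsilon^3)$ right side therefore does not yield \eqref{A3w}. The paper bypasses this entirely: Lemma~\ref{lem: H4 implies gLp} shows \eqref{QConv} $\Rightarrow$ \eqref{eqn: gLp} directly, and then Loeper's result \cite{L09} gives \eqref{eqn: gLp} $\Rightarrow$ \eqref{A3w} for $C^4$ costs.
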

    \begin{proof}
     If $c$ satisfies~\eqref{QConv}, by Lemma~\ref{lem: H4 implies gLp} above it must satisfy~\eqref{eqn: gLp}. In particular, if the cost function is $C^4$ the cost must also satisfy~\eqref{A3w} by~\cite{L09}.
     
        Now suppose that $c$ satisfies~\eqref{A3w}. We will prove the first inequality in~\eqref{QConv}, by a symmetric argument the second inequality will also follow.
        
         Let $f(t):=-c(x,\xbar(t))+c(x_0,\xbar(t))$ where $\xbar(t)$ is the $c$-segment with respect to $x_0$ from $\xbar_0$ to $\xbar_1$. It will then be sufficient to show that
         \begin{align*}
 	f(t)-f(0)\leq Mt(f(1)-f(0))_{+},\qquad\forall\;t\in[0, 1]
\end{align*}
for some universal $M>1$.

         Since $c$ satisfies~\eqref{A3w}, by Theorem~\ref{thm: Loeper's lemma} it satisfies~\eqref{eqn: gLp} and hence
         \begin{align*}
              f(t)-f(0)\leq \max{\{f(1)-f(0), f(0)-f(0)\}}.
         \end{align*}
         In particular, if $f(1)\leq f(0)$ we immediately obtain the corollary. 

         Now suppose that $f(1)>f(0)$. First we claim that if $f'(t_0)>0$ for some $t_0\in [0, 1)$, then $f'(t_1)>0$ for all $t_1>t_0$ as well. Indeed, suppose by contradiction that for some $t_1>t_0$, $f'(t_1)=0$. Define the real valued function $F$ on $\outertargetcoord{x_0}$ by
         \begin{equation*}
              F(\pbar):=-c(x, exp_{x_0}^c(\pbar))+c(x_0, exp_{x_0}^c(\pbar)).
         \end{equation*}
         Then, for the line segment 
         \begin{equation*}
              \pbar(t):=(1-t)\pbar_{(x_0, \xbar_0)}+t\pbar_{(x_0, \xbar_1)}
         \end{equation*}
         we have $f(t)=F(\pbar(t))$ and hence
         \begin{equation*}
              \inner{ D F(\pbar(t_1))}{\pbardot(t_1)}=f'(t_1)=0.
         \end{equation*}
         By reversing the roles of $\outerdom$ and $\outertarget$ in Corollary~\ref{cor: c-convexity of sublevel sets}, the sublevel set 
         \begin{align*}
         \{\pbar\in\outerdomcoord{x_0}\mid F(\pbar)\leq F(\pbar(t_1))\}
         \end{align*} is convex, hence we find that the entire line segment $\{\pbar(t)\mid t\in[0, 1]\}$ is contained in the supporting hyperplane to this sublevel set. In particular $\pbar(t_0)$ is either in the boundary of the sublevel set or in the complement, i.e. $F(\pbar(t_0))\geq F(\pbar(t_1))$. However, since $f'(t_0)>0$, by Remark~\ref{rem: quasiconvexity} above we see that $f$ is increasing on the interval $(t_0, 1)$ and in particular, $F(\pbar(t_1))>F(\pbar(t_0))$, which is a contradiction, thus proving the claim.

         Now, by~\cite[Proof of Theorem 12.36]{Vil09}, there exists a universal $C\geq 0$ such that 
         \begin{equation}\label{eqn: second derivative lower bound}
             f''(t)\geq -C\lvert f'(t)\rvert, \qquad\forall\;t\in[0,1].	
         \end{equation}
         First, suppose that $f'(0)> 0$. Then, by the above claim, $f'(t)>0$ for all $t\in[0, 1]$, thus
         \begin{equation*}
              f''(t)\geq -C\lvert f'(t)\rvert=-Cf'(t), \qquad\forall\;t\in[0,1]
         \end{equation*}
         and we may integrate this inequality to see that 
         \begin{equation}\label{eqn: fderivative inequality}
              f'(t_2)/f'(t_1)\geq e^{-C(t_2-t_1)}	\qquad\text{ whenever } 0\leq t_1\leq t_2\leq 1.
         \end{equation}
         Now, fix $t \in (0,1)$ and define the function $\tilde f(\ttil):= f( t\ttil)$ for $\ttil\in[0, 1]$. Since $f(1)>f(0)$ and $f'>0$ on $[0, 1]$, by Cauchy's mean value theorem and~\eqref{eqn: fderivative inequality}, there exists some $\theta \in (0,1)$ such that
         \begin{equation*}
              \frac{\tilde f(1)-\tilde f(0)}{f(1)-f(0)}=\frac{\tilde f'(\theta_0)}{f'(\theta_0)}= \frac{t f'(t\theta_0)}{f'(\theta_0)}\leq te^{C(1-t)\theta_0}.
         \end{equation*}
         Thus we conclude that
         \begin{equation*}
              f(t)-f(0)\leq e^{C(1-t)\theta_0}t (f(1)-f(0))\leq e^Ct(f(1)-f(0))=Mt(f(1)-f(0)),\qquad\forall \;t\in[0,1],
         \end{equation*}
         with $M:=e^C\geq 1$. In the general case, since $f(1)>f(0)$ by assumption, there must be some $t_0\in [0, 1)$ such that $f'(t_0)\geq 0$ and $f(t_0)=f(0)$, suppose $t_0$ is the maximal such point. Now, if $0\leq t\leq t_0$, again by~\eqref{eqn: gLp} we see that $f(t)\leq \max{(f(0), f(t_0))}=f(0)$. Hence, 
         \begin{equation*}
              f(t)-f(0)\leq 0\leq Mt(f(1)-f(0)).
         \end{equation*}
         Now suppose that $t_0<t\leq 1$. By the definition of $t_0$, we can see that $f'(t_0+\epsilon)>0$ for all $\epsilon>0$ sufficiently small. Hence for all $\epsilon>0$ such that $t_0+\epsilon<t$, we may apply the first portion of this proof to the function $f((1-t)(t_0+\epsilon)+t)$ to conclude
         \begin{align*}
         f(t)-f(0)&=f(t)-f( t_0+\epsilon)+f(t_0+\epsilon)-f(0)\\
         &\leq M\left(\frac{t-t_0-\epsilon}{1-t_0-\epsilon}\right)(f(1)-f( t_0+\epsilon))+f( t_0+\epsilon)-f(0).
         \end{align*}
         Then, if we let $\epsilon \to 0$, since $f(t_0)=f(0)$ we obtain
         \begin{align*}
              f(t)-f(0)&\leq 
              M\left(\frac{t-t_0}{1-t_0}\right)(f(1)-f(0))\\
              &\leq Mt(f(1)-f(0)).
         \end{align*}
    \end{proof}

\section{Polar duals of sublevel sets}\label{section: polar duals}
    For the remainder of the paper, we assume that the domains $\innerdom$, $\outerdom$, $\innertarget$, and $\outertarget$, and the cost function $c$ satisfy assumptions~\eqref{Twist},~\eqref{Nondeg},~\eqref{Convdm}, and~\eqref{QConv}.

    In this section, we develop some notions inspired by convex analysis, our main goal is the estimate in Lemma~\ref{lem: sharp growth estimate}, which is the analogue of the estimate~\eqref{eqn: euclidean sharp growth} in the Euclidean case.

    \begin{DEF}\label{def: c-polar dual}
         Given $\arbitrary \subset \outerdom$, $x \in A$, a $c$-function $\mountain$ and $\lambda>0$ we introduce the set 
         \begin{equation*}
              \arbitrary^c_{x,\mountain,\lambda} := \{ \xbar \in \outertarget \mid -c(\xhat,\xbar)+c(x,\xbar)-(\mountain(\xhat)-\mountain(x))\leq \lambda, \;\;\forall\;\xhat \in\partial \arbitrary\},
         \end{equation*}
         which we will refer to as the \emph{$c$-polar dual of $\arbitrary$ associated to $\mountain$, with center $x$,  and length $\lambda$}.
    \end{DEF}
    Let us point out that Definition~\ref{def: c-polar dual} was inspired by the following basic notion in convex geometry.
    \begin{DEF}\label{def: *-polar dual}
         Suppose $\mathcal{\arbitrary}\subset V$ where $V$ is a finite dimensional real vector space and let $V^*$ be its dual. Then for $p_0\in \mathcal{\arbitrary}$, $q_0\in V^*$, and $\lambda\in \real$, the \emph{polar dual of $\mathcal{\arbitrary}$ with respect to $q_0$, with center $p_0$ and height $\lambda$} is defined by
         \begin{equation*}
              \mathcal{\arbitrary}^*_{p_0,q_0,\lambda} := \{ \xbar \in V^* \mid \inner{ q-q_0}{p-p_0} \leq \lambda, \;\;\forall\; p \in \mathcal{\arbitrary}\}.	
         \end{equation*}
    \end{DEF}
    \begin{DEF}\label{def: c-cones}
    Suppose $\mountain$ is a $c$-function, $\arbitrary\subset \outerdom$, $x \in \arbitrary^{\interior}$, and $\lambda>0$. Then we define the \emph{$c$-cone associated to $\mountain$, with vertex at $x$, base $\arbitrary$, and height $\lambda$} as the function
         \begin{equation*}
              K_{x, \mountain, \arbitrary, \lambda}:=\sup \limits_{\mountainhat}\mountainhat,
         \end{equation*}
         where the supremum is taken over all $c$-functions $\mountainhat$ such that 
         \begin{align*}
              \mountainhat(\xhat) &\leq \mountain(\xhat),\;\;\; \forall\; \xhat \in \partial \arbitrary,\\
              \mountainhat(x) &\leq \mountain(x)-\lambda.
         \end{align*}
    \end{DEF}
    Note here that a $c$-cone is clearly a $c$-convex function.

    There are some useful relations between $c$-subdifferentials of $c$-cones and $c$-polar duals.
    \begin{lem}\label{lem: map of cones}
         Suppose $\mountain$ is a $c$-function with focus $\xbar\in\outertarget$, and $\arbitrary\subset \outerdom$ is $c$-convex with respect to $\xbar$. If $x_0 \in \arbitrary^{\interior}$ and $\lambda>0$ then
         \begin{equation}\label{eqn: polar duals and cone subdifferential equality}
              \arbitrary^c_{x_0, \mountain, \lambda}=\left(\partial_c K_{x_0, \mountain, \arbitrary, \lambda}\right) (x_0).
         \end{equation}
         Moreover, if $u$ is $c$-convex and $\sublevelset:=\{ u \leq \mountain \}$ is compactly contained in $\outerdom$ with $u(x_0)<\mountain(x_0)$, then
         \begin{equation}\label{eqn: cone subdifferential contained in u subdifferential}
              \sublevelset^c_{x_0, \mountain, \mountain(x_0)-u(x_0)}\subset (\partial_c u)(\sublevelset).
         \end{equation}
    \end{lem}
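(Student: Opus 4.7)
My plan is to prove the two claims separately, each with a direct construction.

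For the equality \eqref{eqn: polar duals and cone subdifferential equality}, the first step is to pin down the vertex value $K_{x_0, \mountain, \arbitrary, \lambda}(x_0) = \mountain(x_0) - \lambda$. The upper bound $\leq$ is built into Definition~\ref{def: c-cones}, while the lower bound $\geq$ follows by noting that $\mountain - \lambda$ is itself a $c$-function (with the same focus as $\mountain$) satisfying both constraints, hence a legitimate competitor in the supremum. With this anchor, for $\xbar \in \arbitrary^c_{x_0, \mountain, \lambda}$ I would form the explicit $c$-function $\mountainhat(x) := -c(x, \xbar) + c(x_0, \xbar) + \mountain(x_0) - \lambda$: rewriting the polar dual inequality directly gives $\mountainhat \leq \mountain$ on $\partial \arbitrary$, and by construction $\mountainhat(x_0) = \mountain(x_0) - \lambda$, so $\mountainhat$ is a competitor and thus $\mountainhat \leq K_{x_0, \mountain, \arbitrary, \lambda}$; equality at $x_0$ promotes $\mountainhat$ to a supporting $c$-function at $x_0$, so $\xbar \in \partial_c K_{x_0, \mountain, \arbitrary, \lambda}(x_0)$. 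Conversely, given $\xbar \in \partial_c K_{x_0, \mountain, \arbitrary, \lambda}(x_0)$, the supporting $c$-function $\mountainhat$ satisfies $\mountainhat(x_0) = \mountain(x_0) - \lambda$ and lies pointwise below $K_{x_0, \mountain, \arbitrary, \lambda}$, which is in turn pointwise bounded by $\mountain$ on $\partial \arbitrary$ (as a pointwise sup of such). Subtracting these two relations recovers the polar dual inequality, closing the loop.

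For \eqref{eqn: cone subdifferential contained in u subdifferential}, set $\lambda := \mountain(x_0) - u(x_0) > 0$, fix $\xbar \in \sublevelset^c_{x_0, \mountain, \lambda}$, and carry out a sliding $c$-function argument with the family
\[
\mountainhat_s(x) := -c(x, \xbar) + c(x_0, \xbar) + u(x_0) - s, \qquad s \in \real.
\]
Define $s^* := \inf\{s \mid \mountainhat_s \leq u \text{ on } \sublevelset^{\cl}\}$. The compactness of $\sublevelset^{\cl}$ (from $\sublevelset \subset\subset \outerdom$) together with continuity of $c$ and $u$ ensures $s^*$ is finite, $\mountainhat_{s^*} \leq u$ on $\sublevelset^{\cl}$, and some touching point $\tilde{x} \in \sublevelset^{\cl}$ exists. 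The identity $\mountainhat_0(x_0) = u(x_0)$ gives $s^* \geq 0$. If I can force $\tilde{x} \in \sublevelset^{\interior}$, then $u - \mountainhat_{s^*}$ has an interior local minimum at $\tilde{x}$, and Corollary~\ref{cor: local to global} (applicable since \eqref{QConv} implies \eqref{eqn: gLp} via Lemma~\ref{lem: H4 implies gLp}) upgrades it to a global minimum on $\outerdom$, so $\mountainhat_{s^*}$ globally supports $u$ at $\tilde{x}$, giving $\xbar \in \partial_c u(\tilde{x}) \subset \partial_c u(\sublevelset)$.

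The main obstacle is ruling out that the touching point $\tilde{x}$ escapes to $\partial \sublevelset$, where the local-to-global step would fail. When $s^* = 0$, the point $x_0$ itself is a touching point and lies in $\sublevelset^{\interior}$ by the strict inequality $u(x_0) < \mountain(x_0)$; this case is immediate. When $s^* > 0$, assume for contradiction $\tilde{x} \in \partial \sublevelset$, so $u(\tilde{x}) = \mountain(\tilde{x})$ by continuity, giving $\mountainhat_{s^*}(\tilde{x}) = \mountain(\tilde{x})$. Evaluating the polar dual inequality at $\xhat = \tilde{x}$ and unraveling produces $\mountainhat_0(\tilde{x}) \leq \mountain(\tilde{x}) = \mountainhat_{s^*}(\tilde{x})$; since $\mountainhat_0 = \mountainhat_{s^*} + s^*$ pointwise, this forces $s^* \leq 0$, a contradiction. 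The polar dual condition is precisely what makes this last step work: it is calibrated so that touching may only occur strictly inside the sublevel set, which is exactly what the sliding argument needs.
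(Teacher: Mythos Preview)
Your proof is correct and follows essentially the same approach as the paper. For \eqref{eqn: polar duals and cone subdifferential equality} the arguments are identical; for \eqref{eqn: cone subdifferential contained in u subdifferential} your sliding parameter $s^*$ is just $-\min_{\sublevelset^{\cl}}(u-\mountainhat_0)$ and your touching point $\tilde x$ is the minimizer, so your argument is a repackaging of the paper's direct observation that $u-\mountainhat_0\leq 0$ at the interior point $x_0$ while $u-\mountainhat_0\geq 0$ on $\partial\sublevelset$ forces an interior local minimum, after which Corollary~\ref{cor: local to global} finishes (your case split $s^*=0$ versus $s^*>0$ is not needed, but does no harm).
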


    \begin{proof}
         Let us write 
         \begin{equation*}
              K_1(x):=K_{x_0, \mountain, \arbitrary, \lambda}(x).
         \end{equation*}
         By definition, if $\xbar_1\in \arbitrary^c_{x_0, \mountain, \lambda}$ we have
         \begin{align*}
              -c(x, \xbar_1)+c(x_0, \xbar_1)+\mountain(x_0)-\lambda\leq \mountain(x)
         \end{align*}
         for all $x\in \partial \arbitrary$, hence the $c$-function $-c(x, \xbar_1)+c(x_0, \xbar_1)+\mountain(x_0)-\lambda$ is admissible in the supremum in the definition of $K_1$. Moreover since $K_1(x_0)=\mountain(x_0)-\lambda$, this $c$-function is supporting to $K_1$ from below at $x_0$, and $\arbitrary^c_{x_0,\mountain, \lambda}\subseteq\left(\partial_c K_1\right) (x_0)$. On the other hand, since $K_1\leq \mountain$ on $\partial \arbitrary$, the focus of any $c$-function supporting to $K_1$ from below at $x_0$ must satisfy the inequality in the definition of $\arbitrary^c_{x_0, \mountain, \lambda}$, proving~\eqref{eqn: polar duals and cone subdifferential equality}. 
         
         Now suppose $\sublevelset$ is compactly contained in $\outerdom$, in particular that $u=\mountain$ on $\partial\sublevelset$. Also write 
         \begin{equation*}
              K_2(x):=K_{x_0, \mountain, \sublevelset, \mountain(x_0)-u(x_0)}
         \end{equation*}
         and suppose that $\mountain_2$ is a $c$-function with focus $\xbar_2\in \outertarget$ supporting to $K_2$ from below at $x_0$, which is contained in $\sublevelset^{\interior}$ since $u(x_0)<\mountain(x_0)$. Then by the definition of $K_2$, 
         \begin{equation*}
              u(x_0)=\mountain(x_0)-(\mountain(x_0)-u(x_0))\leq K_2(x_0)=\mountain_2(x_0).
         \end{equation*}
         Combined with the fact that $\mountain_2\leq \mountain= u$ on $\partial\sublevelset$, this implies that the function $u-\mountain_2$ has a local minimum somewhere in $\sublevelset$, thus by Corollary~\ref{cor: local to global}, this implies that $\xbar_2\in \partial_cu(\sublevelset)$, proving~\eqref{eqn: cone subdifferential contained in u subdifferential}.
    \end{proof}

    We also recall the classical John-Cordoba-Gallegos Lemma, which is used extensively in convex analysis and the optimal transport literature,  a proof can be found in \cite{dG76}.

    \begin{lem}\label{lem: john}
         Let $V$ be an Euclidean space with $\dim(V)=n$ and $\mathcal{\arbitrary} \subset V$  a bounded convex subset with nonempty interior. There is an ellipsoid $\mathcal{\ellipsoid}$ centered at a point $p_{cm}\in V$ such that
         \begin{equation*}
              \mathcal{\ellipsoid}\subset \mathcal{\arbitrary} \subset p_{cm}+n\left(\mathcal{\ellipsoid}-p_{cm}\right),
         \end{equation*}
         it will be called the \emph{John ellipsoid of $\mathcal{\arbitrary}$} and $p_{cm}$ its  \emph{center of mass}. It is is unique in the sense that it is the ellipsoid of smallest volume such that $p_{cm}+n\left(\mathcal{E}-p_{cm}\right)$ contains $\mathcal{\arbitrary}$.
    \end{lem}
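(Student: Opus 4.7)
The plan is to characterize $\mathcal{E}$ as the ellipsoid of maximum volume inscribed in $\mathcal{A}$ and then show the concentric $n$-dilation contains $\mathcal{A}$; this is the standard L\"owner--John approach, and the reverse formulation in the statement (smallest enclosing after $n$-dilation) follows from it.

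First I would set up existence. Parametrize ellipsoids in $V$ by pairs $(c, Q)$ with $c \in V$ and $Q$ a symmetric positive definite linear map, writing $\ellipsoid_{c,Q} := c + Q(B_1)$ where $B_1$ is the unit ball. The set $\{(c, Q) : \ellipsoid_{c,Q} \subset \mathcal{A}\}$ is closed in the parameter space, and since $\mathcal{A}$ is bounded, the operator norms $\|Q\|$ are bounded; since $\mathcal{A}$ has nonempty interior, the supremum of $\det Q$ over this set is strictly positive, ruling out degeneration to a flat ellipsoid. By continuity of the determinant, a maximizer $(p_{cm}, Q_0)$ exists, yielding the desired $\mathcal{E}$ with center $p_{cm}$.

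The main step is the containment $\mathcal{A} \subset p_{cm} + n(\mathcal{E} - p_{cm})$. After the affine change of coordinates that sends $\mathcal{E}$ to the unit ball centered at the origin (this is allowed because both maximality of the inscribed ellipsoid and the $n$-dilation statement are affinely invariant), one must show that $|x_0| \leq n$ for every $x_0 \in \mathcal{A}$. Arguing by contradiction, suppose some $x_0 \in \mathcal{A}$ has $|x_0| > n$. By convexity, $\mathcal{A}$ contains $\ch(B_1 \cup \{x_0\})$. The key computation constructs a new inscribed ellipsoid by slightly elongating $B_1$ along the axis through $x_0$ and slightly contracting along the orthogonal axes, with parameters tuned so the resulting ellipsoid still fits inside $\ch(B_1 \cup \{x_0\}) \subset \mathcal{A}$; a direct expansion of the determinant shows that the trade-off strictly increases volume precisely when $|x_0| > n$, contradicting maximality of $\mathcal{E}$.

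Uniqueness would follow from the strict concavity of $\det^{1/n}$ on the cone of symmetric positive definite matrices: if $\ellipsoid_{c_1, Q_1}$ and $\ellipsoid_{c_2, Q_2}$ were two distinct maximizers, one would form an interpolated ellipsoid whose defining matrix is $\tfrac{1}{2}(Q_1 + Q_2)$ centered at $\tfrac{1}{2}(c_1 + c_2)$; this interpolant is still contained in $\mathcal{A}$ by convexity, and strict concavity of $\det^{1/n}$ forces strictly larger volume unless $Q_1 = Q_2$ and $c_1 = c_2$. I expect the main obstacle to be the sharp dimension factor $n$ in the dilation: the variational step requires a careful determinant computation showing that $|x_0| > n$ is exactly the threshold at which the elongation--contraction deformation becomes volume-increasing, which is what pins down the constant. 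All routine details are in the reference \cite{dG76} already cited by the authors.
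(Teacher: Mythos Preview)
The paper does not actually prove this lemma; it simply states it as a classical result and refers the reader to \cite{dG76}. Your sketch is a correct outline of the standard L\"owner--John argument (existence by compactness, the $n$-factor via a variational deformation of the maximal inscribed ellipsoid, uniqueness via strict concavity of $\det^{1/n}$), so you have supplied more than the paper does. One minor point: the paper's uniqueness clause is phrased in terms of the \emph{smallest} ellipsoid whose $n$-dilation contains $\mathcal{A}$, which is the dual (L\"owner) characterization rather than the maximal-inscribed one you work with; you note that the two are equivalent, but the equivalence is not entirely trivial and would need a line or two if you were writing this out in full.
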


    To state the main result of this section we need one more concept, the ``dilation'' with respect to a point $\xbar\in\outertarget$ of a set. 

    \begin{DEF}\label{def: dilations}
%
         Suppose that $\arbitrary\subset\outerdom$ is a set of positive measure that is $c$-convex with respect to some $\xbar\in\outertarget$. Then, its \emph{dilation by $\kappa>0$ with respect to $\xbar$} is defined as the following:
         \begin{equation*}
              \left(\kappa \arbitrary\right)^{\xbar}:=\cExp{\xbar}{p_{cm}+\kappa\left(\coord{\arbitrary}{\xbar}-p_{cm}\right)},
         \end{equation*}
         where $p_{cm}$ is the center of mass of  $\coord{\arbitrary}{\xbar}$. 
    \end{DEF}
The main focus of this section is the following estimate, which is the analogue of the lower bound~\eqref{eqn: euclidean sharp growth}. We stress here that this estimate does not require the sublevel set $\{u\leq \mountain\}$ to be compactly contained in $\outerdom$.
	\begin{lem}[Sharp growth estimate]\label{lem: sharp growth estimate}
		 Let $u$ be a $c$-convex function on $\outerdom$ and let $\mountain$ be a $c$-function with focus $\xbar\in\outertarget$, such that $\sublevelset := \{ u \leq \mountain \}$ has positive measure. Then for any set $\arbitrary$ which is $c$-convex with respect to $\xbar$ and satisfying (with $M$ as in condition~\eqref{QConv})
		 \begin{equation*}
		      \left (2M \arbitrary \right)^{\xbar} \subset \sublevelset,	
		 \end{equation*}
         we have the inequality
		 \begin{equation*}
		      \sup_{\sublevelset} {(\mountain-u)^n} \geq C^{-1}\Leb{\arbitrary}\Leb{\partial_cu(\arbitrary)}.	
		 \end{equation*}
		 where $C>0$ is a universal constant.
    \end{lem}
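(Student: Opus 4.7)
The plan is to adapt Caffarelli's classical strategy: sandwich $\partial_c u(\arbitrary)$ between a suitable $c$-polar dual whose Lebesgue measure can be estimated using John's lemma in the $\xbar$-coordinates. If $M_0:=\sup_{\sublevelset}(\mountain-u)=0$ both sides of the desired inequality vanish, so assume $M_0>0$, let $p_{cm}$ be the center of mass of the convex set $\coord{\arbitrary}{\xbar}\subset\outerdomcoord{\xbar}$, and set $x_{cm}:=\cExp{\xbar}{p_{cm}}\in\arbitrary$. The goal is to show $|\partial_cu(\arbitrary)|\leq C\,M_0^n/|\arbitrary|$.

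Fix $\xbar_1\in\partial_cu(\arbitrary)$ and pick a supporting $c$-function $\mountain_1$ with focus $\xbar_1$ at some $x_1\in\arbitrary$. Since $\mountain_1\leq u\leq\mountain$ throughout $\sublevelset$ and $x_1\in\arbitrary\subset(2M\arbitrary)^{\xbar}\subset\sublevelset$, one reads off $(\mountain-\mountain_1)(x_1)\leq M_0$ and $(\mountain-\mountain_1)\geq 0$ on $\partial(2M\arbitrary)^{\xbar}$; by Definition~\ref{def: c-polar dual} this is exactly
\[ \xbar_1\in((2M\arbitrary)^{\xbar})^c_{x_1,\mountain,M_0}. \]
Working in $\xbar$-coordinates---where $\coord{(2M\arbitrary)^{\xbar}}{\xbar}$ is convex with center of mass $p_{cm}$, and $\coord{\arbitrary}{\xbar}$ sits safely inside its John ellipsoid (Lemma~\ref{lem: john}) thanks to the factor $2M$---condition~\eqref{QConv}, applied via Remark~\ref{rem: H4 remark} in its $x$-segment form to the function $y\mapsto(\mountain_1-\mountain)(y)$ (which is quantitatively quasiconvex in the Euclidean $\xbar$-coordinates), lets one transfer the center of the polar dual from the floating $x_1$ to the fixed $x_{cm}$ at the cost of only a universal multiplicative constant in the height:
\[ \partial_cu(\arbitrary)\subset((2M\arbitrary)^{\xbar})^c_{x_{cm},\mountain,C_1M_0}. \]

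By Lemma~\ref{lem: map of cones} the right-hand side equals $\partial_c K(x_{cm})$ for the $c$-cone $K=K_{x_{cm},\mountain,(2M\arbitrary)^{\xbar},C_1M_0}$. Pulling this set back to $\outertargetcoord{x_{cm}}$ via $\xbar_1\mapsto-Dc(x_{cm},\xbar_1)$ and Taylor-expanding using the $C^3$ control of $c$, one sees it is comparable (up to the universal constants of Remark~\ref{rem: universal constants}) to the Euclidean polar dual of $\coord{(2M\arbitrary)^{\xbar}}{\xbar}$ at $p_{cm}$ with height of order $M_0$. A standard computation for polar duals of convex bodies with controlled John ellipsoid then gives
\[ \bigl|((2M\arbitrary)^{\xbar})^c_{x_{cm},\mountain,C_1M_0}\bigr|\leq C_2\,\frac{M_0^n}{|(2M\arbitrary)^{\xbar}|}=C_2\,(2M)^{-n}\,\frac{M_0^n}{|\arbitrary|}, \]
and combining with the inclusion above yields $|\partial_cu(\arbitrary)|\leq C\,M_0^n/|\arbitrary|$, i.e.\ the claim. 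The main obstacle is the center-transfer in the previous paragraph: the naive union $\bigcup_{x_1\in\arbitrary}((2M\arbitrary)^{\xbar})^c_{x_1,\mountain,M_0}$ is too coarse to control directly (in the Euclidean model it can fill all of ambient space), and~\eqref{QConv} together with the precisely-tuned dilation factor $2M$ is what collapses this union into a single $c$-polar dual whose volume John's lemma can bound.
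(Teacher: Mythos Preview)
Your overall architecture matches the paper's: trap $\partial_c u(\arbitrary)$ in a $c$-polar dual centered at $x_{cm}$, then dominate its volume by $M_0^n/|\arbitrary|$ via a Euclidean polar-dual/John's-lemma estimate. The paper carries this out through Lemmas~\ref{lem: cone comparison}, \ref{lem: dual upper bound}, and \ref{lem: dual bounds}. However, two of your steps are not justified as written.

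\textbf{The center transfer.} You assert that \eqref{QConv} ``in its $x$-segment form'' moves the center from $x_1$ to $x_{cm}$, but you do not say which $c$-segment is used or how $M_0$ enters the bound. With base $(2M\arbitrary)^{\xbar}$, the inequality you need reduces to $(\mountain-\mountain_1)(x_{cm})\le C_1M_0$, since $(\mountain_1-\mountain)\le 0$ on $\partial(2M\arbitrary)^{\xbar}\subset\sublevelset$. This bound is true but not immediate: one must run the $c$-segment from $x_{cm}$ through $x_1$ out to a point of $\partial\sublevelset$, use that $x_1$ sits at parameter $\le(2M)^{-1}$ because $(2M\arbitrary)^{\xbar}\subset\sublevelset$, and then invoke \eqref{QConv} together with $\mountain_1\le u$ and $u\le\mountain$ on $\sublevelset$. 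This is precisely the content of (the second half of) the paper's Lemma~\ref{lem: cone comparison}; your one-line gesture does not supply it. (Incidentally, your remark that $\coord{\arbitrary}{\xbar}$ sits inside the John ellipsoid of $\coord{(2M\arbitrary)^{\xbar}}{\xbar}$ is unjustified unless $2M\ge n$, and in any case is not what the argument uses.)

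\textbf{The Taylor step is a genuine gap.} You propose to pass from the $c$-polar dual to the Euclidean polar dual by ``Taylor-expanding using the $C^3$ control of $c$.'' This does not work: expanding the defining inequality of the $c$-polar dual, either in $\pbar_1-\pbar_0$ or in $p_{(\xhat,\xbar)}-p_{cm}$, leaves a second-order remainder of size comparable to $|\pbar_1-\pbar_0|^2$ or $|\pbar_1-\pbar_0|\cdot\mathrm{diam}(\arbitrary)^2$, neither of which is $O(M_0)$. When $\coord{\arbitrary}{\xbar}$ is thin the Euclidean polar dual with height $M_0$ is long, so $|\pbar_1-\pbar_0|$ is \emph{not} a priori small, and the error swamps $M_0$. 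The paper avoids this entirely by a second use of \eqref{QConv} (Lemma~\ref{lem: dual upper bound}): from $(\mountain_t-\mountain_0)(y)\le Mt(\mountain_1-\mountain_0)(y)_+\le Mt\lambda$ one divides by $t$ and lets $t\to 0^+$ to get the exact linear inequality $\langle \transformadj{x_{cm}}{\xbar}(p_{(y,\xbar)}-p_{cm}),\pbar_1-\pbar_0\rangle\le M\lambda$ with no remainder. Thus \eqref{QConv} is used \emph{twice} in the proof---once for the center transfer and once to linearize the $c$-polar dual---and your proposal misses the second use.
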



    The proof of Lemma~\ref{lem: sharp growth estimate} will itself be divided into a number of shorter lemmas. First, we relate the image of $\arbitrary$ under the $c$-subdifferential of $u$ and the $c$-subdifferential of a $c$-cone with base $\arbitrary$. (see Figure~\ref{figure: section 3a} below).
\begin{figure}[H]
  \centering
    \includegraphics[height=.5\textwidth]{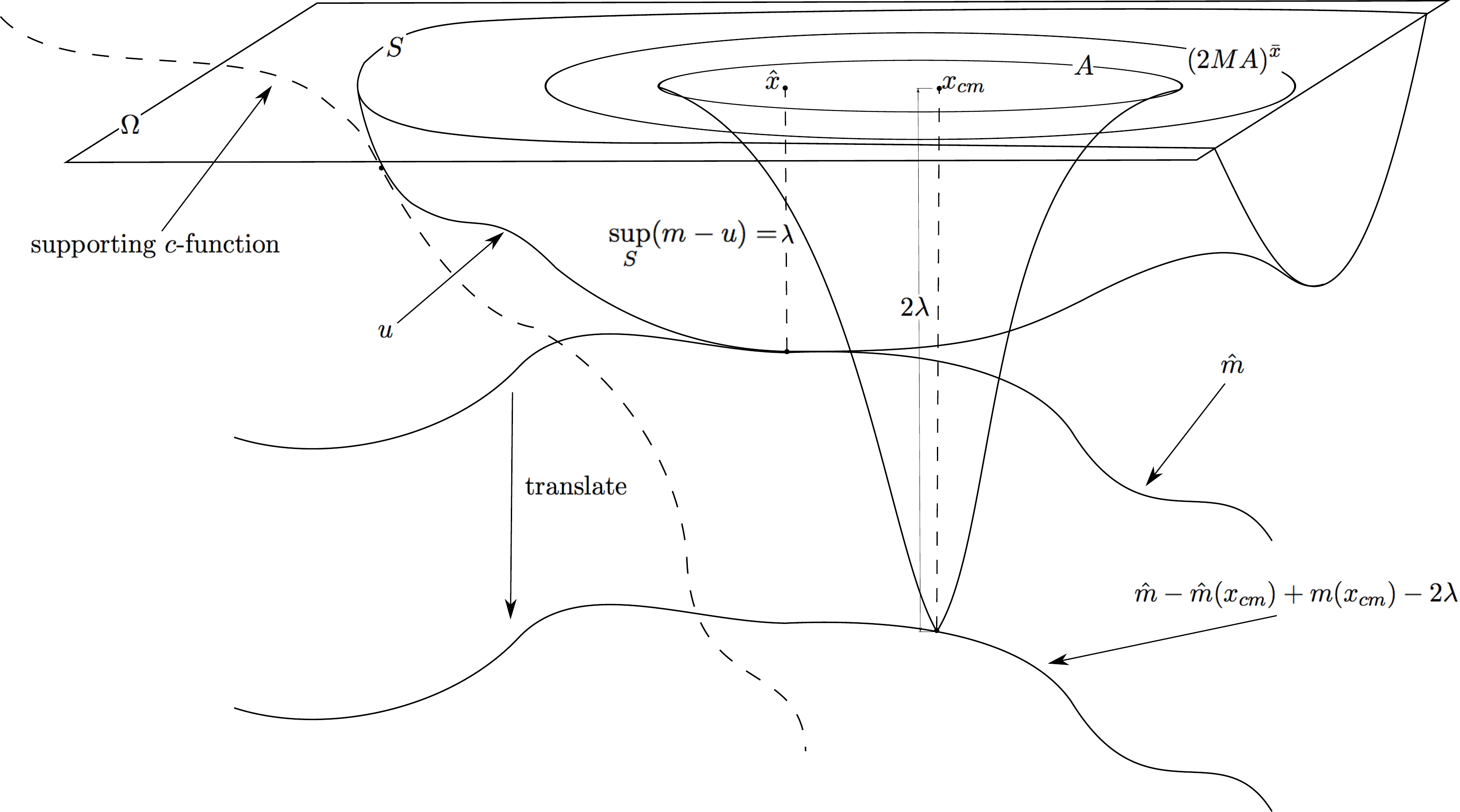}
     \caption{We can take a $c$-function supporting to $u$ somewhere in $\arbitrary$ and show that a vertical translate supports the $c$-cone at the vertex. This can be done as long as $\arbitrary$ is far enough from $\partial \sublevelset$ (note that we do not require $\sublevelset$ to stay away from $\partial \outerdom$).}\label{figure: section 3a}
\end{figure}
    \begin{lem}\label{lem: cone comparison}
         With the same notation as Lemma~\ref{lem: sharp growth estimate} above, let us write 
         \begin{equation*}
              \lambda:=\sup \limits_{\sublevelset} (\mountain-u)>0,
         \end{equation*}   
         and let $x_{cm}$ be such that $p_{(x_{cm},\xbar)}$ is the center of mass of $\arbitrary$. Then if $(2M\arbitrary)^{\xbar} \subset \sublevelset$, we have that 
         \begin{equation*}
              \partial_c u(\arbitrary) \subset \partial_c K_{x_{cm},\mountain, \arbitrary, 2\lambda}(x_{cm}).
         \end{equation*}
    \end{lem}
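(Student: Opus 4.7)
The plan is to reduce the inclusion, via Lemma~\ref{lem: map of cones}, to showing $\partial_c u(\arbitrary)\subset \arbitrary^c_{x_{cm},\mountain,2\lambda}$; in turn this reduces to the following claim: for every $\xbar_*\in\partial_c u(x_*)$ with $x_*\in\arbitrary$ and every $\xhat\in\partial\arbitrary$,
\begin{equation*}
h(\xhat)-h(x_{cm})\leq 2\lambda, \qquad h(x):=\mountain_*(x)-\mountain(x),
\end{equation*}
where $\mountain_*$ denotes the $c$-function with focus $\xbar_*$ supporting $u$ from below at $x_*$. Note that rewriting the second inequality of~\eqref{QConv} in terms of $h$ says precisely that along a $c$-segment $z(s)$ with respect to $\xbar$ from $x_0$ to $x_1$,
\begin{equation*}
h(z(s))-h(x_0)\leq Ms\bigl[h(x_1)-h(x_0)\bigr]_+, \qquad s\in[0,1],
\end{equation*}
so~\eqref{QConv} is tailor-made to control $h$ along such $c$-segments.

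The central construction is as follows: for each $x\in \arbitrary^{\cl}$ define its ``$2M$-extrapolation''
\begin{equation*}
y_x:=\cExp{\xbar}{p_{cm}+2M(p_{(x,\xbar)}-p_{cm})},
\end{equation*}
which by hypothesis lies in $(2M\arbitrary)^{\xbar}\subset \sublevelset$. The $c$-segment with respect to $\xbar$ from $x_{cm}$ to $y_x$ passes through $x$ at parameter $s=1/(2M)$, so the inequality above yields
\begin{equation*}
h(x)-h(x_{cm})\leq \tfrac{1}{2}\bigl[h(y_x)-h(x_{cm})\bigr]_+; \tag{$\star$}
\end{equation*}
the factor $M\cdot\tfrac{1}{2M}=\tfrac{1}{2}$ is the whole reason for the choice of dilation $2M$. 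Observe that $h\leq 0$ everywhere on $\sublevelset$ since $\mountain_*\leq u$ globally while $u\leq \mountain$ on $\sublevelset$; in particular, as $\arbitrary\subset (2M\arbitrary)^{\xbar}\subset\sublevelset$, both $h(y_x)$ and $h(x_{cm})$ are nonpositive.

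Now I apply $(\star)$ twice. Setting $x=x_*$ and using $h(x_*)=u(x_*)-\mountain(x_*)\geq -\lambda$ (which follows from $\sup_{\sublevelset}(\mountain-u)=\lambda$ and $x_*\in\sublevelset$) yields
\begin{equation*}
-\lambda\leq h(x_*)\leq h(x_{cm})+\tfrac{1}{2}\bigl[-h(x_{cm})\bigr]=\tfrac{1}{2}h(x_{cm}),
\end{equation*}
so the auxiliary lower bound $h(x_{cm})\geq -2\lambda$ holds. Setting instead $x=\xhat\in\partial\arbitrary$ and combining this lower bound with $h(y_{\xhat})\leq 0$,
\begin{equation*}
h(\xhat)-h(x_{cm})\leq \tfrac{1}{2}\bigl[-h(x_{cm})\bigr]\leq \lambda \leq 2\lambda,
\end{equation*}
which is the desired inequality. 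The main obstacle is noticing that $(\star)$ on its own is useless without an a priori lower bound on $h(x_{cm})$; the two-step application---first at $x_*$ to manufacture the bound $h(x_{cm})\geq -2\lambda$, then at $\xhat$ to finish---is what makes the argument close, and it crucially exploits the extrapolation hypothesis $(2M\arbitrary)^{\xbar}\subset \sublevelset$ at both the point $x_*$ realizing the support and at arbitrary boundary points of $\arbitrary$.
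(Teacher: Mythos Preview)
Your proof is correct and follows essentially the same strategy as the paper's: both apply~\eqref{QConv} twice along $c$-segments from $x_{cm}$ (once through the touching point $x_*$ to bound $h(x_{cm})$ from below, once through $\xhat\in\partial\arbitrary$ to finish), exploiting the factor $M\cdot(2M)^{-1}=\tfrac{1}{2}$. Your presentation is somewhat cleaner---direct rather than by contradiction, and extrapolating to $y_x\in(2M\arbitrary)^{\xbar}$ instead of to a point of $\partial\sublevelset$---but the underlying mechanism is the same.
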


    \begin{proof}	
         Note that $x_{cm}\in\arbitrary^{\interior}$. Fix an arbitrary $c$-function $\mountainhat$ with focus $\xbarhat$ supporting to $u$ at $\xhat \in \arbitrary$. By definition,
         \begin{equation*}
              K_{x_{cm},\mountain,\arbitrary,2\lambda}(x_{cm})=\mountain(x_{cm})-2\lambda.
         \end{equation*}
         Then, to prove the lemma we only need to show that $\mountainhat-\mountainhat(x_{cm})+\mountain(x_{cm})-2\lambda$ is an admissible $c$-function in the definition of $K_{x_0, \mountain, \arbitrary, 2\lambda}$. In other words, we will show that
         \begin{equation*}
              \mountainhat(x)-\mountainhat(x_{cm})+\mountain(x_{cm})-2\lambda \leq \mountain(x) 
         \end{equation*}
         for all $x\in\partial\arbitrary$. 
         
         Suppose that the above inequality does not hold, then there is some $x \in\partial \arbitrary$ such that
         \begin{equation}\label{eqn: cone comparison contradiction}
              \mountainhat(x)-\mountainhat(x_{cm})+\mountain(x_{cm})-\mountain(x) >2\lambda.	
         \end{equation}

         Since $x\in \partial\arbitrary$ and $\left(2M\arbitrary\right)^{\xbar} \subset \sublevelset$ there exists some $x_1\in \partial \sublevelset$ and some $0\leq s_m \leq(2M)^{-1}$ such that $x=x(s_m)$, where $x(s)$ is  the $c$-segment with respect to $\xbar$ from $x_{cm}$ to $x_1$. Note that $M\geq 1$, hence $(2M)^{-1}< 1$. 
         The inequality~\eqref{eqn: cone comparison contradiction} can now be written as $\mountainhat(x(s_m))-\mountain(x(s_m))-(\mountainhat(x_{cm})-\mountain(x_{cm}))> 2\lambda>0$, hence by Remark~\ref{rem: quasiconvexity} we conclude that $\mountainhat(x_1)-\mountain(x_1)>\mountainhat(x_{cm})-\mountain(x_{cm})$. Thus we may use~\eqref{QConv} and Remark~\ref{rem: H4 remark} to obtain
         \begin{align*}
              0<\mountainhat(x(s_m))-\mountainhat(x_{cm})-(\mountain(x(s_m))-\mountain(x_{cm})) & \leq Ms_m(\mountainhat(x_1)-\mountainhat(x_{cm})-(\mountain(x_1)-\mountain(x_{cm})))\notag\\
              & \leq \tfrac{1}{2} \left [\mountainhat(x_1)-\mountain(x_1)-\mountainhat(x_{cm})+\mountain(x_{cm}) \right ]\notag\\
              & \leq \tfrac{1}{2} \left [u(x_1)-\mountain(x_1)-\mountainhat(x_{cm})+\mountain(x_{cm})\right ]\notag\\
              & \leq \tfrac{1}{2}(\mountain(x_{cm})-\mountainhat(x_{cm})).
         \end{align*}
         Here we have used that $s_m\leq (2M)^{-1}$, the fact that $\mountainhat$ is supporting to $u$, and the fact that $u(x_1)\leq\mountain(x_1)$ since $x_1\in\sublevelset$. Combining this with~\eqref{eqn: cone comparison contradiction} we see that
\begin{align}\label{eqn: upper bound for g}
              0<2\lambda &<  \tfrac{1}{2}(\mountain(x_{cm})-\mountainhat(x_{cm})).
         \end{align}
         
         Next consider 
$\xhat(s)$, the $c$-segment with respect to $\xbar_0$ from $x_{cm}$ to $\xhat_1$, where $\xhat_1 \in \partial \sublevelset$ is the unique point such that $\xhat$ lies on this segment (recall $\xhat$ is a point in $\arbitrary$ where $\mountainhat$ touches $u$ from below). As before, $\xhat \in \arbitrary$ and $\left(2M\arbitrary\right)^{\xbar} \subset \sublevelset$ implies $\xhat=\xhat( \hat s)$, for some $0\leq \hat s \leq(2M)^{-1}$. If $\mountainhat(\xhat)-\mountain(\xhat)\leq\mountainhat(x_{cm})-\mountain(x_{cm})$ we can calculate
         \begin{align*}
              \mountain(x_{cm})-\mountainhat(x_{cm})
              &=\mountain(\xhat)-\mountainhat(\xhat)\\
              &=\mountain(\xhat)-u(\xhat)\\
              & \leq \lambda,
         \end{align*}
         which would contradict~\eqref{eqn: upper bound for g}. On the other hand, suppose $\mountainhat(\xhat)-\mountain(\xhat)>\mountainhat(x_{cm})-\mountain(x_{cm})$. Again, by Remark~\ref{rem: quasiconvexity} we have $\mountainhat(\xhat_1)-\mountain(\xhat_1)>\mountainhat(x_{cm})-\mountain(x_{cm})$ and we use~\eqref{QConv} and Remark~\ref{rem: H4 remark} to obtain in a similar manner to~\eqref{eqn: upper bound for g} above,
         \begin{align*}
              \mountainhat(\xhat(\hat s))-\mountainhat(x_{cm})-(\mountain(\xhat(\hat s))-\mountain(x_{cm})) & \leq M\hat s ( \mountainhat( \xhat_1)-\mountainhat(x_{cm})-(\mountain( \xhat_1)-\mountain(x_{cm})))\\
              & \leq \tfrac{1}{2} [\mountainhat( \xhat_1)-\mountain( \xhat_1)-\mountainhat(x_{cm})+\mountain(x_{cm}) ]\\
              & \leq \tfrac{1}{2} [u( \xhat_1)-\mountain( \xhat_1)-\mountainhat(x_{cm})+\mountain(x_{cm})]\\
              & \leq \tfrac{1}{2} (\mountain(x_{cm})-\mountainhat(x_{cm})).
         \end{align*}
In particular, after rearranging terms
         \begin{align*}
         \tfrac{1}{2}(\mountain(x_{cm})-\mountainhat(x_{cm}))
              &\leq\mountain(\xhat)-\mountainhat(\xhat)\\
              &=\mountain(\xhat)-u(\xhat)\\
              &\leq \lambda,
         \end{align*}
         which again contradicts~\eqref{eqn: upper bound for g}. As a result,~\eqref{eqn: cone comparison contradiction} cannot hold and the lemma is proved. 
    \end{proof}

    Next, we need an estimate of the volume of the polar dual of a set (in a vector space), by the volume of the original set. This result is essentially a rougher version of the Blaschke-Santal{\'o} inequality (see~\cite[Section 7.4]{Sch93}).
    \begin{lem}\label{lem: dual bounds}
         Suppose $V$ is an $n$-dimensional Euclidean vector space and $\mathcal{\arbitrary}\subset V$ is a bounded, convex set with nonempty interior and center of mass $p_{cm}$. Also let $q_0 \in V^*$ and $\lambda>0$. Then,
	     \begin{equation*}
	          \Leb{\mathcal{\arbitrary}^*_{p_{cm},q_0,\lambda}}\leq C\Leb{\mathcal{\arbitrary}}^{-1}\lambda^n
	     \end{equation*}
	     for some universal $C>0$.
    \end{lem}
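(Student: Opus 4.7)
The plan is to reduce to the case of an ellipsoid by applying the John-Cordoba-Gallegos lemma (Lemma~\ref{lem: john}), then perform the computation on the ellipsoid explicitly using the linear structure of the polar dual operation.

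First, by translating, I would assume without loss of generality that $p_{cm}=0$ and $q_0=0$, so that
\begin{equation*}
\mathcal{\arbitrary}^*_{0,0,\lambda}=\{q\in V^*\mid \inner{q}{p}\leq \lambda,\ \forall\, p\in \mathcal{\arbitrary}\}.
\end{equation*}
By Lemma~\ref{lem: john}, there exists an ellipsoid $\mathcal{\ellipsoid}$ centered at $0$ (after our translation) such that $\mathcal{\ellipsoid}\subset \mathcal{\arbitrary}\subset n\mathcal{\ellipsoid}$. The first inclusion immediately gives the containment of polar duals in the reverse direction,
\begin{equation*}
\mathcal{\arbitrary}^*_{0,0,\lambda}\subset \mathcal{\ellipsoid}^*_{0,0,\lambda},
\end{equation*}
since any $q$ satisfying $\inner{q}{p}\leq \lambda$ on the larger set $\mathcal{\arbitrary}$ certainly satisfies it on $\mathcal{\ellipsoid}$. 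So it suffices to bound $\Leb{\mathcal{\ellipsoid}^*_{0,0,\lambda}}$.

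Next, writing $\mathcal{\ellipsoid}=T(B_1)$ for some invertible linear map $T:V\to V$ (where $B_1$ is the unit ball for some fixed inner product on $V$), the condition defining $\mathcal{\ellipsoid}^*_{0,0,\lambda}$ becomes $\inner{q}{Tv}\leq \lambda$ for all $v\in B_1$, i.e. $\lvert T^*q\rvert \leq \lambda$. Thus $\mathcal{\ellipsoid}^*_{0,0,\lambda}=(T^*)^{-1}(B_\lambda)$, which has Lebesgue volume
\begin{equation*}
\Leb{\mathcal{\ellipsoid}^*_{0,0,\lambda}}=\lvert\det T\rvert^{-1}\lambda^n\Leb{B_1}=\Leb{B_1}^2\Leb{\mathcal{\ellipsoid}}^{-1}\lambda^n,
\end{equation*}
since $\Leb{\mathcal{\ellipsoid}}=\lvert\det T\rvert\,\Leb{B_1}$. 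Finally, the inclusion $\mathcal{\arbitrary}\subset n\mathcal{\ellipsoid}$ yields $\Leb{\mathcal{\arbitrary}}\leq n^n\Leb{\mathcal{\ellipsoid}}$, so combining everything gives
\begin{equation*}
\Leb{\mathcal{\arbitrary}^*_{p_{cm},q_0,\lambda}}\leq \Leb{\mathcal{\ellipsoid}^*_{0,0,\lambda}}\leq n^n\Leb{B_1}^2\,\Leb{\mathcal{\arbitrary}}^{-1}\lambda^n,
\end{equation*}
which is the desired inequality with $C=n^n\Leb{B_1}^2$. This constant depends only on the dimension, hence is universal in the sense of Remark~\ref{rem: universal constants}. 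No step is a real obstacle; the only thing to be careful about is ensuring that the asymmetry of $\mathcal{\arbitrary}$ about $p_{cm}$ does not enter, which is exactly why John's lemma is invoked (it produces a centrally symmetric ellipsoid whose dilate covers $\mathcal{\arbitrary}$).
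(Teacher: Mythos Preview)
Your proof is correct and follows essentially the same approach as the paper: both apply John's lemma to trap $\mathcal{\arbitrary}$ between an ellipsoid and its dilate, use the inclusion $\mathcal{\ellipsoid}\subset\mathcal{\arbitrary}$ to reverse the containment under the polar dual, and then compute the polar dual of the ellipsoid via a linear change of variables. The only difference is cosmetic---you translate first to put $p_{cm}=0$ and $q_0=0$, whereas the paper carries these through explicitly---but the computations and the resulting constant are the same.
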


    \begin{proof}
    If we let $\mathcal{\ellipsoid}_0$ be the John's ellipsoid associated to $\mathcal{\arbitrary}$, there exists a linear transformation $T: V\to V$ such that 
    \begin{equation*}
              T(\mathcal{\ellipsoid}_0-p_{cm})=B_1(0)
         \end{equation*}
    and 
         \begin{equation}\label{eqn: determinant bound on T}
              \det{T}=\frac{\Leb{B_1(0)}}{\Leb{\mathcal{\ellipsoid}_0}}\leq \frac{C}{\Leb{\mathcal{\arbitrary}}}.
              \end{equation}
              Thus we can compute,
              \begin{align*}
              \mathcal{\arbitrary}^*_{p_{cm},q_0,\lambda } &=  \left \{ q \in V^* \mid \inner{q-q_0}{p-p_{cm}} \leq \lambda, \quad\forall\;p\in \mathcal{\arbitrary}\right \}\\
              &=\left \{ \qcheck \in V^* \mid\inner{\qcheck}{T^{-1}\pcheck} \leq \lambda, \quad\forall\;\pcheck\in T\mathcal{\arbitrary}-Tp_{cm}\right \}+q_0\\
              &=\left \{ \qcheck \in V^* \mid\inner{\lambda^{-1}(T^*)^{-1}\qcheck}{\pcheck} \leq 1, \quad\forall\;\pcheck\in T(\mathcal{\arbitrary}-p_{cm})\right\}+q_0\\
              & =\lambda T^*\left(\left(T(\mathcal{\arbitrary}-p_{cm}) \right)^*_{0,0,1}\right)+q_0\\
              &\subset \lambda T^*\left(\left(T\left(\mathcal{\ellipsoid}_0-p_{cm}\right)\right)^*_{0, 0, 1}\right)+q_0\\
              &=\lambda T^*\left(B_1(0)^*_{0, 0, 1}\right)+q_0\\
              &=\lambda T^*\left(B^*_1(0)\right)+q_0,
         \end{align*}
     where $B^*_1(0)$ is the unit ball in $V^*$.         

         Thus combining with~\eqref{eqn: determinant bound on T} we obtain
         \begin{align*}
              \Leb{\mathcal{\arbitrary}^*_{p_{cm},q_0,\lambda }}
              &\leq \Leb{\lambda T^*\left(B^*_1(0)\right)+q_0}\\
              &= \lambda^n\det{(T^*)}\Leb{B_1(0)}\\
              &\leq  C\lambda^n\Leb{\mathcal{\arbitrary}}^{-1}.
         \end{align*}
    \end{proof}

    We now show how~\eqref{QConv} can be used to relate the $c$-polar dual of a set with the usual polar dual (in the cotangent coordinates) of an ellipsoid contained in the set.

    \begin{lem}\label{lem: dual upper bound} 
         Suppose $u$ is $c$-convex, $\mountain$ is a $c$-function with focus $\xbar \in\outertarget$, $\arbitrary \subset \outerdom$ is $c$-convex with respect to $\xbar$, $x\in \arbitrary$, and $u\leq \mountain$ on $\arbitrary$. Then for any $\lambda>0$ we have
         \begin{equation*}
              \coord{\arbitrary^c_{x,\mountain,\lambda}}{x} \subset \left ( \transformadj{x}{\xbar}\coord{\arbitrary}{\xbar} \right )^*_{\qbar_{(x, \xbar)},\pbar_{(x,\xbar)},M\lambda}
         \end{equation*}
         where $M$ is the constant in~\eqref{QConv} and
         \begin{equation*}
              \qbar_{(x,\xbar)} := \transformadj{x}{\xbar}p_{(x,\xbar)}.
         \end{equation*}
    \end{lem}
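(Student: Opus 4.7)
The plan is to repackage the defining inequality of the $c$-polar dual in terms of one auxiliary function $\psi$, promote that inequality from $\partial\arbitrary$ to all of $\arbitrary$, and then differentiate the bound supplied by~\eqref{QConv} along a $c$-segment based at $x$.

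Fix $\xbar_\ast\in\arbitrary^c_{x,\mountain,\lambda}$ and set
\[
\psi(y):=-c(y,\xbar_\ast)+c(y,\xbar).
\]
Because $\mountain$ has focus $\xbar$, one has $\mountain(\xhat)-\mountain(x)=-c(\xhat,\xbar)+c(x,\xbar)$, so the defining inequality of $\arbitrary^c_{x,\mountain,\lambda}$ is exactly $\psi(\xhat)-\psi(x)\leq\lambda$ for every $\xhat\in\partial\arbitrary$. The set $\{\psi\leq\psi(x)+\lambda\}$ can be rewritten as $\{-c(\cdot,\xbar_\ast)\leq -c(\cdot,\xbar)+\psi(x)+\lambda\}$, i.e.\ as the sublevel set of a $c$-function below another $c$-function of focus $\xbar$, so Corollary~\ref{cor: c-convexity of sublevel sets} gives that it is $c$-convex with respect to $\xbar$. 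Passing to $\pbar$-coordinates based at $\xbar$, both $\arbitrary$ and this sublevel set become bounded convex subsets of $\cotanspMbar{\xbar}$; since a bounded convex set equals the convex hull of its boundary and the boundary of $\coord{\arbitrary}{\xbar}$ is contained in the sublevel set, we conclude $\psi(\xhat)-\psi(x)\leq\lambda$ for every $\xhat\in\arbitrary$.

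Now, for any such $\xhat$, consider the canonical $c$-segment
\[
x(s):=\cExp{\xbar}{(1-s)p_{(x,\xbar)}+s\,p_{(\xhat,\xbar)}},\qquad s\in[0,1],
\]
which lies in $\arbitrary$ by $c$-convexity with respect to $\xbar$. Applying the second inequality in~\eqref{QConv} with $\xbar_0=\xbar$, the ``$\xbar$'' of the statement replaced by $\xbar_\ast$, $x_0=x$, and $x_1=\xhat$ gives
\[
\psi(x(s))-\psi(x)\leq Ms\,\bigl(\psi(\xhat)-\psi(x)\bigr)_{+}\leq Ms\lambda,\qquad s\in[0,1].
\]
Dividing by $s$ and letting $s\to 0^{+}$ yields $\langle D\psi(x),\xbardot(0)\rangle\leq M\lambda$.

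Finally, I translate both factors back into the claimed polar-dual form. By Remark~\ref{rem: differential of c-exp}, $D_p\cExp{\xbar}{\cdot}$ at $p=p_{(x,\xbar)}$ equals $\transformadj{x}{\xbar}$, so
\[
\xbardot(0)=\transformadj{x}{\xbar}\bigl(p_{(\xhat,\xbar)}-p_{(x,\xbar)}\bigr)=\transformadj{x}{\xbar}p_{(\xhat,\xbar)}-\qbar_{(x,\xbar)}.
\]
On the other hand, writing $q:=-Dc(x,\xbar_\ast)\in\coord{\arbitrary^c_{x,\mountain,\lambda}}{x}$, we have $D\psi(x)=-Dc(x,\xbar_\ast)+Dc(x,\xbar)=q-\pbar_{(x,\xbar)}$. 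Substituting,
\[
\bigl\langle q-\pbar_{(x,\xbar)},\;\transformadj{x}{\xbar}p_{(\xhat,\xbar)}-\qbar_{(x,\xbar)}\bigr\rangle\leq M\lambda
\]
for every $\xhat\in\arbitrary$, which is precisely $q\in\bigl(\transformadj{x}{\xbar}\coord{\arbitrary}{\xbar}\bigr)^{*}_{\qbar_{(x,\xbar)},\,\pbar_{(x,\xbar)},\,M\lambda}$. The only delicate step is the boundary-to-interior promotion of the inequality $\psi-\psi(x)\leq\lambda$; the remainder is a direct derivative computation leveraging~\eqref{QConv} at the scale $s\to 0^{+}$.
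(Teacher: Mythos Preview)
Your argument is correct and reaches the same conclusion as the paper, but it is organized dually: the paper fixes $y\in\arbitrary$ and differentiates along the $c$-segment $\xbar(t)$ in $\outertarget$ (with respect to $x$) from $\xbar$ to $\xbar_1$, invoking the \emph{first} inequality in~\eqref{QConv}; you instead fix $\xbar_\ast$ and differentiate along the $c$-segment $x(s)$ in $\outerdom$ (with respect to $\xbar$) from $x$ to $\xhat$, invoking the \emph{second} inequality in~\eqref{QConv}. Both routes land on the same bilinear pairing $\langle q-\pbar_{(x,\xbar)},\,\transformadj{x}{\xbar}p_{(\xhat,\xbar)}-\qbar_{(x,\xbar)}\rangle\leq M\lambda$.

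Two small comments. First, the ``boundary-to-interior promotion'' you carry out is valid, but unnecessary: the inequality you are ultimately proving is affine in $p_{(\xhat,\xbar)}$, so once it holds for $\xhat\in\partial\arbitrary$ it automatically holds on the convex set $\coord{\arbitrary}{\xbar}$. (The paper in fact writes ``fix a $y\in\arbitrary$'' while only having the $c$-polar-dual inequality on $\partial\arbitrary$; implicitly the same linear extension is being used.) Second, a notational slip: the vector you compute is $\dot{x}(0)$, not $\xbardot(0)$, since your curve $x(s)$ lives in $\outerdom$. The identification $\dot{x}(0)=\transformadj{x}{\xbar}(p_{(\xhat,\xbar)}-p_{(x,\xbar)})$ via Remark~\ref{rem: differential of c-exp} is correct.
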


    \begin{proof}
         Fix some $\xbar_1\in \arbitrary^c_{x, \mountain, \lambda}$. Consider the family of $c$-functions, 
         \begin{equation*}
              \mountain_t(y):=-c(y, \xbar(t))+c(x, \xbar(t))
         \end{equation*}
         where $\xbar(t)$ is the $c$-segment with respect to $x$ going from $\xbar$ to $\xbar_1$. Since $c$ is $C^3$ on $\outerdom^{\cl}\times\outertarget^{\cl}$, for each $y\in\outerdom$, $\mountain_t(y)$ is a differentiable function of $t$ at $0$ and we may calculate,
         \begin{align*}
               \left.\tfrac{d}{dt}\mountain_t(y)\right\vert_{t=0}&=\inner{-\Dbar c(y, \xbar)+\Dbar c(x, \xbar)}{\xbardot(0)}\notag\\
              &=\inner{p_{(y, \xbar)}-p_{(x,\xbar)}}{\transform{x}{\xbar}(\pbar_{(x,\xbar_1)}-\pbar_{(x,\xbar)})}\notag\\
              &=\inner{\transformadj{x}{\xbar}p_{(y, \xbar)}-\qbar_{(x,\xbar)}}{\pbar_{(x,\xbar_1)}-\pbar_{(x,\xbar)}}.
         \end{align*}
         Thus to prove the claimed inclusion it suffices to show that $\left.\tfrac{d}{dt}\mountain_t(y)\right\vert_{t=0}\leq M\lambda$, for all  $y\in \arbitrary$. To this end, fix a $y\in\arbitrary$. Note that if $\left.\tfrac{d}{dt}\mountain_t(y)\right\vert_{t=0}\leq 0$ there is nothing to prove, so suppose that $\left.\tfrac{d}{dt}\mountain_t(y)\right\vert_{t=0}>0$. Then, by Remark~\ref{rem: quasiconvexity}, we have $\mountain_1(y)>\mountain_0(y)$. Hence, using~\eqref{QConv} and Remark~\ref{rem: H4 remark}, and recalling that $y \in \arbitrary$ and $\xbar_1 \in \arbitrary^c_{x, \mountain, \lambda}$, we obtain 
         \begin{align*}
              \frac{\mountain_t(y)-\mountain_0(y)}{t}&\leq M(\mountain_1(y)-\mountain_0(y))\\
              &=M[-c(y, \xbar_1)+c(x, \xbar_1)-(\mountain(y)-\mountain(x))]\\
              &\leq M\lambda
         \end{align*}
         for all $t\in (0, 1)$. Thus taking $t\to 0^+$ we obtain the desired inequality.
    \end{proof}

    \begin{proof}[Proof of Lemma \ref{lem: sharp growth estimate}]
         For the sake of brevity, in what follows we will use the notation
         \begin{align*}
              p:&= p_{(x, \xbar)},\;\; \pbar:=\pbar_{(x, \xbar)},\;\; \qbar:=\transformadj{x}{\xbar}p
         \end{align*}
         and 
\begin{align*}
\lambda:=\sup_{\sublevelset}{(\mountain-u)}. 
\end{align*}
         
         First note that if $\arbitrary^{\interior}=\emptyset$, since it is $c$-convex with respect to $\xbar$ we would have $\Leb{\arbitrary}=0$ by~\eqref{Nondeg}, immediately proving the lemma.
         
         Thus we, assume that $\arbitrary$ has nonempty interior, and we may apply John's Lemma (Lemma~\ref{lem: john}) to the set. Let $x_{cm}\in\outerdom$ be such that $p_{(x_{cm}, \xbar)}$ is the center of mass of $\arbitrary$. By assumption, $(2M \arbitrary)^{\xbar} \subset \sublevelset$, in which case combining Lemma~\ref{lem: cone comparison} with~\eqref{eqn: polar duals and cone subdifferential equality} from Lemma~\ref{lem: map of cones}, we have 
         \begin{equation*}
              \partial_cu(\arbitrary) \subset \partial_cK_{x_{cm},\mountain,\arbitrary,2\lambda}(x_{cm})=\arbitrary^c_{x_{cm},\mountain, 2\lambda},
         \end{equation*}
         and by combining this with Lemma~\ref{lem: dual upper bound} we obtain 
         \begin{align}
              \Leb{ \partial_cu (\arbitrary  )} & \leq C\Leb{\coord{\arbitrary^c_{x_{cm},\mountain,2\lambda}}{x_{cm}}}\notag\\
              &\leq C \Leb{\left ( \transformadj{x}{\xbar}(\coord{\arbitrary}{\xbar}) \right )^*_{\qbar,\pbar, M\lambda}}\label{eqn: lemma 3.10 inequality}.
         \end{align} 
         
         Finally, since $\transformadj{x}{\xbar}\left(\coord{\arbitrary}{\xbar}\right)$ has nonempty interior by~\eqref{Nondeg}, the point $\qbar$ is the center of mass of this set. Hence, we can apply Lemma~\ref{lem: dual bounds} to this set, with $p_{cm}=\qbar$ and $q_0=\pbar$ in the statement of the lemma, which yields (also using~\eqref{Nondeg}),
         \begin{align*}
         	   \Leb{\left ( \transformadj{x}{\xbar}(\coord{\arbitrary}{\xbar}) \right )^*_{\qbar,\pbar, M\lambda}}
              &\leq C\Leb{\transformadj{x}{\xbar}(\coord{\arbitrary}{\xbar})}^{-1}\lambda^n\\
              &\leq C\Leb{\arbitrary}^{-1}\lambda^n.
         \end{align*}
         Combining this with~\eqref{eqn: lemma 3.10 inequality} and rearranging terms, the lemma is proved.
    \end{proof}

\section{The Aleksandrov estimate}\label{section: aleksandrov estimate}

    In this section we extend the Aleksandrov estimate (generalizing~\eqref{eqn: euclidean aleksandrov}) to more general costs. Namely, we prove (see Definition~\ref{def: notations for aleksandrov} below where we introduce $\supsegment{\subcoord}{\e{}}$ and $\plane{\pm \w{}}{\subcoord}$)
        \begin{thm}\label{thm: aleksandrov}
         Suppose $u$ is a $c$-convex function, $\mountain$ a $c$-function with focus $\xbar\in\innertarget$, and $\sublevelset:=\{ u\leq \mountain \}$ is compactly contained in $\outerdom$. Let $x_0$ be any point in the interior of $\sublevelset$ and $\e{} \in \cotanspMbar{\xbar}$ be of unit length. Then for some universal $C>0$ we have
         \begin{equation*}
              (\mountain(x_0)-u(x_0))^n\leq \frac{C}{\supsegment{\subcoord}{\e{}}}\;\frac{d(p_{(x_0, \xbar)},\plane{\e{}}{\subcoord} \cup \plane{-\e{}}{\subcoord})}{d(\plane{\e{}}{\subcoord}, \plane{-\e{}}{\subcoord})} \; \Leb{\sublevelset}\Leb{\partial_cu(\sublevelset)}.
         \end{equation*}
    \end{thm}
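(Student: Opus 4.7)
The plan is to reduce the Aleksandrov-type estimate to a lower bound on the volume of a certain $c$-polar dual, and then exploit the genuine Euclidean convexity of $\subcoord$ in $\cotanspMbar{\xbar}$ afforded by Corollary~\ref{cor: c-convexity of sublevel sets}. Setting $h := \mountain(x_0) - u(x_0)$ and assuming $h>0$, introduce the $c$-cone $K := K_{x_0, \mountain, \sublevelset, h}$. Compact containment of $\sublevelset$ in $\outerdom$ forces $u = \mountain$ on $\partial\sublevelset$, and Lemma~\ref{lem: map of cones} then identifies $\partial_c K(x_0) = \sublevelset^c_{x_0, \mountain, h}$ and places this set inside $\partial_c u(\sublevelset)$. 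The theorem is thus reduced to showing that $\Leb{\sublevelset^c_{x_0, \mountain, h}}$ is bounded below by $C^{-1}h^n$ times the reciprocal of the geometric factors on the right-hand side (together with $1/\Leb{\sublevelset}$).

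Next, pass to cotangent coordinates at $\xbar$, where $\subcoord=\coord{\sublevelset}{\xbar}$ is a convex body in $\cotanspMbar{\xbar}$ containing $p_0 := p_{(x_0,\xbar)}$ in its interior. The Bishop-Phelps theorem (Theorem~\ref{thm: Bishop-Phelps}) guarantees that the subset of $\partial\subcoord$ with unique outer normal is dense, so in particular one can produce boundary points $\xhat_\pm \in \partial\sublevelset$ realizing the supporting hyperplanes $\plane{\pm\e{}}{\subcoord}$ normal to $\e{}$. At each such $\xhat_\pm$ the $c$-function $\mountain$ itself supports $u$, so any small $c$-tilt $-c(\cdot,\xbarhat)+\text{const}$ of $\mountain$ that remains below $u$ on all of $\partial\sublevelset$ produces a focus $\xbarhat \in \partial_c u(\xhat_\pm) \subset \partial_c u(\sublevelset)$; moreover, by the identification in Lemma~\ref{lem: map of cones}, such a $\xbarhat$ automatically lies in $\sublevelset^c_{x_0, \mountain, h}$.

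The core technical step is then a one-dimensional Aleksandrov-type bound in the direction prescribed by $\e{}$. Tracking how much a tilted $c$-function's value at $x_0$ drops relative to its value at $\xhat_\pm$ as its focus is moved by $t\,\transform{x_0}{\xbar}\e{}$ from $\pbar_{(x_0,\xbar)}$, condition~\eqref{QConv} (used in the spirit of Lemma~\ref{lem: dual upper bound} but now to produce a quantitative \emph{lower} bound) yields that foci of the form $\xbarhat = \cExp{x_0}{\pbar_{(x_0,\xbar)} + t\,\transform{x_0}{\xbar}\e{}}$ lie in $\sublevelset^c_{x_0, \mountain, h}$ for $|t|$ at least of order $h\,d(\plane{\e{}}{\subcoord},\plane{-\e{}}{\subcoord})/\bigl(d(p_0,\plane{\e{}}{\subcoord}\cup\plane{-\e{}}{\subcoord})\,\supsegment{\subcoord}{\e{}}\bigr)$. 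Combining this one-dimensional length with a transverse $(n-1)$-dimensional slice of $\sublevelset^c_{x_0, \mountain, h}$ whose measure is controlled via Lemma~\ref{lem: dual bounds} applied to the cross-section of $\subcoord$ perpendicular to $\e{}$ (of volume comparable to $\Leb{\sublevelset}/\supsegment{\subcoord}{\e{}}$), and invoking Fubini, assembles the required lower bound on $\Leb{\sublevelset^c_{x_0,\mountain,h}}$. The principal obstacle is executing the one-dimensional bound sharply in the large: without the small-diameter assumption of~\cite{FKM11} the $c$-polar and linear-polar duals must be compared over the full extent of $\sublevelset$, and this is precisely where~\eqref{QConv} (rather than its infinitesimal form~\eqref{A3w}) and the Bishop-Phelps approximation of $\partial\subcoord$ become indispensable.
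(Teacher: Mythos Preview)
Your reduction to a lower volume bound on $\sublevelset^c_{x_0,\mountain,h}\subset\partial_c u(\sublevelset)$ via Lemma~\ref{lem: map of cones} is exactly how the paper begins, and the intuition that \eqref{QConv} together with Bishop--Phelps should replace the small-diameter hypothesis is correct. However, the execution of the lower bound has two genuine gaps. First, the claim that ``the $c$-function $\mountain$ itself supports $u$'' at $\xhat_\pm\in\partial\sublevelset$ is false: on $\sublevelset$ we have $u\leq\mountain$, not $u\geq\mountain$, so $\mountain$ lies \emph{above} $u$ there. What you need instead is a $c$-function supporting $u$ from below at a boundary point whose focus lies in a controlled direction; this is not automatic and is precisely the content of Lemma~\ref{lem: perturbed plane}, where Bishop--Phelps is used not to realize $\plane{\pm\e{}}{}$ exactly but to find a nearby boundary point $x_{\e{}}$ with a nearby outer normal $\ehat{}$, and then the subdifferential of $u$ at $x_{\e{}}$ furnishes the supporting $c$-function.

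Second, and more seriously, the ``one-dimensional segment $\times$ transverse $(n-1)$-slice via Fubini'' scheme does not assemble a lower bound on $\Leb{\sublevelset^c_{x_0,\mountain,h}}$. Lemma~\ref{lem: dual bounds} is an \emph{upper} bound on polar duals, so it cannot produce a lower bound on any transverse slice; and even with a reverse Santal\'o-type inequality, the polar dual of a cross-section of $\subcoord$ bears no simple relation to a cross-section of $\sublevelset^c_{x_0,\mountain,h}$, so Fubini does not apply. The paper's route is structurally different: it completes $\e{}$ to a full basis $\{\e{i}\}_{i=1}^n$ adapted to the John ellipsoid of $\subcoord$ (Lemma~\ref{lem: trapping planes}), constructs for \emph{each} $i$ a $c$-function $\mountain_i$ with $\sublevelset\subset\{\mountain_i\leq\mountain\}$ and with the quantitative quotient bounds of Lemma~\ref{lem: good c-half spaces}, and then applies Lemma~\ref{lem: dual segments length} to produce $n$ line segments $\bar\gamma_i\subset\coord{\sublevelset^c_{x_0,\mountain,h}}{x_0}$ emanating from $\pbar_{(x_0,\xbar)}$, each of length $\gtrsim h/d(p_0,\plane{\e{i}}{})$. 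Since $\coord{\sublevelset^c_{x_0,\mountain,h}}{x_0}$ is convex, the convex hull of these $n$ segments sits inside it, and the volume bound~\eqref{eqn: convex hull size} on this simplex yields the estimate. Your one-dimensional step is essentially Lemma~\ref{lem: dual segments length} for $i=1$; what is missing is the analogous construction for the remaining $n-1$ directions and the convex-hull combination, which replaces the Fubini argument entirely.
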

    
    The proof will involve ``trapping'' the set $\sublevelset$ with sets of the form $\{\mountain_i\leq \mountain\}$, where $\mountain_i$ are conveniently chosen $c$-functions. Towards this end, we will establish a series of lemmas to construct an appropriate family of such $c$-functions. We start with some notation, which we will extensively use in this section and the next.

    \begin{DEF}\label{def: notations for aleksandrov}
         Suppose that $V$ is an inner product space, $\e{}\in V$ is nonzero, and $\mathcal{\arbitrary}\subset V$ is a convex set. Then, we define $\plane{\e{}}{\mathcal{\arbitrary}}$ to be the supporting hyperplane to $\mathcal{\arbitrary}$ where $\e{}$ is an outward pointing normal direction.

         If $\arbitrary\subset V$ is closed and convex, and $\e{}\in V$ nonzero, we define $\supsegment{A}{\e{}}$ as the maximum among all lengths of segments contained in $A$ that are parallel to $\e{}$.
\end{DEF}
\begin{DEF}\label{def: music}
         For $\e{}\in\cotanspMbar{\xbar}$ we denote its image under the \emph{musical isomorphism} by $\raisebar{\e{}}\in\tanspMbar{\xbar}$, uniquely determined by
         \begin{align*}
              \inner{\raisebar{\e{}}}{\w{}}= \gbar{\xbar}(\e{},\w{})
         \end{align*}
         for any $\w{}\in\cotanspMbar{\xbar}$. The point $\xbar$ will be understood from context.

    \end{DEF}

    \begin{rem}\label{rem: beer bet}
         Let $\mathcal{\arbitrary}$ be a convex subset of a vector space $V$ with inner product $(\cdot, \cdot)$ and $\e{}\in V$ is a unit normal vector, then the distance from a point $p_0\in \mathcal{\arbitrary}$ to $\plane{\e{}}{\mathcal{\arbitrary}}$ is given by
         \begin{align*}
              d(p_0, \plane{\e{}}{\mathcal{\arbitrary}}):=\sup_{p\in \mathcal{\arbitrary}^{\cl}}{(p-p_0, \e{})}=\sup_{p\in\partial \mathcal{\arbitrary}}{(p-p_0, \e{})}.
         \end{align*}

   \end{rem}

    For the remainder of this section, we assume that $u$ is a $c$-convex function and $\mountain$ is a $c$-function with focus $\xbar\in \innertarget$. Defining $\sublevelset:=\{u\leq \mountain\}$, we also fix a $x_0\in \sublevelset^{\interior}$ and assume $\sublevelset$ is compactly contained in $\outerdom$ (so in particular, $u=\mountain$ on $\partial \sublevelset$). We will also write 
    \begin{align*}
    p_0:=p_{(x_0, \xbar)},\ \pbar:=\pbar_{(x_0, \xbar)}.
    \end{align*}
     Additionally, we will mainly be concerned with supporting hyperplanes to the set $\subcoord$. Therefore, when the subscript in the notation $\plane{\e{}}{\mathcal{\arbitrary}}$ is omitted, it will be understood that the set in question is $\subcoord$, the point $\xbar$ involved will be clear from context.

    In the first lemma, we relate the distance from the point $p_0$ to the supporting hyperplane $\plane{\e{}}{}$ to $\subcoord$, with the difference between the original $c$-function $\mountain$ and a certain $c$-function $\mountain^{\e{}}_t$ defined in the lemma below. Note that in this proof, we require that $c$ is $C^3$.
%
%

    \begin{lem}\label{lem: dual segments length 2}
         For any unit vector $\e{}\in\cotanspMbar{\xbar}$ define the $c$-segment
         \begin{equation*}
              \xbar^{\e{}}(t):	= \cExp{x_0}{\pbar+t\transforminv{x_0}{\xbar}\raisebar{\e{}}}
         \end{equation*}
         and the family of $c$-functions
         \begin{equation*}
		      \mountain_t^{\e{}}(x):=-c(x,\xbar^{\e{}}(t))-\sup \limits_{y\in \sublevelset} (-c(y,\xbar^{\e{}}(t))-\mountain(y)).
		 \end{equation*}
Then, there is a universal $d>0$ such that both of these families are well-defined for $t\in[0, d]$. Moreover, there is a universal $C>0$ such that 
         \begin{equation}\label{eqn: mountain plane distance inequality}
              C^{-1}td(p_0,\plane{\e{}}{})\leq \mountain(x_0)-\mountain^{\e{}}_{t}(x_0)\leq C t(t+d(p_0,\plane{\e{}}{})),\;\;\forall\;t\in[0,d].
         \end{equation}
    \end{lem}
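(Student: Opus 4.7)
The plan is to rewrite $\mountain(x_0) - \mountain^{\e{}}_t(x_0)$ as the supremum of a one-parameter family $f_t$ that vanishes identically at $t=0$, compute its first derivative at zero, obtain the upper estimate via a Taylor expansion, and then obtain the lower estimate by bootstrapping Taylor through~\eqref{QConv}.

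Setting $f_t(y) := -c(y, \xbar^{\e{}}(t)) + c(x_0, \xbar^{\e{}}(t)) - (\mountain(y) - \mountain(x_0))$, the fact that $\mountain$ has focus $\xbar = \xbar^{\e{}}(0)$ forces $f_0 \equiv 0$, and a direct rearrangement of the definition of $\mountain^{\e{}}_t$ gives
\begin{equation*}
\mountain(x_0) - \mountain^{\e{}}_t(x_0) = \sup_{y \in \sublevelset} f_t(y).
\end{equation*}
Well-definedness for $t \in [0, d]$ with universal $d>0$ follows because $\innertarget$ is compactly contained in $\outertarget$ by~\eqref{Convdm}, so $\pbar$ lies at a universal positive distance from $\partial \outertargetcoord{x_0}$, while Remark~\ref{rem: universal constants} bounds $\transforminv{x_0}{\xbar}\raisebar{\e{}}$ universally; the supremum defining $\mountain^{\e{}}_t$ is attained because $\sublevelset$ is compact and $c$ continuous. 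Using Remark~\ref{rem: differential of c-exp} I would compute $\dot{\xbar}^{\e{}}(0) = \transform{x_0}{\xbar}\transforminv{x_0}{\xbar}\raisebar{\e{}} = \raisebar{\e{}}$, whereupon Definition~\ref{def: music} gives
\begin{equation*}
\partial_t f_t(y)\bigl|_{t = 0} = \langle p_{(y,\xbar)} - p_0, \raisebar{\e{}}\rangle = \gbar{\xbar}(\e{}, p_{(y,\xbar)} - p_0).
\end{equation*}
Since $\mountain$ has focus $\xbar$, Corollary~\ref{cor: c-convexity of sublevel sets} makes $\subcoord$ convex, and Remark~\ref{rem: beer bet} then yields $\sup_{y \in \sublevelset} \partial_t f_t(y)|_{t=0} = d(p_0, \plane{\e{}}{})$.

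For the upper bound, the $C^3$ regularity of $c$ together with Remark~\ref{rem: universal constants} provides a universal bound on $\partial_t^2 f_t(y)$ on $\sublevelset \times [0, d]$, so Taylor's theorem gives $f_t(y) \leq t\, \gbar{\xbar}(\e{}, p_{(y,\xbar)} - p_0) + C t^2$; taking the supremum produces the claimed bound $Ct(t + d(p_0, \plane{\e{}}{}))$. For the lower bound, I would pick $y^* \in \sublevelset$ so that $p_{(y^*, \xbar)}$ attains $d(p_0, \plane{\e{}}{})$ (existence by compactness of $\subcoord$). Taylor gives $f_\tau(y^*) \geq \tau d(p_0, \plane{\e{}}{})/2$ for all $\tau \leq d(p_0, \plane{\e{}}{})/(2C)$. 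If $t$ already satisfies this bound the lower estimate is immediate; otherwise set $s := d(p_0, \plane{\e{}}{})/(2Ct) \in (0, 1)$. The $c$-segment with respect to $x_0$ from $\xbar$ to $\xbar^{\e{}}(t)$, reparametrized on $[0,1]$, passes through $\xbar^{\e{}}(st)$ at parameter $s$, so~\eqref{QConv} applied with $x = y^*$ yields
\begin{equation*}
f_{st}(y^*) \leq Ms\,(f_t(y^*))_+.
\end{equation*}
Combining with the Taylor lower bound $f_{st}(y^*) \geq st\, d(p_0, \plane{\e{}}{})/2$ (valid because $st \leq d(p_0, \plane{\e{}}{})/(2C)$) and rearranging gives $f_t(y^*) \geq t\, d(p_0, \plane{\e{}}{})/(2M)$. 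The main obstacle is precisely this lower bound: the Taylor estimate alone is uniform only when $t \lesssim d(p_0, \plane{\e{}}{})$, and~\eqref{QConv} is exactly the tool that transfers the small-scale control at $st$ back to the full range $t \in [0, d]$.
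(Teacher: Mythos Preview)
Your proof is correct and follows essentially the same approach as the paper: both rewrite $\mountain(x_0)-\mountain^{\e{}}_t(x_0)$ as $\sup_{y\in\sublevelset} f_t(y)$, compute $\partial_t f_t(y)|_{t=0}=\gbar{\xbar}(\e{},p_{(y,\xbar)}-p_0)$, use a second-order Taylor expansion for the upper bound, and invoke~\eqref{QConv} for the lower bound. The only tactical difference is that the paper applies~\eqref{QConv} with parameter $h\in(0,1]$, divides by $h$, and sends $h\to 0^+$ to obtain $t\,\partial_t f_t(y)|_{t=0}\leq M\,f_t(y)$ directly for every $t$ and every $y$, thereby avoiding your case split between $t\lesssim d(p_0,\plane{\e{}}{})$ and $t\gtrsim d(p_0,\plane{\e{}}{})$; your bootstrap via a specific intermediate scale $st$ reaches the same conclusion with the same constant $1/(2M)$ but is slightly less direct.
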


    \begin{proof}
         First, by~\eqref{Nondeg} and the compactness of $\outerdom^{\cl}$ and $\outertarget^{\cl}$, the length of $\transforminv{x_0}{\xbar}\raisebar{\e{}}$ is controlled uniformly in the choice of $\e{}$, $x_0$, and $\xbar$. As a result, since $\innertarget$ is compactly contained in $\outertarget$ there exists some universal $d>0$ for which $\xbar(t)$ and hence $\mountain_t$ are well-defined on $[0, d]$.
         
         Then, if we define $\mountaintilde^{\e{}}_t(y):=\mountain^{\e{}}_t(y)-\mountain^{\e{}}_t(x_0)+\mountain(x_0)$, note that 
         \begin{align}
               \sup \limits_{y \in \sublevelset} (\mountaintilde^{\e{}}_t(y)-\mountaintilde^{\e{}}_0(y))&=\sup \limits_{y \in \sublevelset} (\mountain^{\e{}}_t(y)-\mountain^{\e{}}_t(x_0)-(\mountain^{\e{}}_0(y)-\mountain^{\e{}}_0(x_0)))\notag\\
               &=\sup \limits_{y \in \sublevelset} (-c(y,\xbar^{\e{}}(t))+c(x_0,\xbar^{\e{}}(t))-(-c(y,\xbar)+c(x_0,\xbar)))\notag\\
               &=\mountain(x_0)-\mountain^{\e{}}_t(x_0).\label{eqn: mountaintilde and sup}
         \end{align}
Fix some $y\in\sublevelset$, then for a fixed $0<t\leq d$ we may apply~\eqref{QConv} and Remark~\ref{rem: H4 remark} to $\mountaintilde^{\e{}}_{ht}(y)$, regarding $0< h\leq 1$ as the parameter, to obtain
         \begin{align*}
              \mountaintilde^{\e{}}_{ht}(y)-\mountaintilde^{\e{}}_0(y) &\leq Mh(\mountaintilde^{\e{}}_t(y)-\mountaintilde^{\e{}}_0(y)).
         \end{align*}
         Then dividing by $h$ and taking $h\to 0^+$ results in the inequality
         \begin{align*}
              t \left.\tfrac{d}{ds}\mountaintilde^{\e{}}_s(y)\right\vert_{s=0}&=\left.\tfrac{d}{dh}\mountaintilde^{\e{}}_{ht}(y)\right\vert_{h=0}\\
              &\leq M(\mountaintilde^{\e{}}_t(y)-\mountaintilde^{\e{}}_0(y))
         \end{align*}
         for any $0<t\leq d$. On the other hand,
         \begin{align*}
              \left.\tfrac{d}{ds}\mountaintilde^{\e{}}_s(y)\right\vert_{s=0}&=\inner{-\Dbar c(y, \xbar)+\Dbar c(x_0, \xbar)}{\xbardot^{\e{}}(0)}\notag\\
              &=\inner{p_{(y, \xbar)}-p_0}{\transform{x_0}{\xbar}\left(\transforminv{x_0}{\xbar}\raisebar{\e{}}\right)}\notag\\
              &=\innergbar[\xbar]{p_{(y, \xbar)}-p_0}{\e{}}.
         \end{align*}
         By taking a supremum over $y\in\sublevelset$ and using~\eqref{eqn: mountaintilde and sup} and Remark~\ref{rem: beer bet}, we find
         \begin{align*}
              d(p_0, \plane{\e{}}{})&=\sup_{p\in \subcoord}{\innergbar[\xbar]{p-p_0}{\e{}}}\\
              &=\sup_{y\in\sublevelset}{\left.\tfrac{d}{ds}\mountaintilde^{\e{}}_s(y)\right\vert_{s=0}}\\
              &\leq Mt^{-1}\sup_{y \in \sublevelset} (\mountaintilde^{\e{}}_t(y)-\mountaintilde^{\e{}}_0(y))\\
              &=Mt^{-1}(\mountain(x_0)-\mountain^{\e{}}_t(x_0)),
         \end{align*}
         in other words, for every $t$ we have
         \begin{equation*}
              \mountain(x_0)-\mountain^{\e{}}_t(x_0) \geq M^{-1}t d(p_0,\plane{\e{}}{}),
         \end{equation*}
         proving the lower bound in~\eqref{eqn: mountain plane distance inequality}.
         
         To prove the upper bound, note that thanks to \eqref{eqn: mountaintilde and sup} it suffices to obtain a bound
         \begin{equation*}
              \mountaintilde^{\e{}}_t(y)-\mountaintilde^{\e{}}_0(y) \leq Ct \left ( t+d(p_0,\plane{\e{} }{} )  \right ),\;\;\forall \; y\in\sublevelset,\;t\in[0, d].
         \end{equation*}
         We observe that $\mountaintilde^{\e{}}_t(y)-\mountaintilde^{\e{}}_0(y) = -c(y,\xbar^{\e{}}(t))+c(x_0,\xbar^{\e{}}(t))-\left (-c(y,\xbar)+c(x_0,\xbar) \right)$, and do a second order Taylor expansion in $t$ to get for some universal $C\geq 1$ (note here $C$ also depends on the $C^3$ norm of $c$),
         \begin{equation*}
         	  \mountaintilde^{\e{}}_t(y)-\mountaintilde^{\e{}}_0(y) \leq t \left.\tfrac{d}{ds}\right\vert_{s=0} \mountaintilde^{\e{}}_s(y)+Ct^2
         \end{equation*}
         but we already saw that $\left. \tfrac{d}{ds} \right \vert_{s=0}\mountaintilde^{\e{}}_s(y)=d(p_0, \plane{\e{}}{})$, and with this the upper bound is proved.
    \end{proof}	
     
    In what follows we will make use of the celebrated Bishop-Phelps Theorem from convex analysis, a proof of which can be found in \cite[Section 7.1]{AsiEll1980}.

    \begin{thm}[Bishop-Phelps]\label{thm: Bishop-Phelps}
    	 Let $\mathcal{\arbitrary}$ be a convex set in Euclidean space and $\e{}$  a unit vector, given $p \in \mathcal{\arbitrary}$ and $\rho \in (0,1)$ there is a unit vector $\ehat{}$ and a $p_{\e{}}\in\partial \mathcal{\arbitrary}$ such that $\plane{\ehat{}}{\mathcal{\arbitrary}}$ is supporting to $\mathcal{\arbitrary}$ at $p_{\e{}}$ and
         \begin{align*}
              \norm{p-p_{\e{}}} &\leq \rho^{-1} d(p,\plane{\e[]{}}{\mathcal{\arbitrary}}),\\
              \norm{\e[]{}-\ehat{}}&\leq 2\rho.	
         \end{align*}
    \end{thm}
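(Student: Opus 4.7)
The plan is to construct $p_{\e{}}$ as a maximal element of $\mathcal{\arbitrary}^{\cl}$ under a partial order determined by a cone tight around $\e{}$, and then obtain the supporting hyperplane $\plane{\ehat{}}{\mathcal{\arbitrary}}$ at $p_{\e{}}$ by Hahn-Banach separation. First I introduce the closed convex cone $K_\rho := \{w \mid \inner{\e{}}{w} \geq \rho \norm{w}\}$, consisting of vectors forming angle at most $\arccos \rho$ with $\e{}$, and declare $q \preceq q'$ iff $q' - q \in K_\rho$. Any $q \in p + K_\rho$ satisfies $\norm{q - p} \leq \rho^{-1} \inner{\e{}}{q - p}$, and when additionally $q \in \mathcal{\arbitrary}^{\cl}$ this is further bounded by $\rho^{-1} d(p, \plane{\e{}}{\mathcal{\arbitrary}})$; hence the set $\mathcal{\arbitrary}^{\cl} \cap (p + K_\rho)$ is compact, and $\inner{\e{}}{\cdot}$ attains a maximum there at some $p_{\e{}}$.

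Next I would verify $p_{\e{}} \in \partial \mathcal{\arbitrary}$ (if $p_{\e{}}$ were interior, then $p_{\e{}} + t \e{} \in \mathcal{\arbitrary} \cap (p + K_\rho)$ for small $t > 0$, contradicting maximality), and argue the key geometric consequence of maximality, namely $(p_{\e{}} + K_\rho^{\interior}) \cap \mathcal{\arbitrary} = \emptyset$: if $q \in \mathcal{\arbitrary}$ with $q - p_{\e{}} \in K_\rho^{\interior}$, then $q - p = (q - p_{\e{}}) + (p_{\e{}} - p) \in K_\rho + K_\rho \subset K_\rho$ by closure of the cone under addition, so $q$ competes in the maximization but $\inner{\e{}}{q} > \inner{\e{}}{p_{\e{}}}$, a contradiction.

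Applying Hahn-Banach separation to the disjoint convex sets $\mathcal{\arbitrary}$ and $p_{\e{}} + K_\rho^{\interior}$ then produces a unit vector $\ehat{}$ for which $\inner{\ehat{}}{q - p_{\e{}}} \leq 0$ for every $q \in \mathcal{\arbitrary}$ (so that $\plane{\ehat{}}{\mathcal{\arbitrary}}$ supports $\mathcal{\arbitrary}$ at $p_{\e{}}$) and $\inner{\ehat{}}{w} \geq 0$ for every $w \in K_\rho$, i.e., $\ehat{}$ lies in the polar cone of $K_\rho$.

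Finally, the two estimates drop out by direct computation. Since $p_{\e{}} - p \in K_\rho$, the defining inequality yields $\norm{p_{\e{}} - p} \leq \rho^{-1} \inner{\e{}}{p_{\e{}} - p} \leq \rho^{-1} d(p, \plane{\e{}}{\mathcal{\arbitrary}})$. Since the polar cone of $K_\rho$ consists of vectors within angle $\arcsin \rho$ of $\e{}$, any unit $\ehat{}$ in it satisfies $\inner{\ehat{}}{\e{}} \geq \sqrt{1 - \rho^2}$, whence $\norm{\ehat{} - \e{}}^2 = 2(1 - \inner{\ehat{}}{\e{}}) \leq 2(1 - \sqrt{1 - \rho^2}) \leq 4\rho^2$; the last step reduces to $\sqrt{1 - \rho^2} \geq 1 - 2\rho^2$, which holds trivially for $\rho^2 \geq 1/2$ and follows by squaring otherwise. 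The only real obstacle is calibrating the cone aperture so that the \emph{primal} bound $\norm{w} \leq \rho^{-1} \inner{\e{}}{w}$ on $K_\rho$ and the \emph{dual} angle bound $\norm{\ehat{} - \e{}} \leq 2\rho$ on its polar simultaneously match the prescribed constants; this is exactly what forces the specific choice $K_\rho = \{\inner{\e{}}{w} \geq \rho \norm{w}\}$.
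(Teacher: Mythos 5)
Your proof is correct. Note, though, that the paper does not prove this statement at all: it is quoted as a classical result with a pointer to the convex-analysis literature (\cite[Section 7.1]{AsiEll1980}), so there is no in-paper argument to compare against. What you have written is essentially the standard Bishop--Phelps cone construction, specialized to finite dimensions: you take the circular cone $K_\rho=\{w\mid \inner{\e{}}{w}\geq\rho\norm{w}\}$, maximize $\inner{\e{}}{\cdot}$ over $\mathcal{\arbitrary}^{\cl}\cap(p+K_\rho)$ (compactness here replaces the Zorn's lemma/completeness argument needed in Banach spaces), and separate $\mathcal{\arbitrary}$ from $p_{\e{}}+K_\rho^{\interior}$; the primal bound $\norm{w}\leq\rho^{-1}\inner{\e{}}{w}$ on $K_\rho$ and the dual angle bound on its polar cone are both computed correctly, including the elementary inequality $2(1-\sqrt{1-\rho^2})\leq 4\rho^2$. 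Two small points you gloss over but which are easily filled: (i) the whole statement implicitly assumes $\mathcal{\arbitrary}$ is bounded in the direction $\e{}$, so that $\plane{\e{}}{\mathcal{\arbitrary}}$ exists and $d(p,\plane{\e{}}{\mathcal{\arbitrary}})<\infty$; this is exactly what makes your truncated set compact, and it holds in the paper's application, where $\mathcal{\arbitrary}$ is the bounded set $\subcoord$; (ii) in the separation step one should observe that $p_{\e{}}$ lies in the closure of both $\mathcal{\arbitrary}$ and $p_{\e{}}+K_\rho^{\interior}$, which pins the separation constant at $\inner{\ehat{}}{p_{\e{}}}$ and yields simultaneously that $\plane{\ehat{}}{\mathcal{\arbitrary}}$ passes through $p_{\e{}}$ and that $\ehat{}$ lies in the polar cone of $K_\rho$. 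With these routine details made explicit, your argument is a complete, self-contained proof of the theorem as used in the paper.
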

    We now adapt the Bishop-Phelps theorem to the framework of $c$-convex geometry.
    \begin{lem}\label{lem: perturbed plane}
    	 For any  unit length $\e{}$ we can find a $x_{\e{}} \in \partial \sublevelset$ and a $c$-function $\mountainhat^{\e{}}$ with focus $\xbar_v\in\outertarget$ which is supporting to $u$ at $x_{\e{}}$ and such that 
         \begin{align}
             \gbarnorm[\xbar]{p_0-p_{(x_{\e{}},\xbar)}} &\leq \rho^{-1} d(p_0,\plane{\e{}}{}), \label{eqn: close to touching point}\\
             \gbarnorm[\xbar]{\e{}-\ehat{}}&\leq 2\rho\label{eqn: normal is close},
         \end{align}
         where $\ehat{}$ is the outer unit normal to $\coord{\{ \mountainhat^{\e{}} \leq \mountain\}}{\xbar}$ at $p_{(x_{\e{}},\xbar)}$ and  $\xbar_{\e{}}$ is given (for some $\lambda>0$) by        
         \begin{align*}
              \xbar_{\e{}}:= \cExp{x_{\e{}}}{\pbar_{(x_{\e{}}, \xbar)}+\lambda \transforminv{x_{\e{}}}{\xbar}{\raisebar{\ehat{}}}}.
         \end{align*}
    \end{lem}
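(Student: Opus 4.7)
The strategy is to combine Bishop-Phelps (Theorem~\ref{thm: Bishop-Phelps}) applied to $\subcoord$ with the infinitesimal identity from the proof of Lemma~\ref{lem: dual segments length 2}. First, by Corollary~\ref{cor: c-convexity of sublevel sets}, $\sublevelset$ is $c$-convex with respect to $\xbar$, so $\subcoord$ is a convex subset of $(\cotanspMbar{\xbar},\gbar{\xbar})$, and it contains $p_0=p_{(x_0,\xbar)}$ in its interior since $x_0\in \sublevelset^{\interior}$. I apply Bishop-Phelps to this convex set, the unit vector $\e{}$, and the parameter $\rho$, obtaining a boundary point $p_\e{}\in\partial\subcoord$ and a unit vector $\ehat{}\in\cotanspMbar{\xbar}$ such that $\plane{\ehat{}}{\subcoord}$ supports $\subcoord$ at $p_\e{}$, with $\gbarnorm[\xbar]{p_0-p_\e{}}\leq\rho^{-1}d(p_0,\plane{\e{}}{})$ and $\gbarnorm[\xbar]{\e{}-\ehat{}}\leq 2\rho$. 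Setting $x_\e{}:=\cExp{\xbar}{p_\e{}}$, the invertibility of $\coord{\cdot}{\xbar}$ guarantees $x_\e{}\in\partial\sublevelset$ and $p_{(x_\e{},\xbar)}=p_\e{}$; the first displayed inequality of the lemma is then immediate, and $u(x_\e{})=\mountain(x_\e{})$ holds because $\sublevelset$ is compactly contained in $\outerdom$.

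Next, I construct $\mountainhat^\e{}$ with focus along the distinguished $c$-ray adapted to $\ehat{}$. For $\lambda\geq 0$, set $\xbar_\e{}(\lambda):=\cExp{x_\e{}}{\pbar_{(x_\e{},\xbar)}+\lambda\transforminv{x_\e{}}{\xbar}\raisebar{\ehat{}}}$ and, writing $\mountain^{(\lambda)}(x):=-c(x,\xbar_\e{}(\lambda))+C_\lambda$ with $C_\lambda:=\inf_{y\in\outerdom^{\cl}}\!\bigl(u(y)+c(y,\xbar_\e{}(\lambda))\bigr)$, let $\mountain^{(\lambda)}$ be the maximal $c$-function with focus $\xbar_\e{}(\lambda)$ lying below $u$. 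Note that $\xbar_\e{}(0)=\xbar$, so $\mountain^{(0)}$ is a translate of $\mountain$, with touching set sitting inside $\sublevelset^{\interior}$ near the argmin of $u-\mountain$. The claim is that for some $\lambda^*>0$ the touching set of $\mountain^{(\lambda^*)}$ with $u$ contains $x_\e{}$; putting $\mountainhat^\e{}:=\mountain^{(\lambda^*)}$ and $\xbar_\e{}:=\xbar_\e{}(\lambda^*)$ then produces a supporting $c$-function at $x_\e{}$ with focus of exactly the required form.

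Given such $\lambda^*$, the second displayed inequality follows quickly. Indeed, by construction $\pbar_{(x_\e{},\xbar_\e{})}-\pbar_{(x_\e{},\xbar)}=\lambda^*\transforminv{x_\e{}}{\xbar}\raisebar{\ehat{}}$, and the same computation as in the proof of Lemma~\ref{lem: dual segments length 2} shows that the $p$-differential at $p_\e{}$ of the map $p\mapsto\mountainhat^\e{}(\cExp{\xbar}{p})-\mountain(\cExp{\xbar}{p})$ equals $\transform{x_\e{}}{\xbar}(\pbar_{(x_\e{},\xbar_\e{})}-\pbar_{(x_\e{},\xbar)})=\lambda^*\raisebar{\ehat{}}$. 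Musically lowering in $\gbar{\xbar}$, the outer unit normal to $\coord{\{\mountainhat^\e{}\leq\mountain\}}{\xbar}$ at $p_\e{}$ is then exactly $\ehat{}$, and the second Bishop-Phelps estimate closes the lemma.

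The main obstacle is the sliding step: producing $\lambda^*>0$ whose touching set contains $x_\e{}$. The governing infinitesimal identity is that, at $\lambda=0$, the derivative of the vertical gap $\mountain^{(\lambda)}(y)-\mountain^{(\lambda)}(x_\e{})$ equals $\innergbar[\xbar]{\ehat{}}{p_{(y,\xbar)}-p_\e{}}$; since $\ehat{}$ is the outer normal to $\subcoord$ at $p_\e{}$, this expression is non-positive on $y\in\sublevelset^{\cl}$ and vanishes along the contact of $\plane{\ehat{}}{\subcoord}$ with $\subcoord$. Thus raising $\lambda$ from $0$ tips the $c$-function so that its touching point with $u$ drifts toward $\partial\sublevelset$ in the direction singled out by $\ehat{}$, and a continuity-plus-compactness argument on $\lambda\in[0,\infty)$ -- exploiting that $\outerdom^{\cl}$ is compact and that $C_\lambda$ varies continuously -- is what forces the touching set to meet $x_\e{}$ at some finite $\lambda^*>0$. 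This is precisely where the interplay between Bishop-Phelps (which couples the choice of boundary point with the choice of near-normal direction) and the $c$-segment structure becomes essential.
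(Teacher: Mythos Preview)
Your setup through Bishop--Phelps is exactly as in the paper, and your final verification that $\ehat{}$ is the outer unit normal to $\coord{\{\mountainhat^{\e{}}\leq\mountain\}}{\xbar}$ at $p_{(x_{\e{}},\xbar)}$ is also the same. The gap is precisely where you say it is: the ``sliding'' step. You need the ray $\lambda\mapsto \pbar_{(x_{\e{}},\xbar)}+\lambda\,\transforminv{x_{\e{}}}{\xbar}\raisebar{\ehat{}}$ to hit $\partial u(x_{\e{}})$ at some $\lambda^*>0$, and you try to obtain this by tracking the touching set of $\mountain^{(\lambda)}$ with $u$. The infinitesimal computation at $\lambda=0$ indeed shows $\partial_\lambda g(0,y)=-\innergbar[\xbar]{p_{(y,\xbar)}-p_{\e{}}}{\ehat{}}\geq 0$ for $y\in\sublevelset$, so the gap function $h(\lambda):=\min_y g(\lambda,y)$ is nondecreasing at $\lambda=0$. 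But this gives no mechanism forcing $h(\lambda^*)=0$ at a \emph{finite} $\lambda^*$ in the (bounded) interval where $\xbar_{\e{}}(\lambda)\in\outertarget$, nor does it force the minimiser $y_\lambda$ to ever coincide with $x_{\e{}}$ rather than some other boundary point on the supporting face $\plane{\ehat{}}{}\cap\subcoord$. A continuity-plus-compactness argument on $\lambda$ alone does not close this; you would need a much more delicate analysis (a topological/degree argument, or a monotonicity argument pinning down $x_{\e{}}$ specifically), and none is supplied.

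The paper bypasses the sliding entirely with a one-line convex-analysis fact. Write $U(p):=u(\cExp{\xbar}{p})-\mountain(\cExp{\xbar}{p})$; this is semi-convex and $\subcoord=\{U\leq 0\}$. Since $\ehat{}$ is an outer normal to $\{U\leq 0\}$ at $p_{\e{}}$ and $U(p_{\e{}})=0$, Rockafellar's identification of the normal cone to a convex sublevel set with the cone generated by the subdifferential (\cite[Corollary 23.7.1]{Roc70}), applied after adding a quadratic centred at $p_{\e{}}$ to make $U$ convex (which leaves $\partial U(p_{\e{}})$ and the normal cone at $p_{\e{}}$ unchanged), gives directly $\lambda\raisebar{\ehat{}}\in\partial U(p_{\e{}})$ for some $\lambda>0$. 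Unwinding via Remark~\ref{rem: differential of c-exp} yields $\pbar_{(x_{\e{}},\xbar)}+\lambda\,\transforminv{x_{\e{}}}{\xbar}\raisebar{\ehat{}}\in\partial u(x_{\e{}})$, and Corollary~\ref{cor: local to global} then produces the supporting $c$-function at $x_{\e{}}$ with exactly the required focus. This replaces your entire sliding paragraph with a single appeal to the subdifferential structure of the sublevel set, and is the missing idea in your argument.
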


    \begin{proof} 
	     We can apply Theorem~\ref{thm: Bishop-Phelps} to $p_0\in \subcoord$, the convex set $\subcoord$, and the supporting plane $\plane{\e{}}{}$ to obtain $p_{{\e{}}}:=p_{(x_{\e{}},\xbar)}\in \partial\subcoord$ and a unit vector $\ehat{}\in\cotanspMbar{\xbar}$ where 
         \begin{align*}
              \gbarnorm[\xbar]{p_0-p_{{\e{}}}} &\leq \rho^{-1} d(p_0,\plane{\e{}}{}),\\
              \gbarnorm[\xbar]{\e{}-\ehat{{\e{}}}}&\leq 2\rho,	
         \end{align*}
         and $\plane{\ehat{}}{}$ is supporting to $\subcoord$ at $p_{{\e{}}}$. 
         
         Now if we write $U(p):=u(exp^c_{\xbar}(p))-\mountain(exp^c_{\xbar}(p))$, we see that $U$ is a semi-convex function, and $\subcoord=\{U(p)\leq 0\}$. By applying adding an appropriate quadratic function to $U$ and applying~\cite[Corollary 23.7.1]{Roc70} (this corollary may be applied since we assume $\sublevelset$ is compactly contained in $\outerdom$) we can see there exists some $\lambda>0$ such that $\lambda \raisebar{\ehat{}}\in \partial U(p_{{\e{}}})$. Here we have made the identifications
         \begin{align*}
              \cotansp{p_{{\e{}}}}{\cotanspMbar{\xbar}}\cong\cotanspMbar{\xbar}\cong\tanspMbar{\xbar}
         \end{align*}
         under the metric $\gbar{\xbar}$. From the definition of $x_{{\e{}}}$ we see that
	     \begin{align*}
	         \transform{x_{\e{}}}{\xbar}\pbar_{(x_{{\e{}}}, \xbar)}+\lambda \raisebar{\ehat{}}&=-\left(D_{p_{{\e{}}}}exp^c_{\xbar}\right)^*Dc(exp^c_{\xbar}(p_{{\e{}}}),       \xbar)+\lambda \raisebar{\ehat{}}\\
	          &= \lambda \raisebar{\ehat{}}+D_p\mountain(exp^c_{\xbar}(p_{{\e{}}}))\\
	          &\in \partial(U(\cdot)+\mountain_0\circ \cExp{\xbar}{\cdot})(p_{\e{}})\\
	          &=\partial (u\circ \cExp{\xbar}{\cdot})(p_{{\e{}}})
	     \end{align*}
	     hence by Remark~\ref{rem: differential of c-exp},
	     \begin{equation*}
	         \pbar_{(x_{{\e{}}}, \xbar)}+\lambda \transforminv{x_{{\e{}}}}{\xbar}\raisebar{\ehat{}}\in \partial u(x_{{\e{}}}).
	     \end{equation*}
	     Then by Corollary~\ref{cor: local to global} we see that
	     \begin{equation*}
	          \xbar_{{\e{}}}:=\cExp{x_{{\e{}}}}{\pbar_{(x_{{\e{}}}, \xbar)}+\lambda_{i}\transforminv{x_{{\e{}}}}{\xbar}\raisebar{\ehat{{\e{}}}}}\in \partial_c u(x_{{\e{}}}),
	     \end{equation*}
	     hence the $c$-function defined by
	     \begin{equation*}
	          \mountainhat^{{\e{}}}(x):=-c(x, \xbar_{{\e{}}})+c(x_{{\e{}}}, \xbar_{i})+u(x_{{\e{}}})
	     \end{equation*}
	     is supporting to $u$ from below at $x_{{\e{}}}$.
	         
	     Since $x_{{\e{}}}\in\outerdom^{\interior}$, differentiating we see that $\ehat{}$ is the outer unit normal to $\coord{\{ \mountainhat^{\e{}} \leq \mountain\}}{\xbar}$ at $p_{x_{{\e{}}}}$.
    	 
    \end{proof}

    The next lemma is essentially an observation in convex analysis in Euclidean space. 

    \begin{lem}\label{lem: trapping planes}
         Given any unit length $\e{} \in \cotanspMbar{\xbar}$ it can be completed to a basis $\{\e{i}\}_{i=1}^n$ where $\e{1}=\e{}$ and $\{\e{i}\}_{i=2}^{n}$ is an orthonormal set in $\gbar{\xbar}$, satisfying
         \begin{align}\label{eqn: constructed basis is nondegenerate}
              \prod_{i=1}^n{\lambda_i}&\leq C \Leb{\ch{\{0, \lambda_i\e{i}\mid 1\leq i\leq n\}}}
         \end{align}
         for any collection of numbers $\lambda_i\geq 0$ and some universal $C>0$.
         At the same time, 
         \begin{equation}\label{eqn: S comparable to box}
              \Leb{\sublevelset} \geq C^{-1}\supsegment{\subcoord}{\e{}} \prod \limits_{i=1}^n d(\plane{\e{i}}{},\plane{-\e{i}}{})	
         \end{equation}
          (here $\supsegment{\subcoord}{\e{}}$ is as in Definition~\ref{def: notations for aleksandrov}).
    \end{lem}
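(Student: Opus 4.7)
The plan is to take $\e{2},\ldots,\e{n}$ to be a $\gbar{\xbar}$-orthonormal basis of the hyperplane $\e{1}^\perp\subset\cotanspMbar{\xbar}$ that is aligned with the principal axes of the John ellipsoid of the projection of $\subcoord$ onto $\e{1}^\perp$. Letting $\pi\colon\cotanspMbar{\xbar}\to\e{1}^\perp$ denote $\gbar{\xbar}$-orthogonal projection, apply Lemma~\ref{lem: john} to the convex body $\pi(\subcoord)\subset\e{1}^\perp$ and declare its principal semi-axes to be $\lambda_i\e{i}$ for $i=2,\ldots,n$. Since $\e{1}=\e{}$ is a unit vector perpendicular in $\gbar{\xbar}$ to every $\e{i}$ with $i\geq 2$, $\{\e{i}\}_{i=1}^n$ is actually an orthonormal basis and $|\det(\e{1},\ldots,\e{n})|=1$, so the simplex $\ch\{0,\lambda_i\e{i}\mid 1\leq i\leq n\}$ has Lebesgue measure $\tfrac{1}{n!}\prod_i\lambda_i$, immediately yielding~\eqref{eqn: constructed basis is nondegenerate} with $C=n!$.

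For~\eqref{eqn: S comparable to box} the key input is the standard Brunn-type observation that, by the convexity of $\subcoord$, the slice length $f(q):=\Leb{\subcoord\cap\pi^{-1}(q)}$ defines a nonnegative concave function on $\pi(\subcoord)$. Choose $q^*\in\pi(\subcoord)$ with $f(q^*)=\supsegment{\subcoord}{\e{1}}$. Concavity together with $f\geq 0$ forces $f\bigl(\tfrac{q^*+q'}{2}\bigr)\geq\tfrac{1}{2}\supsegment{\subcoord}{\e{1}}$ for every $q'\in\pi(\subcoord)$, so $f\geq\tfrac{1}{2}\supsegment{\subcoord}{\e{1}}$ on the set $(q^*+\pi(\subcoord))/2\subseteq\pi(\subcoord)$, which has $(n-1)$-dimensional measure $2^{-(n-1)}\Leb{\pi(\subcoord)}$. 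Fubini then produces
\[
\Leb{\subcoord}\;\geq\;2^{-n}\,\supsegment{\subcoord}{\e{1}}\,\Leb{\pi(\subcoord)}.
\]

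Finally, Lemma~\ref{lem: john} applied to $\pi(\subcoord)$ gives both $\Leb{\pi(\subcoord)}\geq c_{n-1}\prod_{i=2}^n\lambda_i$ and $d(\plane{\e{i}}{\pi(\subcoord)},\plane{-\e{i}}{\pi(\subcoord)})\leq 2(n-1)\lambda_i$; since each $\e{i}$ with $i\geq 2$ lies in $\e{1}^\perp$, these widths coincide with the widths of $\subcoord$ in the same directions, so combining all the estimates yields $\Leb{\subcoord}\gtrsim\supsegment{\subcoord}{\e{1}}\prod_{i=2}^nd(\plane{\e{i}}{\subcoord},\plane{-\e{i}}{\subcoord})$. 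One then passes from $\subcoord$ to $\sublevelset$ using~\eqref{Nondeg}, which forces the Jacobian of $\cExp{\xbar}{\cdot}$ to be uniformly bounded above and below. The main technical obstacle is justifying the concavity of $f$ (a one-line consequence of the convexity of $\subcoord$, but requiring care at the boundary of $\pi(\subcoord)$ where $f$ vanishes), together with reconciling the index range in~\eqref{eqn: S comparable to box}: the construction produces a product over $i\geq 2$, and the $i=1$ factor is absorbed using the universal inequality $\supsegment{\subcoord}{\e{}}\leq d(\plane{\e{}}{\subcoord},\plane{-\e{}}{\subcoord})$ valid for convex bodies.
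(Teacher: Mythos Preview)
Your argument is correct and takes a genuinely different route from the paper's. The paper works with the $n$-dimensional John ellipsoid of $\subcoord$ itself, with orthonormal principal directions $\{\w{i}\}$; it then reorders so that $\innergbar[\xbar]{\e{}}{\w{1}}\geq 1/n$ and sets $\e{1}=\e{}$, $\e{i}=\w{i}$ for $i\geq 2$. The resulting basis is \emph{not} orthonormal, but the lower bound $\innergbar[\xbar]{\e{}}{\w{1}}\geq 1/n$ controls its determinant, giving~\eqref{eqn: constructed basis is nondegenerate}. For~\eqref{eqn: S comparable to box} the paper bounds $\Leb{\subcoord}$ below by $\prod_i d(\plane{\w{i}}{},\plane{-\w{i}}{})$ directly from the John ellipsoid, then uses $d(\plane{\w{1}}{},\plane{-\w{1}}{})\geq \supsegment{\subcoord}{\e{}}/n$ (again from $\innergbar[\xbar]{\e{}}{\w{1}}\geq 1/n$) and absorbs the $i=1$ width via the universal diameter bound. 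Your approach---project to $\e{}^\perp$, take the $(n-1)$-dimensional John ellipsoid there, and recover the $\e{}$-direction contribution by the Brunn/Fubini slicing estimate---has the pleasant feature that $\{\e{i}\}$ is fully orthonormal, making~\eqref{eqn: constructed basis is nondegenerate} hold with the sharp constant $n!$. The cost is the extra slicing lemma, which the paper's argument avoids entirely.

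One small slip: your stated reason for absorbing the $i=1$ factor, namely $\supsegment{\subcoord}{\e{}}\leq d(\plane{\e{}}{},\plane{-\e{}}{})$, points the wrong way. What you actually need (and what the paper also uses) is that $d(\plane{\e{}}{},\plane{-\e{}}{})$ is bounded above by $\diam{\outerdomcoord{\xbar}}$, a universal constant; this lets you multiply the right-hand side by $d(\plane{\e{1}}{},\plane{-\e{1}}{})$ at the cost of enlarging $C$.
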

	
    \begin{proof}
         Again, $\subcoord \subset \cotanspMbar{\xbar} $ is convex, bounded, and has nonempty interior by~\eqref{Twist} and~\eqref{Nondeg}, thus it has an associated John ellipsoid $\mathcal{\ellipsoid}$
         with center of mass $p_{cm}$. Then there is a basis $\{\w{i}\}_{i=1}^n$ orthonormal with respect to $\gbar{\xbar}$ with vectors parallel to the principal axes of $\mathcal{\ellipsoid}$, and by \eqref{Nondeg} we see 
         \begin{equation}\label{eqn: volume inequality with parallelpiped}
              \Leb{\sublevelset}\geq C^{-1}\Leb{\subcoord}\geq C^{-1}\Leb{\mathcal{\ellipsoid}}\geq C^{-1}\prod \limits_{i=1}^n d(\plane{\w{i}}{},\plane{-\w{i}}{}).
         \end{equation}
         for some universal $C>0$. By renumbering the basis and replacing $\w{i}$ by $-\w{i}$ if necessary, we may assume that each $\innergbar[\xbar]{\e{}}{\w{i}}\geq 0$ and
         \begin{align}\label{eqn: maximal inner product}
              \innergbar[\xbar]{\e{}}{\w{1}} =\sup_{1\leq i\leq n} \innergbar[\xbar]{\e{}}{\w{i}} \geq \frac{1}{n}.
         \end{align}
          We then define another basis by $\e{1}:=\e{}$, and $\e{i}:=\w{i}$ for $i\neq 1$. 
         
         We will now show this basis has the claimed properties. First, since $\e{}$ has unit length and $\{\w{i}\}$ is orthonormal,~\eqref{eqn: constructed basis is nondegenerate} immediately follows from~\eqref{eqn: maximal inner product}.
         Further,~\eqref{eqn: volume inequality with parallelpiped} and the definition of $\{\e{i}\}_{i=1}^n$ give
         \begin{align*}
              \Leb{\sublevelset} \geq C^{-1} d(\plane{\w{1}}{},\plane{-\w{1}}{})\prod \limits_{i=2}^n d(\plane{\e{i}}{},\plane{-\e{i}}{}) \geq C^{-1} d(\plane{\w{1}}{},\plane{-\w{1}}{}) \prod \limits_{i=1}^n d(\plane{\e{i}}{},\plane{-\e{i}}{}),
         \end{align*}
        here for the second inequality we used that $d(\plane{\e{1}}{},\plane{-\e{1}}{})$ is bounded from above by the diameter of $\outerdomcoord{\xbar}$, and thus by a universal constant. To finish, let $[p_1, p_2]\subset\subcoord$ be the line segment in the $v$ direction which achieves the length $\supsegment{\subcoord}{\e{}}$. Then by Remark~\ref{rem: beer bet} and~\eqref{eqn: maximal inner product}, we have
         \begin{align*}
         	   d(\plane{\w{1}}{},\plane{-\w{1}}{}) &=d(\plane{\w{1}}{},p_0)+d(p_0, \plane{-\w{1}}{})\\
	   &\geq \innergbar[\xbar]{p_2-p_1}{\w{1}}\\	
	   &=\gbarnorm[\xbar]{p_2-p_1}\cdot\innergbar[\xbar]{\e{}}{\w{1}}\\   	   
	&\geq \supsegment{\subcoord}{\e{}}/n.
	            \end{align*}
    \end{proof}	      
 
The three previous Lemmas~\ref{lem: dual segments length 2},~\ref{lem: perturbed plane}, and~\ref{lem: trapping planes} culminate in the following result. 
    \begin{lem}\label{lem: good c-half spaces}	
         Let $\e{}\in\cotanspMbar{\xbar}$ be of unit length and let $\{\e{i}\}_{i=1}^n$ be the basis associated to it by Lemma \ref{lem: trapping planes}. Then, for some positive universal constant $C$ we have the following:

         \begin{enumerate}
              \item For each $1\leq i \leq n$, there exists a $c$-function $\mountain_{i}$ such that $\mountain_{i}\leq \mountain$ in $\sublevelset$, and
	         \begin{equation}\label{eqn: quotient bounds}
	              C^{-1}\frac{\mountain(x_0)-u(x_0) }{d(\xbar,\xbar_{i})d(p_{(x_0, \xbar)},\plane{\e{i}}{})} \leq \frac{\mountain(x_0)-u(x_0)}{\mountain(x_0)-\mountain_{i}(x_0)}\leq C	
	         \end{equation}
	         where $\xbar_i$ is the foci of $\mountain_i$.
	         \item With $\xbar_i$ as defined above we have the volume bound     
	         \begin{align}\label{eqn: convex hull size}
	              \prod_{i=1}^n{\gnorm[x_0, \xbar]{\pbar_{(x_0, \xbar_i)}-\pbar_{(x_0, \xbar)}}}\leq C\Leb{\ch{\{\pbar_{(x_0, \xbar)},\; \pbar_{(x_0, \xbar_1)}, \;\ldots\;, \pbar_{(x_0,\xbar_n)}} \}}.
	         \end{align}
         \end{enumerate}
    \end{lem}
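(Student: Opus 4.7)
For each $i = 1, \ldots, n$, I would apply Lemma~\ref{lem: perturbed plane} with direction $\e{} = \e{i}$ (fixing $\rho \in (0,1)$ once and for all as a small universal constant) to obtain a contact point $x_i := x_{\e{i}} \in \partial\sublevelset$, a perturbed unit vector $\ehat{i}$ within $2\rho$ of $\e{i}$ in the $\gbar{\xbar}$ norm, a focus
\begin{equation*}
\xbar_i := \cExp{x_i}{\pbar_{(x_i, \xbar)} + \lambda_i \transforminv{x_i}{\xbar}\raisebar{\ehat{i}}}
\end{equation*}
for some $\lambda_i > 0$, and a $c$-function $\mountainhat^{\e{i}}$ with focus $\xbar_i$ supporting $u$ from below at $x_i$. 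The natural candidate is $\mountain_i := \mountainhat^{\e{i}}$. Since $\mountain_i \leq u \leq \mountain$ on $\sublevelset$, the assumption $\mountain_i \leq \mountain$ in $\sublevelset$ holds, and the right inequality in \eqref{eqn: quotient bounds} follows immediately with constant $1$ from $\mountain_i(x_0) \leq u(x_0)$.

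The left inequality is the substantive part. Since $x_i \in \partial\sublevelset$ and $\mountainhat^{\e{i}}$ touches $u$ at $x_i$, one has $\mountainhat^{\e{i}}(x_i) = \mountain(x_i)$, so
\begin{equation*}
\mountain(x_0) - \mountain_i(x_0) = f(0) - f(1)
\end{equation*}
where $f(s) := -c(x_0, \tilde\xbar(s)) + c(x_i, \tilde\xbar(s))$ and $\tilde\xbar(s)$ is the $c$-segment with respect to $x_0$ from $\xbar$ to $\xbar_i$. Computing $f'(0) = \inner{p_0 - p_{(x_i, \xbar)}}{\transformadj{x_0}{\xbar}(\pbar_{(x_0, \xbar_i)} - \pbar)}$ and invoking the Bishop--Phelps bound $\gbarnorm[\xbar]{p_0 - p_{(x_i, \xbar)}} \leq \rho^{-1} d(p_0, \plane{\e{i}}{})$, together with $\gbarnorm[\xbar]{\pbar_{(x_0, \xbar_i)} - \pbar}$ being comparable to $d(\xbar, \xbar_i)$, yields $|f'(0)| \leq C\rho^{-1} d(p_0, \plane{\e{i}}{}) d(\xbar, \xbar_i)$. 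A second-order Taylor expansion in $s$, using the $C^3$ regularity of $c$, then gives
\begin{equation*}
\mountain(x_0) - \mountain_i(x_0) \leq C d(p_0, \plane{\e{i}}{}) d(\xbar, \xbar_i) + C d(\xbar, \xbar_i)^2,
\end{equation*}
so the target bound holds provided $d(\xbar, \xbar_i) \lesssim d(p_0, \plane{\e{i}}{})$.

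For the volume bound \eqref{eqn: convex hull size}, the displacement $\pbar_{(x_0, \xbar_i)} - \pbar$ is, up to a universal bi-Lipschitz factor supplied by the linear map associated to $\transformadj{x_0}{\xbar}$, nearly parallel to $\ehat{i}$, and hence within angular distance $O(\rho)$ of the basis vector $\e{i}$ from Lemma~\ref{lem: trapping planes}. Setting $\lambda_i := \gnorm[x_0, \xbar]{\pbar_{(x_0, \xbar_i)} - \pbar}$ and choosing $\rho$ universally small enough that this perturbation does not degrade nondegeneracy by more than a universal factor, the conclusion \eqref{eqn: convex hull size} follows directly from inequality~\eqref{eqn: constructed basis is nondegenerate}.

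The principal obstacle is guaranteeing $d(\xbar, \xbar_i) \lesssim d(p_0, \plane{\e{i}}{})$ so as to absorb the quadratic Taylor error in the calculation above: the Bishop--Phelps step alone only provides some $\lambda_i > 0$ via Rockafellar's theorem, with no a priori commensuration to $d(p_0, \plane{\e{i}}{})$. I would circumvent this by replacing the bare supporting $c$-function $\mountainhat^{\e{i}}$ with the $c$-function constructed in Lemma~\ref{lem: dual segments length 2} applied along the perturbed direction $\ehat{i}$ at a calibrated time $t_i$ of order $d(p_0, \plane{\e{i}}{})$; the upper bound of that lemma then yields the left inequality of \eqref{eqn: quotient bounds} directly, while the lower bound combined with a short case split on the relative sizes of $\mountain(x_0) - u(x_0)$ and $d(p_0, \plane{\e{i}}{})^2$ (taking the pointwise maximum of the two constructions in the borderline regime) preserves the right inequality.
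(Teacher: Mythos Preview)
Your initial construction via Bishop--Phelps and your identification of the quadratic-error obstacle in the left inequality of \eqref{eqn: quotient bounds} are both correct, and in fact match the paper's reasoning for one of the two regimes it treats. However, there is a genuine gap in your argument for the volume bound \eqref{eqn: convex hull size}, and your proposed fix for the quotient bound is not workable as stated.

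For the volume bound, the focus $\xbar_i$ is defined via the $c$-exponential at the Bishop--Phelps contact point $x_i$, not at $x_0$. Thus the direction of $\pbar_{(x_0,\xbar_i)}-\pbar$ in $\cotanspM{x_0}$ is not automatically close to $\transforminv{x_0}{\xbar}\raisebar{\ehat{i}}$; one must compare the normalised map $x\mapsto \transform{x}{\xbar}(\pbar_{(x,\xbar_i)}-\pbar_{(x,\xbar)})$ at $x=x_0$ and $x=x_i$, and this Lipschitz comparison is only useful when $x_i$ is close to $x_0$. Bishop--Phelps gives $\gbarnorm[\xbar]{p_0-p_{(x_i,\xbar)}}\leq \rho^{-1}d(p_0,\plane{\e{i}}{})$, which says nothing when $d(p_0,\plane{\e{i}}{})$ is of order one. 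Your fix---running Lemma~\ref{lem: dual segments length 2} at a time $t_i\sim d(p_0,\plane{\e{i}}{})$---has the same defect: that lemma is only valid on a universal interval $[0,d]$, and in any case changes the focus, so the direction analysis would have to be redone from scratch. The suggestion of a ``pointwise maximum of the two constructions'' is also problematic, since a maximum of two $c$-functions is $c$-convex but not a $c$-function and therefore has no single focus $\xbar_i$.

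The paper resolves both difficulties simultaneously by splitting on whether $d(p_0,\plane{\e{i}}{})$ lies above or below a small universal threshold $\delta$. When $d(p_0,\plane{\e{i}}{})\geq\delta$, it takes $\mountain_i:=\mountain^{\e{i}}_\delta$ from Lemma~\ref{lem: dual segments length 2} with the \emph{fixed} time $t=\delta$ and the \emph{original} direction $\e{i}$: the bounds \eqref{eqn: mountain plane distance inequality} give \eqref{eqn: quotient bounds} (the quadratic error $\delta^2$ is absorbed since $d(p_0,\plane{\e{i}}{})\geq\delta$), and the unit direction $\wbar{i}$ equals $\transforminv{x_0}{\xbar}\raisebar{\e{i}}$ on the nose. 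When $d(p_0,\plane{\e{i}}{})<\delta$, it uses exactly your Bishop--Phelps construction; now $\gbarnorm[\xbar]{p_0-p_{(x_i,\xbar)}}\leq\delta/\rho$ is genuinely small, the Lipschitz argument gives $\gnorm[x_0,\xbar]{\wbar{i}-\transforminv{x_0}{\xbar}\raisebar{\ehat{i}}}\leq C\delta/\rho$, and combining with $\gbarnorm[\xbar]{\e{i}-\ehat{i}}\leq 2\rho$ one takes $\rho=\sqrt{\delta}$ and then $\delta$ universally small. In either case $\wbar{i}$ ends up uniformly close to $\transforminv{x_0}{\xbar}\raisebar{\e{i}}$, and \eqref{eqn: convex hull size} follows from \eqref{eqn: constructed basis is nondegenerate}.
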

    
    \begin{proof}
         For each $i$ we will obtain $\mountain_{i}$ in one of two different ways, depending on the distance $d(p_0, \plane{\e{i}}{}$). First, since $\plane{\e{i}}{}$ is supporting to $\subcoord$, we see that $d(p_0, \partial \subcoord)\leq d(p_0, \plane{\e{i}}{})$. Then, since $u$ is uniformly Lipschitz with constant depending on $c$ and $\diam{\left(\outertarget\right)}$, and we also have $u=\mountain$ on $\partial \sublevelset_0$, we note that for a universal $C>0$
         \begin{equation*}
             \mountain(x_0)-u(x_0) \leq C d(p_0,\plane{\e{i}}{}).
         \end{equation*}
         Then let $\mountain^{\e{i}}_t$ defined for $t\in[0, d]$ be given by Lemma \ref{lem: dual segments length 2}. By combining the above inequality with the first inequality in~\eqref{eqn: mountain plane distance inequality}, for any $t\in[0, d]$ we have	 
         \begin{equation}\label{eqn: lambda ratio bound}
              \frac{\mountain(x_0)-u(x_0)}{\mountain(x_0)-\mountain^{\e{i}}_{t}(x_0)} \leq \frac{C d(p_0,\plane{\e{i}}{})}{C^{-1}td(p_0,\plane{\e{i}}{})} \leq Ct^{-1}.
         \end{equation}
         Now, fix some $0<\delta<d$ small which will be universal, to be determined later (recall that $d$ here is universal). 
         
         First, suppose $d(p_0,\plane{\e{i}}{}) \geq \delta$. By applying the second inequality in~\eqref{eqn: mountain plane distance inequality} and then taking $t=\delta$ in~\eqref{eqn: lambda ratio bound} we obtain
         \begin{align*}
         	  C^{-1}\delta^{-1} \frac{\mountain(x_0)-u(x_0) }{d(p_0,\plane{\e{i}}{})}&\leq \frac{C(\mountain(x_0)-u(x_0))}{\delta(\delta+d(p_0, \plane{\e{i}}{}))} \notag\\
	          &\leq \frac{C(\mountain(x_0)-u(x_0))}{\mountain(x_0)-\mountain^{\e{i}}_{\delta}(x_0)}\notag\\
	          &\leq C\delta^{-1}.
         \end{align*}
         Thus for $i$ such that $d(p_0,\plane{\e{i}}{}) \geq \delta$, we will set $\mountain_{i}:=\mountain^{\e{i}}_\delta$. Since $d(\xbar,\xbar_{i}(\delta))\sim\delta$ with a universal constant of proportionality by~\eqref{Twist} and~\eqref{Nondeg}, and $\delta$ will be chosen to be universal, we obtain~\eqref{eqn: quotient bounds} in this case.
%
         
         Now suppose $d(p_0,\plane{\e{i}}{})< \delta$. Apply Lemma \ref{lem: perturbed plane} to $\e{}=\e{i}$ with some $\rho>0$ to be determined (which will again be universal), and let $\mountainhat^{\e{i}}$, $x_{\e{i}}$, $\xbar_{\e{i}}$, and $\ehat{\e{i}}$ be as given by the lemma. For ease of notation set 
         \begin{align*}
         \mountain_{i}:=\mountainhat^{\e{i}},\ x_i:=x_{\e{i}},\ \xbar_i:=\xbar_{\e{i}},\ \ehat{i}:=\ehat{\e{i}},\  p_{i}:=p_{(x_{\e{i}}, \xbar)}.
         \end{align*}
          Since $\mountain_{i}$ is supporting to $u$ from below at $x_{i}$ in this case, we have
         \begin{equation}\label{eqn: lambda bound 2}
              \frac{\mountain(x_0)-u(x_0)}{\mountain(x_0)-\mountain_{i}(x_0)} \leq 1.
         \end{equation}
         Also, since $x_{i}\in\partial \sublevelset_0$, we see $\mountain_{i}(x_{i})=u(x_{i})=\mountain(x_{i})$ and thus for some universal $C>0$,
         \begin{align*}
              \mountain(x_0)- \mountain_{i}(x_0)&=-c(x_0, \xbar)+c(x_{i}, \xbar)-(-c(x_0, \xbar_{i})+c(x_{i}, \xbar_{i}))\\
              &\leq Cd(\xbar,\xbar_{i})\gbarnorm[\xbar]{p_0-p_{i}},
         \end{align*}
         hence from~\eqref{eqn: close to touching point} we conclude that
         \begin{equation*}
              \mountain(x_0)- \mountain_{i}(x_0)\leq C\rho^{-1}d(\xbar,\xbar_{i})d(p_0,\plane{\e{i}}{}).
         \end{equation*} 
         Then by combining with~\eqref{eqn: lambda bound 2}, we obtain
         \begin{equation}\label{eqn: quotient bounds II}
              C^{-1}\rho \frac{\mountain(x_0)-u(x_0)}{d(\xbar,\xbar_{i})d(p_0,\plane{\e{i}}{})} \leq \frac{\mountain(x_0)-u(x_0)}{\mountain(x_0)-\mountain_{i}(x_0)} \leq 1.    	  
         \end{equation}
Since $\rho$ will be universal, we again obtain~\eqref{eqn: quotient bounds} in this case.

         We now make a universal choice of $\delta$ and $\rho$, but in such a way that the volume bound~\eqref{eqn: convex hull size} holds. Note that in all cases, $\xbar_i \neq \xbar$, so we may consider the unit vectors
         \begin{equation*}
              \wbar{i} := \frac{\pbar_{(x_0,\xbar_i)}-\pbar_{(x_0,\xbar)}}{\gnorm[x_0,\xbar]{\pbar_{(x_0,\xbar_i)}-\pbar_{(x_0,\xbar)}}}.
         \end{equation*}
In order to obtain~\eqref{eqn: convex hull size}, we must now control the angles between each of the $\wbar{i}$. First note that if $i$ is such that $d(p_0,\plane{\e{i}}{})\geq \delta$, by the choice of $\mountain_i$ we immediately obtain that $\wbar{i}=\transforminv{x_0}{\xbar}\raisebar{\e{i}}$. On the other hand, suppose we are in the second case $d(p_0,\plane{\e{i}}{})< \delta$. 
         Consider the mapping 
         \begin{align*}
         x\mapsto \frac{\transform{x}{\xbar}(\pbar_{(x, \xbar_{i})}-\pbar_{(x, \xbar)})}{\gbarnorm[\xbar]{\transform{x}{\xbar}(\pbar_{(x, \xbar_{i})}-\pbar_{(x, \xbar)})}}
         \end{align*}
          from $\outerdom$ to $\tanspMbar{\xbar}$. Since $c$ is $C^3$, we can see that this mapping is uniformly Lipschitz in $x$, with a Lipschitz constant bounded by $C$ 
           for some universal $C>0$. 
           Hence, we may compute        
         \begin{align}
              &\gnorm[x_0, \xbar]{\wbar{i}-\transforminv{x_0}{\xbar}\raisebar{\ehat{i}}}\notag\\
              &= \gbarnorm[\xbar]{\transform{x_0}{\xbar}\wbar{i}-\raisebar{\ehat{i}}}\notag\\
              &= \gbarnorm[\xbar]{\frac{\transform{x_0}{\xbar}(\pbar_{(x_0, \xbar_{i})}-\pbar_{(x_0, \xbar)})}{\gbarnorm[\xbar]{\transform{x_0}{\xbar}(\pbar_{(x_0, \xbar_{i})}-\pbar_{(x_0, \xbar)})}}-\frac{\transform{x_{i}}{\xbar}(\pbar_{(x_{i}, \xbar_{i})}-\pbar_{(x_{i}, \xbar)})}{\gbarnorm[\xbar]{\transform{x_{i}}{\xbar}(\pbar_{(x_{i}, \xbar_{i})}-\pbar_{(x_{i}, \xbar)})}}}\notag\\
              &\leq Cd(x_{i}, x_0)\notag\\
              &\leq C\gbarnorm[\xbar]{p_{i}-p_{0}}\notag\\
              &\leq C \rho^{-1} d(p_0,\plane{\e[]{i}}{})\notag\\
              &\leq C\frac{\delta}{\rho}\label{eqn: first rho bound},
         \end{align}
         we have used~\eqref{eqn: close to touching point} to obtain the second to last line. 
         At the same time, by~\eqref{eqn: normal is close}, we see that
         \begin{align}
              \gnorm[x_0, \xbar]{\transforminv{x_0}{\xbar}\raisebar{\e{i}}-\transforminv{x_0}{\xbar}\raisebar{\ehat{i}}}&=\gbarnorm[\xbar]{\raisebar{\e{i}}-\raisebar{\ehat{i}}}\notag\\
              &=\gbarnorm[\xbar]{\e{i}-\ehat{i}}\notag\\
              &\leq 2\rho\label{eqn: second rho bound}.
         \end{align}
         Thus, by taking $\rho:=\sqrt{\delta}$ and combining~\eqref{eqn: first rho bound} with~\eqref{eqn: second rho bound}, we obtain the inequality
         \begin{align*}
              \gnorm[x_0, \xbar]{\transforminv{x_0}{\xbar}\raisebar{\e{i}}-\wbar{i}}\leq C\sqrt{\delta}
         \end{align*}
         for some universal $C>0$. 
         
         Since $\transforminv{x_0}{\xbar}\raisebar{\e{i}}$ and $\wbar{i}$ are all unit vectors in $\g{x_0, \xbar}$, we may take $\delta$ sufficiently small (which can be done in a universal way) to deduce that the basis $\{\wbar{i}\}_{i=1}^n$ can be obtained by applying arbitrarily small rotations to the elements in the basis $\{\transforminv{x_0}{\xbar}\raisebar{\e{i}}\}_{i=1}^n$ (which, in turn, spans a parallelepiped that is uniformly close to the orthogonal basis by Lemma~\ref{lem: trapping planes}).
         Now note that by~\eqref{Nondeg} and the compactness of the domains $\outerdom^{\cl}$ and $\outertarget^{\cl}$, the volume defined by the inner product $\g{x_0, \xbar}$ is comparable to the Riemannian volume $\Leb{\cdot}$ on $\cotanspMbar{\xbar}$, with a constant of proportionality that is universal (in particular, independent of $x_0$). As a result, we can conclude the bound
         \begin{align*}
              &\Leb{\ch{\{0,\ \gnorm[x_0, \xbar]{\pbar_{(x_0, \xbar_i)}-\pbar_{(x_0, \xbar)}}\transforminv{x_0}{\xbar}\raisebar{\e{i}}\mid 1\leq i\leq n\}}}\\
              &\qquad\leq C \Leb{\ch{\{0,\ \gnorm[x_0, \xbar]{\pbar_{(x_0, \xbar_i)}-\pbar_{(x_0, \xbar)}}\wbar{i}\mid 1\leq i\leq n\}}}\\
              &\qquad=C \Leb{\ch{\{\pbar_{(x_0, \xbar)},\ \pbar_{(x_0, \xbar_i)}\mid 1\leq i\leq n\}}}
         \end{align*}
         for a universal constant $C>0$. 
                  Finally, first by taking $\lambda_i=\gnorm[x_0, \xbar]{\pbar_{(x_0, \xbar_i)}-\pbar_{(x_0, \xbar)}}$ in~\eqref{eqn: constructed basis is nondegenerate} for each $i$, and then by~\eqref{Nondeg} and the compactness of the domains $\outerdom^{\cl}$ and $\outertarget^{\cl}$, we can see that 
         \begin{align*}
              \prod_{i=1}^n{\gnorm[x_0, \xbar]{\pbar_{(x_0, \xbar_i)}-\pbar_{(x_0, \xbar)}}}&\leq C \Leb{\ch{\{0,\ \gnorm[x_0, \xbar]{\pbar_{(x_0, \xbar_i)}-\pbar_{(x_0, \xbar)}}\e{i}\mid 1\leq i\leq n\}}}\\
              &\leq C \Leb{\ch{\{0,\ \gnorm[x_0, \xbar]{\pbar_{(x_0, \xbar_i)}-\pbar_{(x_0, \xbar)}}\transforminv{x_0}{\xbar}\raisebar{\e{i}}\mid 1\leq i\leq n\}}}
         \end{align*}
         completing the proof of~\eqref{eqn: convex hull size}.
    \end{proof}

    Before we prove Theorem \ref{thm: aleksandrov} we shall show that the $c$-polar dual of $\{\mountain_1 \leq \mountain\}$ contains a $c$-segment with a direction and length determined by the foci of $\mountain$ and $\mountain_1$, and the height $\lambda$.
 
    \begin{lem}\label{lem: dual segments length}
         Suppose $\mountain$ and $\mountainhat$ are $c$-functions with foci $\xbar$ and $\xbarhat$ in $\innertarget$. Also suppose that $x_0$ is in the interior of $\halfspace := \{\mountainhat \leq \mountain\}$. Then for any $\lambda>0$ we have
         \begin{equation*}
              \left \{ \xbar (t) \mid \forall\; 0<t<\min {\left\{1,M^{-1}\tfrac{\lambda}{\mountain(x_0)-\mountainhat(x_0)}\right\}} \right \}\subset \halfspace^c_{x_0, \mountain,\lambda},
         \end{equation*}
         where $M$ is the constant in~\eqref{QConv} and $\xbar(t)$ is the $c$-segment with respect to $x_0$ from $\xbar$ to $\xbarhat$.
    \end{lem}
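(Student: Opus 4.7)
By the definition of $\halfspace^c_{x_0, \mountain, \lambda}$, my goal is to show that for $\xbar(t)$ in the claimed range and every $\xhat \in \partial\halfspace$,
\begin{equation*}
-c(\xhat, \xbar(t)) + c(x_0, \xbar(t)) - (\mountain(\xhat) - \mountain(x_0)) \leq \lambda.
\end{equation*}
The strategy is a direct one-line application of the first inequality in \eqref{QConv} to the $c$-segment from $\xbar$ to $\xbarhat$ with base point $x_0$, taking the ``$x$'' in \eqref{QConv} to be $\xhat$. The only genuine content is recognizing that the positive part $(\cdot)_+$ on the right-hand side of \eqref{QConv} is controlled in terms of $\mountain(x_0) - \mountainhat(x_0)$ when $\xhat \in \partial \halfspace$.

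\paragraph{Step 1: rewrite the quantity to match \eqref{QConv}.} Since $\mountain(x) = -c(x,\xbar) + \lambda_0$ for some constant, the difference $\mountain(\xhat) - \mountain(x_0)$ equals $-c(\xhat,\xbar) + c(x_0,\xbar)$. Consequently
\begin{equation*}
-c(\xhat, \xbar(t)) + c(x_0, \xbar(t)) - (\mountain(\xhat) - \mountain(x_0)) = \bigl[-c(\xhat,\xbar(t)) + c(\xhat,\xbar)\bigr] - \bigl[-c(x_0,\xbar(t)) + c(x_0,\xbar)\bigr],
\end{equation*}
which is precisely the left-hand side of the first inequality of \eqref{QConv} applied with $x = \xhat$, base point $x_0$, and endpoints $\xbar_0 = \xbar$, $\xbar_1 = \xbarhat$.

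\paragraph{Step 2: apply \eqref{QConv} and simplify the $(\cdot)_+$ using $\xhat \in \partial\halfspace$.} By \eqref{QConv}, the previous expression is bounded by
\begin{equation*}
Mt\bigl(-c(\xhat,\xbarhat) + c(\xhat,\xbar) - (-c(x_0,\xbarhat) + c(x_0,\xbar))\bigr)_+ = Mt\bigl(\mountainhat(\xhat) - \mountainhat(x_0) - (\mountain(\xhat) - \mountain(x_0))\bigr)_+,
\end{equation*}
after expressing $\mountainhat$ in terms of its focus $\xbarhat$. Now since $x_0 \in \halfspace^{\interior}$ and $\halfspace = \{\mountainhat \leq \mountain\}$ has non-empty interior, a continuity argument shows that $\mountainhat(\xhat) = \mountain(\xhat)$ for every $\xhat \in \partial \halfspace$. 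Thus the expression in parentheses reduces to $\mountain(x_0) - \mountainhat(x_0) > 0$, and I obtain the clean bound
\begin{equation*}
-c(\xhat, \xbar(t)) + c(x_0, \xbar(t)) - (\mountain(\xhat) - \mountain(x_0)) \leq Mt(\mountain(x_0) - \mountainhat(x_0)).
\end{equation*}

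\paragraph{Step 3: conclude.} The right-hand side is at most $\lambda$ provided $t \leq M^{-1}\lambda/(\mountain(x_0) - \mountainhat(x_0))$, and the $c$-segment $\xbar(t)$ is defined (as a canonically parametrized $c$-segment) for $t \in [0,1]$. Both constraints together give the claimed range. There is no real obstacle here: the only mildly delicate point is verifying $\mountain = \mountainhat$ on $\partial\halfspace$, which follows immediately from continuity of $\mountain - \mountainhat$ together with the fact that $\halfspace$ has a nonempty interior (so $\partial\halfspace$ lies in the zero set of $\mountain - \mountainhat$).
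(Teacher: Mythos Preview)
Your approach is the paper's: both apply \eqref{QConv} directly to the $c$-segment $\xbar(t)$ with base point $x_0$, then bound the right-hand side using membership in $H$. The paper separates into the cases $\mountain_1(x)-\mountain_0(x)\le 0$ (handled via \eqref{eqn: gLp}) and $>0$, but your use of the positive part $(\cdot)_+$ absorbs both cases at once, which is slightly cleaner.

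One small correction: your claim that $\mountainhat(\xhat)=\mountain(\xhat)$ for every $\xhat\in\partial H$ is not justified. The continuity argument only yields equality at points of $\partial H$ lying in $\outerdom^{\interior}$; when $H$ reaches $\partial\outerdom$ (as it may in the application of this lemma in Theorem~\ref{thm: aleksandrov}, where $H=\{\mountain_i\le\mountain\}$ is a ``$c$-half-space''), boundary points on $\partial\outerdom$ can satisfy $\mountainhat<\mountain$ strictly. The fix is immediate and is exactly what the paper uses: you only need $\mountainhat(\xhat)\le\mountain(\xhat)$ on $\partial H\subset H^{\cl}$, which gives
\[
\mountainhat(\xhat)-\mountainhat(x_0)-(\mountain(\xhat)-\mountain(x_0))\le \mountain(x_0)-\mountainhat(x_0),
\]
and then $(\cdot)_+\le \mountain(x_0)-\mountainhat(x_0)$ yields the same conclusion.
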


\begin{proof}
Let us define the $c$-function
\begin{equation*}
              \mountain_t(x):= -c(x,\xbar(t))+c(x_0,\xbar(t)).
         \end{equation*}
         Observe that it is sufficient to show
         \begin{align*}
             \mountain_t(x)+\mountain(x_0)-\mountain(x)&\leq \lambda,\qquad \forall\; 0<t<\min {\left\{1,M^{-1}\tfrac{\lambda}{\mountain(x_0)-\mountainhat(x_0)}\right\}}
         \end{align*}
         for any $x\in \partial\halfspace$, to this end let us fix $x\in \partial\halfspace$.
         If $\mountain_1(x)-\mountain_0(x)\leq 0$, then by~\eqref{eqn: gLp} we conclude for every $t \in [0,1]$ that 
         \begin{align*}
              \mountain_t(x)+\mountain(x_0)-\mountain(x)=\mountain_t(x)-\mountain_0(x)\leq 0\leq \lambda.
         \end{align*}
         Otherwise, $\mountain_1(x)-\mountain_0(x)>0$, in which case due to~\eqref{QConv} and the fact that $\mountainhat(x)\leq \mountain(x)$ we obtain
         \begin{align*}
              \mountain_t(x)+\mountain(x_0)-\mountain(x)&\leq Mt(\mountain_1(x)-\mountain_0(x))\\
              & = Mt(\mountainhat(x)-\mountainhat(x_0)-\mountain(x)+\mountain(x_0))\\
              &\leq Mt(\mountain(x_0)-\mountainhat(x_0)).
         \end{align*}
         We obtain the desired inequality whenever $t < \min\{ 1, M^{-1}\tfrac{\lambda}{\mountain(x_0)-\mountainhat(x_0)}\}$, proving the lemma.
    \end{proof}

    Finally, the Aleksandrov estimate follows by combining Lemma \ref{lem: good c-half spaces} and Lemma \ref{lem: dual segments length}.

    \begin{proof}[Proof of Theorem~\ref{thm: aleksandrov}]
         Let $\{\e{i}\}_{i=1}^n$ and $\mountain_i$ with foci $\xbar_i$  be obtained by applying Lemma~\ref{lem: good c-half spaces} to $\e{}$. Since $\sublevelset \subset \{ \mountain_i\leq \mountain\}$ we may apply Lemma~\ref{lem: dual segments length} with $\lambda := \mountain(x_0)-u(x_0)$ to obtain for each $i$, a portion of a line segment
         \begin{align*}
              \gammabar_i := \left \{ (1-t)\pbar_0 +t\pbar_{(\xbar_i,\xbar)} \mid \forall\; 0<t<\min {\left\{1,M^{-1}\tfrac{\lambda}{\mountain(x_0)-\mountain_{i}(x_0)}\right\}} \right \}\subset \coord{\sublevelset^c_{x_0, \mountain,\lambda}}{x_0}.
         \end{align*}
         Since $\coord{\sublevelset^c_{x_0, \mountain,\lambda}}{x_0}=\coord{\partial_cK_{x_0, \mountain, \sublevelset, \lambda}(x_0)}{x_0}$ by~\eqref{eqn: polar duals and cone subdifferential equality} of Lemma~\ref{lem: map of cones}, the set is convex by Corollary~\ref{cor: local to global}. Thus, we see that
          \begin{align}\label{eqn: segments inclusion}
              \ch\left \{\gammabar_{i}\mid 1\leq i\leq n\right \}\subset \coord{\sublevelset^c_{x_0, \mountain,\lambda}}{x_0}.
         \end{align} 
         
         We will now compute the length of $\gammabar_{i}$. By the second inequality in~\eqref{eqn: quotient bounds} of Lemma~\ref{lem: good c-half spaces},
         \begin{align*}
              \frac{\lambda}{\mountain(x_0)- \mountain_{i}(x_0)}\leq C
         \end{align*}
         for all $i$ and a universal constant $C$. Thus, there exists another universal constant $C$ such that 
         \begin{align*}
              C^{-1}\frac{\lambda}{\mountain(x_0)-\mountain_{i}(x_0)}\leq\min {\left\{1,M^{-1}\frac{\lambda}{\mountain(x_0)-\mountain_{i}(x_0)}\right\}}
         \end{align*}
         for each $i$. Hence we can calculate
         \begin{align*}
              \gnorm[x_0, \xbar]{\gammabar_{i}}&= d(\xbar_0,\xbar_i)\frac{C^{-1}\lambda}{\mountain(x_0)-\mountain_{i}(x_0)} \geq C^{-1}\frac{\lambda}{d(p_0, \plane{\e{i}}{})}
         \end{align*}
         where this time we have used the first inequality in~\eqref{eqn: quotient bounds} of Lemma~\ref{lem: good c-half spaces}. Combining this bound with inequality~\eqref{eqn: convex hull size} of  Lemma~\ref{lem: good c-half spaces}, we obtain
         \begin{equation*}
         	  \Leb{\ch\left \{\gammabar_{i}\mid 1\leq i\leq n\right \}} \geq C^{-1}\lambda^n \prod \limits_{i=1}^n\frac{1}{d(p_0, \plane{\e{i}}{})},
         \end{equation*}
         and since $d(p_0,\plane{\e{i}}{}) \leq d(\plane{-\e{i}}{},\plane{\e{i}}{})$ for all $i$, we have
         \begin{equation*}\label{eqn: convex hull bound}
         	  \Leb{\ch\left \{\gammabar_{i}\mid 1\leq i\leq n\right \}} \geq C^{-1}\lambda^n \frac{1}{d(p_0,\plane{\e{1}}{})}\prod \limits_{i=2}^n\frac{1}{d(\plane{\e{i}}{}, \plane{\e{i}}{})}.
         \end{equation*}
         We repeat the same argument with $-v$ instead of $v$, obtaining a second convex hull inside $\coord{\sublevelset^c_{x_0, \mountain,\lambda}}{x_0}$ whose measure has an analogous lower bound. Since the intersection of these two convex hulls lies in a lower dimensional hyperplane, it must have measure zero, and the volume of the union of the two sets is bounded below by the sum of their volumes. Hence recalling the inclusion~\eqref{eqn: segments inclusion} we obtain
         \begin{align}
              \lambda^n \left(\frac{1}{d(p_0, \plane{\e{1}}{})}+\frac{1}{d(p_0, \plane{-\e{1}}{})}\right) \prod \limits_{i=2}^n \frac{1}{d(\plane{-\e{i}}{},\plane{\e{i}}{})}&\leq C \Leb{\coord{\sublevelset^c_{x_0, \mountain,\lambda}}{x_0}}\notag\\
              &\leq C\Leb{\partial_cu(\sublevelset_s)},\label{eqn: aleksandrov final}
         \end{align}
         the last inequality resulting from~\eqref{eqn: cone subdifferential contained in u subdifferential} in Lemma~\ref{lem: map of cones}, along with~\eqref{Nondeg}.
         Noting that
         \begin{align*}
              d(p_0,\plane{-\e{i}}{})+d(p_0,\plane{\e{i}}{}) & =d(\plane{-\e{i}}{},\plane{\e{i}}{}),\\
              \min \{d(p_0,\plane{-\e{i}}{}),d(p_0,\plane{\e{i}}{})\} & = d(p_0,\plane{-\e{i}}{}\cup \plane{\e{i}}{})	
         \end{align*}
         we get the lower bound,
         \begin{align*}
              \left(\frac{1}{d(p_0, \plane{\e{1}}{})}+\frac{1}{d(p_0, \plane{-\e{1}}{})}\right) \prod \limits_{i=2}^n \frac{1}{d(\plane{-\e{i}}{},\plane{\e{i}}{})}	 & \geq \frac{d(\plane{-\e{1}}{},\plane{\e{1}}{})}{d(p_0,\plane{-\e{1}}{} \cup \plane{\e{1}}{})}\prod \limits_{i=1}^n \frac{1}{d(\plane{-\e{i}}{},\plane{\e{i}}{})}.
         \end{align*}
         Then combined with the bound~\eqref{eqn: S comparable to box} from Lemma \ref{lem: trapping planes}, we conclude that the left hand side in~\eqref{eqn: aleksandrov final} is no smaller than
         \begin{equation*}
         	  C^{-1}\lambda^n\frac{d(\plane{-\e{1}}{},\plane{\e{1}}{})}{d(p_0,\plane{-\e{1}}{}\cup \plane{\e{1}}{})}\supsegment{\subcoord}{\e{}} \Leb{\sublevelset}^{-1}.
         \end{equation*}
         Substituting this into \eqref{eqn: aleksandrov final} and rearranging the terms, the theorem is proved.
\end{proof}

\section{Caffarelli's Localization Theorem revisited.}\label{section: localization}

    The purpose of this section is to prove a geometric property (cf. Theorem~\ref{thm: localization}) of Aleksandrov solutions of~\eqref{OT problem}. This property was first proven by Caffarelli in~\cite{C90}, in the special case $c(x,y)=-x\cdot y$ where $x,y\in\real^n$ and it has important consequences including strict convexity and $C^{1,\alpha}$ regularity for solutions of the classical Monge-Amp\`{e}re equation. Informally, it says that solutions cannot have ``ridges'' except when these extend all the way to the boundary of the domain.

It is here that we will combine the two estimates Lemma~\ref{lem: sharp growth estimate} and Theorem~\ref{thm: aleksandrov}. However, the main work of this section goes toward finding the appropriate sublevel sets to which we can apply these two estimates. Roughly speaking, starting with a contact set $\{u=\mountain_0\}$, we want to ``tilt'' the supporting $c$-function in such a way that the new $c$-functions will ``chop'' a small portion of the contact set, and converge back to this chopped portion as the tilting becomes smaller.

    We first recall some useful definitions from convex geometry (see for example,~\cite{Roc70}).

    \begin{DEF}\label{def: normal cones}
         Suppose that $\mathcal{\arbitrary}$ is a convex subset of an inner product space $V$ with inner product $\left(\cdot, \cdot\right)$, and $p_e\in \partial \mathcal{\arbitrary}$. Then, we define the \emph{strict normal cone of $\mathcal{\arbitrary}$ at $p_e$} and \emph{normal cone of $\mathcal{\arbitrary}$ at $p_e$} respectively by
         \begin{align*}
              \normal^0_{p_e}\left(\mathcal{\arbitrary}\right):&=\{q\in V\mid \left( q, p-p_e\right) <0,\ \forall p\neq p_e\in \mathcal{\arbitrary}\},\\
              \normal_{p_e}\left(\mathcal{\arbitrary}\right):&=\{q\in V\mid \left( q, p-p_e\right) \leq 0,\ \forall p\in \mathcal{\arbitrary}\}.
         \end{align*}
         If $\normal^0_{p_e}\left(\mathcal{\arbitrary}\right)\neq \emptyset$, $p_e$ is called an \emph{exposed point} of $\mathcal{\arbitrary}$.
    \end{DEF}

    \begin{rem}\label{rem: remark on normal cones}
         Recall that both $\normal_{p_e}\left(\mathcal{\arbitrary}\right)$ and $\normal^0_{p_e}\left(\mathcal{\arbitrary}\right)$ are convex and $1$-homogeneous, while $\normal_{p_e}\left(\mathcal{\arbitrary}\right)$ is closed, and contains $0$ and at least one nonzero vector for any $p_e\in\partial\mathcal{\arbitrary}$.
    \end{rem}

    \begin{DEF}\label{def: c-extremal points}
         Given $\xbar_0\in\outertarget$ and a set $\arbitrary\subset \outerdom$ which is $c$-convex with respect to $\xbar_0$, we say that a point $x_e\in \arbitrary$ is \emph{a $c$-extremal point ($c$-exposed point) of $\arbitrary$ with respect to $\xbar_0$} if $p_{(x_e, \xbar_0)}$ is an extremal point (exposed point) in the usual sense of the convex set $\coord{\arbitrary}{\xbar_0}$.
    \end{DEF}

    We begin by constructing a certain family of $c$-functions associated to each contact set, which will play a crucial role in ``chopping'' the contact sets near its $c$-exposed points.

    \begin{lem}\label{lem: localization claim}
         Let $u$ be a $c$-convex function on $\outerdom$, $\mountain_0$ be a $c$-function that is supporting to $u$ somewhere in $\outerdom$ with focus $\xbar_0\in \innertarget$, and define 
         \begin{align*}
              \sublevelset_0:&=\{u=\mountain_0\}.
         \end{align*}
         Also, suppose $x_e\in \partial \sublevelset_0$ is a $c$-exposed point of $\sublevelset_0$ with respect to $\xbar_0$, and $\e{0} \in \normal^0_{p_{(x_e,\xbar_0)}}\left(\subzerocoord\right)$ is a unit vector. 
         Then for any fixed $\delta>0$ there is a family of $c$-functions $\{\mountain^\delta_t\}$ depending on $\sublevelset_0$ and $\e{0}$, such that for all sufficiently small and positive $t$ we have  $\mountain^\delta_t \neq \mountain_0$ and
         \begin{align}
              \coord{\sublevelset_{\delta, t}}{\xbar_0}&\subset B_{\delta}(p_{(x_e, \xbar_0)}),\label{eqn: chopped sections close to exposed point}\\
              u(x_e)&<\mountain^\delta_t(x_e),\label{eqn: exposed point is inside chopped section}
         \end{align}
         where $\sublevelset_{\delta, t}:=\{u\leq \mountain^\delta_t\}$.
    \end{lem}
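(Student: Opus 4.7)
The plan is to tilt the supporting $c$-function $\mountain_0$ by moving its focus infinitesimally along a $c$-segment in a direction dual to $\e{0}$, then lift it so that it strictly exceeds $u$ at $x_e$. Set
\begin{equation*}
\xbar(t) := \cExp{x_e}{\pbar_{(x_e, \xbar_0)} + t\,\transforminv{x_e}{\xbar_0}\raisebar{\e{0}}},
\end{equation*}
which is a $c$-segment with respect to $x_e$ satisfying $\xbar(0)=\xbar_0$ and staying in $\outertarget$ for $t \in [0,t_0]$, with $t_0>0$ universal by the compact containment $\innertarget \subset \outertarget^{\interior}$ and~\eqref{Nondeg}. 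For any $\eta_t > 0$ with $\eta_t = o(t)$ (concretely, $\eta_t := t^2$), define
\begin{equation*}
\mountain^\delta_t(x) := -c(x,\xbar(t)) + c(x_e,\xbar(t)) + \mountain_0(x_e) + \eta_t,
\end{equation*}
so that $\mountain^\delta_t(x_e) = \mountain_0(x_e) + \eta_t = u(x_e) + \eta_t$, yielding~\eqref{eqn: exposed point is inside chopped section}. Moreover $\mountain^\delta_t \neq \mountain_0$ for $t > 0$: if the two agreed, differentiating in $x$ and invoking~\eqref{Twist} would force $\xbar(t)=\xbar_0$, contradicting $\xbardot(0) = \raisebar{\e{0}} \neq 0$.

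The key computation is the first-order expansion of $\mountain^\delta_t - \mountain_0$ in $t$. Writing $\mountain^\delta_t(x) - \mountain_0(x) = [c(x,\xbar_0) - c(x,\xbar(t))] - [c(x_e,\xbar_0) - c(x_e,\xbar(t))] + \eta_t$, differentiating at $t=0$, and identifying $\xbardot(0)$ with $\raisebar{\e{0}}$ via the dual form of Remark~\ref{rem: differential of c-exp}, we obtain
\begin{equation*}
\mountain^\delta_t(x) - \mountain_0(x) = t\,\innergbar[\xbar_0]{p_{(x,\xbar_0)} - p_{(x_e,\xbar_0)}}{\e{0}} + O(t^2) + \eta_t,
\end{equation*}
with the $O(t^2)$ uniform in $x \in \outerdom^{\cl}$ by the $C^3$ regularity of $c$. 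Since $\e{0} \in \normal^0_{p_{(x_e,\xbar_0)}}(\subzerocoord)$, the linear coefficient is strictly negative on $\subzerocoord \setminus \{p_{(x_e,\xbar_0)}\}$; by compactness it is bounded above by some $-c_\delta < 0$ on the closed set $\{p \in \subzerocoord : \gbarnorm[\xbar_0]{p - p_{(x_e,\xbar_0)}} \geq \delta\}$.

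To establish~\eqref{eqn: chopped sections close to exposed point}, argue by contradiction: suppose sequences $t_n \to 0^+$ and $x_n \in \sublevelset_{\delta,t_n}$ exist with $p_{(x_n,\xbar_0)} \notin B_\delta(p_{(x_e,\xbar_0)})$. Pass to a subsequence with $x_n \to x_\infty$ in the compact set $\outerdom^{\cl}$. Since $\mountain^\delta_{t_n} \to \mountain_0$ uniformly on $\outerdom^{\cl}$ and $u$ is continuous, the inequality $u(x_n) \leq \mountain^\delta_{t_n}(x_n)$ passes to $u(x_\infty) \leq \mountain_0(x_\infty)$, which combined with the supporting property $u \geq \mountain_0$ forces $x_\infty \in \sublevelset_0$ while the distance bound is preserved in the limit. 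By continuity of $x \mapsto p_{(x,\xbar_0)}$, the linear coefficient above evaluated at $x_n$ is $\leq -c_\delta/2$ for large $n$, so $\mountain^\delta_{t_n}(x_n) - \mountain_0(x_n) < 0$ for $n$ large, contradicting $\mountain^\delta_{t_n}(x_n) \geq u(x_n) \geq \mountain_0(x_n)$. The principal obstacle is the bookkeeping of the three isomorphisms $D \cExp{x_e}{\cdot}$, $\transform{x_e}{\xbar_0}$, and the musical $\sharp$, so that the linear coefficient is literally the pairing featuring in the definition of the strict normal cone; once that is in hand the rest of the argument is a routine compactness-plus-Taylor expansion.
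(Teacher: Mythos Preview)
Your proof is correct and follows essentially the same route as the paper: tilt the focus of $\mountain_0$ along the $c$-segment $\xbar(t)=\cExp{x_e}{\pbar_{(x_e,\xbar_0)}+t\,\transforminv{x_e}{\xbar_0}\raisebar{\e{0}}}$, add a small positive lift at $x_e$, then Taylor-expand $\mountain^\delta_t-\mountain_0$ in $t$ and use compactness together with the strict-normal-cone condition on $\e{0}$ to trap $\coord{\sublevelset_{\delta,t}}{\xbar_0}$ near $p_{(x_e,\xbar_0)}$.

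The one substantive difference is the size of the lift. You take $\eta_t=t^2$ (more generally any $o(t)$), whereas the paper takes the lift to be $\tau_\delta\, t$, linear in $t$ with a small $\delta$-dependent constant $\tau_\delta>0$. For the present lemma either choice works. However, the paper's linear choice is not accidental: in the next lemma (Lemma~\ref{lem: chopping convergence}) one needs the ratio
\[
\frac{\mountain^\delta_t(x_e)-u(x_e)}{\sup_{\sublevelset_{\delta,t}}(\mountain^\delta_t-u)}
\]
to stay bounded below by a positive constant as $t\to 0^+$. The denominator is of order $t$ (coming from the linear term $t\,\innergbar[\xbar_0]{p-p_e}{\e{0}}$ over points $p$ at positive distance from the exposing hyperplane), so with your $\eta_t=o(t)$ this ratio collapses to zero and property~\eqref{eqn: lower bound on localization ratios} would fail. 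If you intend to feed this construction into the localization argument, you should replace $\eta_t=t^2$ by a lift proportional to $t$.
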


    \begin{proof}
         The proof is motivated by an idea used in \cite[Lemma 1]{C92}. Let us write 
         \begin{align*}
              p_e:=p_{(x_e, \xbar_0)},\ \pbar_0:=\pbar_{(x_e,\xbar_0)},
         \end{align*}
         and for any $\rho\in \real$, $x\in\outerdom$, $\xbar\in\outertarget$, and nonzero $\e{}\in\cotanspMbar{\xbar}$, define
    \begin{align*}
         \extremalhalfspace[x,\xbar,\e{}]{\rho}:&=\{ p\in\cotanspMbar{\xbar}\mid \innergbar[\xbar]{p-p_{(x,\xbar)}}{\tfrac{\e{}}{\gbarnorm[\xbar]{\e{}}}}\geq \rho\}.
    \end{align*}
         We first show the following auxiliary claim: there exists a constant $0<\tau_\delta<\delta$ depending on $\sublevelset_0$ and $\e{0}$ such that
\begin{align}\label{eqn: chopped section is small}
 \subzerocoord\cap \extremalhalfspace{-\tau_\delta}&\subset B_\delta(p_{(x_e, \xbar_0)}).
\end{align}
 Suppose the inclusion does not hold for any $\tau_\delta>0$. Then, there exists a sequence of points $p_k$ in $\subzerocoord$  such that 
         \begin{align*}
              \gbarnorm[\xbar_0]{p_k-p_e}\geq\delta
         \end{align*}
         while
         \begin{align*}
              \innergbar{p_k-p_e}{\e{0}}\geq -\tfrac{1}{k}
         \end{align*}
         for each $k$. By compactness, we can assume a subsequence $p_k\to p_\infty\in\subzerocoord$ as $k\to\infty$, thus $p_\infty\neq p_e$, but $\innergbar{p_\infty-p_e}{\e{0}}\geq 0$ which contradicts $\e{0}\in\normal^0_{p_e}\left(\subzerocoord\right)$. Hence, for some choice of $0<\tau_\delta\leq \delta$ depending on $\sublevelset_0$, we obtain~\eqref{eqn: chopped section is small}.

         We will now define $\mountain^\delta_t$. Note that by~\eqref{Nondeg}, we have $\transforminv{x_e}{\xbar_0}\raisebar{\e{0}}\neq 0$ (see Definition \ref{def: music} for the definition of $\raisebar{\e{0}}$). 
         Since $\innertarget$ is compactly contained in $\outertarget$, for $t$ positive and small enough, $\pbar_0+t\transforminv{x_e}{\xbar_0}\raisebar{\e{0}}$ is still in $\outertargetcoord{x_e}$, hence 
         \begin{equation*}
              \xbar(t):=\cExp{x_e}{\pbar_0+t\transforminv{x_e}{\xbar_0}\raisebar{\e{0}}}
         \end{equation*}
         is a well-defined $c$-segment for $t$ small. Then, we will consider the $c$-functions,
         \begin{equation*}
              \mountain^\delta_t(x):=-c(x, \xbar(t))+c(x_e, \xbar(t))+\mountain_0(x_e)+\tau_\delta t
         \end{equation*}
         and note that if $\mountain_0(x)\leq\mountain^\delta_t(x)$, by a Taylor expansion in $t$ we have
         \begin{align*}
              -\tau_\delta t&\leq-c(x, \xbar(t))+c(x_e, \xbar(t))-(-c(x, \xbar_0)+c(x_e, \xbar_0))\notag\\
              &\leq t\inner{p_{(x, \xbar_0)}-p_e}{\xbardot(0)}+Ct^2\notag\\
              &=t\inner{p_{(x, \xbar_0)}-p_e}{ \transform{x_e}{\xbar_0}(\transforminv{x_e}{\xbar_0}\raisebar{\e{0}})}+Ct^2\notag\\
              &=t\innergbar{p_{(x, \xbar_0)}-p_e}{\e{0}}+Ct^2
         \end{align*}
         hence
         \begin{equation}\label{eqn: chopping set containment}
              \coord{\sublevelset_{\delta, t}}{\xbar_0}\subset\coord{\{\mountain_0\leq\mountain^\delta_t \}}{\xbar_0} \subset \extremalhalfspace{-Ct-\tau_\delta}
         \end{equation}
         for all $t>0$ sufficiently small. By a simple compactness argument, we easily obtain~\eqref{eqn: chopped sections close to exposed point}.

         Finally, since
         \begin{equation*}
              \frac{d}{dt}\mountain^\delta_t(x_e)=\tau_\delta>0,\;\;\forall\;t>0
         \end{equation*}
         we see 
         \begin{equation*}
              u(x_e)=\mountain_0(x_e)<\mountain^\delta_t(x_e)
         \end{equation*}
         for any $t>0$ sufficiently small, proving~\eqref{eqn: exposed point is inside chopped section}.
    \end{proof}

    For the next lemma, recall the notation $\supsegment{\subcoord}{\e{}}$ and $\plane{\pm \w{}}{\subcoord}$ introduced in Definition~\ref{def: notations for aleksandrov}.

    \begin{lem}\label{lem: chopping convergence}
         Let $u$, $\mountain_0$, and $\sublevelset_0$ satisfy the same conditions as in Lemma~\ref{lem: localization claim} above. Then, for each $\delta>0$, we can find (each of the following depending on $\sublevelset_0$) some $\epsilon_0>0$, a unit length vector $\e{0}\in\normal^0_{p_{(x_e, \xbar_0)}}\left(\subzerocoord\right)$, a family of $c$-functions $m_t$ with foci $\xbar_t\in \outertarget$, and a family of unit length $\e{t}\in\cotanspMbar{\xbar_t}$ all defined for $t>0$ sufficiently small, which satisfy the following:
         \begin{align}
	          \mountain_t(x_e)&>u(x_e),\label{eqn: chopped sections have interior}\\
	          \coord{\sublevelset_{t}}{\xbar_0}&\subset B_{\delta}(p_{(x_e, \xbar_0)}),\label{eqn: chopped sections close to exposed point 2}\\
              \min \left \{\frac{\mountain_t(x_e)-u(x_e)}{\sup \limits_{\sublevelset_t} (\mountain_t-u)}\;,\;\frac{\mountain_t(x_e)-u(x_e)}{\sup \limits_{\sublevelset_t} (\mountain_t-u)+u(x)-\mountain_t(x)} \right \} &\geq \epsilon_0 ,\label{eqn: lower bound on localization ratios}\\
              \supsegment{\coord{\sublevelset_t}{\xbar_t}}{\e{t}} &\geq \epsilon_0,\label{eqn: lower bound on segment}\\
              \lim \limits_{t\to 0^+} d(p_{(x_e,\xbar_t)}, \plane{\e{s}}{\coord{\sublevelset_t}{\xbar_t}}  \cup \plane{-\e{t}}{\coord{\sublevelset_t}{\xbar_t}} )  &= 0.\label{eqn: supporting plane collapses}
         \end{align}
         Here we have written $\sublevelset_t:=\{u\leq \mountain_t\}$. 
    \end{lem}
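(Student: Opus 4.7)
The plan is to reuse the family of $c$-functions $\{\mountain^{\delta'}_t\}$ constructed in Lemma~\ref{lem: localization claim} for an appropriate $\delta' \in (0,\delta]$ to be selected at the end in terms of the geometry of $\sublevelset_0$. Set $\mountain_t := \mountain^{\delta'}_t$ and $\xbar_t := \xbar(t)$, the focus of $\mountain_t$, and choose $\e{0}$ to be any unit vector in $\normal^0_{p_{(x_e,\xbar_0)}}(\subzerocoord)$, which is nonempty because $x_e$ is $c$-exposed. Properties~\eqref{eqn: chopped sections have interior} and~\eqref{eqn: chopped sections close to exposed point 2} then follow at once from~\eqref{eqn: exposed point is inside chopped section} and~\eqref{eqn: chopped sections close to exposed point}, respectively, as soon as $\delta' \leq \delta$.

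For~\eqref{eqn: lower bound on localization ratios}, the key observation is that $\mountain_t(x_e) - u(x_e) = \tau_{\delta'}t$ by construction. A Taylor expansion of $\mountain_t - \mountain_0$ in $t$ (as in the proof of Lemma~\ref{lem: localization claim}) combined with the inclusion $\coord{\sublevelset_t}{\xbar_0} \subset B_{\delta'}(p_{(x_e,\xbar_0)})$ yields, on $\sublevelset_t$, the bound $|\mountain_t - \mountain_0| \leq C t(\tau_{\delta'} + \delta') + O(t^2)$. Since $u \geq \mountain_0$ globally, this controls $\sup_{\sublevelset_t}(\mountain_t - u)$ and $|u-\mountain_t|$ on $\sublevelset_t$ by the same scale. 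Choosing $\delta'$ so that $\tau_{\delta'}/\delta'$ stays bounded below by a constant depending only on $\sublevelset_0$ (the construction in Lemma~\ref{lem: localization claim} is flexible enough for this), both quotients in~\eqref{eqn: lower bound on localization ratios} are bounded below by some $\epsilon_0>0$.

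For~\eqref{eqn: lower bound on segment} and~\eqref{eqn: supporting plane collapses} I would work in $\xbar_t$-coordinates, in which $\coord{\sublevelset_t}{\xbar_t}$ is convex by Corollary~\ref{cor: c-convexity of sublevel sets}. As $t \to 0^+$, $\xbar_t \to \xbar_0$ and $\mountain_t \to \mountain_0$, so under the smooth coordinate change $p_{(\cdot,\xbar_0)} \mapsto p_{(\cdot,\xbar_t)}$ the set $\coord{\sublevelset_t}{\xbar_t}$ Hausdorff-converges to the image of the slice of $\subzerocoord$ near $p_{(x_e,\xbar_0)}$ cut out by a half-space with outer normal $\e{0}$; this slice has diameter bounded below by a constant depending only on $\sublevelset_0$, via the $c$-exposedness of $x_e$ (which ensures that $\subzerocoord$ has positive transverse extent away from $p_{(x_e,\xbar_0)}$). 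Choose $\e{t}\in \cotanspMbar{\xbar_t}$ so that $\e{t} \to \e{0}$ under the natural identification of fibres. The positive transverse extent of the limiting slice yields~\eqref{eqn: lower bound on segment}, and since $p_{(x_e,\xbar_0)}$ lies on the supporting hyperplane $\plane{\e{0}}{\subzerocoord}$, continuity forces $d(p_{(x_e,\xbar_t)}, \plane{\e{t}}{\coord{\sublevelset_t}{\xbar_t}}) \to 0$, giving~\eqref{eqn: supporting plane collapses}.

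The main obstacle is the simultaneous choice of $\e{t}$ satisfying both~\eqref{eqn: lower bound on segment} and~\eqref{eqn: supporting plane collapses}: we need a single direction that both realizes a definite-length segment in $\coord{\sublevelset_t}{\xbar_t}$ and points toward a supporting hyperplane within distance $o(1)$ of $p_{(x_e,\xbar_t)}$. The tension is resolved by the $c$-exposedness of $x_e$: at $t=0$ the hyperplane $\plane{\e{0}}{\subzerocoord}$ touches $\subzerocoord$ only at $p_{(x_e,\xbar_0)}$, while the set still has positive diameter in transverse directions; a Bishop--Phelps-type perturbation (Theorem~\ref{thm: Bishop-Phelps}) applied to $\coord{\sublevelset_t}{\xbar_t}$ promotes this limiting picture to a quantitative selection of $\e{t}$ valid uniformly for all sufficiently small $t>0$.
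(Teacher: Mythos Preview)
Your proposal has a genuine gap in the treatment of~\eqref{eqn: lower bound on segment}. You write ``choose $\e{0}$ to be any unit vector in $\normal^0_{p_{(x_e,\xbar_0)}}(\subzerocoord)$,'' but this is precisely where the paper needs the nontrivial Lemma~\ref{lem: good supporting normal} from the Appendix: one must select $\e{0}\in\normal^0_{p_e}(\subzerocoord)$ with the additional property that $p_e-\lambda_0\e{0}\in\subzerocoord$ for some $\lambda_0>0$. This is what produces the concrete ``central point'' $x_{cp}$ with $p_{(x_{cp},\xbar_0)}=p_e-\tau\e{0}\in\subzerocoord$; one then checks directly (via the same Taylor expansion you use for~\eqref{eqn: lower bound on localization ratios}) that $x_{cp}\in\sublevelset_t$ for small $t$, and defines $\e{t}$ as the unit vector in the direction $p_{(x_{cp},\xbar_t)}-p_{(x_e,\xbar_t)}$. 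The segment between these two points lies in $\coord{\sublevelset_t}{\xbar_t}$ by convexity and has length bounded below by a fixed positive number, giving~\eqref{eqn: lower bound on segment}.

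For a generic $\e{0}$ in the strict normal cone, $-\e{0}$ need not point into $\subzerocoord$ at all: take $\subzerocoord$ to be a line segment in $\real^2$ with $p_e$ an endpoint; the strict normal cone at $p_e$ is an open half-plane, and only one ray in it has $-\e{0}$ along the segment. In this situation your Hausdorff-limit heuristic fails: the limit of $\coord{\sublevelset_t}{\xbar_t}$ is a slice of a one-dimensional set, and segments parallel to an oblique $\e{0}$ inside $\coord{\sublevelset_t}{\xbar_t}$ have length tending to $0$ as $t\to 0$. The Bishop--Phelps theorem is the wrong tool here---it perturbs a direction to obtain a supporting hyperplane with a nearby contact point, whereas what is needed is an exposing normal whose \emph{negative} points back into the set. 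That existence statement is exactly Lemma~\ref{lem: good supporting normal}, proved via Fenchel--Rockafellar duality.

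Your arguments for~\eqref{eqn: chopped sections have interior},~\eqref{eqn: chopped sections close to exposed point 2}, and~\eqref{eqn: lower bound on localization ratios} are essentially along the paper's lines (though for~\eqref{eqn: lower bound on localization ratios} you do not need to arrange $\tau_{\delta'}/\delta'$ bounded below; the crude bound $\sup_{y\in\sublevelset_t}\innergbar{p_{(y,\xbar_0)}-p_e}{\e{0}}\leq C\diam(\outerdom)$ already yields a positive $\epsilon_0$ depending on $\sublevelset_0$). Your sketch for~\eqref{eqn: supporting plane collapses} is in the right spirit and the paper makes it precise by a short compactness/contradiction argument.
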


    \begin{proof}
         We will write $p_e:=p_{(x_e, \xbar_0)}$. First we apply Lemma~\ref{lem: good supporting normal} from the Appendix to $\subzerocoord$, to obtain a unit length $\e{0}\in\normal^0_{p_e}\left(\subzerocoord\right)$ and $\lambda_0>0$, such that $p_e-\lambda_0\e{0}\in\subzerocoord$.
         Next fix $\delta>0$, and let $\mountain_t:=\mountain^\delta_t$ with focus $\xbar_t$ be obtained by applying Lemma~\ref{lem: localization claim} with this $\delta$. Note that $\xbar_t$ is independent of $\delta$. Then~\eqref{eqn: exposed point is inside chopped section} immediately implies~\eqref{eqn: chopped sections have interior}, while~\eqref{eqn: chopped sections close to exposed point} implies~\eqref{eqn: chopped sections close to exposed point 2}.
               
         Now we will show~\eqref{eqn: lower bound on localization ratios}. Recalling that $\mountain_0\leq u$ while $x_e\in \sublevelset_0$, 
         \begin{align*}
              \frac{\mountain_t (x_e)-u(x_e)}{\sup\limits_{\sublevelset_t }{(\mountain_t -u)}}&\geq \frac{\mountain_t (x_e)-\mountain_0(x_e)}{\sup\limits_{\sublevelset_t }(\mountain_t -\mountain_0)}\\
              &=\frac{\tau_\delta t}{\sup\limits_{y\in\sublevelset_t }[\tau_\delta t -c(y, \xbar_t)+c(y, \xbar_0)-(-c(x_e, \xbar_t)+c(x_e, \xbar_0))]}\\
              &\geq \frac{\tau_\delta t}{\tau_\delta t+t\cdot\sup\limits_{y\in\sublevelset_t }{\innergbar{p_{(y, \xbar_0)}-p_e}{\e{0}}}+Ct^2}\\
              &\geq \frac{\tau_\delta}{\tau_\delta+C\diam{\left(\outerdom\right)}+Ct},
         \end{align*}
         for some universal $C>0$.  Next note that the denominator of the second expression in the minimum in~\eqref{eqn: lower bound on localization ratios} is always nonnegative. Then, since for any $x\in\sublevelset_0$,
         \begin{align*}
              \frac{\mountain_t (x_e)-u(x_e)}{\sup\limits_{\sublevelset_t }{(\mountain_t -u)}+u(x)-\mountain_t(x)}
              \geq \frac{\mountain_t (x_e)-\mountain_0(x_e)}{\sup\limits_{\sublevelset_t }(\mountain_t -\mountain_0)+\mountain_0(x)-\mountain_t(x)},
         \end{align*}
         by a nearly identical Taylor expansion argument as above, we obtain~\eqref{eqn: lower bound on localization ratios} as long as 
         \begin{align*}
         	  0<\epsilon_0<\min\left\{\frac{\tau_\delta}{C\diam{\left(\outerdom\right)}},\; \frac{\tau_\delta}{\tau_\delta+C\diam{\left(\outerdom\right)}}\right\}.
         \end{align*}
         Next we work towards showing~\eqref{eqn: lower bound on segment}. Let $\tau:=\min{\{\tau_\delta/2, \lambda_0\}}$ and define the following for all $s>0$ sufficiently small:
         \begin{align*}
              p^t_e:&=p_{(x_e, \xbar_t)},\\
              p_{cp}:&=p_e-\tau \e{0},\ p^t_{cp}:=p_{(x_{cp}, \xbar_t)},\\
              x_{cp}:&=\cExp{\xbar_0}{p_{cp}}\\
              v_t:&=\frac{p^t_{cp}-p^t_e}{\gbarnorm[\xbar_t]{p^t_{cp}-p^t_e}}
         \end{align*}
         (the subscript $cp$ stands for ``central point'').
         We first claim that
         \begin{align}
              x_{cp}\in {\sublevelset}_{t},\qquad \forall\; t>0\text{ sufficiently small}.\label{eqn: other point in tilted section}
         \end{align}
         Indeed, by a Taylor expansion in $t$,
         \begin{align*}
              \mountain_t(x_{cp})-u(x_{cp})&=\mountain_t(x_{cp})-\mountain_0(x_{cp})\\
              &=-c(x_{cp}, \xbar(t))+c(x_e, \xbar(t))+\tau_\delta t-(-c(x_{cp}, \xbar_0)+c(x_e, \xbar_0))\\
              &\geq t\innergbar{p_{cp}-p_e}{\e{0}}-Ct^2+\tau_\delta t\\
              &\geq -\tfrac{\tau_\delta}{2}t-Ct^2+\tau_\delta t\\
              &\geq 0
         \end{align*}
         for $t\geq 0$ sufficiently small, proving~\eqref{eqn: other point in tilted section}. As a result, since $\coord{\sublevelset_t}{\xbar_t}$ is convex by Corollary~\ref{cor: c-convexity of sublevel sets}, we see that the entire line segment from $p^t_{cp}$ to $p^t_e$ is contained in $\coord{\sublevelset_t}{\xbar_t}$, and parallel to $\e{t}$, hence
         
         \begin{align*}
              \supsegment{\coord{\sublevelset_t}{\xbar_t}}{\e{t}}&\geq \gbarnorm[\xbar_t]{p^t_{cp}-p^t_e}\\
              &\to \gbarnorm[\xbar_0]{p_{cp}-p_e},\ (t\to 0)\\
              &=\tau.
         \end{align*}
         Thus, if $s>0$ is sufficiently small, we obtain~\eqref{eqn: lower bound on segment} for $0<\epsilon_0<\tau/2$.
%
        
         Finally, suppose that~\eqref{eqn: supporting plane collapses} fails. Then, recalling Remark~\ref{rem: beer bet}, there exists $\epsilon>0$, a sequence of positive numbers $t_k$ going to zero, and points $p_k\in \coord{\sublevelset_{k}}{\xbar_k}$ such that 
         \begin{align}
         \epsilon\leq d(p^k_e, \plane{\e{k}}{\coord{\sublevelset_k}{\xbar_k}})=\innergbar[\xbar_k]{p_k-p^k_e}{\e{k}}\label{eqn: collapsing support plane contradiction}
         \end{align} 
         where for brevity, we have written $\sublevelset_k:=\sublevelset_{t_k}$, $\xbar_k:=\xbar_{t_k}$, $p^k_e:=p^{t_k}_e$, and $\e{k}:=\e{t_k}$. By compactness of $\outerdom^{\cl}$, passing to a subsequence we may assume that $\cExp{\xbar_k}{p_k}\to x_\infty$ as $k\to\infty$ for some $x_\infty\in\outerdom^{\cl}$, and we easily see $p_{(x_\infty, \xbar_0)}\in \subzerocoord$.  However, by passing to the limit in~\eqref{eqn: collapsing support plane contradiction} we would obtain
         \begin{align*}
              0< \epsilon\leq \innergbar[\xbar_0]{p_{(x_\infty, \xbar_0)}-p_e}{\e{0}},
         \end{align*}
         contradicting that $\e{0}\in\normal_{p_e}\left(\subzerocoord\right)$, and thus we have proved~\eqref{eqn: supporting plane collapses}.
    \end{proof}

    At this point, we recall a classical result from convex analysis.
    \begin{thm}[Straszewicz's Theorem {\cite[Theorem 18.6]{Roc70}}]\label{thm: exposed points}
         The collection of exposed points of a convex set is dense in the collection of extremal points of the set.
    \end{thm}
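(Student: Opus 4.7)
My approach would be the classical farthest-point construction. The key lemma is: if $y \in \real^n$ and $z \in C$ is the unique farthest point in $C$ from $y$, then $z$ is an exposed point of $C$. To verify this, every $w \in C$ satisfies $|w-y|^2 \leq |z-y|^2$, which upon expansion yields $\langle y-z,\, w-z\rangle \geq \tfrac12 |w-z|^2$; hence the hyperplane through $z$ orthogonal to $y - z$ supports $C$ at $z$ and intersects $C$ only at $z$, exposing it.

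With this in hand, the heart of the argument is to produce, for each extremal $x \in C$ and each $\epsilon > 0$, some $y$ whose unique farthest point in $C$ lies in $B_\epsilon(x)$. Assuming $C$ compact (the general case reducing by intersecting with a sufficiently large closed ball centered at $x$), I would set $D := \overline{\mathrm{conv}}(C \setminus B_\epsilon(x))$ and first verify $x \notin D$: if $x \in D$, then Carath\'eodory combined with compactness would express $x$ as a nontrivial convex combination of finitely many points of $C$ at distance at least $\epsilon$ from $x$, contradicting the extremality of $x$.

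Since $x \notin D$ and $D$ is closed convex, Hahn-Banach produces a unit vector $u$ and $\delta > 0$ with $\langle u, x\rangle \geq \sup_D \langle u, \cdot\rangle + \delta$. Setting $y_R := x - Ru$ for large $R$, a direct expansion gives $|w-y_R|^2 \leq \mathrm{diam}(C)^2 + R^2 - 2R\delta$ for $w \in D$, while $|x - y_R|^2 = R^2$; so for $R$ large, every point of $D$ is strictly closer to $y_R$ than $x$ is, and consequently the farthest point in $C$ from $y_R$ must lie in $C \setminus D \subseteq B_\epsilon(x)$. The principal obstacle is ensuring that this farthest point is actually unique: I would handle this by observing that the convex function $y \mapsto \max_{w \in C}|w-y|^2$ on $\real^n$ is differentiable almost everywhere, and differentiability at $y$ is equivalent to uniqueness of the argmax. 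An arbitrarily small generic perturbation of $y_R$ therefore yields a $y$ whose unique farthest point still lies in $B_\epsilon(x)$ and is exposed by the first paragraph, producing an exposed point within $\epsilon$ of $x$ as required.
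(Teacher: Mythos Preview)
The paper does not prove this theorem at all; it is merely quoted from Rockafellar's \emph{Convex Analysis} and used as a black box in Section~\ref{section: localization}. There is therefore no ``paper's proof'' to compare against. Your farthest-point argument is the classical proof and is correct.

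One remark on the reduction to the compact case, which you state parenthetically. It is valid, but it deserves a sentence of justification, since a priori an exposed point of $C' := C \cap \bar B_R(x)$ need not be exposed in $C$. The point is that the exposed point $z$ you produce satisfies $z \in B_\epsilon(x) \subset B_R(x)^{\interior}$. If $H$ is the hyperplane exposing $z$ in $C'$, and some $p \in C$ lay on $H$ (with $p \neq z$) or on the wrong side of $H$, then the segment $[z,p] \subset C$ would enter $C'$ nontrivially near $z$ (because $z$ is interior to $\bar B_R(x)$), contradicting either $H \cap C' = \{z\}$ or $C' \subset H^+$. Hence $H$ exposes $z$ in $C$ as well, and the reduction goes through. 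In any case, the paper only invokes Straszewicz's theorem for the sets $\coord{\sublevelset_0}{\xbar_0}$, which are compact, so the reduction is not even needed for the application at hand.
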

    We are now ready to begin a sequence of proofs that will culminate in showing that any contact set between a $c$-convex solution $u$ and a supporting $c$-function that is not a singleton can only have $c$-extremal points in the boundary of $\outerdom$. In the following Section~\ref{section: strict convexity}, we will rule out this final case, thus any contact set must consist of only one point, implying strict $c$-convexity of solutions $u$. We proceed by systematically ruling out $c$-extremal points in the interior of $\innerdom$, then in the interior of $\outerdom\setminus\innerdom$, and then finally on the boundary of $\innerdom$. The first two cases closely mirror the proofs given by Caffarelli in~\cite{C92}. However, this third case requires the use of an idea borrowed from Figalli, Kim, and McCann (see~\cite[Theorem 7.1]{FKM11}). In the interest of highlighting these differences, we have split the three proofs up.
    \begin{thm}[Caffarelli Localization Theorem] \label{thm: localization}
         Suppose $u$ is a $c$-convex Aleksandrov solution of~\eqref{OT problem} and $\mountain_0$ is a $c$-function with focus $\xbar_0\in\innertarget$, supporting to $u$ at some $x_0\in(\innerdom)^{\interior}$, then the set 
         \begin{equation*}
              \sublevelset_0:=\{u=\mountain_0 \}
         \end{equation*}
         is either a single point, or it contains no $c$-extremal points with respect to $\xbar_0$ interior to $\innerdom$.
    \end{thm}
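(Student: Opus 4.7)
The plan is to argue by contradiction: assume $\sublevelset_0$ is not a singleton and has a $c$-extremal point in $\innerdom^{\interior}$, then combine the sharp growth estimate (Lemma~\ref{lem: sharp growth estimate}) as a lower bound with the Aleksandrov estimate (Theorem~\ref{thm: aleksandrov}) as an upper bound on $(\mountain_t(x_e) - u(x_e))^n$ for a suitable family of ``chopping'' $c$-functions, and drive these two bounds into contradiction as the chopping parameter $t \to 0^+$.

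First I would reduce the $c$-extremal hypothesis to $c$-exposed: Straszewicz's theorem (Theorem~\ref{thm: exposed points}) combined with openness of $\innerdom^{\interior}$ allows us to replace a hypothetical interior $c$-extremal point by a $c$-exposed point $x_e$ still lying in $\innerdom^{\interior}$. Next, I would fix $\delta > 0$ small enough that a $\delta$-ball around $p_{(x_e,\xbar_0)}$ (in cotangent coordinates based at $\xbar_0$), mapped back by $\cExp{\xbar_0}{\cdot}$, stays compactly inside $\innerdom^{\interior}$, and invoke Lemma~\ref{lem: chopping convergence} to produce the tilted $c$-functions $\mountain_t$ with foci $\xbar_t$, the directions $\e{t}$, a constant $\epsilon_0 > 0$, and the sublevel sets $\sublevelset_t = \{u \le \mountain_t\}$. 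By property~\eqref{eqn: chopped sections close to exposed point 2}, $\sublevelset_t$ will be compactly contained in $\innerdom^{\interior}$ for all small $t$, so the Aleksandrov density sandwich applies on all of $\sublevelset_t$.

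The core estimate is the double-sided bound on $(\mountain_t(x_e) - u(x_e))^n$. For the lower bound, I would apply Lemma~\ref{lem: sharp growth estimate} to the dilate $\arbitrary := ((2M)^{-1}\sublevelset_t)^{\xbar_t}$, noting that $(2M\arbitrary)^{\xbar_t} = \sublevelset_t$, that $|\arbitrary|$ and $|\sublevelset_t|$ are comparable by~\eqref{Nondeg}, and that $|\partial_c u(\arbitrary)| \geq \alpha_1 |\arbitrary|$ from $\arbitrary \subset \sublevelset_t \subset \innerdom^{\interior}$. Combined with property~\eqref{eqn: lower bound on localization ratios} this yields $(\mountain_t(x_e) - u(x_e))^n \geq C^{-1}\epsilon_0^n |\sublevelset_t|^2$. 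For the upper bound, Theorem~\ref{thm: aleksandrov} applied at $x_e$ with direction $\e{t}$, together with $|\partial_c u(\sublevelset_t)| \leq \alpha_2|\sublevelset_t|$, gives
\begin{equation*}
(\mountain_t(x_e)-u(x_e))^n \;\leq\; \frac{C\,d(p_{(x_e,\xbar_t)},\, \plane{\e{t}}{\coord{\sublevelset_t}{\xbar_t}} \cup \plane{-\e{t}}{\coord{\sublevelset_t}{\xbar_t}})}{\supsegment{\coord{\sublevelset_t}{\xbar_t}}{\e{t}}\cdot d(\plane{\e{t}}{\coord{\sublevelset_t}{\xbar_t}},\, \plane{-\e{t}}{\coord{\sublevelset_t}{\xbar_t}})}\;|\sublevelset_t|^2.
\end{equation*}
By property~\eqref{eqn: lower bound on segment}, the segment length in direction $\e{t}$ is at least $\epsilon_0$, and since that segment is parallel to $\e{t}$ and contained in $\coord{\sublevelset_t}{\xbar_t}$, the transversal width $d(\plane{\e{t}}{},\plane{-\e{t}}{})$ is also at least $\epsilon_0$. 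The $|\sublevelset_t|^2$ factors then cancel between the two bounds, leaving $C^{-1}\epsilon_0^{n+2} \leq C\,d(p_{(x_e,\xbar_t)},\, \plane{\e{t}}{\coord{\sublevelset_t}{\xbar_t}} \cup \plane{-\e{t}}{\coord{\sublevelset_t}{\xbar_t}})$; letting $t \to 0^+$ and invoking~\eqref{eqn: supporting plane collapses} forces the right side to zero, a contradiction.

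The main obstacle, already shouldered by Lemma~\ref{lem: chopping convergence}, is that the chopping family must pass all four ``axes'' of the two-sided estimate at once: $\sublevelset_t$ must shrink to $x_e$ so as to remain inside $\innerdom^{\interior}$, the peak height $\mountain_t(x_e) - u(x_e)$ must stay comparable to $\sup_{\sublevelset_t}(\mountain_t - u)$ (to feed the lower bound), a definite segment in some direction $\e{t}$ must persist inside $\sublevelset_t$ (to keep both geometric denominators bounded below), and the supporting hyperplanes in that same direction must collapse to $p_{(x_e,\xbar_t)}$ (to kill the numerator). Verifying all four simultaneously is exactly the content of Lemma~\ref{lem: chopping convergence}, made possible by $c$-exposedness of $x_e$ and~\eqref{QConv}; once those conclusions are in hand, the contradiction here is essentially bookkeeping on the two global estimates from Sections~\ref{section: polar duals} and~\ref{section: aleksandrov estimate}.
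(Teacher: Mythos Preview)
Your proposal is correct and follows essentially the same approach as the paper: reduce to a $c$-exposed point via Straszewicz, invoke Lemma~\ref{lem: chopping convergence} to produce the chopping family, then combine Lemma~\ref{lem: sharp growth estimate} and Theorem~\ref{thm: aleksandrov} with~\eqref{eqn: main} and the properties \eqref{eqn: lower bound on localization ratios}--\eqref{eqn: supporting plane collapses} to reach a contradiction as $t\to 0^+$. The only cosmetic difference is your choice of $\arbitrary=((2M)^{-1}\sublevelset_t)^{\xbar_t}$ in the sharp growth estimate, whereas the paper takes $\arbitrary$ to be the $(2M)^{-1}$-dilation of the John ellipsoid of $\coord{\sublevelset_t}{\xbar_t}$; both satisfy $(2M\arbitrary)^{\xbar_t}\subset\sublevelset_t$ with $\Leb{\arbitrary}\sim\Leb{\sublevelset_t}$ and $\arbitrary\subset\innerdom$, so the argument goes through identically.
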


    \begin{proof}
         Suppose $\sublevelset_0$ is not a single point and it contains a $c$-extremal point with respect to $\xbar_0$ in $(\innerdom)^{\interior}$. By Theorem~\ref{thm: exposed points}, there exists a $c$-exposed point $x_e$ of $\sublevelset_0$ with respect to $\xbar_0$, which is also contained in $(\innerdom)^{\interior}$. We choose $0<\delta<d(p_{(x_e, \xbar_0)}, \partial \innerdomcoord{\xbar_0})$, and let $\sublevelset_t$, $\mountain_t$ and $\e{t}$ be as given by Lemma~\ref{lem: chopping convergence}.
   
         First apply Lemma~\ref{lem: sharp growth estimate} to  $\sublevelset_{t}$ and $\mountain_t$ for each $t>0$ small, with the choice of $\arbitrary=\ellipsoid_t$, where $\coord{\ellipsoid_t}{\xbar_t}$ is the dilation by $\tfrac{1}{2M}$ of the John ellipsoid of each $\coord{\sublevelset_t}{\xbar_t}$. We thus calculate 
         \begin{align*}
              \sup\limits_{\sublevelset_t} \left (\mountain_t-u \right )^n&\geq C\Leb{\sublevelset_t}\Leb{\partial_cu(\ellipsoid_t)}\\
              &\geq C\Leb{\sublevelset_t}\Leb{\ellipsoid_t\cap\innerdom}\\
              &\geq C\Leb{\sublevelset_t}^2,
         \end{align*}
 we have used~\eqref{eqn: main} along with the facts that $\ellipsoid_t\subset \sublevelset_t\subset \innerdom$ and $\Leb{\ellipsoid_t}\sim\Leb{\sublevelset_t}$. At the same time since each $\sublevelset_t$ is compactly contained in $\innerdom$ by~\eqref{eqn: chopped sections close to exposed point 2}, by applying Theorem~\ref{thm: aleksandrov} with each $\e{t}$ direction, we obtain
         \begin{align*}
               (\mountain_t(x_e)-u(x_e))^n
              &\leq C\frac{d(p_{(x_e,\xbar_t)}, \plane{\e{t}}{\coord{\sublevelset_t}{\xbar_t}} \cup \plane{-\e{t}} {\coord{\sublevelset_t}{\xbar_t} })}{\supsegment{\coord{\sublevelset_t}{\xbar_t}}{\e{t}}d(\plane{\e{}}{\coord{\sublevelset_t}{\xbar_t}}, \plane{-\e{}}{\coord{\sublevelset_t}{\xbar_t}})}\Leb{\sublevelset_t}\Leb{\partial_cu(\sublevelset_t)}\\
              &\leq C\epsilon_0^{-2}d(p_{(x_e,\xbar_t)}, \plane{\e{t}}{\coord{\sublevelset_t}{\xbar_t}} \cup \plane{-\e{t}} {\coord{\sublevelset_t}{\xbar_t} })\Leb{\sublevelset_t}^2,
         \end{align*}
         here we have used~\eqref{eqn: main}, and that 
         \begin{align*}
        \supsegment{\coord{\sublevelset_t}{\xbar_t}}{\e{t}} d(\plane{\e{t}}{\coord{\sublevelset_t}{\xbar_t}}, \plane{-\e{t}}{\coord{\sublevelset_t}{\xbar_t}})\geq \supsegment{\coord{\sublevelset_t}{\xbar_t}}{\e{t}}^2\geq \epsilon^2_0
         \end{align*}
         by~\eqref{eqn: lower bound on segment} from Lemma~\ref{lem: chopping convergence}. Since $\Leb{\sublevelset_t }>0$ from property~\eqref{eqn: exposed point is inside chopped section}, we can divide the second inequality above by the first, rearrange terms, and use~\eqref{eqn: lower bound on localization ratios} to obtain
         \begin{align*}
              0< \epsilon_0^{n+2} &\leq C\epsilon_0^2\left(\frac{\mountain_t(x_e)-u(x_e)}{\sup \limits_{\sublevelset_t} (\mountain_t-u)}\right)^n\\
               &\leq 	Cd(p_{(x_e,\xbar_t)}, \plane{\e{t}}{\coord{\sublevelset_t}{\xbar_t}} \cup \plane{-\e{t}} {\coord{\sublevelset_t}{\xbar_t} }).
         \end{align*}
         However, this last expression approaches $0$ as $s\to 0$ by property~\eqref{eqn: supporting plane collapses} of Lemma~\ref{lem: chopping convergence}, hence by this contradiction we obtain the theorem.
\end{proof}

 Ruling out $c$-extremal points in $\outerdom^{\interior} \setminus \innerdom$ is simpler and only requires use of the upper bound Theorem~\ref{thm: aleksandrov}.
    \begin{lem}\label{lem: localization beyond innerdom}
        Let $u$ be a $c$-convex Aleksandrov solution of~\eqref{OT problem}. If $\mountain_0$ is a $c$-function with focus $\xbar_0\in\innertarget$ that is supporting to $u$ at a point in $(\innerdom)^{\interior}$, then the set
        \begin{equation*}
             \sublevelset_0:=\{ u =\mountain_0\}
        \end{equation*}
        contains no $c$-extremal points with respect to $\xbar_0$ in $\outerdom^{\interior} \setminus \innerdom$.         	
    \end{lem}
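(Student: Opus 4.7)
The strategy parallels Theorem~\ref{thm: localization}, with the crucial simplification that a $c$-extremal point lying outside $\innerdom$ allows us to produce sublevel sets disjoint from $\innerdom$, so that the $c$-subdifferential estimate~\eqref{eqn: main} forces the vanishing of the $c$-Monge--Amp\`ere mass and the upper Aleksandrov bound alone yields a contradiction. In particular, the lower bound Lemma~\ref{lem: sharp growth estimate} will not be needed.

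I would start by assuming, for contradiction, that $\sublevelset_0$ contains a $c$-extremal point with respect to $\xbar_0$ in $\outerdom^{\interior}\setminus\innerdom$. Since $\innerdom$ is closed, $\outerdom^{\interior}\setminus\innerdom$ is open, and the homeomorphism $x\mapsto p_{(x,\xbar_0)}$ (afforded by~\eqref{Twist} and~\eqref{Nondeg}) identifies it with an open subset of $\outerdomcoord{\xbar_0}$. Applying Straszewicz's theorem (Theorem~\ref{thm: exposed points}) to the convex set $\subzerocoord$ then yields a $c$-exposed point $x_e$ of $\sublevelset_0$ with respect to $\xbar_0$ that still lies in $\outerdom^{\interior}\setminus\innerdom$. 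Since $\innerdom$ is compact and $p_{(x_e,\xbar_0)}\notin\innerdomcoord{\xbar_0}$, I can then pick $\delta>0$ small enough that $B_\delta(p_{(x_e,\xbar_0)})$ is disjoint from $\innerdomcoord{\xbar_0}$ and sits inside $\outerdomcoord{\xbar_0}$.

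Applying Lemma~\ref{lem: chopping convergence} with this $\delta$ produces, for all sufficiently small $t>0$, $c$-functions $\mountain_t$ whose sublevel sets $\sublevelset_t:=\{u\leq\mountain_t\}$ satisfy $\coord{\sublevelset_t}{\xbar_0}\subset B_\delta(p_{(x_e,\xbar_0)})$, and contain $x_e$ in their interior (because $\mountain_t(x_e)>u(x_e)$ by~\eqref{eqn: chopped sections have interior}). This forces $\sublevelset_t\cap\innerdom=\emptyset$ and $\sublevelset_t$ compactly contained in $\outerdom$, so the Aleksandrov condition gives $|\partial_cu(\sublevelset_t)|\leq\alpha_2|\sublevelset_t\cap\innerdom|=0$. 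Applying Theorem~\ref{thm: aleksandrov} to $\sublevelset_t$ and $\mountain_t$ at the interior point $x_e$ in any unit direction $\e{}\in\cotanspMbar{\xbar_t}$ then forces $(\mountain_t(x_e)-u(x_e))^n=0$, which directly contradicts~\eqref{eqn: chopped sections have interior}.

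The main care needed in carrying out this plan lies in the initial use of Straszewicz to secure a $c$-exposed point inside the open region $\outerdom^{\interior}\setminus\innerdom$ (rather than merely in its closure), and in confirming that $\delta$ can be chosen small enough that the chopping construction of Lemma~\ref{lem: chopping convergence} truly drives $\sublevelset_t$ completely out of $\innerdom$. A secondary technical point is that the focus $\xbar_t$ remains near $\xbar_0\in\innertarget$, so that the hypothesis of Theorem~\ref{thm: aleksandrov} is satisfied for $t$ small. Once these are in place, the rest of the argument is immediate from the vanishing of $|\partial_cu(\sublevelset_t)|$.
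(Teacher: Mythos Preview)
Your proposal is correct and follows essentially the same approach as the paper: reduce to a $c$-exposed point via Straszewicz, choose $\delta$ so that the chopped sublevel sets from Lemma~\ref{lem: chopping convergence} avoid $\innerdom$, and then use only the upper Aleksandrov estimate (Theorem~\ref{thm: aleksandrov}) together with the vanishing of $\Leb{\partial_cu(\sublevelset_t)}$ from~\eqref{eqn: main} to contradict~\eqref{eqn: chopped sections have interior}. Your remark that the focus $\xbar_t$ must stay near $\innertarget$ for Theorem~\ref{thm: aleksandrov} to apply is a fair technical observation that the paper leaves implicit as well; it is handled simply by taking $t$ small, since $\xbar_t\to\xbar_0\in\innertarget\subset\outertarget^{\interior}$.
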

  
    \begin{proof}
	     Suppose the statement is not true. Again, by Theorem~\ref{thm: exposed points} we can assume there exists $x_e$, a $c$-exposed point of $\sublevelset_0$ with respect to $\xbar_0$ in $\outerdom^{\interior} \setminus \innerdom$. We can fix $0<\delta<d\left(p_{(x_e, \xbar_0)}, \partial \left(\outerdomcoord{\xbar_0}\setminus\innerdomcoord{\xbar_0}\right)\right)$ and apply Lemma~\ref{lem: chopping convergence} to $x_e$, this time we obtain $c$-functions $\mountain_s$ and $\sublevelset_s$ compactly contained in $\outerdom^{\interior} \setminus \innerdom$ by~\eqref{eqn: chopped sections close to exposed point 2} (in particular, $\sublevelset_s \cap \innerdom = \emptyset$ for all $s$).
         Then, applying Theorem~\ref{thm: aleksandrov} to $u$, $\sublevelset_s$, $\mountain_s$, and $x_e$, and using~\eqref{eqn: main} we obtain
         \begin{align*}
              (\mountain_s(x_e)- u(x_e))^n  &\leq C\Leb{\sublevelset_s}\Leb{\partial_cu(\sublevelset_s)}\\
              &\leq C\Leb{\sublevelset_s}\Leb{\sublevelset_s\cap \innerdom}=0.
         \end{align*}
         However, this is impossible since $\mountain_s(x_e)-u(x_e)>0$ by~\eqref{eqn: chopped sections have interior} of Lemma~\ref{lem: chopping convergence}, and the claim is proved.
    \end{proof}


    Finally, the third case is dealt following an idea of Figalli, Kim, and McCann \cite{FKM11}, it consists in applying the bounds from Theorem~\ref{thm: aleksandrov} and Lemma~\ref{lem: sharp growth estimate} to two different, but related sublevel sets of $u$. The issue is that when we use the Aleksandrov solution property of $u$, we require a fixed proportion of the set $\arbitrary$ in Lemma~\ref{lem: sharp growth estimate} to be contained in $\innerdom$.

    \begin{lem}\label{lem: localization in Omega}
         Let $u$ solve~\eqref{eqn: main}, if $\mountain_0$ is a $c$-function with focus $\xbar_0\in\innertarget$ that is supporting to $u$ at a point in $(\innerdom)^{\interior}$, then the set
         \begin{equation*}
              \sublevelset_0:=\{ u =\mountain_0\} 
         \end{equation*}
         contains no $c$-extremal points with respect to $\xbar_0$ in $\partial \innerdom$.
    \end{lem}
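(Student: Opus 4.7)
The plan is to adapt the contradiction argument of Theorem~\ref{thm: localization} with $x_e$ now on $\partial\innerdom$, importing the two-sublevel-set device of~\cite[Theorem 7.1]{FKM11} to handle the fact that the chopped sections $\sublevelset_t$ are no longer contained in $\innerdom$. Suppose $\sublevelset_0$ contained a $c$-extremal point in $\partial\innerdom$; by Straszewicz (Theorem~\ref{thm: exposed points}) we may take it to be a $c$-exposed point $x_e\in\partial\innerdom$, and Lemma~\ref{lem: chopping convergence} furnishes, for all sufficiently small $t>0$, chopped sublevel sets $\sublevelset_t=\{u\leq\mountain_t\}$, directions $\e{t}$, and a universal $\epsilon_0>0$ obeying~\eqref{eqn: chopped sections have interior}--\eqref{eqn: supporting plane collapses}. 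Observe that $x_e\in\innerdom=\spt{\rho}$ and that $x_e$ is in the interior of $\sublevelset_t$ by~\eqref{eqn: chopped sections have interior}, so $\Leb{\sublevelset_t\cap\innerdom}>0$ for all such $t$.

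The upper bound obtained as in Theorem~\ref{thm: localization}---applying Theorem~\ref{thm: aleksandrov} at $(u,\mountain_t,x_e,\e{t})$ together with~\eqref{eqn: lower bound on segment},~\eqref{eqn: lower bound on localization ratios}, and the upper Aleksandrov-solution inequality in~\eqref{eqn: main}---reads
\begin{equation*}
\epsilon_0^n\bigl(\sup_{\sublevelset_t}(\mountain_t-u)\bigr)^n\leq C\epsilon_0^{-2}\rho_t\Leb{\sublevelset_t}\Leb{\sublevelset_t\cap\innerdom},
\end{equation*}
where $\rho_t:=d(p_{(x_e,\xbar_t)},\plane{\e{t}}{\coord{\sublevelset_t}{\xbar_t}}\cup\plane{-\e{t}}{\coord{\sublevelset_t}{\xbar_t}})\to 0$ by~\eqref{eqn: supporting plane collapses}. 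Following~\cite{FKM11}, the matching lower bound would come from applying Lemma~\ref{lem: sharp growth estimate} not to the dilated John ellipsoid of $\coord{\sublevelset_t}{\xbar_t}$ (as in Theorem~\ref{thm: localization}, whose intersection with $\innerdom$ is not controlled) but to a second, related subset $\arbitrary_t\subset\sublevelset_t$ chosen so that the dilation condition $(2M\arbitrary_t)^{\xbar_t}\subset\sublevelset_t$ holds alongside the volume comparisons $\Leb{\arbitrary_t}\geq C^{-1}\Leb{\sublevelset_t}$ and $\Leb{\arbitrary_t\cap\innerdom}\geq C^{-1}\Leb{\sublevelset_t\cap\innerdom}$. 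Combined with the lower Aleksandrov-solution inequality in~\eqref{eqn: main}, this yields $\sup_{\sublevelset_t}(\mountain_t-u)^n\geq C^{-1}\Leb{\sublevelset_t}\Leb{\sublevelset_t\cap\innerdom}$, and substituting into the upper bound cancels the factor $\Leb{\sublevelset_t}\Leb{\sublevelset_t\cap\innerdom}$ to produce $\epsilon_0^{n+2}\leq C\rho_t$, contradicting $\rho_t\to 0$.

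The main obstacle is precisely the construction of $\arbitrary_t$ with $\Leb{\arbitrary_t\cap\innerdom}\geq C^{-1}\Leb{\sublevelset_t\cap\innerdom}$ and a universal constant. Because no regularity is imposed on $\partial\innerdom$, this comparison must be extracted from the global geometry of $\sublevelset_0$: by Corollary~\ref{cor: c-convexity of sublevel sets}, $\sublevelset_0$ is $c$-convex with respect to $\xbar_0$ and contains both $x_e\in\partial\innerdom$ and the original touching point $x_0\in(\innerdom)^{\interior}$, so the $c$-segment $[x_0,x_e]_{\xbar_0}$ lies inside $\sublevelset_0$. A natural attempt is to shift the John ellipsoid of $\coord{\sublevelset_t}{\xbar_t}$ along the $\xbar_t$-image of this $c$-segment (toward $x_0$) before contracting by $1/(2M)$, and then to use~\eqref{QConv} together with the asymptotics $\sublevelset_t\to\{x_e\}$ to verify that the resulting set satisfies both volume lower bounds with universal constants. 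Carrying out this quantitative comparison---propagating the ``interior mass'' inherited from $x_0$ into the vanishing sections $\sublevelset_t$---is the delicate core of the argument.
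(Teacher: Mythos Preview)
Your upper bound via Theorem~\ref{thm: aleksandrov} is fine, but the lower-bound strategy has a genuine gap: you insist on finding $\arbitrary_t\subset\sublevelset_t$ with $\Leb{\arbitrary_t\cap\innerdom}\geq C^{-1}\Leb{\sublevelset_t\cap\innerdom}$ and the dilation condition $(2M\arbitrary_t)^{\xbar_t}\subset\sublevelset_t$ simultaneously. Since $\sublevelset_t$ collapses to $\{x_e\}$ with $x_e\in\partial\innerdom$, and no regularity is assumed on $\partial\innerdom$, there is no universal control on how $\innerdom$ meets a shrinking John ellipsoid of $\sublevelset_t$. Your proposed remedy---shifting that ellipsoid along the $c$-segment toward $x_0$---fails because $x_0\notin\sublevelset_t$ for small $t$ (indeed, $\delta$ in Lemma~\ref{lem: chopping convergence} is chosen precisely so that $\coord{\sublevelset_t}{\xbar_0}\subset B_\delta(p_{(x_e,\xbar_0)})$ misses $p_{(x_0,\xbar_0)}$). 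So any such shift moves $\arbitrary_t$ outside $\sublevelset_t$ and destroys the dilation hypothesis of Lemma~\ref{lem: sharp growth estimate}.

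The paper's fix is to stop trying to work inside $\sublevelset_t$ for the lower bound. Instead one introduces the \emph{enlarged} sublevel set
\[
\bigsublevelset_t:=\{u\leq \mountain_t+u(x_0)-\mountain_t(x_0)\},
\]
which contains $x_0\in(\innerdom)^{\interior}$ by construction and hence does not collapse. By the argument of~\cite[Lemma~3]{C92} (detailed in~\cite[Theorem~7.1]{FKM11}), a fixed dilation $\ellipsoid_t$ of the John ellipsoid of $\coord{\bigsublevelset_t}{\xbar_t}$ can be translated into $(\innerdom)^{\interior}\cap\bigsublevelset_t$ with $\Leb{\ellipsoid_t}\geq\Lambda_0^{-n}\Leb{\bigsublevelset_t}$. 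Lemma~\ref{lem: sharp growth estimate} (which, crucially, does \emph{not} require compact containment of the sublevel set) applied to $\bigsublevelset_t$ with $\arbitrary=(\tfrac{1}{2M}\ellipsoid_t)^{\xbar_t}$ then gives
\[
\Bigl(\sup_{\sublevelset_t}(\mountain_t-u)+u(x_0)-\mountain_t(x_0)\Bigr)^n\geq C^{-1}\Lambda_0^{-n}\Leb{\bigsublevelset_t}^2.
\]
The link to your upper bound is the \emph{second} ratio in~\eqref{eqn: lower bound on localization ratios}, not the first: dividing and using $\Leb{\sublevelset_t}\leq\Leb{\bigsublevelset_t}$ yields $\epsilon_0^n\leq C\rho_t$, the desired contradiction. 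The role of the ``two-sublevel-set device'' is thus not to locate a good $\arbitrary_t$ inside $\sublevelset_t$, but to replace $\sublevelset_t$ altogether in the sharp-growth step.
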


    \begin{proof}
         Suppose the lemma does not hold, by Theorem~\ref{thm: exposed points} again, we can assume there is a $c$-exposed point $x_e$ of $\sublevelset_0$ with respect to $\xbar_0$, near $\partial(\innerdom)$. Since exposed points are also extremal points and there exists a point $x_0\in\sublevelset_0\cap (\innerdom)^{\interior}$, we may apply Theorem~\ref{thm: localization} and Lemma~\ref{lem: localization beyond innerdom} to see that we must have $x_e\in\partial(\innerdom)$. Since $\innerdom$ is compactly contained in $\outerdom$, we can choose a $\delta>0$ small enough to obtain a family of $c$-functions $\mountain_t$ with focus $\xbar_t$ and unit vectors $\e{t} \in \cotanspMbar{\xbar_t}$ as provided to us by Lemma~\ref{lem: chopping convergence} all such that $\sublevelset_t := \{ u\leq \mountain_t\}$ are compactly contained in $\outerdom$. Additionally, by choosing $\delta$ sufficiently small, by property~\eqref{eqn: chopped sections close to exposed point} we may also assume that $x_0\not\in\sublevelset_t$ for any $t>0$.

	     We now use $x_0$ to define a related family of sublevel sets,	
         \begin{align*}
              \bigsublevelset_t:=\{u\leq \mountain_t+u(x_0)-\mountain_t(x_0)\}.
         \end{align*}
         Note here that $\sublevelset_t\subset\bigsublevelset_t$, and $\sublevelset_t$ is compactly contained in $\outerdom$ for every $t$, but $\bigsublevelset_t$ may intersect $\partial \outerdom$. By the same argument as~\cite[Lemma 3]{C92} (or see~\cite[Theorem 7.1]{FKM11} for a detailed proof of this claim), there exists some large constant $\Lambda_0>0$ (independent of $t$) such that $\ellipsoid_t\subset(\innerdom)^{\interior}\cap\bigsublevelset_t$, where $\coord{\ellipsoid_t}{\xbar_t}$ is some translation of the dilation by $\tfrac{1}{\Lambda_0}$ of the John ellipsoid of each $\coord{\bigsublevelset_t}{\xbar_t}$. Then, we can apply Lemma~\ref{lem: sharp growth estimate} to $\bigsublevelset_{t}$ and $\mountain_t+u(x_0)-\mountain_t(x_0)$ for each $t>0$ small, with the choice of $\arbitrary=\left(\tfrac{1}{2M}\ellipsoid_t\right)^{\xbar_t}$, to obtain
         \begin{align*}
              \sup\limits_{\bigsublevelset_t} \left (\mountain_t+u(x_0)-\mountain_t(x_0)-u \right )^n&\geq C\Leb{\bigsublevelset_t}\Leb{\partial_cu(\left(\tfrac{1}{2M}\ellipsoid_t\right)^{\xbar_t})}\\
              &\geq C\Leb{\bigsublevelset_t}\Leb{\left(\tfrac{1}{2M}\ellipsoid_t\right)^{\xbar_t}\cap\innerdom}\\
              &= C\Leb{\bigsublevelset_t}\Leb{\left(\tfrac{1}{2M}\ellipsoid_t\right)^{\xbar_t}}\\
              &\geq C\Lambda_0^{-n}\Leb{\bigsublevelset_t}^2.
         \end{align*}
	     Note that we do not require compact containment of $\bigsublevelset_t$ in $\outerdom$ in order to invoke Lemma~\ref{lem: sharp growth estimate}.
	
	      Since each $\sublevelset_t$ is compactly contained in $\innerdom$ by~\eqref{eqn: chopped sections close to exposed point 2}, we can apply Theorem~\ref{thm: aleksandrov} with each $\e{t}$ direction to these sets. Similar to the proof of Theorem~\ref{thm: localization} we arrive at the inequality:
	     \begin{align*}
              \left(\frac{\mountain_t(x_e)-u(x_e)}{\sup \limits_{\bigsublevelset_t} (\mountain_t+u(x_0)-\mountain_t(x_0)-u)}\right)^n
               &\leq 	Cd(p_{(x_e,\xbar_t)}, \plane{\e{t}}{\coord{\sublevelset_t}{\xbar_t}} \cup \plane{-\e{t}} {\coord{\sublevelset_t}{\xbar_t} })\left(\frac{\Lambda_0^n\Leb{\sublevelset_t}^2}{\Leb{\bigsublevelset_t}^2}\right).
         \end{align*}
	     This time, since $\sup \limits_{\bigsublevelset_t} {(\mountain_t+u(x_0)-\mountain_t(x_0)-u)}=\sup \limits_{\sublevelset_t} {(\mountain_t-u)}+u(x_0)-\mountain_t(x_0)$, by property~\eqref{eqn: lower bound on localization ratios} we see the quantity on the left side is bounded below by a uniform, positive constant. At the same time since $\Leb{\sublevelset_t}\leq \Leb{\bigsublevelset_t}$, the quantity on the right approaches $0$ as $t\to 0$ by property~\eqref{eqn: supporting plane collapses}. This contradiction completes the proof.
    \end{proof}

\section{Strict $c$-convexity}\label{section: strict convexity}

    In this section, we will use the results of Section \ref{section: localization} to show that a $c$-convex function $u$ solving \eqref{eqn: main} must actually be strictly $c$-convex. We note that the overall structure of the paper is somewhat reversed from the approach in~\cite{FKM11}. Figalli, Kim, and McCann first rule out $c$-extremal points of contact sets on $\partial \outerdom$, and then show that any contact set with more than one point must extend to $\partial\outerdom$. In our approach, we have first ruled out $c$-extremal points in $\outerdom^{\interior}$, and we will use this fact in an essential way to show that a contact set cannot stretch to $\partial \outerdom$. 
    
    Roughly speaking, we wish to construct a certain family of cones in $\innertarget$ depending on a parameter $r>0$, whose vertex is given by the focus of the $c$-function defining the contact set (this family will ``close down'' upon its axis as $r\to 0$). We then show that for $r>0$ sufficiently small, the preimages of these cones under the $c$-subdifferential map lie outside of $\innerdom$, obtaining a contradiction with the main equation~\eqref{eqn: main}, it is in obtaining this property that we use the results from Section~\ref{section: localization} (see Figures~\ref{figure: section 6a} and~\ref{figure: section 6b}).
    
    We work toward showing the following theorem.
    \begin{thm}\label{thm: strict c-convexity}
         Let $u$ be a $c$-convex Aleksandrov solution of~\eqref{OT problem} and suppose that $\mountain_0$ is a $c$-function that is supporting to $u$ from below with focus $\xbar_0\in\innertarget$. Define the contact set $\contact:=\{u=\mountain_0\}$, and suppose that $\contact$ contains a point $x_0\in(\innerdom)^{\interior}$. Then, 
         \begin{align*}
              \contact = \{x_0\}.
         \end{align*}
    \end{thm}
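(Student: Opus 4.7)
I argue by contradiction, assuming $\contact \supsetneq \{x_0\}$. By Corollary~\ref{cor: c-convexity of sublevel sets}, $\contact$ is $c$-convex with respect to $\xbar_0$, so $\subzerocoord$ is a nontrivial compact convex subset of $\cotanspMbar{\xbar_0}$. Theorem~\ref{thm: localization}, Lemma~\ref{lem: localization beyond innerdom}, and Lemma~\ref{lem: localization in Omega} together exclude $c$-extremal points of $\contact$ (with respect to $\xbar_0$) from every portion of $\outerdom^{\interior}$, so all such extremal points must lie in $\partial\outerdom$. In particular $p_{(x_0,\xbar_0)}$, whose preimage $x_0$ lies in $(\innerdom)^{\interior}$, cannot be extremal, and the Krein--Milman theorem produces a $c$-extremal point $x_e \in \contact \cap \partial\outerdom$ distinct from $x_0$. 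By convexity of $\subzerocoord$ the line segment joining $p_{(x_0,\xbar_0)}$ and $p_{(x_e,\xbar_0)}$ lies in $\subzerocoord$, yielding a $c$-segment $x(t) = \cExp{\xbar_0}{(1-t)p_{(x_0,\xbar_0)} + t p_{(x_e,\xbar_0)}} \subset \contact$ joining $x_0$ to $x_e$. Since $\innerdom$ is compactly contained in $\outerdom$, there exists $t^* \in (0,1)$ with $x(t^*) \in \partial\innerdom$.

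Next, I would construct a family of cones $C_r \subset \innertarget$ with vertex $\xbar_0$ that collapse onto an axis as $r \to 0^+$, in the spirit of the informal outline preceding this theorem. The axis is a short $c$-segment in $\innertarget$ emanating from $\xbar_0$, obtained as the $\cExp{x_0}$-image of a half-line in $\cotanspM{x_0}$ starting at $\pbar_{(x_0,\xbar_0)}$; the direction of the half-line is chosen so that, to first order in the tilt parameter, perturbing the focus of $\mountain_0$ along it shifts the touching set with $u$ along $x(t)$ in the direction from $x_0$ toward $x_e$. The cone $C_r$ is then the $\cExp{x_0}$-image of a narrow conical neighborhood of this half-line (opening proportional to $r$, length proportional to $r$), intersected with $\innertarget$. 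By \eqref{Nondeg} and compactness one has $|C_r| > 0$ for every $r > 0$.

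The crucial step is to show $(\partial_c u)^{-1}(C_r) \cap \innerdom = \emptyset$ for all sufficiently small $r$. For each $\xbar \in C_r$, let $\mountain_{\xbar}$ denote the $c$-function with focus $\xbar$ supporting $u$ from below, and write $\sublevelset_{\xbar} := \{u \leq \mountain_{\xbar}\}$. Applying the Aleksandrov estimate (Theorem~\ref{thm: aleksandrov}) in the direction along the cone axis and the sharp growth estimate (Lemma~\ref{lem: sharp growth estimate}) in the transverse directions to $\sublevelset_{\xbar}$, and using \eqref{QConv} to compare the shifted contact set with the original one, the two bounds combine to force $\{u = \mountain_{\xbar}\}$ into a shrinking neighborhood of $\{x(t) : t \in [t^* + \delta(r), 1]\}$, with $\delta(r) \to 0^+$ as $r \to 0^+$. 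For $r$ small this neighborhood is disjoint from $\innerdom^{\cl}$, so no $x \in \innerdom$ satisfies $\xbar \in \partial_c u(x)$. Taking $A := (\partial_c u)^{-1}(C_r)$ in \eqref{eqn: main} yields $|\partial_c u(A)| \leq \alpha_2 |A \cap \innerdom| = 0$; combined with Remark~\ref{rem: brenier solutions a.e.}, which implies $\partial_c u(\innerdom)$ covers $\innertarget$ up to measure zero, this forces $|C_r| = 0$, contradicting the positivity established above. The main obstacle is precisely this quantitative step: identifying the correct axis direction and verifying via the two estimates that the displaced contact set is truly forced out of $\innerdom$, using in an essential way that $\contact$ has no $c$-extremal points anywhere inside $\outerdom^{\interior}$.
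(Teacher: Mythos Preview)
The overall shape of your argument matches the paper: construct a family of cones in the target with vertex $\xbar_0$, show their $\partial_c u$-preimages avoid $\innerdom$, and derive a measure contradiction from~\eqref{eqn: main}. You also correctly invoke the Section~\ref{section: localization} results to place every $c$-extremal point of $\contact$ on $\partial\outerdom$. The gap is entirely in your ``crucial step,'' and it is a genuine one.

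First, the mechanism you propose cannot work as written. For $\xbar\in C_r$ you take $\mountain_{\xbar}$ \emph{supporting} to $u$ from below, so $\sublevelset_{\xbar}:=\{u\leq\mountain_{\xbar}\}=\{u=\mountain_{\xbar}\}$ and $\mountain_{\xbar}-u\equiv 0$ on it; both Theorem~\ref{thm: aleksandrov} and Lemma~\ref{lem: sharp growth estimate} then yield only $0\leq 0$. In the paper those two estimates are used \emph{only inside} Section~\ref{section: localization}; once Theorem~\ref{thm: localization} and Lemmas~\ref{lem: localization beyond innerdom},~\ref{lem: localization in Omega} are available, this section never invokes them again. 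Second, the set you claim the preimage shrinks toward is wrong. You assert it lies near $\{x(t):t\in[t^*+\delta(r),1]\}$ with $\delta(r)\to 0^+$, but there is no hypothesis forcing the $c$-segment to stay outside $\innerdom$ for $t>t^*$ (the paper assumes no $c$-convexity of $\innerdom$), and even if it did, as $r\to 0$ the tail approaches $x(t^*)\in\partial\innerdom$, so disjointness from $\innerdom^{\cl}$ gets \emph{harder}, not easier. What is actually true, and what the paper proves, is that the preimage collapses toward a \emph{single point} on $\partial\outerdom$, not a segment.

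The idea you are missing is the precise choice of the axis $\qbar_0$ and the computation that replaces your appeal to the two estimates. One must choose $\qbar_0$ (Lemma~\ref{lem: good direction a3w}) so that the linear function $\linear{}{p}=\inner{\transform{x_0}{\xbar_0}(\qbar_0)/\gnorm[x_0]{\qbar_0}}{p}$ attains a \emph{unique} maximum on $\contactcoord$ at some $\pmax$; this point is then extremal, hence in $\partial\outerdomcoord{\xbar_0}$ by Section~\ref{section: localization}. A direct second-order Taylor expansion in the focus variable (Lemma~\ref{lem: halfspace inequality}, no Aleksandrov-type bounds) shows that whenever $x\notin\contact$ has $\coord{\partial_c u(x)}{x_0}$ meeting the cone annulus at scale $r$, one has $\linear{}{p_{(x,\xbar_0)}}>\linear{}{\pmax}-Cr$. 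A compactness argument (Corollary~\ref{cor: preimage trapping a3w}) then squeezes $p_{(x,\xbar_0)}$ into an arbitrarily small ball around $\pmax$: any subsequential limit lies in $\contactcoord$ with $\linear{}{\cdot}$-value at least $\linear{}{\pmax}$, hence equals $\pmax$ by uniqueness. Since $\pmax\in\partial\outerdomcoord{\xbar_0}$ and $\innerdom$ is compactly contained in $\outerdom$, this ball misses $\innerdomcoord{\xbar_0}$ for $r$ small, which is the disjointness you need.
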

 
    
    To this end, for the remainder of this section we will fix $\mountain_0$, $\xbar_0$, and $x_0$ as in the statement of the above theorem, and also write
    \begin{align*}
         p_0:=p_{(x_0, \xbar_0)}, \ 
         \pbar_0:=\pbar_{(x_0, \xbar_0)}.
    \end{align*}
    Additionally, in this section we will use the Riemannian inner product $\innerg[x_0]{\cdot}{\cdot}$ defined on $\cotanspM{x_0}$.  As will be seen in the following proofs, the actual choice of inner product on $\cotanspM{x_0}$ is irrelevant, we merely fix $\g{x_0}$ for concreteness.
    
    We will also use the following result proven in~\cite[Theorem 2.2.9]{Sch93}.    
    \begin{lem}\label{lem: rob schneider's lemma}
     Suppose that $\vectsp$ is a $k$--dimensional normed vector space, and $\arbitrary\subset \vectsp$ is an $k$--dimensional convex subset. Then, the subset of the unit sphere in the dual space $\vectsp^*$ consisting of linear functions that do not attain a unique maximum over $\arbitrary$ has zero $(k-1)$--dimensional Hausdorff measure. In particular, the set of linear functions which do attain a unique maximum over $\arbitrary$ is dense in $\vectsp^*$.
     \end{lem}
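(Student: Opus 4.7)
The plan is to recast the statement in terms of the support function $h\colon \vectsp^* \to \real\cup\{+\infty\}$ of $\arbitrary$, defined by $h(\ell) := \sup_{x \in \arbitrary} \ell(x)$. A standard duality computation identifies the convex subdifferential $\partial h(\ell)$ with the exposed face $F(\ell) := \{x \in \arbitrary^{\cl} \mid \ell(x) = h(\ell)\}$, valid for $\ell$ in the interior of the effective domain $\{h<+\infty\}$. Since a finite convex function on a finite-dimensional space is differentiable at a point if and only if its subdifferential there is a singleton, the condition that $\ell$ attains a unique maximum over $\arbitrary$ is equivalent to the differentiability of $h$ at $\ell$. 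Thus the lemma reduces to showing that the set of non-differentiability points of $h$, when restricted to the unit sphere $S \subset \vectsp^*$, has vanishing $(k-1)$-dimensional Hausdorff measure.

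For this, I would invoke the classical fact that a finite convex function on an open subset of a finite-dimensional vector space is locally Lipschitz, hence differentiable Lebesgue-almost everywhere (for instance via Rademacher's theorem, or via the characterization of convex subdifferentials as a monotone operator with single-valued resolvent almost everywhere). Under the assumption that $\arbitrary$ is bounded---which holds in the paper's intended application, where $\arbitrary$ will be a compact $c$-convex subset of a fixed cotangent space---the effective domain of $h$ is all of $\vectsp^*$, so this gives that the set $D := \{\ell \in \vectsp^* \mid h \text{ is not differentiable at } \ell\}$ has $k$-dimensional Lebesgue measure zero.

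The final step is to transfer this negligibility from $\vectsp^*$ to the unit sphere $S$, using the positive homogeneity $h(t\ell) = t\,h(\ell)$ for $t>0$, which makes $D$ invariant under positive dilations and hence a cone through the origin. Passing to polar coordinates,
\begin{equation*}
0 = \int_{\vectsp^*} \bone_D(x)\,dx = \left(\int_0^{\infty} r^{k-1}\,dr\right)\cdot \mathcal{H}^{k-1}(D \cap S),
\end{equation*}
and since the radial integral is infinite, we must have $\mathcal{H}^{k-1}(D\cap S) = 0$. The density statement then follows immediately: any subset of $S$ whose complement has vanishing $(k-1)$-dimensional Hausdorff measure is dense in $S$, and density extends to all of $\vectsp^*$ because the property of attaining a unique maximum over $\arbitrary$ is invariant under multiplication by positive scalars. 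The one delicate point to verify carefully is the identification $\partial h(\ell) = F(\ell)$, which is standard in convex analysis but relies on boundedness of $\arbitrary$ (or on $\ell$ lying in the interior of $\{h<+\infty\}$) in order to ensure that directional derivatives of $h$ at $\ell$ are given by $\sup_{x \in F(\ell)}\langle\cdot,x\rangle$; beyond this, the argument is essentially a marriage of Rademacher's theorem with polar integration.
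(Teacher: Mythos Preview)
The paper does not prove this lemma at all; it simply cites \cite[Theorem~2.2.9]{Sch93}. Your argument is in fact essentially the standard proof one finds there: pass to the support function, observe that unique-maximum directions correspond exactly to its differentiability points, invoke almost-everywhere differentiability of a finite convex (hence locally Lipschitz) function, and use positive homogeneity to descend from $\vectsp^*$ to the sphere.

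Two small remarks. First, you correctly flag that the lemma as stated omits a boundedness hypothesis on $\arbitrary$; without it the statement can fail (for $\arbitrary$ a closed half-space, no linear function attains a unique maximum). In Schneider's book ``convex body'' always means compact convex, so this assumption is implicit in the citation, and the paper's sole application---to the compact set $\contactcoord$ in Lemma~\ref{lem: good direction a3w}---satisfies it. Second, your polar-coordinate step should be written over a finite ball: from
\begin{equation*}
0 = \Leb{D \cap B_R(0)} = \frac{R^k}{k}\,\mathcal{H}^{k-1}(D \cap S)
\end{equation*}
for any $R>0$ the conclusion follows directly, whereas the $\int_0^\infty$ formulation yields the indeterminate $0 = \infty \cdot \mathcal{H}^{k-1}(D \cap S)$. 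This is purely cosmetic; the underlying idea is sound.
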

    We are now ready to state and prove our first lemma, where we single out a certain direction to be used as the axis of our family of cones. Very roughly, we start with a direction that points inward to $\innertarget$, project this onto the $k$-dimensional affine hull of $\contactcoord$ to apply Lemma~\ref{lem: rob schneider's lemma} there, and show that the result of ``unprojecting'' the vector is the correct direction. Note that this lemma only utilizes convex geometry, and does not  require any properties of the solution $u$.
    \begin{lem}\label{lem: good direction a3w}
         Suppose that the conditions of Theorem~\ref{thm: strict c-convexity} hold and $\contact$ contains more than one point. Then there is some nonzero $\qbar_0 \in \cotanspMbar{x_0}$ such that
         \begin{align}\label{eqn: cone is in interior a3w}
              \left( \left(B_r(\pbar_0)\setminus B_{\frac{r}{2}}(\pbar_0)\right)\cap\cone{\qbar_0}{r}\right)\setminus\{ \pbar_0\}\subset \outertargetcoord{x_0}^{\interior}
         \end{align}
         for all sufficiently small and positive $r$. Here, $\cone{\qbar_0}{r}$ denotes the cone (see Figure~\ref{figure: section 6b})
         \begin{align}\label{eqn: cone def a3w}
              \cone{\qbar_0}{r}:=\left \{\pbar\in \outertargetcoord{x_0} \mid r\innerg[x_0]{\pbar-\pbar_0}{\frac{\qbar_0}{\gnorm[x_0]{\qbar_0}}}\geq \gnorm[x_0]{\proj{\qbar_0^\perp}{\pbar-\pbar_0}} \right\},
         \end{align}
         and $\proj{\qbar_0^\perp}{\pbar}$ is the projection of $\pbar$ onto the $(n-1)$--dimensional affine space containing $\pbar_0$, which is $\g{x_0}$-orthogonal to $\qbar_0$. Moreover, the linear function on $\outerdomcoord{\xbar_0}$ defined by
         \begin{align}\label{eqn: linear function def}
              \linear{}{p}:= \inner{\frac{\transform{x_0}{\xbar_0}(\qbar_0)}{\gnorm[x_0]{\qbar_0}}}{p}
         \end{align}
         attains a unique maximum on $\contactcoord$.
    \end{lem}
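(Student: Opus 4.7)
The proof splits into two essentially independent pieces, one for the geometric inclusion \eqref{eqn: cone is in interior a3w} and one for the uniqueness of the maximum of $\linear{}{\cdot}$ on $\contactcoord$; notably, the direction of $\qbar_0$ will be determined entirely by the second requirement, while the first merely forces $r$ to be small.

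For \eqref{eqn: cone is in interior a3w}, the key observation is that $\xbar_0 \in \innertarget$ together with the hypothesis $\innertarget \subset \outertarget^{\interior}$ in \eqref{Convdm} imply, via the homeomorphism $-Dc(x_0,\cdot): \outertarget \to \outertargetcoord{x_0}$ from \eqref{Twist} and \eqref{Nondeg}, that $\pbar_0 \in \outertargetcoord{x_0}^{\interior}$. Hence there is some $r_0 > 0$ with $B_{r_0}(\pbar_0) \subset \outertargetcoord{x_0}^{\interior}$, and for every $0 < r \leq r_0$ and every nonzero $\qbar_0$ the set on the left of \eqref{eqn: cone is in interior a3w} is contained in $B_r(\pbar_0)$, hence in $\outertargetcoord{x_0}^{\interior}$. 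Thus \eqref{eqn: cone is in interior a3w} holds automatically once $r$ is taken sufficiently small, and the direction of $\qbar_0$ remains entirely free to be chosen based on the second requirement.

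For the uniqueness of the maximum, I first note that $\contactcoord$ is a convex subset of $\cotanspMbar{\xbar_0}$: writing $\contact = \{u \leq \mountain_0\}$, this follows directly from Corollary~\ref{cor: c-convexity of sublevel sets}. Since $\contact$ contains more than one point by assumption and $-\Dbar c(\cdot,\xbar_0)$ is injective by \eqref{Twist}, $\contactcoord$ has positive dimension. Let $k := \dim \affhull{\contactcoord} \geq 1$ and let $W \subset \cotanspMbar{\xbar_0}$ be the $k$-dimensional linear subspace parallel to $\affhull{\contactcoord}$. I apply Lemma~\ref{lem: rob schneider's lemma} with the $k$-dimensional convex set $\contactcoord$ sitting inside its affine hull, obtaining a dense collection of linear functionals on $W$ whose affine extensions to $\affhull{\contactcoord}$ attain a unique maximum on $\contactcoord$.

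To produce $\qbar_0 \in \cotanspM{x_0}$ (the ambient space corrects the apparent typo in the statement), I observe that by \eqref{Nondeg} the map $\transform{x_0}{\xbar_0}: \cotanspM{x_0} \to \tanspMbar{\xbar_0}$ is a linear isomorphism, so the composition
\[
\qbar_0 \;\longmapsto\; \bigl(p \mapsto \inner{\transform{x_0}{\xbar_0}(\qbar_0)}{p}\bigr) \big|_W
\]
is a linear surjection from $\cotanspM{x_0}$ onto $W^*$. Two functionals on $\cotanspMbar{\xbar_0}$ that agree on $W$ differ by a constant on $\affhull{\contactcoord}$, so the property of attaining a unique maximum on $\contactcoord$ depends only on the restriction to $W$; therefore the preimage under the surjection above of the dense set supplied by Lemma~\ref{lem: rob schneider's lemma} is dense in $\cotanspM{x_0}$. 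Choosing any nonzero $\qbar_0$ in this preimage, the induced functional $\linear{}{\cdot}$ from \eqref{eqn: linear function def} (which depends only on the direction of $\qbar_0$) attains a unique maximum on $\contactcoord$; combined with the choice of $r$ from the first paragraph, both conclusions hold. I do not anticipate any substantive obstacle: the argument is essentially a repackaging of Schneider's density theorem through the non-degeneracy isomorphism, and the geometric inclusion is a free consequence of the compact containment of $\innertarget$ in $\outertarget$.
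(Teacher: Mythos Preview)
Your proof is correct and, in one respect, cleaner than the paper's. Both arguments hinge on Lemma~\ref{lem: rob schneider's lemma} for the unique-maximum claim, and your surjection argument through $\transform{x_0}{\xbar_0}$ is essentially the same projection-onto-$W$ idea the paper carries out with explicit bases. The genuine difference is in the treatment of \eqref{eqn: cone is in interior a3w}: the paper allows for the possibility $\pbar_0\in\partial\outertargetcoord{x_0}$ and therefore must first choose $\qbar_1$ with $\pbar_0+\qbar_1\in\outertargetcoord{x_0}^{\interior}$, then perturb it only slightly (so that the interior condition survives) to achieve the unique-maximum property. You instead observe that the standing hypothesis $\innertarget\subset\outertarget^{\interior}$ from \eqref{Convdm}, combined with $\xbar_0\in\innertarget$ and the fact that $-Dc(x_0,\cdot)$ is a local diffeomorphism by \eqref{Nondeg}, forces $\pbar_0\in\outertargetcoord{x_0}^{\interior}$ outright; this renders the inclusion \eqref{eqn: cone is in interior a3w} automatic for small $r$ regardless of the direction of $\qbar_0$, and decouples the two requirements entirely. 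Your shortcut is valid under the stated hypotheses; the paper's version is simply more robust in that it would survive dropping the compact containment of $\innertarget$ in $\outertarget$.
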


    \begin{proof}
         Since $\outertargetcoord{x_0}$ has a nonempty interior, there exists a nonzero $\qbar_1\in \cotanspM{x_0}$ such that $\pbar_0+\qbar_1\in \outertargetcoord{x_0}^{\interior}$. By making a small perturbation of $\qbar_1$ and by~\eqref{Nondeg}, we may assume that the linear function on $\outerdomcoord{\xbar_0}$ which is defined by 
         \begin{align*}
              \linear[1]{}{\pbar}:=\inner{\transform{x_0}{\xbar_0}(\qbar_1)}{\pbar}
         \end{align*}
         is not identically constant on $\contactcoord$. Now, consider the linear subspace $\vectsp[k]:=\aff{\contactcoord}-p_0$ and the dual space of linear functions $\vectsp[k]^*\subset \tanspMbar{\xbar_0}$, endowed with the restriction of the inner product $\gbar{\xbar_0}$. By choosing an orthonormal basis $\{\e{i}\}_{i=1}^k$ of $\vectsp[k]$, we may define the projection of $\linear[1]{}{\cdot}$ by
         \begin{align*}
              \linear[1]{k}{p}:=\sum_{i=1}^k{\linear[1]{}{\e{i}}\innergbar{\e{i}}{p}}\in \vectsp[k]^*.
         \end{align*}
         Note that if $p\in\aff{\contactcoord}$, 
         \begin{align*}
              \linear[1]{}{p}- \linear[1]{k}{p}-\linear[1]{}{p_0}+\linear[1]{k}{p_0}&=\linear[1]{}{p-p_0-\sum_{i=1}^k{(\innergbar{\e{i}}{p-p_0}\e{i})}}=0,
         \end{align*}
         in other words $\linear[1]{}{\cdot}-\linear[1]{K}{\cdot}$ is identically constant on $\contactcoord$, hence by our choice of $\qbar_1$ we must have that $\linear[1]{k}{p}$ is not identically zero. Then by Lemma~\ref{lem: rob schneider's lemma}, there exists a $\vectn\in \vectsp[k]^*$ of arbitrarily small $\gbar{\xbar_0}$ norm such that $\linear[1]{K}{\cdot}+\inner{\vectn}{\cdot}$ attains a unique maximum over $\contactcoord.$ Thus, we may define 
         \begin{align*}
              \qbar_0:=\qbar_1+\transforminv{x_0}{\xbar_0}(\vectn)
         \end{align*}
          for which $\pbar_0+\qbar_0\in \outertargetcoord{x_0}^{\interior}$ and $\qbar_0\neq 0$, by taking the norm of $\vectn$ sufficiently small. Then if $\linear[]{}{\cdot}$ is the linear function defined by~\eqref{eqn: linear function def} we find that for $p\in \contactcoord$,
         \begin{align*}
              \linear[]{}{p}&=\linear[1]{}{p}+\inner{\vectn}{p}\\
              &=\linear[1]{k}{p}+\inner{\vectn}{p}+\linear[1]{}{p_0}-\linear[1]{k}{p_0},
         \end{align*}
         hence $\linear[]{}{\cdot}$ attains a unique maximum on $\contactcoord$.

         We will now show the inclusion~\eqref{eqn: cone is in interior a3w}. If $\pbar_0\in\outertargetcoord{x_0}^{\interior}$, this inclusion is immediate for $r>0$ sufficiently small, so assume that $\pbar_0\in\partial\outertargetcoord{x_0}$. Since $\pbar_0+\qbar_0 \in \outertargetcoord{x_0}^{\interior}$, there exists some $r_0>0$ such that $B_{r_0}(\pbar_0+\qbar_0)\subset \outertargetcoord{x_0}^{\interior}$. Then by considering the convex hull of $B_{r_0}(\pbar_0+\qbar_0)$ and the point $\pbar_0$, and since $\outertargetcoord{x_0}$ is convex, we can easily obtain the inclusion~\eqref{eqn: cone is in interior a3w} for all $r>0$ sufficiently small.

    \end{proof}

\begin{figure}[H]
  \centering
    \includegraphics[height=.35\textwidth]{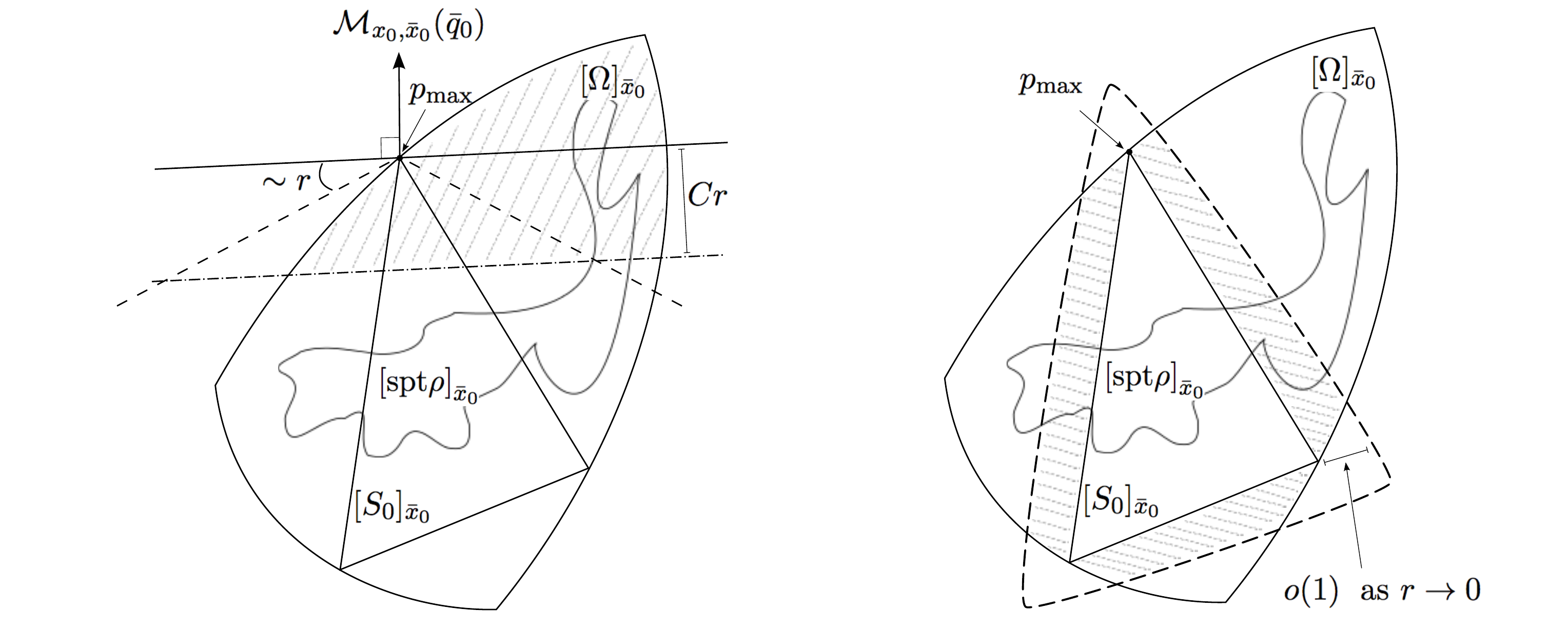}
     \caption{We first trap the preimage of the family of cones in a ``dual cone'' that flattens as $r\to 0$ (left diagram). Then, we show it must converge to the contact set $\sublevelset_0$ as $r\to 0$ (right diagram). Note that each diagram on its own is not enough to conclude that $\innerdom$ does not intersect the preimage of the family of cones we have constructed, even for $r>0$ small, and we must combine the two as in Figure~\ref{figure: section 6b}.}\label{figure: section 6a}
\end{figure}
In the next two results, we will show that the preimage under $\partial_cu$ of the family of cones constructed above must be close to the point $\xmax$. In the first lemma, we show the image is contained in some ``dual cone'' in $\outerdom$ (left image in Figure~\ref{figure: section 6a} above). We utilize the results from Section~\ref{section: localization} here.

    \begin{lem}\label{lem: halfspace inequality}
         Suppose $\qbar_0\in \outertargetcoord{x_0}$ is chosen as in Lemma~\ref{lem: good direction a3w} above, $\linear{}{p}$ is defined by~\eqref{eqn: linear function def}, and $\contact$ contains more than one point. Then if $\pmax\in\contactcoord$ is the unique point where $\linear{}{\cdot}$ attains its maximum over $\contactcoord$, we have
         \begin{align}\label{eqn: max at boundary a3w}
              \pmax\in \contactcoord\cap \partial \outerdomcoord{\xbar_0}.
         \end{align} 
         Additionally, for all sufficiently small positive $r$ and some constant $C>0$ independent of $r$, if $x\not \in\contact$ and
         \begin{align*}
              \coord{\partial_{c} u(x)}{x_0}\cap\cone{\qbar_0}{r}\cap \left(B_r(\pbar_0)\setminus B_{\frac{r}{2}}(\pbar_0)\right)\neq \emptyset,
         \end{align*}
         then we have the inequality
         \begin{align}\label{eqn: halfspace inequality a3w}
              \linear{}{p_{(x, \xbar_0)}}> \linear{}{\pmax}-Cr.
         \end{align}
    \end{lem}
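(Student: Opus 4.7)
The plan is to treat the two claims separately: I will use the localization results of Section~\ref{section: localization} to establish~\eqref{eqn: max at boundary a3w}, and a first-order Taylor expansion along the $c$-segment joining $\xbar_0$ to $\xbar$ to establish~\eqref{eqn: halfspace inequality a3w}.

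For~\eqref{eqn: max at boundary a3w}, I would observe that $\contact = \{u=\mountain_0\} = \{u \leq \mountain_0\}$ (since $\mountain_0$ supports $u$ from below), so by Corollary~\ref{cor: c-convexity of sublevel sets} the set $\contactcoord$ is convex. Since $\linear{}{\cdot}$ attains its maximum over $\contactcoord$ at the single point $\pmax$ (by Lemma~\ref{lem: good direction a3w}), $\pmax$ is an exposed point of $\contactcoord$, hence $\xmax := \cExp{\xbar_0}{\pmax}$ is a $c$-exposed (in particular $c$-extremal) point of $\contact$ with respect to $\xbar_0$. Because $\contact$ contains more than one point by hypothesis, Theorem~\ref{thm: localization} together with Lemma~\ref{lem: localization beyond innerdom} and Lemma~\ref{lem: localization in Omega} forbid any such $c$-extremal point from lying anywhere in $\outerdom^{\interior}$, so $\xmax \in \partial\outerdom$ and therefore $\pmax \in \partial\outerdomcoord{\xbar_0}$ via the homeomorphism supplied by~\eqref{Twist} and~\eqref{Nondeg}.

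For~\eqref{eqn: halfspace inequality a3w}, I will fix $\xbar \in \partial_c u(x)$ with $p_{(x_0, \xbar)} \in \cone{\qbar_0}{r} \cap (B_r(\pbar_0)\setminus B_{r/2}(\pbar_0))$, set $\tilde{q} := p_{(x_0, \xbar)} - \pbar_0$, and introduce the $c$-segment $\xbar(t) := \cExp{x_0}{\pbar_0 + t\tilde{q}}$ together with the function $F(t) := -c(\xmax, \xbar(t)) + c(x, \xbar(t))$. The supporting $c$-function $\mountainhat(y) := -c(y,\xbar)+c(x,\xbar)+u(x)$ satisfies $\mountainhat(\xmax) \leq u(\xmax) = \mountain_0(\xmax)$, which, after rearrangement using the definitions of $\mountain_0$ and $\mountainhat$, is equivalent to
\begin{equation*}
F(1) - F(0) \leq \mountain_0(x) - u(x) \leq 0.
\end{equation*}
Since $\pbar(t) := \pbar_0 + t\tilde{q}$ is affine with $|\dot{\pbar}(t)|\leq r$, Taylor's theorem together with the $C^3$ regularity of $c$ and the universal bounds from Remark~\ref{rem: universal constants} gives $F(1) - F(0) = F'(0) + O(r^2)$, with $F'(0) = \inner{\pmax - p_{(x,\xbar_0)}}{\transform{x_0}{\xbar_0}(\tilde{q})}$. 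Combining the two displays yields
\begin{equation*}
\inner{\pmax - p_{(x,\xbar_0)}}{\transform{x_0}{\xbar_0}(\tilde{q})} \leq Cr^2.
\end{equation*}

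Finally, I will exploit the cone geometry. Decomposing $\tilde{q} = a\,\qbar_0/\gnorm[x_0]{\qbar_0} + \tilde{q}_\perp$ with $a := \innerg[x_0]{\tilde{q}}{\qbar_0/\gnorm[x_0]{\qbar_0}}$, the cone condition~\eqref{eqn: cone def a3w} forces $\gnorm[x_0]{\tilde{q}_\perp} \leq ra$, and $\gnorm[x_0]{\tilde{q}} \geq r/2$ forces $a \geq r/3$ for $r$ small. Since $\transform{x_0}{\xbar_0}(\qbar_0/\gnorm[x_0]{\qbar_0})$ is precisely the vector defining $\linear{}{\cdot}$ in~\eqref{eqn: linear function def}, and $|\transform{x_0}{\xbar_0}(\tilde{q}_\perp)| \leq Cra$ by~\eqref{Nondeg} and compactness, splitting the inner product and bounding $|\pmax - p_{(x,\xbar_0)}|$ by $\diam \outerdomcoord{\xbar_0}$ produces
\begin{equation*}
a\bigl(\linear{}{\pmax} - \linear{}{p_{(x,\xbar_0)}}\bigr) \leq Cr^2 + C' ra \leq C'' ra,
\end{equation*}
after using $r^2 \leq 3ra$. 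Dividing by $a$ proves the required inequality (with the strict ``$>$'' recovered by enlarging the constant). The delicate step I expect is ensuring the Taylor remainder is genuinely $O(r^2)$ rather than $o(r)$, since after dividing by $a \sim r$ one must preserve a linear bound in $r$; this is precisely what the $C^3$ regularity of $c$ and the uniform bounds of Remark~\ref{rem: universal constants} are designed to give.
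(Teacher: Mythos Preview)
Your proposal is correct and follows essentially the same route as the paper: for~\eqref{eqn: max at boundary a3w} you invoke the localization results (Theorem~\ref{thm: localization} and Lemmas~\ref{lem: localization beyond innerdom},~\ref{lem: localization in Omega}) exactly as the paper does, and for~\eqref{eqn: halfspace inequality a3w} you use the supporting inequality at $\xmax$, a first-order Taylor expansion of $-c(\xmax,\xbar(t))+c(x,\xbar(t))$ along the $c$-segment with $O(r^2)$ remainder, and then the decomposition of $\tilde q$ along $\qbar_0$ and $\qbar_0^\perp$ together with the cone/annulus geometry to divide out the factor $a\sim r$ --- this is precisely the paper's computation, only with the opposite parametrization of the $c$-segment. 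One small correction: the strict inequality in~\eqref{eqn: halfspace inequality a3w} is \emph{not} recovered by ``enlarging the constant'' but rather by noting that $x\notin\contact$ gives $\mountain_0(x)-u(x)<0$ strictly, so your display should read $F(1)-F(0)\leq \mountain_0(x)-u(x)<0$, and this strictness then propagates through the estimate.
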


    \begin{proof}
         Since the extrema of a linear function on a convex set must be attained at at least one of its extremal points, $\pmax$ must be an extremal point of $\contactcoord$. However, since $\contact$ contains more than one point by assumption, we may apply Theorem~\ref{thm: localization}, and Lemmas~\ref{lem: localization beyond innerdom} and~\ref{lem: localization in Omega} to conclude that $\pmax$ cannot be in $\outerdomcoord{\xbar_0}^{\interior}$, this proves~\eqref{eqn: max at boundary a3w}.
        
                We now work towards the inequality~\eqref{eqn: halfspace inequality a3w}. Fix some $r>0$ and $x\not \in \contact$, and write $p:=p_{(x, \xbar_0)}$. Then if $\pbar_r\in \coord{\partial_{c} u(x)}{x_0}\cap \cone{\qbar_0}{r}\cap \left(B_r(\pbar_0)\setminus B_{\frac{r}{2}}(\pbar_0)\right)$ we must have (writing $\xbar_r:=\cExp{x_0}{\pbar_r}$)
         \begin{align}\label{eqn: supporting inequality a3w}
              u(y)\geq -c(y, \xbar_r)+c(x, \xbar_r)+u(x)>-c(y, \xbar_r)+c(x, \xbar_r)+\mountain_0(x)
         \end{align}
         for any $y\in\outerdom$. Now, we define the $c$-segment
         \begin{align*}
              \xbar(t):=\cExp{x_0}{(1-t)\pbar_r+t\pbar_0},
         \end{align*}
         and let us also write
         \begin{align*}
              \xmax:=\cExp{\xbar_0}{\pmax}
         \end{align*}
         for ease of notation. Then, by taking $y=\xmax$ in~\eqref{eqn: supporting inequality a3w} and applying Taylor's theorem in $t$, we calculate
         \begin{align}
         0&=u(\xmax)-\mountain_0(\xmax)\notag\\
         &>-c(\xmax, \xbar_r)+c(x, \xbar_r)+\mountain_0(x)-\mountain_0(\xmax)\notag\\
         &=-c(\xmax, \xbar_r)+c(\xmax, \xbar_0)-(-c(x, \xbar_r)+c(x, \xbar_0))\notag\\
              &\geq \inner{-\Dbar c(\xmax, \xbar_0)+\Dbar c(x, \xbar_0)}{\xbardot(0)}-C\sup_{t\in[0,1]}{\lvert \frac{d^2}{dt^2}\left[c(\xmax, \xbar(t))-c(x, \xbar(t))\right]\rvert}\label{eqn: first inequality pt1}.
              \end{align}
              Now note that
\begin{align*}
 \norm{\frac{d^2}{dt^2}\left[-c(\xmax, \xbar(t))+c(x, \xbar(t))\right]}&=\norm{\frac{d}{dt}\inner{-\Dbar c(\xmax, \xbar(t))+\Dbar c(x, \xbar(t))}{\xbardot(t)}}\\
 &=\norm{\frac{d}{dt}\inner{\transformadj{x_0}{\xbar(t)}(-\Dbar c(\xmax, \xbar(t))+\Dbar c(x, \xbar(t)))}{\pbar_r-\pbar_0}}\\
 &\leq C\gnorm[x_0]{\pbar_r-\pbar_0}^2
\end{align*}
for some universal $C>0$, by applying the chain rule along with~\eqref{Nondeg}. Thus combining this with~\eqref{eqn: first inequality pt1} we continue calculating,
              \begin{align}
              0&> \inner{\pmax-p}{\DDbarinv{x_0}{\xbar_0}(\pbar_r-\pbar_0)}-C\gnorm[x_0]{\pbar_r-\pbar_0}^2\notag\\
              &\geq \inner{\transform{x_0}{\xbar_0}(\pbar_r-\pbar_0)}{\pmax-p}-Cr^2\notag\\
              &=\inner{\transform{x_0}{\xbar_0}\left(\innerg[x_0]{\pbar_r-\pbar_0}{\frac{\qbar_0}{\gnorm[x_0]{\qbar_0}}}\frac{\qbar_0}{\gnorm[x_0]{\qbar_0}}+\proj{\qbar_0^\perp}{\pbar_r-\pbar_0}\right)}{\pmax-p}-Cr^2\notag\\
              &=\innerg[x_0]{\pbar_r-\pbar_0}{\frac{\qbar_0}{\gnorm[x_0]{\qbar_0}}} \inner{\frac{\transform{x_0}{\xbar_0}(\qbar_0)}{\gnorm[x_0]{\qbar_0}}}{\pmax-p}+\inner{\proj{\qbar_0^\perp}{\pbar_r-\pbar_0}}{\transformadj{x_0}{\xbar_0}(\pmax-p)}-Cr^2.\label{eqn: first inequality pt1}
         \end{align}
         Now note that by the definition of $\cone{\qbar_0}{r}$,  if $\innerg[x_0]{\pbar_r-\pbar_0}{\qbar_0}=0$ we would have $\pbar_r=\pbar_0$, which would contradict inequality~\eqref{eqn: supporting inequality a3w}. Hence we must actually have 
         \begin{align*}
              \innerg[x_0]{\pbar_r-\pbar_0}{\qbar_0}> 0.
         \end{align*} 
         At the same time since $\cone{\qbar_0}{r}\setminus B_{\frac{r}{2}}(\pbar_0)$,
\begin{align*}
 \frac{r^2}{4}&\leq\gnorm[x_0]{\pbar_r-\pbar_0}^2\\
 &=\innerg[x_0]{\pbar_r-\pbar_0}{\frac{\qbar_0}{\gnorm[x_0]{\qbar_0}}}^2+\gnorm[x_0]{\proj{\qbar_0^\perp}{\pbar_r-\pbar_0}}^2\\
 &\leq (1+r^2)\innerg[x_0]{\pbar_r-\pbar_0}{\frac{\qbar_0}{\gnorm[x_0]{\qbar_0}}}^2.
\end{align*}
Thus if $r$ is sufficiently small, we have 
\begin{align*}
 \innerg[x_0]{\pbar_r-\pbar_0}{\frac{\qbar_0}{\gnorm[x_0]{\qbar_0}}}\geq C^{-1}r
\end{align*}
for some universal constant $C>0$. Dividing~\eqref{eqn: first inequality pt1} by $\innerg[x_0]{\pbar_r-\pbar_0}{\frac{\qbar_0}{\gnorm[x_0]{\qbar_0}}}$, rearranging, and using that $\pbar_r\in \cone{\qbar_0}{r}$ along with~\eqref{Nondeg}, we obtain
         \begin{align*}
              \linear{}{p}&=\inner{\frac{\transform{x_0}{\xbar_0}(\qbar_0)}{\gnorm[x_0]{\qbar_0}}}{p}\\
              &>\inner{\frac{\transform{x_0}{\xbar_0}(\qbar_0)}{\gnorm[x_0]{\qbar_0}}}{\pmax}-C\lVert \transformadj{x_0}{\xbar_0}\rVert\gbarnorm[x_0]{\pmax-p}\left(\frac{\gnorm[x_0]                                     {\proj{\qbar_0^\perp}{\pbar_r-\pbar_0}}}{\innerg[x_0]{\pbar_r-\pbar_0}{\frac{\qbar_0}{\gnorm[x_0]{\qbar_0}}}}\right)\\
              &\qquad-C\frac{r^2}{\innerg[x_0]{\pbar_r-\pbar_0}{\frac{\qbar_0}{\gnorm[x_0]{\qbar_0}}}}\\
              &\geq \linear{}{\pmax}-Cr,
         \end{align*}
         hence~\eqref{eqn: halfspace inequality a3w} is proven.
    \end{proof}
\begin{figure}[H]
  \centering
    \includegraphics[height=.4\textwidth]{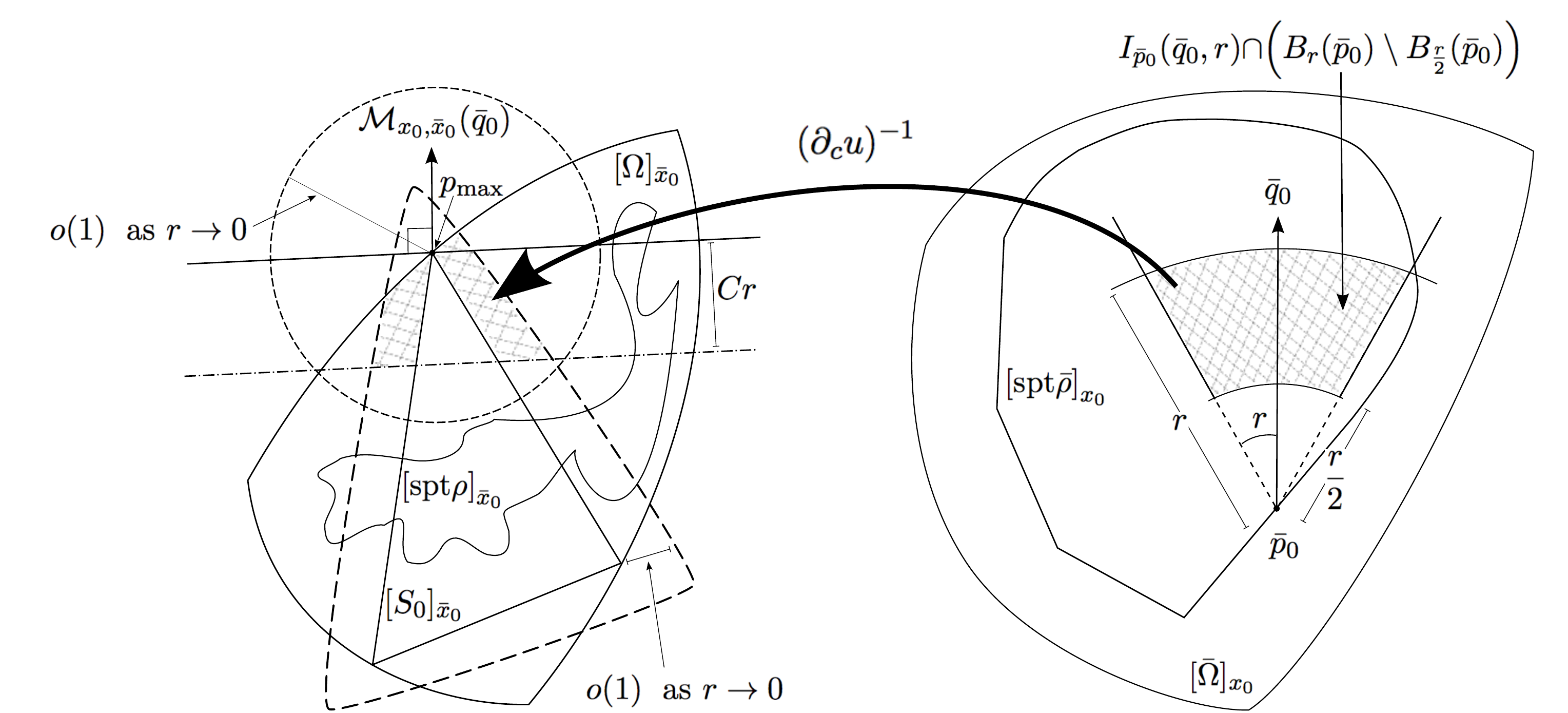}
     \caption{By combining Lemma~\ref{lem: halfspace inequality} with Corollary~\ref{cor: preimage trapping a3w}, we can show the family of cones (right diagram) has preimage approaching the point $\xmax$ as $r\to 0$ (left diagram). Note the left diagram combines the two illustrations in Figure~\ref{figure: section 6a}.}\label{figure: section 6b}
\end{figure}
    Next, we prove that as $r\to 0$, the inverse image under $\partial_cu$ of our family of cones must also be close to the contact set $\contact$ (right image in Figure~\ref{figure: section 6a}). By combining this with the above lemma, we can conclude that the inverse images must approach the point $\xmax$, or be contained in the contact set $\contact$ (see Figure~\ref{figure: section 6b} above).
 
    \begin{cor}\label{cor: preimage trapping a3w}
         Suppose that the conditions of Lemma~\ref{lem: good direction a3w} hold. Let $\qbar_0\in \cotanspM{x_0}$ and $\pmax\in \contactcoord\cap\partial\outerdomcoord{\xbar_0}$ satisfy the conclusions of Lemma~\ref{lem: good direction a3w}, and let $\cone{\qbar_0}{r}$ be as defined by~\eqref{eqn: cone def a3w}. Then given any $\epsilon>0$, there exists $r_\epsilon>0$ such that for any $x\in\outerdom^{\cl}\setminus \contact$ satisfying
         \begin{align*}
              \coord{\partial_{c}u(x)}{x_0}\cap \cone{\qbar_0}{r_\epsilon}\cap \left(B_{r_\epsilon}(\pbar_0)\setminus B_{\frac{r_\epsilon}{2}}(\pbar_0)\right)\neq \emptyset,
         \end{align*}
         we must have
         \begin{align*}
              \gbarnorm[\xbar_0]{p_{(x, \xbar_0)}-\pmax}<\epsilon.
         \end{align*} 
    \end{cor}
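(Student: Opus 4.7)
The plan is to argue by contradiction using a compactness argument, with the key input being the quantitative inequality~\eqref{eqn: halfspace inequality a3w} from Lemma~\ref{lem: halfspace inequality} combined with the \emph{uniqueness} clause in Lemma~\ref{lem: good direction a3w} (that $\linear{}{\cdot}$ attains a unique maximum on $\contactcoord$ at $\pmax$). Suppose the conclusion fails. Then one obtains $\epsilon>0$, a sequence $r_k\to 0^+$, points $x_k\in\outerdom^{\cl}\setminus\contact$, and covectors $\pbar_{r_k}\in\coord{\partial_c u(x_k)}{x_0}\cap\cone{\qbar_0}{r_k}\cap\bigl(B_{r_k}(\pbar_0)\setminus B_{r_k/2}(\pbar_0)\bigr)$ with $\gbarnorm[\xbar_0]{p_{(x_k,\xbar_0)}-\pmax}\geq\epsilon$. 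By compactness of $\outerdom^{\cl}$, pass to a subsequence so that $x_k\to x_\infty\in\outerdom^{\cl}$. Since $\pbar_{r_k}\to\pbar_0$, the foci $\xbar_{r_k}:=\cExp{x_0}{\pbar_{r_k}}$ converge to $\xbar_0$.

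The crux of the argument is to show $x_\infty\in\contact$. Because each $\xbar_{r_k}\in\partial_c u(x_k)$, the corresponding supporting $c$-function gives $u(y)\geq -c(y,\xbar_{r_k})+c(x_k,\xbar_{r_k})+u(x_k)$ for all $y\in\outerdom^{\cl}$. Since $u$ is Lipschitz (being $c$-convex with $c\in C^1$) and $c$ is continuous, passing to the limit yields $u(y)\geq -c(y,\xbar_0)+c(x_\infty,\xbar_0)+u(x_\infty)$, so $\xbar_0\in\partial_c u(x_\infty)$. If $u(x_\infty)>\mountain_0(x_\infty)$, then the $c$-function $\mountaintilde(y):=-c(y,\xbar_0)+c(x_\infty,\xbar_0)+u(x_\infty)$ differs from $\mountain_0$ by a positive additive constant, hence $\mountaintilde>\mountain_0$ pointwise. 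Evaluating at any $y\in\contact$ gives $\mountaintilde(y)>\mountain_0(y)=u(y)$, contradicting that $\mountaintilde$ supports $u$ from below. Therefore $u(x_\infty)=\mountain_0(x_\infty)$, i.e. $x_\infty\in\contact$, and in particular $p_{(x_\infty,\xbar_0)}\in\contactcoord$.

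Finally, Lemma~\ref{lem: halfspace inequality} applies for all $k$ large and gives $\linear{}{p_{(x_k,\xbar_0)}}>\linear{}{\pmax}-Cr_k$. Passing to the limit (using continuity of $p_{(\cdot,\xbar_0)}=-\Dbar c(\cdot,\xbar_0)$ and of $\linear{}{\cdot}$) yields $\linear{}{p_{(x_\infty,\xbar_0)}}\geq \linear{}{\pmax}$. Since $\pmax$ is the \emph{unique} maximizer of $\linear{}{\cdot}$ on $\contactcoord$, we must have $p_{(x_\infty,\xbar_0)}=\pmax$. Consequently $p_{(x_k,\xbar_0)}\to\pmax$, which contradicts $\gbarnorm[\xbar_0]{p_{(x_k,\xbar_0)}-\pmax}\geq\epsilon$ and completes the proof.

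The main subtlety I anticipate is the step showing $x_\infty\in\contact$: one must rule out $u(x_\infty)>\mountain_0(x_\infty)$, and this is where we exploit the rigidity that any two $c$-functions with the same focus differ by a constant, so a better supporting function at $x_\infty$ with focus $\xbar_0$ would dominate $\mountain_0$ everywhere and contradict $\mountain_0\leq u$ on $\contact$. Everything else is essentially a direct combination of Lemmas~\ref{lem: good direction a3w} and~\ref{lem: halfspace inequality} with standard compactness.
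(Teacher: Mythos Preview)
Your proof is correct and follows essentially the same route as the paper's: contradiction via compactness, showing $x_\infty\in\contact$ by passing the supporting inequality to the limit, then applying Lemma~\ref{lem: halfspace inequality} and the uniqueness of $\pmax$. The only cosmetic difference is that the paper establishes $x_\infty\in\contact$ by evaluating the supporting inequality directly at $y=\xmax$ and taking the limit there, whereas you first obtain the limiting supporting inequality for all $y$ and then evaluate at a point of $\contact$; both arguments are the same in substance.
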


    \begin{proof}
         Let $\linear{}{\cdot}$ be defined by~\eqref{eqn: linear function def}. Suppose that the corollary does not hold, then there exists some $\epsilon_0>0$, a sequence of positive numbers $r_k$ decreasing to $0$ as $k\to \infty$, and sequences of points $\{x_k\}_{k=1}^\infty\subset \outerdom^{\cl}\setminus \contact$, and $\pbar_k\in \cone{\qbar_0}{r_k}\cap \left(B_{r_k}(\pbar_0)\setminus B_{\frac{r_k}{2}}(\pbar_0)\right)$ such that 
         \begin{align}
              \pbar_k&\in\coord{\partial_{c} u(x_k)}{x_0},\notag\\
              \gbarnorm[\xbar_0]{p_{(x_k, \xbar_0)}-\pmax}&\geq \epsilon_0\label{eqn: contradiction assumption a3w}
         \end{align}
         for all $k$. It is clear that $\pbar_k\to\pbar_0$ as $k\to\infty$, and by the compactness of $\outerdomcoord{\xbar_0}^{\cl}$ we may extract a subsequence to assume that $x_k\to x_\infty$ for some $x_\infty\in \outerdom^{\cl}$. Let us write         
\begin{align*}
 \xmax:=\cExp{\xbar_0}{\pmax},\ \xbar_k:=\cExp{x_0}{\pbar_k}.
\end{align*}
 Since $\xmax\in \contact$, we can calculate that
         \begin{align*}
              \mountain_0(\xmax)&=u(\xmax)\\
              &\geq -c(\xmax, \xbar_k)+c(x_k, \xbar_k)+u(x_k)\\
              &\to -c(\xmax, \xbar_0)+c(x_\infty, \xbar_0)+u(x_\infty)\\
              &=\mountain_0(\xmax)-\mountain_0(x_\infty)+u(x_\infty),
         \end{align*}
         as $k\to \infty$, in other words, $x_\infty\in \contact$. At the same time, since each $x_k$ satisfies inequality~\eqref{eqn: halfspace inequality a3w} with $r=r_k$, by taking $k\to\infty$ we would obtain that $\linear{}{p_{(x_\infty, \xbar_0)}}\geq \linear{}{\pmax}$. However, since~\eqref{eqn: contradiction assumption a3w} implies that $p_{(x_\infty, \xbar_0)}\neq \pmax$, by the uniqueness of $\pmax\in \contactcoord$ as the point achieving the maximum value of $\linear{}{\cdot}$, we must have that $p_{(x_\infty, \xbar_0)}\not \in \contactcoord$, this contradiction completes the proof.
    \end{proof}
With this final result in hand, we can finally obtain a contradiction with the main equation~\eqref{eqn: main}, proving the desired result of strict $c$-convexity.
    \begin{proof}[Proof of Theorem~\ref{thm: strict c-convexity}]
         Suppose that $u$ fails to be strictly $c$-convex, thus the contact set $\contact$ contains more than one point. Since $\innerdom$ is assumed to be compactly contained in $\outerdom$, we may fix  
         \begin{align*}
              0<\epsilon <\dist{(\innerdomcoord{\xbar_0}, \partial \outerdomcoord{\xbar_0})}.
         \end{align*}
         We can now find some $\qbar_0\in \cotanspM{x_0}$ satisfying the conclusions of Lemma~\ref{lem: good direction a3w}, and by applying Corollary~\ref{cor: preimage trapping a3w} above we see that
         \begin{equation*}
              (\partial_{c} u)^{-1}(\cExp{x_0}{\cone{\qbar_0}{r_\epsilon}\cap \left(B_{r_\epsilon}(\pbar_0)\setminus B_{\frac{r_\epsilon}{2}}(\pbar_0)\right)}) \subset \contact\cup \left(\outerdom^{\cl}\setminus\innerdom\right)
         \end{equation*}
         for some $r_\epsilon>0$. Then by~\eqref{eqn: main} we can see that
         \begin{align*}
    	      \Leb{\cone{\qbar_0}{r_\epsilon}\cap B_{r_\epsilon}(\pbar_0)} &\leq \Leb{\partial_{c} u (\contact\cup \left(\outerdom^{\cl}\setminus\innerdom\right))} \\
	          &\leq C\Leb{\left(\contact\cup \left(\outerdom^{\cl}\setminus\innerdom\right)\right)\cap \innerdom}\\
	          &=0
         \end{align*}
         (we have also used here that $\Leb{\contact}=0$).
         However, by~\eqref{eqn: cone is in interior a3w} we see that $\Leb{\cone{\qbar_0}{r_\epsilon}\cap \left(B_{r_\epsilon}(\pbar_0)\setminus B_{\frac{r_\epsilon}{2}}(\pbar_0)\right)}$ has strictly positive measure, which leads to a contradiction, proving the proposition.
    \end{proof}
    With this result in hand, we may finally prove the main theorem.
    \begin{proof}[Proof of Theorem~\ref{thm: main}]
    Suppose $u$ is a $c$-convex Aleksandrov solution to~\eqref{OT problem}, let $x_0\in(\innerdom)^{\interior}$ and suppose $\mountain_0$ is a supporting $c$-function to $u$ at $x_0$ with focus $\xbar_0$. We must also have $\xbar_0\in\innertarget$, and by Theorem~\ref{thm: strict c-convexity} this implies that $u(x)>\mountain_0(x)$ for any $x\neq x_0$, i.e. $u$ is strictly $c$-convex at $x_0$.
\end{proof}

\appendix
\section{Inward pointing normals of convex sets}\label{appendix}
The results in this appendix are necessary to obtain the lower bound~\eqref{eqn: supporting plane collapses} on the family of line segments in Lemma~\ref{lem: chopping convergence}. In turn, this bound is needed to apply the Aleksandrov estimate Theorem~\ref{thm: aleksandrov} in the proofs of Theorem~\ref{thm: localization} and Lemma~\ref{lem: localization in Omega}. 

The idea is the following. We have freedom in choosing the direction $\e{0}$ to apply Lemma~\ref{lem: chopping convergence}. However, in order to obtain a strictly positive lower bound~\eqref{eqn: lower bound on segment}, we must be careful to select a $\e{0}$ for which the negative actually points into the sublevel set $\sublevelset_0$. Since it is not a priori obvious that such a choice of direction exists, this is what we aim to show. We note here that the main result of~\cite{FKM11a} by Figalli, Kim, McCann plays an analogous role in their paper~\cite{FKM11}. 

We begin by stating a well-known result in convex analysis, the Fenchel-Rockafellar Duality Theorem. Throughout the section, we will fix an $n$-dimensional inner product space $V$ with an inner product $(\cdot, \cdot)$.
\begin{thm}[Fenchel-Rockafellar Duality Theorem, {\cite{Roc66}}]\label{thm: fenchel-rockafellar}
 If $f$ and $g$ are convex functions on $V$ such that one of the functions is continuous at some point in $\{p\in V\mid f(p)+g(p)<+\infty\}$, then
\begin{align*}
 \inf_{p\in V}{(f(p)+g(p))}=\max_{p^*\in V}{(-f^*(-p^*)-g^*(p^*))},
\end{align*}
where $f^*$ is the usual Legendre-Fenchel transform,
\begin{align*}
 f^*(p^*):=\sup_{p\in V}{[\left(p^*,p\right)-f(p) ]}.
\end{align*}
\end{thm}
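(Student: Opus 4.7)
The plan is to split the argument into weak duality and the existence of a maximizer attaining equality.

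For the weak inequality $\inf(f+g)\geq \sup(-f^*(-p^*)-g^*(p^*))$, the defining inequality of the Legendre-Fenchel transform immediately gives, for any $p\in V$ and $p^*\in V$,
\[
f^*(-p^*)\geq (-p^*,p)-f(p),\qquad g^*(p^*)\geq (p^*,p)-g(p),
\]
and summing these produces $f(p)+g(p)\geq -f^*(-p^*)-g^*(p^*)$; taking the infimum on the left and the supremum on the right does the job. The nontrivial content of the theorem is that equality holds and that the right-hand supremum is attained, so a specific $p^*$ saturating the bound must be constructed.

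To construct such a $p^*$, set $\alpha:=\inf_p(f(p)+g(p))$. The case $\alpha=-\infty$ follows from weak duality, so assume $\alpha$ is finite. I would apply the geometric Hahn-Banach theorem in $V\times\mathbb{R}$ to the convex sets
\[
C_1:=\{(p,r)\in V\times \mathbb{R} : r>f(p)\},\qquad C_2:=\{(p,r)\in V\times \mathbb{R}: r\leq \alpha - g(p)\},
\]
which have disjoint interiors by the definition of $\alpha$, and where the continuity hypothesis ensures $C_1$ has nonempty interior at a point where $f+g$ is finite. This yields $(p^*,c)\neq (0,0)$ and $\gamma\in \mathbb{R}$ satisfying $(p^*,p)+cr\geq \gamma$ on $C_1$ and $(p^*,p)+cr\leq \gamma$ on $C_2$. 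Letting $r\to+\infty$ in $C_1$ forces $c\geq 0$; after normalizing $c=1$, the two supporting inequalities translate directly into $f^*(-p^*)\leq -\gamma$ and $g^*(p^*)\leq \gamma - \alpha$, whence $-f^*(-p^*)-g^*(p^*)\geq \alpha$. Combined with weak duality, this produces both equality and a maximizer.

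The main obstacle is the non-verticality step $c>0$. If $c=0$, the separator would reduce to a nonzero $p^*\in V$ with $(p^*,p)\geq \gamma$ on $\operatorname{dom}(f)$ and $(p^*,p)\leq \gamma$ on $\operatorname{dom}(g)$; at a common point $p_0$ lying in the interior of $\operatorname{dom}(f)$ (supplied precisely by the continuity hypothesis) this forces $(p^*,p_0)=\gamma$, and then picking $p=p_0-\varepsilon p^*$ in a neighborhood where $f$ is still finite violates the first inequality, a contradiction. Without this hypothesis, classical examples show the right-hand supremum can drop to $-\infty$ while $\alpha$ remains finite, so continuity at an interior point of the joint domain is genuinely necessary for strong duality to hold.
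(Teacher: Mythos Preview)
The paper does not give its own proof of this theorem; it is stated in the appendix purely as a quoted result from \cite{Roc66} and then applied in Lemma~\ref{lem: inward vector to normal cone}. So there is nothing to compare against on the paper's side.

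Your proof is correct and is the classical argument: weak duality by Fenchel--Young, followed by Hahn--Banach separation of the strict epigraph of $f$ from the hypograph of $\alpha-g$, with the continuity hypothesis used exactly where it must be, namely to give the epigraph nonempty interior and to rule out a vertical separating hyperplane. Two minor remarks. First, you implicitly assume it is $f$ (rather than $g$) that is continuous at the common point; this is harmless by symmetry, but it is worth saying so explicitly. Second, the hypothesis guarantees the existence of a point with $f+g$ finite, so $\alpha<+\infty$ automatically; you tacitly use this when reducing to the two cases $\alpha=-\infty$ and $\alpha$ finite.
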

We will also need the concept of the indicator function of a set.
\begin{DEF}
If $G$ is a set, the \emph{indicator function} of $G$ is defined by
\begin{align*}
 \delta_G(p):=
 \begin{cases}
 0,&p\in G\\
 +\infty,&p\not\in G.
 \end{cases}
\end{align*}
If $G$ is convex and nonempty, $\delta_G$ is a proper, convex function, and if $G$ is closed $\delta_G$ is lower semi-continuous.
\end{DEF}
We first show essentially the desired result, but applied to the strict normal cone of a convex set with nonempty interior. The added structure of the cone allows us to obtain the result more easily.
\begin{lem}\label{lem: inward vector to normal cone}
Suppose that $\mathcal{\arbitrary}$ is a convex subset of an $V$ with nonempty interior, and that $p_e\in \partial \mathcal{\arbitrary}$ is an exposed point of $\mathcal{\arbitrary}$ (recall Definition~\ref{def: normal cones}). Then, there exists some $\w{0}\in \normal_{p_e}\left(\mathcal{\arbitrary}\right)\cap \S^{n-1}$ such that $\left( \w{0}, p\right) >0$ for all $p\in \normal_{p_e}\left(\mathcal{\arbitrary}\right)\cap \S^{n-1}$. Here $\S^{n-1}$ is the unit sphere in $V$.
\end{lem}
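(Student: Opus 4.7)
The first step is to verify that $\normal_{p_e}(\mathcal{\arbitrary})$ is a pointed closed convex cone, i.e., contains no nontrivial linear subspace. If both $\pm q \in \normal_{p_e}(\mathcal{\arbitrary})$, then $(q, p - p_e) = 0$ for every $p \in \mathcal{\arbitrary}$, and since $\mathcal{\arbitrary}$ has nonempty interior this forces $q = 0$. Let $H := \normal_{p_e}(\mathcal{\arbitrary}) \cap \S^{n-1}$ and let $K$ be its convex hull, which is compact in finite dimension. Pointedness then forces $0 \notin K$: a convex combination of unit vectors in $\normal_{p_e}(\mathcal{\arbitrary})$ equal to $0$ would produce antipodal vectors in the cone.

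My plan is to study the infimum
\[ I := \inf_{w \in \normal_{p_e}(\mathcal{\arbitrary})} h(w), \qquad h(w) := \sup_{p \in H}\,[-(w, p)], \]
and show it equals $-\infty$. The function $h$ is the support function of the compact set $-H$, hence everywhere real-valued, convex, and continuous, while $\delta_{\normal_{p_e}(\mathcal{\arbitrary})}$ is proper, convex, and lower semi-continuous. If $I < 0$, then some $\tilde{w} \in \normal_{p_e}(\mathcal{\arbitrary})$ satisfies $h(\tilde{w}) < 0$, which is precisely $(\tilde{w}, p) > 0$ for every $p \in H$; setting $\w{0} := \tilde{w}/\lvert\tilde{w}\rvert$ yields the required vector. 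Since $h$ is positively homogeneous of degree one and $\normal_{p_e}(\mathcal{\arbitrary})$ is a cone, $I \in \{0, -\infty\}$, so showing $I < 0$ is equivalent to showing $I = -\infty$.

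Next I would apply the Fenchel-Rockafellar duality theorem with $f := \delta_{\normal_{p_e}(\mathcal{\arbitrary})}$ and $g := h$; its hypothesis holds because $h$ is finite and continuous on all of $V$. Standard Legendre-transform computations give $f^*(w^*) = \delta_{\normal_{p_e}(\mathcal{\arbitrary})^{\circ}}(w^*)$, where $\normal_{p_e}(\mathcal{\arbitrary})^{\circ} := \{v : (v, w) \leq 0\ \forall w \in \normal_{p_e}(\mathcal{\arbitrary})\}$ is the polar cone, and $g^*(w^*) = \delta_{-K}(w^*)$, since the convex conjugate of a support function is the indicator of the closed convex hull. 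Writing $\normal_{p_e}(\mathcal{\arbitrary})^{\vee} := -\normal_{p_e}(\mathcal{\arbitrary})^{\circ}$, the dual objective becomes
\[ \sup_{w^*} \bigl[ -\delta_{\normal_{p_e}(\mathcal{\arbitrary})^{\vee}}(w^*) - \delta_{-K}(w^*) \bigr], \]
which evaluates to $0$ when $\normal_{p_e}(\mathcal{\arbitrary})^{\vee} \cap (-K) \neq \emptyset$, and to $-\infty$ otherwise.

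The final step is to verify that this intersection is empty: if $v$ lay in it, then $-v \in K \subset \normal_{p_e}(\mathcal{\arbitrary})$ (the cone is convex and contains $H$), while $v \in \normal_{p_e}(\mathcal{\arbitrary})^{\vee}$ would force $(v, -v) \geq 0$, hence $v = 0$; this contradicts $0 \notin -K$. Therefore the dual value is $-\infty$, Fenchel-Rockafellar delivers $I = -\infty$, and $\w{0}$ is produced as described. The main subtlety---and the reason Fenchel-Rockafellar is needed rather than mere Hahn-Banach separation---is producing $\w{0}$ \emph{inside} $\normal_{p_e}(\mathcal{\arbitrary})$: separating the compact set $K$ from $0$ trivially yields a unit vector paired positively with all of $H$, but offers no control on its location; it is the duality that guarantees $\w{0}$ lies in $\normal_{p_e}(\mathcal{\arbitrary})$ itself.
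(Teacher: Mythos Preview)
Your proof is correct and, like the paper's, rests on Fenchel--Rockafellar duality, but the two applications are set up differently. The paper first uses a ball $B_{r_0}(p_0)\subset\mathcal{A}$ to exhibit a direction $e_0=-p_0/\lvert p_0\rvert$ with $(e_0,w)>0$ for every nonzero $w\in N_{p_e}(\mathcal{A})$, then slices the cone by the affine hyperplane $\{(\cdot,e_0)=1\}$ to obtain a compact convex base $G$; duality is applied with $f=\delta_G$ and $g=\delta_G^{*}$, yielding directly $\max_{p^*\in G}\inf_{p\in G}(p^*,p)=\inf_{p\in G}\sup_{p^*\in G}(p^*,p)\geq 1$, and the normalized maximizer is $w_0$. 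You instead deduce pointedness of $N_{p_e}(\mathcal{A})$ straight from the nonempty interior, work with $H=N_{p_e}(\mathcal{A})\cap\S^{n-1}$ and its hull $K$, and apply duality with $f=\delta_{N_{p_e}(\mathcal{A})}$ and $g$ the support function of $-H$; the dual problem reduces to the emptiness of $N^{\vee}\cap(-K)$, which forces the primal value to $-\infty$. Your route is a bit cleaner in that it avoids constructing the slicing direction $e_0$ and the base $G$; the paper's route is more quantitative (it yields the explicit lower bound $1$ rather than mere strict positivity), though that extra information is never used downstream. Your closing remark that plain separation of $K$ from $0$ would not place $w_0$ inside the normal cone is exactly the point, and is the reason both arguments need a minimax/duality step rather than just Hahn--Banach.
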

\begin{proof}
We may make a translation to assume that $p_e=0$. First we will show that $\normal_0\left(\mathcal{\arbitrary}\right)$ can be generated by its intersection with some plane that does not intersect the origin. Since $\mathcal{\arbitrary}^{\interior}\neq \emptyset$, we may assume that for some radius $r_0>0$ and center $p_0\neq 0$, there exists a ball $B_{r_0}(p_0)\subset \mathcal{\arbitrary}$. Now let $K$ be the cone generated by this ball, with $0$ as the vertex, i.e.
\begin{align*}
 K:=\{\lambda p\mid \lambda \geq 0,\ p\in B_{r_0}(p_0)\}.
\end{align*}
Since clearly $K\subset \mathcal{\arbitrary}$ and $0\in\partial K$, we immediately see that $\normal_0\left(\mathcal{\arbitrary}\right)\subset \normal_0\left(K\right)$, and the normal cone $\normal_0\left(K\right)$ is a cone with vertex $0$ and axial direction $\e{0}:=-\frac{p_0}{\norm{p_0}}$. Any $(n-1)$-dimensional hyperplane through $0$ that has normal vector orthogonal to $\e{0}$ would contain the point $p_0$, hence cannot be supporting to $K$. In other words, $\left (\e{0}, \w{}\right)>0$ for any $\w{}\in \normal_0\left(K\right)\setminus \{0\}$, and in particular, for any $\w{}\in \normal_0\left(\mathcal{\arbitrary}\right)\setminus\{0\}$. Thus by the homogeneity of normal cones, we see that the intersection $G:=\normal_0\left(\mathcal{\arbitrary}\right)\cap\{p\in V\mid \left (p, \e{0}\right)= 1\}$ generates $\normal_0\left(\mathcal{\arbitrary}\right)$. Moreover, it is easy to see that $G$ is compact and convex.

We will now obtain the desired $\w{0}$. Define the concave function $h(p^*):=\inf_{p\in G}{\left (p^*, p\right)}$. We wish to choose $f$ and $g$ in Theorem~\ref{thm: fenchel-rockafellar} so that $g^*$ is the indicator function of $G$, while $-f^*(-p^*)=h(p^*)$. To this end, define
\begin{align*}
 f(p):&=\delta_G(p),\\
 g(p):&=\sup_{p^*\in G}{\left (p^*, p\right)}=(\delta_G)^*(p).
\end{align*}
Since $G$ is convex and nonempty, both $f$ and $g$ are proper convex functions. Moreover, since $G$ is compact we see that $g$ is continuous and finite everywhere, thus we may apply Theorem~\ref{thm: fenchel-rockafellar} to $f$ and $g$. We can also calculate that indeed, $-f^*(-p^*)=h(p^*)$, while  
\begin{align*}
 g^*(p^*)=(\delta_G)^{**}(p^*)=\delta_G(p^*),
\end{align*}
since $\delta_G$ is lower semi-continuous by the closedness of $G$. Hence, we find
\begin{align*}
 \max_{p^*\in G}{h(p^*)}&=\max_{p^*\in V}{(-f^*(-p^*)-g^*(p^*))}\\
 &= \inf_{p\in V}{(f(p)+g(p))}\\
&=\inf_{p\in G}{\sup_{p^*\in G}{\left (p^*, p\right)}}\geq 1.
\end{align*}
By letting $\w{0}$ be the vector achieving the maximum in the expression on the left, normalized to unit length, we obtain the claimed properties.
\end{proof}
Finally, we can use a separation theorem to translate the above lemma into our main result.
\begin{lem}\label{lem: good supporting normal}
Suppose that $\mathcal{\arbitrary}\subset V$ is convex and contains more than one point, and $p_e\in \partial \mathcal{\arbitrary}$ is an exposed point of $\mathcal{\arbitrary}$. Then, there exists some $\e{0}\in \normal^0_{p_e}\left(\mathcal{\arbitrary}\right)\cap \S^{n-1}$ and $\lambda_0>0$ such that $p_e-\lambda \e{0}\in \mathcal{\arbitrary}$ for any $\lambda \in (0, \lambda_0]$.
\end{lem}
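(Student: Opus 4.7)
The proof proceeds in three steps. First, since $\mathcal{\arbitrary}$ contains more than one point, $\aff(\mathcal{\arbitrary})$ has positive dimension; set $V':=\aff(\mathcal{\arbitrary})-p_e\subset V$, a linear subspace inheriting the restricted inner product, so that $\mathcal{\arbitrary}-p_e\subset V'$ is convex with $0$ on its boundary and with nonempty interior in $V'$. Picking any $\eta\in \normal^0_{p_e}(\mathcal{\arbitrary})$ (nonempty by exposedness) and letting $\eta_{V'}\in V'$ denote its orthogonal projection onto $V'$, we have $(\eta_{V'},p-p_e)=(\eta,p-p_e)<0$ for every $p\in \mathcal{\arbitrary}\setminus\{p_e\}$, because $p-p_e\in V'$. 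In particular $\eta_{V'}\neq 0$, so $0$ remains an exposed point of $\mathcal{\arbitrary}-p_e$ when viewed inside $V'$.

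Next I apply Lemma~\ref{lem: inward vector to normal cone} with ambient space $V'$ and set $\mathcal{\arbitrary}-p_e$ at the boundary point $0$, obtaining a unit vector $\w{0}\in V'$ in the normal cone $\normal_0^{V'}(\mathcal{\arbitrary}-p_e)$ such that $(\w{0},q)>0$ for every nonzero $q\in \normal_0^{V'}(\mathcal{\arbitrary}-p_e)$. A standard characterization of interior points of polar cones says this strict positivity is equivalent to $-\w{0}$ lying in the interior (inside $V'$) of the polar of $\normal_0^{V'}(\mathcal{\arbitrary}-p_e)$, and by the bipolar theorem this polar equals $\overline{T_0^{V'}(\mathcal{\arbitrary}-p_e)}$; since $\mathcal{\arbitrary}-p_e$ has nonempty interior in $V'$, the interior of this closure agrees with $\interior(T_0^{V'}(\mathcal{\arbitrary}-p_e))$. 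Consequently $-\w{0}\in \interior(T_0^{V'}(\mathcal{\arbitrary}-p_e))$, and since interior tangent directions are genuine inward-feasible directions, $p_e-\lambda\w{0}\in \mathcal{\arbitrary}$ for every sufficiently small $\lambda>0$.

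The remaining obstacle---and the reason the preceding step does not yet finish the proof---is that $\w{0}$ lies only in $\normal_{p_e}(\mathcal{\arbitrary})$, not necessarily in the strict normal cone $\normal^0_{p_e}(\mathcal{\arbitrary})$: the supporting hyperplane with outer normal $\w{0}$ could touch $\mathcal{\arbitrary}$ along an entire face. I will fix this by perturbing with $\eta_{V'}$, setting
\begin{equation*}
     \nu_\delta:=\frac{\w{0}+\delta\,\eta_{V'}}{\lVert \w{0}+\delta\,\eta_{V'}\rVert},\qquad \delta>0.
\end{equation*}
For any $p\in \mathcal{\arbitrary}\setminus\{p_e\}$, the quantity $(\w{0},p-p_e)+\delta(\eta_{V'},p-p_e)$ is the sum of a nonpositive and a strictly negative term, hence strictly negative, so $\nu_\delta\in \normal^0_{p_e}(\mathcal{\arbitrary})\cap S^{n-1}$. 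Since $\nu_\delta\to \w{0}$ as $\delta\to 0^+$ and the open set $\interior(T_0^{V'}(\mathcal{\arbitrary}-p_e))$ contains $-\w{0}$, it also contains $-\nu_\delta$ for $\delta$ small, yielding a uniform $\lambda_0>0$ with $p_e-\lambda \nu_\delta\in \mathcal{\arbitrary}$ for $\lambda\in(0,\lambda_0]$. Taking $\e{0}:=\nu_\delta$ for any such $\delta$ completes the proof. The main point---and the content of the perturbation step---is that the exposedness hypothesis supplies $\eta$ precisely so as to upgrade $\w{0}$ from $\normal_{p_e}$ to $\normal^0_{p_e}$, without disturbing the inward feasibility of $-\e{0}$ that was secured by the previous step.
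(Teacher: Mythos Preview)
Your proof is correct and follows a route that differs from the paper's in its concluding half. Both arguments begin the same way: reduce to the affine hull $V'$ so that $\mathcal{A}-p_e$ has nonempty interior there, and invoke Lemma~\ref{lem: inward vector to normal cone} to obtain $\w{0}\in \normal_{p_e}(\mathcal{A})$ with $(\w{0},q)>0$ for every nonzero $q\in \normal_{p_e}(\mathcal{A})$. From this point the two proofs diverge. The paper argues by contradiction, applying Rockafellar's separation theorems to the pair $-\normal^0_{p_e}(\mathcal{A})$ and $\mathcal{A}$ to show directly that $-\normal^0_{p_e}(\mathcal{A})\cap \mathcal{A}^{\interior}\neq\emptyset$; any vector in this intersection, once normalized and negated, furnishes $\e{0}$ in one stroke. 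You instead read the strict positivity of $\w{0}$ against the normal cone as the statement that $-\w{0}$ lies in the interior of the polar of $\normal_{p_e}(\mathcal{A})$, which by the bipolar theorem is the (closed) tangent cone; since interior tangent directions are feasible, $p_e-\lambda\w{0}\in\mathcal{A}$ for small $\lambda$. You then repair the possible failure $\w{0}\notin \normal^0_{p_e}(\mathcal{A})$ by the convex-combination perturbation with the projected strict normal $\eta_{V'}$, which preserves membership in the open interior of the tangent cone for small $\delta$. Your argument is slightly more constructive and makes the role of the tangent/normal polar duality explicit; the paper's is shorter once separation is invoked. Both are valid.
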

\begin{proof}
Again, assume that $p_e=0$. Since $\mathcal{\arbitrary}$ contains more than one point, its affine dimension must be strictly bigger than $0$. If the affine dimension of $\mathcal{\arbitrary}$ is strictly less than $n$, we may consider the orthogonal projection of $\mathcal{\arbitrary}$ onto its affine hull for the following proof, so without loss of generality assume that $\mathcal{\arbitrary}$ has affine dimension $n$. In particular, $\mathcal{\arbitrary}^{\interior}\neq \emptyset$ and we may choose an associated $\w{0}\in \normal_{0}\left(\mathcal{\arbitrary}\right)\cap \S^{n-1}$ with the property described in Lemma~\ref{lem: inward vector to normal cone} above. Our claim will be proven if we can show that $-\normal^0_{0}\left(\mathcal{\arbitrary}\right)\cap \mathcal{\arbitrary}^{\interior}\neq \emptyset$. Suppose this does not hold. Then by applying the separation theorems~\cite[Theorem 11.3 and 11.7]{Roc70} to $-\normal^0_{0}\left(\mathcal{\arbitrary}\right)$ and $\mathcal{\arbitrary}$, we obtain a unit length $\e{0}\in \S^{n-1}$ such that
\begin{align*}
\left(\e{0}, \w{}\right)&\geq 0,\qquad\forall \w{}\in -\normal^0_{0}\left(\mathcal{\arbitrary}\right),\\
\left(p, \e{0}\right)&\leq 0,\qquad \forall p\in \mathcal{\arbitrary}.
\end{align*}
Since $0$ is an exposed point of $\mathcal{\arbitrary}$ we have $\normal^0_{0}\left(\mathcal{\arbitrary}\right)\neq \emptyset$, and hence it can be seen that $\left(\normal^0_{0}\left(\mathcal{\arbitrary}\right)\right)^{\cl}=\normal_{0}\left(\mathcal{\arbitrary}\right)$. Thus as a result of the first inequality above, $\left(\e{0}, \w{0}\right)\leq 0$. However, the second inequality implies that $\e{0}\in\normal_0\left(\mathcal{\arbitrary}\right)$, which contradicts the choice of $\w{0}$, and we obtain the lemma.

\end{proof}
\nocite{B91,GM96,L09,MTW05}
\bibliography{transport}

\begin{thebibliography}{10}

\bibitem{AsiEll1980}
Leonard Asimow and Alan~J. Ellis.
\newblock {\em Convexity theory and its applications in functional analysis},
  volume~16 of {\em London Mathematical Society Monographs}.
\newblock Academic Press Inc. [Harcourt Brace Jovanovich Publishers], London,
  1980.

\bibitem{Bre87}
Yann Brenier.
\newblock D\'ecomposition polaire et r\'earrangement monotone des champs de
  vecteurs.
\newblock {\em C. R. Acad. Sci. Paris S\'er. I Math.}, 305(19):805--808, 1987.

\bibitem{B91}
Yann Brenier.
\newblock Polar factorization and monotone rearrangement of vector-valued
  functions.
\newblock {\em Comm. Pure Appl. Math.}, 44(4):375--417, 1991.

\bibitem{C90}
Luis~A. Caffarelli.
\newblock A localization property of viscosity solutions to the
  {M}onge-{A}mp\`ere equation and their strict convexity.
\newblock {\em Ann. of Math. (2)}, 131(1):129--134, 1990.

\bibitem{C92lectures}
Luis~A. Caffarelli.
\newblock A-priori estimates and the geometry of the monge-ampere equation.
\newblock {\em Nonlinear partial differential equations in differential
  geometry (Park City, UT, 1992)}, 2:5--63, 1992.

\bibitem{C92}
Luis~A. Caffarelli.
\newblock The regularity of mappings with a convex potential.
\newblock {\em J. Amer. Math. Soc.}, 5(1):99--104, 1992.

\bibitem{dG76}
Miguel de~Guzm{\'a}n.
\newblock Differentiation of integrals in {${\bf R}^{n}$}.
\newblock In {\em Measure theory ({P}roc. {C}onf., {O}berwolfach, 1975)}, pages
  181--185. Lecture Notes in Math., Vol. 541. Springer, Berlin, 1976.

\bibitem{DG10}
Philippe Delano{\"e} and Yuxin Ge.
\newblock Regularity of optimal transport on compact, locally nearly spherical,
  manifolds.
\newblock {\em J. Reine Angew. Math.}, 646:65--115, 2010.

\bibitem{DG11}
Philippe Delano{\"e} and Yuxin Ge.
\newblock Locally nearly spherical surfaces are almost-positively {$c$}-curved.
\newblock {\em Methods Appl. Anal.}, 18(3):269--302, 2011.

\bibitem{FKM09}
Alessio Figalli, Young-Heon Kim, and Robert~J. McCann.
\newblock {Continuity and injectivity of optimal maps for non-negatively
  cross-curved costs}.
\newblock {\em arXiv:0911.3952}, November 2009.

\bibitem{FKM10}
Alessio Figalli, Young-Heon Kim, and Robert~J. McCann.
\newblock {Regularity of optimal transport maps on multiple products of
  spheres}.
\newblock {\em To appear in {\em J. Eur. Math. Soc. (JEMS).}}, June 2010.

\bibitem{FKM11}
Alessio Figalli, Young-Heon Kim, and Robert~J. McCann.
\newblock {H\"older continuity and injectivity of optimal maps}.
\newblock {\em arXiv:1107.1014}, July 2011.

\bibitem{FKM11a}
Alessio Figalli, Young-Heon Kim, and Robert~J. McCann.
\newblock {On supporting hyperplanes to convex bodies}.
\newblock {\em arXiv:1107.1016}, July 2011.

\bibitem{FRV10}
Alessio Figalli, Ludovic Rifford, and C{\'e}dric Villani.
\newblock On the {M}a-{T}rudinger-{W}ang curvature on surfaces.
\newblock {\em Calc. Var. Partial Differential Equations}, 39(3-4):307--332,
  2010.

\bibitem{FRV11}
Alessio Figalli, Ludovic Rifford, and C{\'e}dric Villani.
\newblock Necessary and sufficient conditions for continuity of optimal
  transport maps on {R}iemannian manifolds.
\newblock {\em Tohoku Math. J. (2)}, 63(4):855--876, 2011.

\bibitem{FRV12}
Alessio Figalli, Ludovic Rifford, and C{\'e}dric Villani.
\newblock Nearly round spheres look convex.
\newblock {\em Amer. J. Math.}, 134(1):109--139, 2012.

\bibitem{GM96}
Wilfrid Gangbo and Robert~J. McCann.
\newblock The geometry of optimal transportation.
\newblock {\em Acta Math.}, 177(2):113--161, 1996.

\bibitem{KK12}
Young-Heon {Kim} and Jun {Kitagawa}.
\newblock {On the degeneracy of optimal transportation}.
\newblock {\em arXiv:1211.6227}, November 2012.

\bibitem{KM10}
Young-Heon Kim and Robert~J. McCann.
\newblock Continuity, curvature, and the general covariance of optimal
  transportation.
\newblock {\em J. Eur. Math. Soc. (JEMS)}, 12(4):1009--1040, 2010.

\bibitem{Liu09}
Jiakun Liu.
\newblock H\"older regularity of optimal mappings in optimal transportation.
\newblock {\em Calc. Var. Partial Differential Equations}, 34(4):435--451,
  2009.

\bibitem{LT10}
Jiakun Liu and Neil~S. Trudinger.
\newblock On {P}ogorelov estimates for {M}onge-{A}mp\`ere type equations.
\newblock {\em Discrete Contin. Dyn. Syst.}, 28(3):1121--1135, 2010.

\bibitem{LTW10}
Jiakun Liu, Neil~S. Trudinger, and Xu-Jia Wang.
\newblock Interior {$C^{2,\alpha}$} regularity for potential functions in
  optimal transportation.
\newblock {\em Comm. Partial Differential Equations}, 35(1):165--184, 2010.

\bibitem{L09}
Gr{\'e}goire Loeper.
\newblock On the regularity of solutions of optimal transportation problems.
\newblock {\em Acta Math.}, 202(2):241--283, 2009.

\bibitem{L11}
Gr{\'e}goire Loeper.
\newblock Regularity of optimal maps on the sphere: the quadratic cost and the
  reflector antenna.
\newblock {\em Arch. Ration. Mech. Anal.}, 199(1):269--289, 2011.

\bibitem{LV10}
Gr{\'e}goire Loeper and C{\'e}dric Villani.
\newblock Regularity of optimal transport in curved geometry: the nonfocal
  case.
\newblock {\em Duke Math. J.}, 151(3):431--485, 2010.

\bibitem{MTW05}
Xi-Nan Ma, Neil~S. Trudinger, and Xu-Jia Wang.
\newblock Regularity of potential functions of the optimal transportation
  problem.
\newblock {\em Arch. Ration. Mech. Anal.}, 177(2):151--183, 2005.

\bibitem{Mc01}
Robert~J. McCann.
\newblock Polar factorization of maps on {R}iemannian manifolds.
\newblock {\em Geom. Funct. Anal.}, 11(3):589--608, 2001.

\bibitem{Pog1971}
Aleksandrov~V. Pogorelov.
\newblock The regularity of the generalized solutions of the equation {${\rm
  det}(\partial ^{2}u/\partial x^{i}\partial x^{j})=$} {$\varphi
  (x^{1},\,x^{2},\dots, x^{n})>0$}.
\newblock {\em Dokl. Akad. Nauk SSSR}, 200:534--537, 1971.

\bibitem{Roc66}
R.~Tyrrell Rockafellar.
\newblock Extension of {F}enchel's duality theorem for convex functions.
\newblock {\em Duke Math. J.}, 33:81--89, 1966.

\bibitem{Roc70}
R.~Tyrrell Rockafellar.
\newblock {\em Convex analysis}.
\newblock Princeton University Press, 1970.

\bibitem{Sch93}
Rolf Schneider.
\newblock {\em Convex bodies: the {B}runn-{M}inkowski theory}, volume~44 of
  {\em Encyclopedia of Mathematics and its Applications}.
\newblock Cambridge University Press, Cambridge, 1993.

\bibitem{TW09}
Neil~S. Trudinger and Xu-Jia Wang.
\newblock On the second boundary value problem for {M}onge-{A}mp\`ere type
  equations and optimal transportation.
\newblock {\em Ann. Sc. Norm. Super. Pisa Cl. Sci. (5)}, 8(1):143--174, 2009.

\bibitem{Urb97}
John Urbas.
\newblock On the second boundary value problem for equations of
  {M}onge-{A}mp\`ere type.
\newblock {\em J. Reine Angew. Math.}, 487:115--124, 1997.

\bibitem{Vil08}
C{\'e}dric Villani.
\newblock Stability of a 4th-order curvature condition arising in optimal
  transport theory.
\newblock {\em J. Funct. Anal.}, 255(9):2683--2708, 2008.

\bibitem{Vil09}
C{\'e}dric Villani.
\newblock {\em Optimal transport}, volume 338 of {\em Grundlehren der
  Mathematischen Wissenschaften [Fundamental Principles of Mathematical
  Sciences]}.
\newblock Springer-Verlag, Berlin, 2009.
\newblock Old and new.

\end{thebibliography}
\bibliographystyle{plain}
\end{document}